\newcommand{\mbP}{\mathbb P}
\newcommand{\mbZ}{\mathbb Z}
\newcommand{\mbC}{\mathbb C}
\newcommand{\oM}{\overline{\mathcal M}}
\def\cM{{\mathcal{M}}}
\def\oM{{\overline{\mathcal{M}}}}
\def\d{{\partial}}
\newcommand{\eps}{\varepsilon}
\newcommand{\even}{\mathrm{even}}
\newcommand{\ct}{\mathrm{ct}}
\newcommand{\Coef}{\mathrm{Coef}}
\DeclareMathOperator{\Deg}{Deg}
\newcommand{\tv}{\widetilde v}
\renewcommand{\top}{\mathrm{top}}
\DeclareMathOperator{\Aut}{Aut}
\newcommand{\tOmega}{\widetilde\Omega}
\newcommand{\gl}{\mathrm{gl}}
\newcommand{\tR}{\widetilde{R}}
\newcommand{\ext}{\mathrm{ext}}
\newcommand{\mcC}{\mathcal{C}}
\newcommand{\mcF}{\mathcal{F}}
\newcommand{\mcL}{\mathcal{L}}
\newcommand{\tT}{\widetilde{T}}
\newcommand{\tpi}{\widetilde{\pi}}
\newcommand{\mcO}{\mathcal{O}}
\def\t{\mathfrak{t}}
\def\d{\partial}
\def\n{\noindent}
\def\f{\frac}
\newcommand{\beq}{\begin{equation}}
\newcommand{\eeq}{\end{equation}}
\newcommand{\orig}{\mathrm{orig}}
\newcommand{\Id}{\mathrm{Id}}
\newcommand{\mcR}{\mathcal{R}}
\newcommand{\mcH}{\mathcal{H}}
\newcommand{\End}{\mathrm{End}}
\newcommand{\tmcF}{\widetilde{\mathcal{F}}}
\newcommand{\tq}{\widetilde{q}}
\newcommand{\hmcF}{\widehat{\mathcal{F}}}
\newcommand{\omcF}{\overline{\mathcal{F}}}
\newcommand{\oF}{\overline{F}}
\renewcommand{\tt}{\widetilde{t}}
\newcommand{\oc}{{\overline{c}}}
\newcommand{\ot}{\overline{t}}
\newcommand{\tS}{\widetilde{S}}
\def\un{{1\!\! 1}}
\newcommand{\tp}{\widetilde{p}}
\newcommand{\tPsi}{\widetilde{\Psi}}
\newcommand{\diag}{\mathrm{diag}}
\newcommand{\tD}{\widetilde{D}}
\newcommand{\tGamma}{\widetilde{\Gamma}}
\newcommand{\tA}{\widetilde{A}}
\newcommand{\triv}{\mathrm{triv}}
\newcommand{\const}{\mathrm{const}}
\newcommand{\GL}{\mathrm{GL}}
\newcommand{\tgamma}{\widetilde{\gamma}}
\newcommand{\txi}{\widetilde{\xi}}
\newcommand{\oxi}{\overline{\xi}}
\newcommand{\Lie}{\mathrm{Lie}}
\newcommand{\tnabla}{\widetilde{\nabla}}
\newcommand{\Ext}{\mathrm{Ext}}
\newcommand{\otau}{{\overline{\tau}}}
\newcommand{\oH}{\overline{H}}
\newcommand{\ET}{\mathrm{ET}}
\newcommand{\Cont}{\mathrm{Cont}}
\newcommand{\te}{\widetilde{e}}
\newcommand{\tk}{\widetilde{k}}
\newcommand{\mcV}{\mathcal{V}}
\newcommand{\tcL}{\widetilde{\mathcal{L}}}
\newcommand{\mcE}{\mathcal{E}}
\newtheorem{theorem}{Theorem}[section]
\newtheorem{proposition}[theorem]{Proposition}
\newtheorem{lemma}[theorem]{Lemma}
\newtheorem{definition}[theorem]{Definition}
\newtheorem{remark}[theorem]{Remark}
\numberwithin{equation}{section}
\begin{document}

\title{Semisimple flat F-manifolds in higher genus}

\author{Alessandro Arsie}
\address{A.~Arsie:\newline Department of Mathematics and Statistics, The University of Toledo,\newline 2801 W. Bancroft St., 43606 Toledo, OH, USA}
\email{alessandro.arsie@utoledo.edu}

\author{Alexandr Buryak}
\address{A.~Buryak:\newline Faculty of Mathematics, National Research University Higher School of Economics,\newline 6 Usacheva str., Moscow, 119048, Russian Federation;\smallskip\newline
Center for Advanced Studies, Skolkovo Institute of Science and Technology,\newline 1 Nobel str., Moscow, 143026, Russian Federation}
\email{aburyak@hse.ru}

\author{Paolo Lorenzoni}
\address{P.~Lorenzoni:\newline Dipartimento di Matematica e Applicazioni, Universit\`a di Milano-Bicocca,\newline Via Roberto Cozzi 53, I-20125 Milano, Italy}
\email{paolo.lorenzoni@unimib.it}

\author{Paolo Rossi}
\address{P.~Rossi:\newline Dipartimento di Matematica ``Tullio Levi-Civita'', Universit\`a degli Studi di Padova,\newline Via Trieste 63, 35121 Padova, Italy}
\email{paolo.rossi@math.unipd.it}

\begin{abstract}
In this paper, we generalize the Givental theory for Frobenius manifolds and cohomological field theories to flat F-manifolds and F-cohomological field theories. In particular, we define a notion of Givental cone for flat F-manifolds, and we provide a generalization of the Givental group as a matrix loop group acting on them. We show that this action is transitive on semisimple flat F-manifolds. We then extend this action to F-cohomological field theories in all genera. We show that, given a semisimple flat F-manifold and a Givental group element connecting it to the constant flat F-manifold at its origin, one can construct a family of F-CohFTs in all genera, parameterized by a vector in the associative algebra at the origin, whose genus~$0$ part is the given flat F-manifold. If the flat F-manifold is homogeneous, then the associated family of F-CohFTs contains a subfamily of homogeneous F-CohFTs. However, unlike in the case of Frobenius manifolds and CohFTs, these homogeneous F-CohFTs can have different conformal dimensions, which are determined by the properties of a certain metric associated to the flat F-manifold.
\end{abstract}

\date{\today}

\maketitle

\tableofcontents

\section*{Introduction}
In a series of influential papers \cite{Giv01a,Giv01b,Giv04}, A. Givental, inspired by localization formulas in the Gromov--Witten theory of projective spaces, described a technique to express the all genera descendant Gromov--Witten potential of a target variety with semisimple quantum cohomology in terms of the action of a certain operator on $N$ copies (where $N$ is the dimension of the target variety's cohomology) of the descendant potential of a point (also known as the Witten--Kontsevich tau-function).\\

Restricting the attention to the genus $0$ descendant potential, the situation is described as the action of a certain loop group of matrices on the space of descendant potentials of calibrated Frobenius manifolds (i.e., solutions to certain differential equations inspired by Gromov--Witten theory and well-known in the theory of Frobenius manifolds, see \cite{DZ05})). This action is in fact transitive when further restricted to semisimple Frobenius manifolds. In this sense, taking as a starting point the $N$-fold product of the trivial $1$-dimensional Frobenius manifold, the descendant potential of any other fixed calibrated semisimple Frobenius manifold can be recovered by the action of an operator representing an element of the Givental group connecting these two semisimple Frobenius manifolds.\\

The geometric set-up for this result is interpreting the genus $0$ descendant potential as the generating function for a Lagrangian cone in an infinite dimensional symplectic vector space. The Givental group is then a group of symplectic transformations acting on the set of all such cones.\\

In \cite{Giv01b}, Givental conjectured how to extend such action to the potential at all genera as a canonical quantization of the above symplectic action. In particular, Givental's formula can be seen as a way to reconstruct higher genus descendant Gromov--Witten invariants of a target variety from its (genus $0$) quantum cohomology, as long as this is semisimple.\\

In \cite{Tel12}, Teleman proved Givental's reconstruction formula by reformulating the problem in the language of cohomological field theories (CohFTs), families of cohomology classes on the moduli space of stable algebraic curves introduced in \cite{KM94} to axiomatize the properties of Gromov--Witten invariants. In this context, the Givental symplectic loop group is seen as acting directly on the space of all CohFTs, and this action restricts to a transitive action on the space of semisimple CohFTs. The explicit form of this action had been known to experts for a while before being first accurately described in \cite{PPZ15}.\\

In this language, the Givental--Teleman reconstruction theorem relies on two results: the transitivity of the action of the Givental group in genus $0$ (i.e., on descendant potentials of semisimple calibrated Frobenius manifolds) and the fact that a semisimple CohFT is essentially uniquely reconstructable from its genus $0$ part (in fact, up to insertions of Hodge classes in the general case, and uniquely for homogeneous CohFTs). This shows, in particular, that the Givental group acts transitively on semisimple CohFTs.\\

In this paper, we consider a generalization of Frobenius manifolds called flat F-manifolds. F-manifolds were introduced by Hertling and Manin in \cite{HM99} (see also the book \cite{Man99}). They are generalizations of Frobenius manifolds, for which one drops the potentiality condition and the presence of a metric. Flat F-manifolds were first studied by Getzler~\cite{Get04} and Manin~\cite{Man05} (in Getzler's paper they are called Dubrovin manifolds) and they are often useful to capture interesting structures in singularity theory and algebraic geometry.\\

The relevance of F-manifolds stems also from the fact that in this framework it is possible to capture phenomena that do not always have a natural set-up in the theory of Frobenius manifolds. Moreover, many important constructions in the theory of Frobenius manifolds admit a natural generalization in this framework.
\begin{itemize}
\item {\it Painlev\'e transcendents}. Three dimensional semisimple flat F-manifolds equipped with a linear Euler vector field are parameterized by solutions of the full family of Painlev\'e~VI (PVI) equations (\cite{Lor14,AL15}) and in the non-semisimple regular case by the full family of PIV and PV equations \cite{AL15}. The same results were proved later with different methods in \cite{KMS15} (semisimple case) and \cite{KM19} (non-semisimple case). In the last paper, the authors proved that PII--PVI appear, as special cases, in dimension~$4$. In the case of Frobenius manifolds, no results of this type are known for the non-semisimple situation.
\item {\it Reflection groups}. The Dubrovin construction of a Frobenius manifold structure on the orbit space of a Coxeter group can be generalized to well-generated complex reflection groups \cite{KMS15,AL17,KMS18}. Moreover, the flat coordinates of the flat F-manifold are distinguished basic invariant polynomials of the group that generalize the Saito flat coordinates. 
\item {\it Open Gromov--Witten theory}. Flat F-manifolds appear in genus $0$ open Gromov--Witten theory, as remarked in \cite{BB19}. In particular, the open WDVV equations introduced by Horev and Solomon in \cite{HS12} can be thought as a particular case of the oriented WDVV equations that play the role of the usual WDVV's in this  more general setting.
\item {\it Higher genus extension}. The notion of a Frobenius manifold can be naturally extended
to higher genus considering CohFTs. It was proved in \cite{BR18} that a similar extension can be defined also in the case of flat F-manifolds and leads to the notion of an F-CohFT. In the same paper, the first, all genera, explicit example of an F-CohFT, relevant for open Gromov--Witten theory, was constructed.
\end{itemize}  
Due to the variety of applications ranging from integrable systems to Gromov--Witten theory, it is natural to ask whether Givental's theory and the Givental--Teleman reconstruction can be generalized to flat F-manifolds and F-CohFTs. In this paper, we give an answer to this question in the semisimple case.\\

The paper is organized as follows.\\

In Section \ref{section:flat F-manifolds}, we develop the theory of semisimple flat F-manifolds in canonical coordinates, reformulating some known facts and introducing stronger or more precise technical results needed for Section \ref{section:Givental theory for flat F-manifolds}, where we introduce a suitably generalized version of the Givental theory for flat F-manifolds.\\

In particular, we prove that to a calibrated flat F-manifold one can univocally associate a sequence of descendant vector potentials describing a cone in an infinite dimensional vector space. When the flat F-manifold is Frobenius, such cone is Lagrangian with respect to the symplectic structure constructed via the flat metric, as proved in \cite{Giv04}. We then introduce a larger Givental-type loop group (which is not symplectic anymore) and a corresponding action, which is defined on the space of such descendant cones. We prove that the action is transitive for semisimple flat F-manifolds, thereby completely generalizing the genus $0$ Givental theory. We also recall the definition of a homogeneous flat F-manifold given in \cite{BB19} and the related notions of a Saito structure without metric (introduced in \cite{Sab98}) and of a bi-flat F-manifold (introduced in \cite{AL13}). For homogeneous flat F-manifolds, we show that the $R$-matrix defining an element of the generalized Givental group is uniquely determined. \\

In Section \ref{section:F-CohFT}, we study F-CohFTs. They are in fact generalizations of partial CohFTs: the gluing axiom at a nonseparating node is dropped and, moreover, the complete equivariance of the classes with respect to permutation of marked points is broken, as one of them carries a co-vector, instead of a vector. This removes the necessity of a metric, which is then also dropped. Indeed, partial CohFTs in genus $0$ still reduce to Frobenius manifolds, while F-CohFTs give flat F-manifolds.\\

Finally, in Section \ref{section:group action of F-CohFTs}, we extend our generalized Givental group action to F-CohFTs in all genera and we show our main result: given any semisimple flat F-manifold, we can construct an F-CohFT with that F-manifold as its genus $0$ part. In fact, because of the absence of the gluing axiom at nonseparating nodes, some genus $1$ information is needed to fix the nonzero genus part. This is done by specifying the degree $0$ part of the F-CohFT on $\oM_{1,1}$, which amounts to a vector $G_0$ in the F-CohFT phase space $V$. For each choice of such a vector, we construct a different F-CohFT with the given flat F-manifold as its genus $0$ part. If the flat F-manifold is homogeneous, then we construct a decomposition $V=\oplus_{i\in I} V_i$ and prove that for $G_0\in V_i$ the resulting F-CohFT is homogeneous of conformal dimension $\gamma_i$. The collection of numbers~$\gamma_i$, $i\in I$ is determined by the properties of a certain metric that is associated to the flat F-manifold.\\

It is worth underlying that, despite in the definition of a flat F-manifold one drops the requirement of a metric, typical instead for Frobenius structures, a metric compatible with the product can nonetheless be constructed \cite{AL13} around any semisimple point. This hidden metric plays a crucial role in our proof of the transitivity of the Givental action on the set of semisimple flat F-manifolds. Moreover, despite the definition of homogeneous flat F-manifold does not involve the notion of a conformal dimension either, the same hidden metric is at the origin of the appearence of a tuple of conformal dimensions for the homogeneous higher genus extensions of a given homogeneous semisimple flat F-manifold.\\ 

To put these results in perspective, let us point out the following. When the unit axiom is dropped, flat F-manifolds are essentially equivalent to Hycomm algebras. In this case, in a slightly more general set-up, a generalization of the Givental group action for the case of genus zero curves is given in Section 6 of~\cite{KMS13}. A conceptual explanation for this action as the choice of a homotopy trivialisation of the circle action in BV algebras is also given in~\cite{KMS13} and, from a different perspective, in~\cite{DSV15}. A Lagrangian-cone-type description of the Givental theory for Losev--Manin CohFTs (which exist only in genus $0$) is given in~\cite{SZ11}. However, we want to stress that the present paper is, to the best of our knowledge, the first one to consider and fully develop the higher genus Givental theory for flat F-manifolds and F-CohFTs.\\

We remark that, beside the aforementioned applications to singularity theory and moduli spaces of curves, another motivation for studying F-CohFTs and developing a corresponding Givental-type theory comes from integrable systems. Indeed, as shown in \cite{BR18}, to any F-CohFT one can associate, via a suitable generalization of the double ramification hierarchy construction of \cite{Bur15,BR16}, an infinite hierarchy of compatible evolutionary PDEs (in particular, systems of conservation laws). The dispersionless limit of this hierarchy is the principal hierarchy associated with the corresponding flat F-manifold, see Section \ref{section:Givental theory for flat F-manifolds}. In the literature, there are interesting examples of integrable hierarchies like those studied by Antonowicz and Fordy in \cite{AF87} whose dispersionless limit can be interpreted as the principal hierarchy of a (homogeneous) flat F-manifold. \\

The results of the present paper then allow to construct a family of dispersive deformations of the principal hierarchy of any semisimple flat F-manifold, parametrized by a vector $G_0$ at its origin. In the homogeneous case, choices of $G_0$ exist for which the deformation is homogeneous. We will study in detail such dispersive deformations and the properties of the double ramification hierarchy of an F-CohFT in our next paper.\\

\subsection*{Acknowledgments}
We thank Sergey Shadrin for providing useful remarks and pointing out some references. The work of A.~B. is an output of a research project implemented as part of the Basic Research Program at the National Research University Higher School of Economics (HSE University). P.~L. is supported by MIUR - FFABR funds 2017 and by funds of H2020-MSCA-RISE-2017 Project No. 778010 IPaDEGAN.\\


\section{Flat F-manifolds around a semisimple point}\label{section:flat F-manifolds}

After recalling the definition of a flat F-manifold as a generalization of the notion of a Frobenius manifold, in this section, we show that a flat F-manifold around a semisimple point possesses a metric, and we construct rotation coefficients and a sequence of $R$-matrices. These objects will play an important role in our later construction of an F-cohomological field theory in all genera associated to a flat F-manifold.

\smallskip

\subsection{Flat F-manifolds and Frobenius manifolds}

We recall here the following facts and definitions from \cite{Get04,Man05}, see also \cite{AL18} and \cite{Dub96}.

\smallskip

\begin{definition}
A flat F-manifold $(M,\nabla,\circ,e)$ is the datum of an analytic manifold $M$, an analytic connection $\nabla$ in the tangent bundle $T M$, an algebra structure $(T_p M,\circ)$ with unit $e$ on each tangent space, analytically depending on the point $p\in M$, such that the one-parameter family of connections $\nabla_z=\nabla+z\circ$ is flat and torsionless for any $z\in\mbC$, and $\nabla e=0$.
\end{definition}

\smallskip

From the flatness and the torsionlessness of $\nabla_z$ one can deduce the commutativity and the associativity of the algebras $(T_pM,\circ)$. Moreover, if one choses flat coordinates $t^\alpha$, $1\le\alpha\le N$, $N=\dim M$, for the connection $\nabla$, then it is easy to see that locally there exist analytic functions $F^\alpha(t^1,\ldots,t^N)$, $1\leq\alpha\leq N$, such that the second derivatives 
\begin{gather}\label{eq:structure constants of flat F-man}
c^\alpha_{\beta\gamma}=\frac{\d^2 F^\alpha}{\d t^\beta \d t^\gamma}
\end{gather}
are the structure constants of the algebras $(T_p M,\circ)$,
\begin{gather*}
\frac{\d}{\d t^\beta}\circ\frac{\d}{\d t^\gamma}=c^\alpha_{\beta\gamma}\frac{\d}{\d t^\alpha}.
\end{gather*}
Also, in the coordinates~$t^\alpha$ the unit $e$ has the form $e=A^\alpha\frac{\d}{\d t^\alpha}$ for some constants $A^\alpha\in\mbC$. Note that we use Einstein's convention of sum over repeated Greek indices. From the associativity of the algebras $(T_p M,\circ)$ and the fact that the vector field $A^\alpha\frac{\d}{\d t^\alpha}$ is the unit it follows that
\begin{align}
A^\mu\frac{\d^2 F^\alpha}{\d t^\mu\d t^\beta} &= \delta^\alpha_\beta, && 1\leq \alpha,\beta\leq N,\label{eq:axiom1 of flat F-man}\\
\frac{\d^2 F^\alpha}{\d t^\beta \d t^\mu} \frac{\d^2 F^\mu}{\d t^\gamma \d t^\delta} &= \frac{\d^2 F^\alpha}{\d t^\gamma \d t^\mu} \frac{\d^2 F^\mu}{\d t^\beta \d t^\delta}, && 1\leq \alpha,\beta,\gamma,\delta\leq N.\label{eq:axiom2 of flat F-man}
\end{align}
The $N$-tuple of functions $\oF=(F^1,\ldots,F^N)$ is called the {\it vector potential} of the flat F-manifold.\\

Conversely, if $M$ is an open subset of $\mbC^N$ and $F^1,\ldots,F^N\in\mcO(M)$ are functions satisfying equations~\eqref{eq:axiom1 of flat F-man} and~\eqref{eq:axiom2 of flat F-man}, then these functions define a flat F-manifold $(M,\nabla,\circ,A^\alpha\frac{\d}{\d t^\alpha})$ with the connection~$\nabla$ given by $\nabla_{\frac{\d}{\d t^\alpha}}\frac{\d}{\d t^\beta}=0$, and the multiplication $\circ$ given by the structure constants~\eqref{eq:structure constants of flat F-man}.

\smallskip

\begin{definition}
Consider a flat F-manifold $(M,\nabla,\circ,e)$ and a symmetric nondegenerate bilinear form~$g$ (often called a metric) on the tangent spaces~$T_pM$ analytically depending on the point $p\in M$. We say that $g$ is compatible with the product $\circ$ if
\begin{gather*}
g(X\circ Y, Z)=g(X,Y\circ Z),
\end{gather*}
for any local vector fields $X,Y,Z $ on $M$.
\end{definition}

\smallskip

A point $p\in M$ of an $N$-dimensional flat F-manifold $(M,\nabla,\circ,e)$ is called \textit{semisimple} if $T_pM$ has a basis of idempotents $\pi_1,\dots,\pi_N$ satisfying $\pi_k \circ \pi_l = \delta_{k,l} \pi_k$. Moreover, locally around such a point one can choose coordinates $u^i$ such that $\frac{\d}{\d u^k}\circ\frac{\d}{\d u^l}=\delta_{k,l}\frac{\d}{\d u^k}$. These coordinates are called {\it canonical coordinates}. In particular, this means that semisimplicity at a point is an open property on $M$. In canonical coordinates we have $e=\sum_i\frac{\d}{\d u^i}$.\\

A flat F-manifold given by a vector potential $(F^1,\ldots,F^N)$ is called {\it homogeneous} if there exists a vector field $E$ of the form
\begin{equation}\label{Euler}
E=\sum_{\alpha=1}^N(\underbrace{(1-q_\alpha)t^\alpha+r^\alpha}_{=:E^\alpha})\frac{\d}{\d t^\alpha},\quad q_\alpha,r^\alpha\in\mbC,
\end{equation}
satisfying $[e,E]=e$ and such that
\begin{gather*}
E^\mu\frac{\d F^\alpha}{\d t^\mu}=(2-q_\alpha)F^\alpha+A^\alpha_\beta t^\beta+B^\alpha,\quad\text{for some $A^\alpha_\beta,B^\alpha\in\mbC$}.
\end{gather*}
Note that this equation can be written more invariantly as $\Lie_E(\circ)=\circ$, where $\Lie_E$ denotes the Lie derivative. The vector field $E$ is called the {\it Euler vector field}.

\smallskip

\begin{definition}
A flat F-manifold $(M,\nabla,\circ,e)$ is called a Frobenius manifold if it is equipped with a metric $\eta$ compatible with the product $\circ$ and such that $\nabla \eta = 0$. The connection $\nabla$ is then the Levi-Civita connection associated to~$\eta$. A Frobenius manifold will be denoted by a tuple $(M,\eta,\circ,e)$. 
\end{definition}

\smallskip

Homogeneous Frobenius manifolds are sometimes called {\it conformal} Frobenius manifolds.\\

In case a flat F-manifold is actually Frobenius, the vector potential $\oF$ can be shown to descend locally from a Frobenius potential $F(t^*)$ as $F^\alpha(t^*) = \eta^{\alpha\mu} \frac{\d F(t^*)}{\d t^\mu}$ and the Frobenius potential $F(t^*)$ satisfies
\begin{align}
A^\mu\frac{\d^3 F}{\d t^\mu\d t^\alpha \d t^\beta} &= \eta_{\alpha\beta}, && 1\leq \alpha,\beta\leq N,\label{eq:axiom1 for Frobenius manifolds}\\
\frac{\d^3 F}{\d t^\alpha \d t^\beta \d t^\mu} \eta^{\mu \nu}\frac{\d^3 F}{\d t^\nu \d t^\gamma \d t^\delta} &= \frac{\d^3 F}{\d t^\alpha \d t^\gamma \d t^\mu} \eta^{\mu \nu}\frac{\d^3 F}{\d t^\nu \d t^\beta \d t^\delta}, && 1\leq \alpha,\beta,\gamma,\delta\leq N.\label{eq:axiom2 for Frobenius manifolds}
\end{align}
In particular, the structure functions $c^\alpha_{\beta\gamma}$ of the algebras $(T_pM,\circ)$ can be written as $c^\alpha_{\beta\gamma} =\eta^{\alpha\mu} \frac{\d^3 F}{\d t^\mu \d t^\beta \d t^\gamma}$, $1\leq \alpha,\beta,\gamma\leq N$.

\smallskip

\subsection{Metric, rotation coefficients and $R$-matrices}\label{subsection:metric for a flat F-manifold}

Consider a flat F-manifold $(M,\nabla,\circ,e)$ around a semisimple point. Let $u^1,\ldots,u^N$ be the canonical coordinates. By $t^1,\ldots,t^N$ we denote the flat coordinates.\\

In general, our flat F-manifold is not Frobenius and so it doesn't possess a metric which is covariantly constant with respect to $\nabla$ and compatible with the product $\circ$. However, there is a natural metric compatible with the product $\circ$, which was first constructed in~\cite{AL13}. Introduce a matrix~$\tPsi$ by
$$
\tPsi:=\left(\frac{\d u^i}{\d t^\alpha}\right).
$$
Note that, in canonical coordinates, the connection $\nabla_z=\nabla+z\circ$ is given by
$$
\nabla+z\circ=d-d\tPsi\cdot\tPsi^{-1}+z dU,
$$
where $U:=\diag(u^1,\ldots,u^N)$.

\smallskip

\begin{proposition}\label{proposition:properties of tGamma}
1. The matrix $d\tPsi\cdot\tPsi^{-1}$ has the form 
$$
d\tPsi\cdot\tPsi^{-1}=\tD+[\tGamma,dU],
$$
where $\tD$ is a diagonal matrix consisting of one-forms and $\tGamma$ is a matrix with vanishing diagonal entries. \\
2. We have $d\tD=0$ and 
\begin{gather}\label{eq:system for tGamma}
d[\tGamma,dU]=\tD\wedge [\tGamma,dU]+[\tGamma,dU]\wedge\tD+[\tGamma,dU]\wedge [\tGamma,dU].
\end{gather}
\end{proposition}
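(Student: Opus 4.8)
The plan is to read off both statements from the flatness of the family $\nabla_z$ for all $z\in\mbC$, expanded as a polynomial in $z$. Writing $\nabla_z=d+A_z$ with $A_z=-\Omega+z\,dU$ and $\Omega:=d\tPsi\cdot\tPsi^{-1}$, the curvature is $F_z=dA_z+A_z\wedge A_z$. Since the entries of $dU$ are the exact forms $du^i$ we have $d(dU)=0$, and because $dU$ is diagonal with $(dU\wedge dU)_{ij}=\delta_{ij}\,du^i\wedge du^i=0$ the $z^2$-term drops out. Hence
\[
F_z=-d\Omega+\Omega\wedge\Omega-z\,(\Omega\wedge dU+dU\wedge\Omega),
\]
and $F_z\equiv 0$ is equivalent to the two identities $d\Omega=\Omega\wedge\Omega$ (coefficient of $z^0$) and $\Omega\wedge dU+dU\wedge\Omega=0$ (coefficient of $z^1$). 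The first is in fact automatic from $\Omega=d\tPsi\cdot\tPsi^{-1}$, being the Maurer--Cartan identity, but I will simply use it as a consequence of flatness.

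For Part 1 I would extract the $z^1$ identity entrywise. A direct computation gives $(\Omega\wedge dU+dU\wedge\Omega)_{ij}=\Omega_{ij}\wedge du^j+du^i\wedge\Omega_{ij}=\Omega_{ij}\wedge(du^j-du^i)$, so for every pair $i\neq j$ one has $\Omega_{ij}\wedge(du^j-du^i)=0$. At a semisimple point the canonical coordinates are honest local coordinates, so the $du^i$ are linearly independent and $du^j-du^i\neq 0$; therefore the one-form $\Omega_{ij}$ must be a multiple of $du^j-du^i$, say $\Omega_{ij}=\tGamma_{ij}\,(du^j-du^i)$ with $\tGamma_{ij}$ analytic. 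Setting $\tGamma_{ii}:=0$ and letting $\tD$ be the diagonal part of $\Omega$, the identity $([\tGamma,dU])_{ij}=\tGamma_{ij}(du^j-du^i)$, whose diagonal vanishes automatically, shows that the off-diagonal part of $\Omega$ is exactly $[\tGamma,dU]$, i.e. $\Omega=\tD+[\tGamma,dU]$, as claimed.

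For Part 2 I would substitute this decomposition into the $z^0$ identity $d\Omega=\Omega\wedge\Omega$ and compare diagonal and off-diagonal blocks. Since $\tD$ is diagonal one has $\tD\wedge\tD=0$, so $d\tD+d[\tGamma,dU]=\tD\wedge[\tGamma,dU]+[\tGamma,dU]\wedge\tD+[\tGamma,dU]\wedge[\tGamma,dU]$. Here $d\tD$ is diagonal while $d[\tGamma,dU]$ is off-diagonal (each entry being $d\tGamma_{ij}\wedge(du^j-du^i)$), and $\tD\wedge[\tGamma,dU]$ and $[\tGamma,dU]\wedge\tD$ manifestly have vanishing diagonal. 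The one point to verify is that $[\tGamma,dU]\wedge[\tGamma,dU]$ also has vanishing diagonal: its $(i,i)$-entry is $\sum_{k}\tGamma_{ik}\tGamma_{ki}\,(du^k-du^i)\wedge(du^i-du^k)$, and each summand vanishes because $(du^k-du^i)\wedge(du^i-du^k)=0$. Thus the entire right-hand side is off-diagonal; matching diagonal parts forces $d\tD=0$, and matching off-diagonal parts yields precisely \eqref{eq:system for tGamma}.

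The only genuinely non-formal step is the passage from $\Omega_{ij}\wedge(du^j-du^i)=0$ to proportionality, which relies on $du^j-du^i$ being a nonzero one-form; this is exactly where semisimplicity enters, guaranteeing that the $u^i$ are genuine coordinates. Everything else is a bookkeeping split of a matrix identity into its diagonal and off-diagonal blocks, together with the elementary cancellations $dU\wedge dU=0$, $\tD\wedge\tD=0$ and $(du^k-du^i)\wedge(du^i-du^k)=0$ that make each block collapse to the stated form.
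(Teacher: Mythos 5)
Your proof is correct and follows essentially the same route as the paper's: flatness of $\nabla_z$, expanded in powers of $z$, yields $\Omega\wedge dU+dU\wedge\Omega=0$ (giving Part 1) and $d\Omega=\Omega\wedge\Omega$ (giving Part 2 after splitting into diagonal and off-diagonal blocks, using that $[\tGamma,dU]$ and $[\tGamma,dU]\wedge[\tGamma,dU]$ have vanishing diagonal). You merely spell out two steps the paper leaves implicit — the entrywise passage from $\Omega_{ij}\wedge(du^j-du^i)=0$ to $\Omega_{ij}=\tGamma_{ij}(du^j-du^i)$, and the cancellation in the diagonal of $[\tGamma,dU]\wedge[\tGamma,dU]$ — both of which are handled correctly.
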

\begin{proof}
Denote $M:=d\tPsi\cdot\tPsi^{-1}$. The flatness of $\nabla_z$ is equivalent to the equation
\begin{gather}\label{eq:equations for M}
-d M+(-M+zdU)\wedge(-M+zdU)=0\Leftrightarrow 
\left\{
\begin{aligned}
&M\wedge dU+dU\wedge M=0,\\
&dM=M\wedge M.
\end{aligned}
\right.
\end{gather}
Part 1 of the proposition follows from the equation $M\wedge dU+dU\wedge M=0$. For Part 2 we write
\begin{multline*}
d\left(\tD+[\tGamma,dU]\right)=\left(\tD+[\tGamma,dU]\right)\wedge\left(\tD+[\tGamma,dU]\right)\Leftrightarrow\\
\Leftrightarrow d\tD+d[\tGamma,dU]=\tD\wedge [\tGamma,dU]+[\tGamma,dU]\wedge\tD+[\tGamma,dU]\wedge [\tGamma,dU],
\end{multline*}
and it remains to note that the diagonal parts of the matrices $[\tGamma,dU]$ and $[\tGamma,dU]\wedge[\tGamma,dU]$ are equal to zero. The proposition is proved.
\end{proof}

\smallskip

Let $\Gamma^i_{jk}$ be the Christoffel symbols of the connection $\nabla$ in canonical coordinates, $\tGamma=(\tgamma^i_j)$ and $\tD=\diag(\tD_1,\ldots,\tD_N)$, where $\tD_i$ are one-forms $\tD_i=\sum_j\tD_{ij}du^j$. Note that $\tD_{ij}=-\Gamma^i_{ji}$. Proposition \ref{proposition:properties of tGamma} together with the fact $\sum_k\frac{\d\tPsi}{\d u^k}=0$ implies that
\begin{align}
&\Gamma^i_{jk}=0,&&k\ne i\ne j\ne k,\label{eq:Gamma in terms of tgamma1}\\
&\Gamma^i_{ij}=\Gamma^i_{ji}=-\Gamma^i_{jj}=\tgamma^i_j,&& i\ne j,\label{eq:Gamma in terms of tgamma2}\\
&\Gamma^i_{ii}=-\sum_{k\ne i}\tgamma^i_k,\label{eq:Gamma in terms of tgamma3}
\end{align}
and that the functions $\tgamma^i_j$ satisfy the following system:
\begin{align}
\frac{\d\tgamma^i_j}{\d u^k}=&-\tgamma^i_j\tgamma^i_k+\tgamma^i_j\tgamma^j_k+\tgamma^i_k\tgamma^k_j, && i\ne k\ne j\ne i,\label{eq:Darboux-Egoroff for tgamma,1}\\
\sum_k\frac{\d\tgamma^i_j}{\d u^k}=&0,&& i\ne j.\label{eq:Darboux-Egoroff for tgamma,2}
\end{align}  

\smallskip

Since $d\tD=0$, there exists a nondegenerate diagonal matrix $H=\diag(H_1,\ldots,H_N)$ satisfying 
$$
dH\cdot H^{-1}=-\tD.
$$
The functions $H_i$ are defined by this property uniquely up to rescalings $H_i\mapsto\lambda_i H_i$, $\lambda_i\in\mbC^*$. Define a metric $g=\sum_i g_i(d u^i)^2$ on our flat F-manifold by $g_i:=H_i^2$. It is clearly compatible with the product $\circ$. If our flat F-manifold is Frobenius, then there exist numbers $\lambda_i\in\mbC^*$ such that the metric $\sum_i \lambda_i g_i(d u^i)^2$ coincides with the metric $\eta$.\\

Define matrices $\Psi$ and $\Gamma$ by
$$
\Psi:=H\tPsi,\qquad\Gamma=(\gamma^i_j):=H\tGamma H^{-1}.
$$
Let us call the coefficients $\gamma^i_j$ the {\it rotation coefficients}. 

\smallskip

\begin{proposition}
We have
\begin{gather}\label{eq:formulas for dPsi and dGammadU}
d\Psi=[\Gamma,dU]\Psi,\qquad d[\Gamma,dU]=[\Gamma,dU]\wedge[\Gamma,dU].
\end{gather}
\end{proposition}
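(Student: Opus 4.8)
The plan is to exploit the fact that passing from the tilded objects $\tPsi,\tGamma$ to the untilded $\Psi,\Gamma$ is nothing but conjugation by the diagonal matrix $H$, whose defining relation $dH\cdot H^{-1}=-\tD$ was arranged precisely so as to absorb the diagonal one-form $\tD$. Both identities should then come out of the two statements of Proposition~\ref{proposition:properties of tGamma} by direct manipulation, with no new geometric input.

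For the first identity I would differentiate $\Psi=H\tPsi$ by the Leibniz rule, obtaining $d\Psi=dH\cdot\tPsi+H\,d\tPsi$, then substitute $d\tPsi=(\tD+[\tGamma,dU])\tPsi$ (Part~1 of Proposition~\ref{proposition:properties of tGamma}) and re-express everything through $\Psi$ by replacing $\tPsi$ with $H^{-1}\Psi$. This leaves three matrix factors acting on $\Psi$: the term $dH\cdot H^{-1}=-\tD$, the term $H\tD H^{-1}=\tD$ (equal to $\tD$ because $H$ and $\tD$ are both diagonal and hence commute), and the term $H[\tGamma,dU]H^{-1}$. The first two cancel. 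The one genuinely substantive point is the conjugation identity $H[\tGamma,dU]H^{-1}=[\Gamma,dU]$: since $dU=\diag(du^1,\dots,du^N)$ is diagonal it commutes with $H$, so conjugating $[\tGamma,dU]=\tGamma\,dU-dU\,\tGamma$ by $H$ conjugates only $\tGamma$ into $\Gamma=H\tGamma H^{-1}$ while fixing $dU$, giving exactly $[\Gamma,dU]$. This produces $d\Psi=[\Gamma,dU]\Psi$.

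For the second identity the cleanest route is to apply $d$ to the first one and invoke $d^2\Psi=0$: expanding $d([\Gamma,dU]\Psi)=0$ with the graded Leibniz rule yields $\bigl(d[\Gamma,dU]-[\Gamma,dU]\wedge[\Gamma,dU]\bigr)\Psi=0$, and since $\Psi=H\tPsi$ is invertible (as the product of the nondegenerate $H$ with the coordinate Jacobian $\tPsi$) the parenthesized factor must vanish. Alternatively one can differentiate the conjugation formula $[\Gamma,dU]=H[\tGamma,dU]H^{-1}$ directly, feeding in Part~2 of Proposition~\ref{proposition:properties of tGamma}, i.e. equation~\eqref{eq:system for tGamma}; there the diagonal cross-terms $\pm\tD\wedge[\Gamma,dU]$ and $\pm[\Gamma,dU]\wedge\tD$ arising from $dH$, $d(H^{-1})$ and from the right-hand side of \eqref{eq:system for tGamma} cancel in pairs, leaving precisely $[\Gamma,dU]\wedge[\Gamma,dU]$.

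The only real care required is bookkeeping with matrix-valued differential forms: the graded signs in the Leibniz rule (a $0$-form matrix such as $H$ carries no sign, a $1$-form matrix carries $-1$), and the fact that $\tD$ commutes only with the diagonal matrices $H,H^{-1},dU$ but not with $[\tGamma,dU]$ in general. I expect the sign bookkeeping in the \emph{direct} differentiation for the second identity to be the main—though entirely routine—obstacle, which is exactly why I would prefer to derive it from $d^2\Psi=0$ together with the invertibility of $\Psi$.
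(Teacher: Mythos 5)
Your proof is correct. For the first identity you argue exactly as the paper does: differentiate $\Psi=H\tPsi$, substitute $d\tPsi=(\tD+[\tGamma,dU])\tPsi$ from Part 1 of Proposition~\ref{proposition:properties of tGamma}, cancel $dH\cdot H^{-1}=-\tD$ against $H\tD H^{-1}=\tD$, and use that $H$ commutes with $dU$ so that $H[\tGamma,dU]H^{-1}=[\Gamma,dU]$. For the second identity your preferred route genuinely differs from the paper's: the paper differentiates the conjugation formula $[\Gamma,dU]=H[\tGamma,dU]H^{-1}$ directly and feeds in equation~\eqref{eq:system for tGamma}, watching the four $\tD$-cross-terms cancel in pairs --- which is precisely the ``alternative'' you sketch. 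Your shortcut via $d^2\Psi=0$, namely expanding $0=d\bigl([\Gamma,dU]\Psi\bigr)=\bigl(d[\Gamma,dU]-[\Gamma,dU]\wedge[\Gamma,dU]\bigr)\Psi$ and invoking invertibility of $\Psi=H\tPsi$ (valid, since $\tPsi$ is the Jacobian of a coordinate change and $H$ is nondegenerate diagonal), is a clean improvement in one respect: it bypasses Part 2 of Proposition~\ref{proposition:properties of tGamma} entirely, needing only the first identity. This reflects the general fact that the Maurer--Cartan equation $dM=M\wedge M$ is automatic for any pure-gauge matrix $M=d\Psi\cdot\Psi^{-1}$; the paper's direct computation, in exchange, makes explicit how the statement descends from the corresponding identity for the tilded data and avoids relying on invertibility. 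Both arguments are sound, and your sign bookkeeping concerns are handled correctly in either route.
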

\begin{proof}
We compute
\begin{align*}
d\Psi\cdot\Psi^{-1}=&d\left(H\tPsi\right)\tPsi^{-1}H^{-1}=dH\cdot H^{-1}+\tD+H[\tGamma,dU]H^{-1}=[\Gamma,dU],\\
d[\Gamma,dU]=&d\left(H[\tGamma,dU]H^{-1}\right)=\\
=&-\tD\wedge[\Gamma,dU]+\left(\tD\wedge[\Gamma,dU]+[\Gamma,dU]\wedge\tD+[\Gamma,dU]\wedge[\Gamma,dU]\right)-[\Gamma,dU]\wedge\tD=\\
=&[\Gamma,dU]\wedge[\Gamma,dU].
\end{align*}
\end{proof}

\smallskip

Note that the matrix equation $d[\Gamma,dU]=[\Gamma,dU]\wedge[\Gamma,dU]$ is equivalent to the system 
\begin{gather*}
\left\{\begin{aligned}
&\frac{\d\gamma^i_j}{\d u^k}=\gamma^i_k\gamma^k_j,\quad k\ne i\ne j\ne k,\\
&\sum_{k=1}^N\frac{\d \gamma^i_j}{\d u^k}=0,
\end{aligned}\right.
\end{gather*}
which is the classical Darboux--Egorov system. In the case of a Frobenius manifold, the coefficients $\gamma^i_j$ are the rotation coefficients of the metric and satisfy the additional symmetry property~$\gamma^i_j=\gamma^j_i$.\\

Note also that we have the system
$$
\frac{\d H_i}{\d u^j}=
\begin{cases}
\gamma^i_j H_j,&\text{if $i\ne j$},\\
-\sum_{k\ne i}\gamma^i_k H_k,&\text{if $i=j$}.
\end{cases}
$$
Introducing a column-vector $\oH:=(H_1,\ldots,H_N)$, this system can be equivalently written as
\begin{gather}\label{eq:equation for oH}
d\oH=[\Gamma,dU]\oH.
\end{gather}

\smallskip 

\begin{proposition}\label{proposition:matrices R_k}
1. There exists a sequence of matrices $R_0=\Id,R_1,R_2,\ldots$ satisfying the equations
\begin{gather}\label{eq:equations for R_k}
d R_{k-1}+R_{k-1}[\Gamma,dU]=[R_k,dU],\quad k\ge 1.
\end{gather}
2. The matrices $R_i$ are determined uniquely up to a transformation
\begin{gather}\label{eq:ambiguity of R_i}
\Id+\sum_{i\ge 1}R_i z^i\mapsto \left(\Id+\sum_{i\ge 1}D_iz^i\right)\left(\Id+\sum_{i\ge 1}R_i z^i\right),
\end{gather}
where $D_i$, $i\ge 1$, are arbitrary diagonal matrices with constant entries.
\end{proposition}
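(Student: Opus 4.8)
The plan is to package the whole sequence into the generating series $R(z)=\Id+\sum_{k\ge 1}R_kz^k$ and observe that the recursion \eqref{eq:equations for R_k} is equivalent to the single equation $\mcE:=dR+R[\Gamma,dU]-z^{-1}[R,dU]=0$, obtained by multiplying the $k$-th relation by $z^{k-1}$ and summing (the choice $R_0=\Id$ guarantees no $z^{-1}$ pole). Setting $\omega_1:=z^{-1}dU-[\Gamma,dU]$ and $\omega_2:=z^{-1}dU$, this reads $dR=R\omega_1-\omega_2R$, a two-sided linear system, and $\mcE=dR-R\omega_1+\omega_2R$. The engine of the proof is that both $\omega_1$ and $\omega_2$ are flat: since $[\Gamma,dU]$ has $(i,j)$-entry proportional to $du^j-du^i$ one checks $[\Gamma,dU]\wedge dU+dU\wedge[\Gamma,dU]=0$, whence $\omega_1\wedge\omega_1=[\Gamma,dU]\wedge[\Gamma,dU]$ and $d\omega_1+\omega_1\wedge\omega_1=-d[\Gamma,dU]+[\Gamma,dU]\wedge[\Gamma,dU]=0$ by \eqref{eq:formulas for dPsi and dGammadU}, while $\omega_2$ is flat trivially. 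From flatness I obtain the Bianchi identity $d\mcE+\mcE\wedge\omega_1+\omega_2\wedge\mcE=0$, valid for the $\mcE$ attached to any series $R(z)$.

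\textbf{Existence (Part 1).} I would build the $R_k$ by induction, carrying the hypothesis that $R_0=\Id,\dots,R_k$ satisfy \eqref{eq:equations for R_k} up to level $k$ and that the diagonal of $R_k$ solves $d(R_k)_{ii}=-(R_k[\Gamma,dU])_{ii}$. The key linear-algebra fact is that $B\mapsto[B,dU]$ has image exactly the matrix-valued one-forms $\omega$ with $\omega\wedge dU+dU\wedge\omega=0$ and vanishing diagonal; so solving \eqref{eq:equations for R_k} for the next matrix amounts to checking these two conditions for $\Omega_{k+1}:=dR_k+R_k[\Gamma,dU]$. The first (wedge) condition is produced for free by the Bianchi identity: the truncated error $\mcE^{(k)}$, obtained by substituting $\sum_{j\le k}R_jz^j$ for $R$, vanishes below order $z^k$ by the inductive hypothesis and has leading coefficient $\Omega_{k+1}$, so the order-$z^{k-1}$ part of the Bianchi identity gives $\Omega_{k+1}\wedge dU+dU\wedge\Omega_{k+1}=0$. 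The second (diagonal) condition $(\Omega_{k+1})_{ii}=0$ is exactly the inductive hypothesis on the diagonal of $R_k$. This determines the off-diagonal entries of $R_{k+1}$; to close the induction I then fix the diagonal of $R_{k+1}$ by integrating (locally) the one-forms $\theta_i:=-(R_{k+1}[\Gamma,dU])_{ii}$, which are closed. Indeed, the second Bianchi-type identity $d\Omega=\Omega\wedge[\Gamma,dU]$ for $\Omega=dB+B[\Gamma,dU]$ (a direct consequence of \eqref{eq:formulas for dPsi and dGammadU}), together with the wedge condition for $\Omega_{k+2}$ (again supplied by the Bianchi identity one order higher, independently of the still-unfixed diagonal of $R_{k+1}$), yields $d(\Omega_{k+2})_{ii}=0$ and hence $d\theta_i=0$. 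The diagonal is then determined up to an additive constant, which is the seed of the ambiguity in Part 2.

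\textbf{Main obstacle.} The delicate point is the bookkeeping of the two solvability conditions and the fact that the diagonal entries of each $R_k$ are only pinned down ``one order late'': they are precisely what makes the diagonal of $\Omega_{k+1}$ vanish at the next step. The feature that makes this manageable is that \emph{both} solvability conditions are governed by the two Bianchi identities, so the entire construction rests on the flatness \eqref{eq:formulas for dPsi and dGammadU} rather than on a direct manipulation of the Darboux--Egorov equations for the $\gamma^i_j$.

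\textbf{Uniqueness (Part 2).} Given two solutions $R,R'$, I set $C:=R'R^{-1}$, a formal series with $C=\Id+O(z)$. Substituting $R'=CR$ into $\mcE(R')=0$ and using $\mcE(R)=0$ gives $\mcE(CR)=\big(dC-z^{-1}[C,dU]\big)R$, so $R'$ solves the recursion if and only if $z\,dC=[C,dU]$; in particular the transformations \eqref{eq:ambiguity of R_i} with constant diagonal $D_i$ (for which $dC=0$ and $[C,dU]=0$) preserve the set of solutions. Conversely, expanding $C=\Id+\sum_{i\ge1}C_iz^i$ and matching powers of $z$ in $z\,dC=[C,dU]$ yields $[C_1,dU]=0$ at order $z$, forcing $C_1$ to be diagonal, and $dC_m=[C_{m+1},dU]$ for $m\ge1$. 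Since $[C_{m+1},dU]$ has vanishing diagonal while $dC_m$ is diagonal-valued whenever $C_m$ is, an immediate induction shows that every $C_i$ is a constant diagonal matrix, i.e. $C=\Id+\sum_{i\ge1}D_iz^i$ as in \eqref{eq:ambiguity of R_i}. This proves that the $R_i$ are determined uniquely up to precisely that transformation.
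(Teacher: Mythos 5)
Your proof is correct. The existence half has the same recursive skeleton as the paper's: off-diagonal entries of $R_{k+1}$ are forced by $[R_{k+1},dU]=dR_k+R_k[\Gamma,dU]$, the diagonal is obtained by locally integrating closed one-forms, and the two solvability conditions (the wedge condition and the vanishing diagonal of $dR_k+R_k[\Gamma,dU]$) are exactly the ones the paper checks. The difference is organizational: you derive both conditions from a single generating-series Bianchi identity $d\mcE+\mcE\wedge\omega_1+\omega_2\wedge\mcE=0$ for the flat connection forms $\omega_1=z^{-1}dU-[\Gamma,dU]$, $\omega_2=z^{-1}dU$ (note your series equation $\mcE=0$ is precisely the paper's \eqref{eq:equation for R(z)}), whereas the paper performs the equivalent order-by-order computation \eqref{eq:computation with R_n} directly from $d[\Gamma,dU]=[\Gamma,dU]\wedge[\Gamma,dU]$; your framing makes the role of flatness \eqref{eq:formulas for dPsi and dGammadU} conceptually transparent, at the cost of some truncation bookkeeping. (One micro-step you compress, as does the paper: the wedge condition forces $(\Omega_{k+2})^i_j=f^i_j(du^j-du^i)$, and then $(\Omega_{k+2}\wedge[\Gamma,dU])^i_i=\sum_{j\ne i}f^i_j\gamma^j_i(du^j-du^i)\wedge(du^i-du^j)=0$ term by term — worth a line.) Where you genuinely depart from the paper is Part 2: the paper argues that every solution arises from the recursive procedure and identifies the integration-constant ambiguity with \eqref{eq:ambiguity of R_i} by an induction whose details are left to the reader, while you reduce uniqueness to the gauge equation $z\,dC=[C,dU]$ for $C=R'R^{-1}$ and show by matching powers of $z$ that $C$ must be a constant diagonal series. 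Your computation $\mcE(CR)=(dC-z^{-1}[C,dU])R$ is correct, and this route is more self-contained and simultaneously handles both directions (constant diagonal $C$ trivially solves the gauge equation, so such transformations preserve solutions); it is arguably a cleaner proof of Part 2 than the one in the paper.
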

\begin{proof}
1. The matrices $R_i$ can be recursively constructed in the following way. Suppose that the matrices $R_0=\Id,R_1,\ldots,R_m$, $m\ge 0$, are already constructed. We define the nondiagonal entries of~$R_{m+1}$ by 
\begin{gather}\label{eq:non-diagonal part of R_m}
(R_{m+1})^i_j:=(R_m)^i_i\gamma^i_j-\frac{\d(R_m)^i_j}{\d u^i},\quad i\ne j,
\end{gather}
and then determine the diagonal entries by the equation
\begin{gather}\label{eq:diagonal part of R_m}
d(R_{m+1})^i_i=-\sum_{j\ne i}(R_{m+1})^i_j\gamma^j_i(du^i-du^j),
\end{gather}
where the integration constants can be arbitrary. Let us check that this procedure is well defined and gives a solution of equations~\eqref{eq:equations for R_k}.\\

Suppose that $n\ge 0$ steps of our procedure are well defined and produce matrices $R_0=\Id,R_1,\ldots,R_n$ satisfying equations~\eqref{eq:equations for R_k} with $k\le n$. Let us first check that
$$
d[R_n,dU]=[R_n,dU]\wedge[\Gamma,dU].
$$
For $n=0$ this is trivial and for $n\ge 1$ we compute
\begin{align}
d[R_n,dU]=&d\left(dR_{n-1}+R_{n-1}[\Gamma,dU]\right)=dR_{n-1}\wedge[\Gamma,dU]+R_{n-1}d[\Gamma,dU]=\label{eq:computation with R_n}\\
=&dR_{n-1}\wedge[\Gamma,dU]+R_{n-1}[\Gamma,dU]\wedge[\Gamma,dU]=[R_n,dU]\wedge[\Gamma,dU].\notag
\end{align}

\smallskip

Then note that equation~\eqref{eq:diagonal part of R_m} with $m=n-1$ implies that the diagonal part of the matrix $dR_n+R_n[\Gamma,dU]$ is equal to zero. Moreover, we have
\begin{gather*}
(dR_n+R_n[\Gamma,dU])\wedge dU+dU\wedge(dR_n+R_n[\Gamma,dU])=d[R_n,dU]-[R_n,dU]\wedge[\Gamma,dU]=0,
\end{gather*}
and, therefore, $dR_n+R_n[\Gamma,dU]=[R,dU]$ for some matrix $R$ whose nondiagonal entries are given exactly by formula~\eqref{eq:non-diagonal part of R_m}, $R^i_j:=(R_n)^i_i\gamma^i_j-\frac{\d(R_n)^i_j}{\d u^i}$, $i\ne j$. In order to check that the diagonal part of $R_{n+1}$ can be defined by equation~\eqref{eq:diagonal part of R_m} with $m=n$, we have to check that
\begin{gather*}
d(R[\Gamma,dU])^\diag=0,
\end{gather*}
where $(\cdot)^\diag$ denotes the diagonal part of a matrix. We compute
$$
d(R[\Gamma,dU])^\diag=\left((dR+R[\Gamma,dU])\wedge[\Gamma,dU]\right)^\diag,
$$
and it remains to check that the expression
$$
(dR+R[\Gamma,dU])\wedge dU+dU\wedge(dR+R[\Gamma,dU])=d[R,dU]-[R,dU]\wedge[\Gamma,dU]
$$
is equal to zero, which is true by the same computation as in~\eqref{eq:computation with R_n}. This completes the proof of Part 1 of the proposition.\\

2. Clearly, transformations~\eqref{eq:ambiguity of R_i} preserve the space of solutions of equations~\eqref{eq:equations for R_k}. Suppose that a sequence of matrices $R_0=\Id,R_1,\ldots$ satisfies equations~\eqref{eq:equations for R_k}. For a fixed~$k$, equation~\eqref{eq:equations for R_k} determines the nondiagonal entries of the matrix $R_k$ in terms of the matrix~$R_{k-1}$ and this gives formula~\eqref{eq:non-diagonal part of R_m}. Since $[R_k,dU]^\diag=0$, equation~\eqref{eq:equations for R_k} determines the differential of the diagonal part of $R_{k-1}$ in terms of the nondiagonal part of $R_{k-1}$. This gives formula~\eqref{eq:diagonal part of R_m}. So all solutions of equations~\eqref{eq:equations for R_k} are given by the procedure described in the proof of the first part of the proposition. At each step of this procedure, the integration constants for the diagonal entries of $R_i$ are totally ambiguous. It is easy to check by induction that, fixing some choice of integration constants, any other choice can be obtained by a transformation of the form~\eqref{eq:ambiguity of R_i}. This completes the proof of the proposition.
\end{proof}

\smallskip

Note that equation~\eqref{eq:equations for R_k} for~$k=1$ implies that $R_1-\Gamma$ is a diagonal matrix.\\

Note also that, introducing the generating series $R(z):=\Id+\sum_{i\ge i}R_i z^i$, the system of equations~\eqref{eq:equations for R_k} can be equivalently written as
\begin{gather}\label{eq:equation for R(z)}
z(dR(z)+R(z)[\Gamma,dU])=[R(z),dU].
\end{gather}

\smallskip

In order to explain the meaning of relations \eqref{eq:equations for R_k}, let us consider the system $\nabla_{z^{-1}}\txi=0$ for $1$-forms $\txi=\sum_{i=1}^N\txi_i(u^*,z) du^i$ depending on~$z$ that are covariantly constant with respect to the connection $\nabla_{z^{-1}}=\nabla+\frac{1}{z}\circ$. In canonical coordinates, this system reads
\begin{align*}
\frac{\d\txi_i}{\d u^j}=&\Gamma^i_{ji}\txi_i+\Gamma^j_{ji}\txi_j,\quad j\ne i,\\
\frac{\d\txi_i}{\d u^i}=&\sum_{l\ne i}\left(-\Gamma^i_{il}\txi_i+\Gamma^l_{ii}\txi_l\right)+\frac{1}{z}\txi_i.
\end{align*}
Let us rewrite the above system for the unknown functions $\xi_i$ defined as $\txi_i=H_i\xi_i$. We obtain the system
\begin{align*}
\frac{\d\xi_i}{\d u^j}=&\gamma^j_i\xi_j,\quad j\ne i\\
\frac{\d\xi_i}{\d u^i}=&-\sum_{l\ne i}\gamma^l_i\xi_l+\frac{1}{z}\txi_i,
\end{align*}
which, introducing a row-vector $\oxi:=(\xi_1,\ldots,\xi_N)$, is equivalent to the equation
\begin{gather}\label{eq:equation for xi}
d\oxi=\oxi[dU,\Gamma]+\frac{1}{z}\oxi dU.
\end{gather}
For $\gamma^i_j=0$ (trivial flat F-manifold), a fundamental matrix of solutions of this equation is $\Xi^0=e^{U/z}$. It is straightforward to check that, looking for a fundamental matrix $\Xi$ of solutions of equation~\eqref{eq:equation for xi} in the form
$$
\Xi=e^{U/z}\left({\rm Id}+\sum_{k\ge 1}R_k z^k\right),
$$ 
one obtains system~\eqref{eq:equations for R_k}. We will see that the formal series in the brackets can be interpreted as an element of a group acting on the space of flat F-manifolds.

\smallskip

\subsection{Flat F-manifolds and Riemannian F-manifolds}

In the previous section, we saw that, around a semisimple point, a flat F-manifold possesses a metric. In this section, we show that there is a correspondence between semisimple (at any point) flat F-manifolds and Riemannian F-manifolds.\\

Let us first recall the local description of flat F-manifolds around a semisimple point in canonical coordinates.

\smallskip

\begin{theorem}[\cite{AL15}]\label{theorem:local description of semisimple flat F-manifolds}
Let $\tgamma^i_j$, $i\ne j$, be a solution of equations~\eqref{eq:Darboux-Egoroff for tgamma,1},~\eqref{eq:Darboux-Egoroff for tgamma,2}. Then the connection $\nabla$, given by equations~\eqref{eq:Gamma in terms of tgamma1}--\eqref{eq:Gamma in terms of tgamma3}, the structure constants, given by $c^i_{jk}:=\delta^i_j\delta^i_k$, and the vector field $e:=\sum_i\frac{\d}{\d u^i}$ define a semisimple flat F-manifold structure, where the coordinates~$u^i$ are canonical. Moreover, any flat F-manifold around a semisimple point can be obtained in this way.
\end{theorem}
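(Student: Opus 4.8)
The plan is to treat the two assertions separately. For the first one (that the prescribed data define a flat F-manifold) I would take $M$ to be the domain of the coordinates $u^i$ and verify directly the three defining properties, namely that $\nabla_z=\nabla+z\circ$ is torsionless and flat for every $z\in\mbC$ and that $\nabla e=0$. The algebra with structure constants $c^i_{jk}=\delta^i_j\delta^i_k$ is the pointwise semisimple algebra $\mbC^N$, so it is commutative, associative and has $e=\sum_i\frac{\d}{\d u^i}$ as unit, and the $u^i$ are canonical by construction. Torsionlessness of $\nabla_z$ for all $z$ is then immediate from the symmetry of the Christoffel symbols~\eqref{eq:Gamma in terms of tgamma1}--\eqref{eq:Gamma in terms of tgamma3} in their lower indices together with $c^i_{jk}=c^i_{kj}$. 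The condition $\nabla e=0$ amounts to $\sum_i\Gamma^m_{ik}=0$ for all $m,k$, which I would check by a short case analysis splitting $m=k$ from $m\ne k$: in both cases the terms cancel by the specific shape of~\eqref{eq:Gamma in terms of tgamma1}--\eqref{eq:Gamma in terms of tgamma3} (for $m=k$ the diagonal symbol $\Gamma^i_{ii}=-\sum_{l\ne i}\tgamma^i_l$ absorbing the off-diagonal ones, for $m\ne k$ the symbols $\Gamma^m_{mk}=\tgamma^m_k$ and $\Gamma^m_{kk}=-\tgamma^m_k$ cancelling), independently of the Darboux--Egorov equations.

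The core of the first assertion is the flatness of $\nabla_z$ for all $z$. Following the computation preceding the theorem, I would write the matrix of connection one-forms of $\nabla_z$ in the frame $\frac{\d}{\d u^i}$ as $-M+z\,dU$ with $M:=\tD+[\tGamma,dU]$, where $\tGamma=(\tgamma^i_j)$ and $\tD=\diag(\tD_1,\dots,\tD_N)$ is defined by $\tD_i:=\sum_{j\ne i}\tgamma^i_j(du^i-du^j)$. Vanishing of the curvature is then equivalent, exactly as in~\eqref{eq:equations for M}, to $M\wedge dU+dU\wedge M=0$ together with $dM=M\wedge M$. The first equation holds identically for any $M$ of this block shape, by the same diagonal/off-diagonal bookkeeping used in the proof of Proposition~\ref{proposition:properties of tGamma}. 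For the second, I would separate diagonal and off-diagonal parts: since the diagonal entries of both $[\tGamma,dU]$ and $[\tGamma,dU]\wedge[\tGamma,dU]$ vanish, the diagonal part reads $d\tD=0$ while the off-diagonal part is precisely~\eqref{eq:system for tGamma}. Expanding the $(i,j)$-entry of the latter in the basis two-forms, I expect the coefficients of $du^k\wedge du^j$ and $du^k\wedge du^i$ (with $k$ distinct from $i,j$) to reproduce~\eqref{eq:Darboux-Egoroff for tgamma,1}, and the coefficient of $du^i\wedge du^j$ to reproduce the trace condition~\eqref{eq:Darboux-Egoroff for tgamma,2} once~\eqref{eq:Darboux-Egoroff for tgamma,1} is used; the diagonal identity $d\tD=0$ should then follow from~\eqref{eq:Darboux-Egoroff for tgamma,1}--\eqref{eq:Darboux-Egoroff for tgamma,2} as well. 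This matching of the cubic expressions in the $\tgamma^i_j$ against the correct basis two-forms is the only delicate step, and I expect it to be the main obstacle; everything else is formal.

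For the converse I would start from an arbitrary flat F-manifold around a semisimple point, use semisimplicity to pass to canonical coordinates $u^i$, so that automatically $c^i_{jk}=\delta^i_j\delta^i_k$ and $e=\sum_i\frac{\d}{\d u^i}$. Setting $\tPsi=\left(\frac{\d u^i}{\d t^\alpha}\right)$ and applying Proposition~\ref{proposition:properties of tGamma}, one obtains $d\tPsi\cdot\tPsi^{-1}=\tD+[\tGamma,dU]$; reading off its entries recovers the Christoffel symbols~\eqref{eq:Gamma in terms of tgamma1}--\eqref{eq:Gamma in terms of tgamma3} in terms of the off-diagonal entries $\tgamma^i_j$ of $\tGamma$, and Part~2 of that proposition together with $\sum_k\frac{\d\tPsi}{\d u^k}=0$ shows that these $\tgamma^i_j$ solve~\eqref{eq:Darboux-Egoroff for tgamma,1}--\eqref{eq:Darboux-Egoroff for tgamma,2}. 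Hence the given structure coincides with the one produced by the construction, which proves the claim. Finally, I would emphasize that none of the metric data is needed here: the solvability of $dH\cdot H^{-1}=-\tD$ and the ensuing functions $H_i$ enter only in the later construction of $g$, whereas the bare flat F-manifold structure depends solely on $\nabla$, $\circ$ and $e$, so the two directions above are complete on their own.
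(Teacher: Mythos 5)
Your proposal is correct and takes essentially the same route as the paper: the converse direction is exactly the paper's appeal to the computations of the previous section (Proposition~\ref{proposition:properties of tGamma} together with $\sum_k\frac{\d\tPsi}{\d u^k}=0$), while for the direct implication you run that same machinery in reverse, actually carrying out the ``direct computation'' that the paper delegates to \cite{AL15}. The delicate step you flag does work exactly as you predict: in the $(i,j)$-entry of \eqref{eq:system for tGamma} the coefficients of $du^k\wedge du^j$ and $du^k\wedge du^i$ (for $k$ distinct from $i,j$) each yield \eqref{eq:Darboux-Egoroff for tgamma,1}, the coefficient of $du^i\wedge du^j$ reduces to \eqref{eq:Darboux-Egoroff for tgamma,2} once \eqref{eq:Darboux-Egoroff for tgamma,1} is substituted, and both the $\omega_{ki}$- and $\omega_{kj}$-components of $d\tD$ vanish as a consequence of the two systems, so $dM=M\wedge M$ holds and $\nabla_z$ is flat for all $z$.
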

\begin{proof}
The fact that any flat F-manifold around a semisimple point can be obtained in this way was proved in the previous section. The converse statement is a direct computation (see \cite{AL15} for details).
\end{proof}

\smallskip

In the previous section, we associated to any flat F-manifold around a semisimple point a metric $g=\sum_i g_i(d u^i)^2$ satisfying the condition 
\begin{gather}\label{eq:defining property for gi}
\frac{1}{g_i}\frac{\d g_i}{\d u^j}=2\Gamma^i_{ij},\quad 1\le i,j\le N. 
\end{gather}
This condition determines a metric uniquely up to rescalings $g_i\mapsto\lambda_ig_i$, $\lambda_i\in\mbC^*$. Thus, we actually get a family of metrics parameterized by a vector $(\lambda_1,\ldots,\lambda_N)\in(\mbC^*)^N$. This family can be described in a more invariant way, without going to the canonical coordinates.

\smallskip

\begin{proposition}\label{proposition:two descriptions of a metric}
The family of metrics $g$ on a flat F-manifold $M$ around a semisimple point given, in canonical coordinates, by~\eqref{eq:defining property for gi} coincides with the family of metrics $g$ compatible with the product $\circ$ and satisfying the condition
\begin{gather}\label{eq:invariant definition of a metric}
\nabla_kg_{ij}=\frac{1}{2}\sum_l\left(c^l_{ik}(d\theta)_{lj}+c^l_{jk}(d\theta)_{li}\right),\quad 1\le i,j,k\le N,
\end{gather}
where $X,Y,Z$ are local vector fields on $M$ and $\theta$ is the counit, $\theta(\cdot)=g(e,\cdot)$.
\end{proposition}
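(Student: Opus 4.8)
The plan is to prove that the two families coincide by passing to canonical coordinates $u^1,\dots,u^N$ and showing that, for a metric compatible with $\circ$, the invariant equation \eqref{eq:invariant definition of a metric} is equivalent to the system \eqref{eq:defining property for gi}. The first observation is that compatibility already pins down the shape of $g$: since in canonical coordinates $\frac{\d}{\d u^i}\circ\frac{\d}{\d u^j}=\delta_{ij}\frac{\d}{\d u^i}$, the identity $g(X\circ Y,Z)=g(X,Y\circ Z)$ applied to the coordinate fields forces $g_{ij}=\delta_{ij}g_{ii}$. Thus every compatible metric is automatically diagonal, $g=\sum_i g_i(du^i)^2$ with $g_i:=g_{ii}$, and conversely every metric of that form is compatible. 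Hence both families consist of diagonal metrics, and it suffices to compare the two differential conditions on the functions $g_i$.

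Next I would compute the three ingredients entering \eqref{eq:invariant definition of a metric} in canonical coordinates. Since $e=\sum_i\frac{\d}{\d u^i}$ and $g$ is diagonal, the counit is $\theta=\sum_i g_i\,du^i$, so that $(d\theta)_{ij}=\frac{\d g_j}{\d u^i}-\frac{\d g_i}{\d u^j}$. The structure constants are $c^l_{ik}=\delta^l_i\delta^l_k$, so each sum over $l$ in the right-hand side collapses to a single term that survives only when the relevant pair of indices coincides. Finally, expanding the covariant derivative of the diagonal metric gives $\nabla_k g_{ij}=\delta_{ij}\frac{\d g_i}{\d u^k}-\Gamma^j_{ki}g_j-\Gamma^i_{kj}g_i$, which I would rewrite using the explicit Christoffel symbols \eqref{eq:Gamma in terms of tgamma1}--\eqref{eq:Gamma in terms of tgamma3} in terms of the coefficients $\tgamma^i_j$.

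With these in hand the verification becomes a short case analysis according to the coincidence pattern of $(i,j,k)$. When $i=j$ the right-hand side vanishes, because the surviving $(d\theta)$ factors are diagonal hence zero, and the equation reduces to $\frac{\d g_i}{\d u^k}=2\Gamma^i_{ik}g_i$, i.e. precisely \eqref{eq:defining property for gi}; taking $k=i$ and $k\ne i$ recovers the diagonal and off-diagonal instances of that system. For $i\ne j$ the left-hand side equals a combination of $\tgamma^i_j g_i$ and $\tgamma^j_i g_j$, and I would show that \eqref{eq:invariant definition of a metric} produces no new constraint: substituting $\frac{\d g_i}{\d u^k}=2\tgamma^i_k g_i$ (the content of \eqref{eq:defining property for gi}, using $\Gamma^i_{ik}=\tgamma^i_k$) into $(d\theta)_{ij}$ is designed to make the two sides agree. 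This would close the equivalence in both directions.

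The main obstacle is the off-diagonal bookkeeping in this last step: one must check that the coefficients on the two sides match exactly after feeding in \eqref{eq:defining property for gi}, which hinges on the precise identifications $\Gamma^i_{ij}=\Gamma^i_{ji}=-\Gamma^i_{jj}=\tgamma^i_j$ and on the antisymmetry $(d\theta)_{ij}=-(d\theta)_{ji}$ correctly pairing the $k=i$ and $k=j$ instances. It is exactly here that the numerical factors in \eqref{eq:invariant definition of a metric} are tested, so I would treat these cases with the most care.
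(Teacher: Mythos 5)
Your strategy is the same as the paper's own proof: work in canonical coordinates, observe that compatibility with $\circ$ forces $g$ to be diagonal, note $\theta=\sum_i g_i\,du^i$, and split into cases — for $i=j$ the right-hand side of \eqref{eq:invariant definition of a metric} vanishes and the equation reduces to $\frac{\d g_i}{\d u^k}=2\Gamma^i_{ik}g_i$, i.e.\ \eqref{eq:defining property for gi}, while the off-diagonal components are checked directly (the paper dismisses this as ``a simple direct computation''). Your diagonality argument, the formula $(d\theta)_{ij}=\frac{\d g_j}{\d u^i}-\frac{\d g_i}{\d u^j}$, the expansion $\nabla_k g_{ij}=\delta_{ij}\frac{\d g_i}{\d u^k}-\Gamma^j_{ki}g_j-\Gamma^i_{kj}g_i$, and the $i=j$ reduction are all correct, and the backward implication of the proposition is complete on the basis of the $i=j$ case alone.

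However, the step you deferred is precisely where the argument does not close \emph{with the coefficients as printed}, and the forward implication needs it. In canonical coordinates the right-hand side of \eqref{eq:invariant definition of a metric} collapses to $\frac12\delta_{ik}(d\theta)_{ij}+\frac14\delta_{jk}(d\theta)_{ji}$. For $i\ne j$, $k\notin\{i,j\}$ both sides vanish, and for $k=i$ everything matches: using \eqref{eq:Gamma in terms of tgamma2} and \eqref{eq:defining property for gi} one gets $\nabla_i g_{ij}=-\Gamma^j_{ii}g_j-\Gamma^i_{ij}g_i=\tgamma^j_i g_j-\tgamma^i_j g_i=\tfrac12(d\theta)_{ij}$. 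But for $k=j$ one finds $\nabla_j g_{ij}=-\Gamma^j_{ji}g_j-\Gamma^i_{jj}g_i=\tgamma^i_j g_i-\tgamma^j_i g_j$, whereas the printed right-hand side gives $\tfrac14(d\theta)_{ji}=\tfrac12\left(\tgamma^i_j g_i-\tgamma^j_i g_j\right)$ — off by a factor of $2$. The failure is visible without computation: $\nabla_k g_{ij}$ is symmetric under $i\leftrightarrow j$ while the printed right-hand side is not, and demanding the identity as printed would force $(d\theta)_{ij}=0$, i.e.\ the Egorov-type condition $\tgamma^i_j g_i=\tgamma^j_i g_j$, which fails for general (non-Frobenius) flat F-manifolds. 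The resolution is that the inner $\frac12$ in \eqref{eq:invariant definition of a metric} is a typo for $1$ — the intended invariant condition is $\nabla_Z g(X,Y)=\frac12\left(d\theta(Z\circ X,Y)+d\theta(Z\circ Y,X)\right)$, which is what the stray ``$X,Y,Z$'' in the statement hints at. With the symmetric coefficients your substitution works in all subcases, with the $k=i$ and $k=j$ instances paired by the antisymmetry of $d\theta$ exactly as you anticipated, and your proof then coincides with the paper's. So: right method, but the ``most care'' you promised at the off-diagonal step is genuinely needed — carried out literally it refutes the displayed formula and identifies the correct one, rather than verifying it.
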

\begin{proof}
Suppose that a metric $g=\sum_i g_i(du^i)^2$ satisfies condition~\eqref{eq:defining property for gi}. Note that $\theta=\sum_i g_i du^i$. Then the proof of property~\eqref{eq:invariant definition of a metric} becomes a simple direct computation based on the expression of the Christoffel symbols $\Gamma^i_{jk}$ in terms of the functions~$g_i$.\\

Suppose now that a metric $g=\frac{1}{2}\sum_{i,j} g_{ij}du^idu^j$ is compatible with the product $\circ$ and satisfies condition~\eqref{eq:invariant definition of a metric}. The compatibility with the product~$\circ$ immediately implies that $g_{ij}=0$ for $i\ne j$. For $i=j$, the right-hand side of~\eqref{eq:invariant definition of a metric} is zero, while the left-hand side is $\frac{\d g_{ii}}{\d u^k}-2\Gamma^i_{ik}g_{ii}$, which gives equation~\eqref{eq:defining property for gi} for $g_{ii}=g_i$.
\end{proof}

\smallskip

\begin{definition}\cite{HM99}
An F-manifold $(M,\circ,e)$ is the datum of an analytic manifold $M$, a commutative associative algebra structure $(T_pM,\circ)$ on each tangent space analytically depending on the point $p\in M$, and a unit vector field $e$ such that the condition
\begin{gather}\label{eq:HM condition,invariant}
\Lie_{X\circ Y}(\circ)=X\circ\Lie_Y(\circ)+Y\circ\Lie_X(\circ)
\end{gather} 
is satisfied, for any local vector fields $X,Y$ on $M$. The above condition is called the Hertling--Manin condition.
\end{definition}

\smallskip

If $t^1,\ldots,t^N$ are some coordinates on $M$, then condition~\eqref{eq:HM condition,invariant} is equivalent to the following condition for the structure constants $c^i_{jk}$ of the multiplication~$\circ$:
\begin{gather*}
\sum_{s=1}^N\left(\frac{\d c^k_{jl}}{\d t^s}c^s_{im}+\frac{\d c^s_{im}}{\d t^j}c^k_{sl}-\frac{\d c^k_{im}}{\d t^s}c^s_{jl}-\frac{\d c^s_{jl}}{\d t^i}c^k_{sm}-\frac{\d c^s_{jl}}{\d t^m}c^k_{si}+\frac{\d c^s_{mi}}{\d t^l}c^k_{js}\right)=0,\quad 1\le i,j,k,l,m\le N.
\end{gather*}

\smallskip

In the remaining part of this section, we will focus on the semisimple case describing the relation between flat F-manifolds and a special class of F-manifolds called Riemannian F-manifolds.

\smallskip

\begin{definition}
A semisimple (pseudo-)Riemannian F-manifold $(M,g,\circ,e)$ is the datum of a semisimple F-manifold $(M,\circ,e)$ equipped with a metric~$g$ compatible with the product $\circ$ and such that
\begin{gather}\label{eq:condition for Riemannian F-manifold 2}
R(Y,Z)(X\circ W)+R(X,Y)(Z\circ W)+R(Z,X)(Y\circ W)=0,
\end{gather}
where $R$ is the curvature operator for the Levi-Civita connection $\tnabla$ associated to $g$ and $X,Y,W,Z$ are local vector fields on $M$. If also the condition 
\begin{gather*}
\Lie_e g=0
\end{gather*}
is satisfied, then the manifold is called a Riemannian F-manifold with Killing unit vector field.
\end{definition}

\smallskip

\begin{remark}
Using semisimplicity, it is easy to check that condition \eqref{eq:condition for Riemannian F-manifold 2} can be replaced by the equivalent condition
\begin{gather}\label{eq:condition for Riemannian F-manifold}
Z\circ R(W,Y)(X)+W\circ R(Y,Z)(X)+Y\circ R(Z,W)(X)=0.
\end{gather}
\end{remark} 

\smallskip

\begin{remark} The notion of a Riemannian F-manifold appears also in \cite{LPR09} and \cite{DS11}. In both cases, the definition involves some extra conditions. In \cite{LPR09}, the connection and the product are required to satisfy the condition $\tnabla_l c^i_{jk}=\nabla_j c^i_{lk}$, while in \cite{DS11} the counit $\theta$ is required to be closed.
\end{remark} 

\smallskip

We have the following theorem. 

\smallskip

\begin{theorem}
1. Consider a flat F-manifold $(M,\nabla,\circ,e)$ around a semisimple point and a metric~$g$ compatible with the product $\circ$ and satisfying~\eqref{eq:invariant definition of a metric}. Then the tuple $(M,g,\circ,e)$ is a Riemannian F-manifold with Killing unit vector field.\\
2. Consider a Riemannian F-manifold $(M,g,\circ,e)$ with Killing unit vector field around a semisimple point. Then there exists a unique torsionless connection satisfying~\eqref{eq:invariant definition of a metric}. With this connection the tuple $(M,\nabla,\circ,e)$ is a flat F-manifold.
\end{theorem}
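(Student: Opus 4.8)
The plan is to prove both directions by working in canonical coordinates $u^1,\dots,u^N$, where $\frac{\d}{\d u^i}\circ\frac{\d}{\d u^j}=\delta_{ij}\frac{\d}{\d u^i}$, the unit is $e=\sum_i\frac{\d}{\d u^i}$, and any metric compatible with $\circ$ is diagonal, $g=\sum_i g_i(du^i)^2$. Writing $g_i=H_i^2$ and $\tgamma^i_j:=\frac{1}{H_i}\frac{\d H_i}{\d u^j}$ for $i\ne j$, I will match the two defining conditions of a Riemannian F-manifold with Killing unit field to the two Darboux--Egorov equations: the classical formula for the Levi--Civita curvature $R$ of a diagonal metric has no components with four distinct indices, and its ``three-index'' components $R_{ijik}$ ($i,j,k$ distinct) vanish precisely when \eqref{eq:Darboux-Egoroff for tgamma,1} holds, while the Killing condition $\Lie_e g=0$ will account for \eqref{eq:Darboux-Egoroff for tgamma,2}.

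For Part 1 the underlying manifold is already an F-manifold: in canonical coordinates the structure constants $c^i_{jk}=\delta^i_j\delta^i_k$ are constant, so the Hertling--Manin condition \eqref{eq:HM condition,invariant} holds trivially. By Proposition \ref{proposition:two descriptions of a metric} the hypothesis \eqref{eq:invariant definition of a metric} is equivalent to \eqref{eq:defining property for gi}, so the functions $\tgamma^i_j$ coincide with the rotation data of the flat F-manifold and satisfy both \eqref{eq:Darboux-Egoroff for tgamma,1} and \eqref{eq:Darboux-Egoroff for tgamma,2}. I would then check two things. First, $\Lie_e g=0$, from the direct computation $(\Lie_e g)_{ii}=\sum_k\frac{\d g_i}{\d u^k}=2g_i\sum_k\Gamma^i_{ik}=0$, where the last equality uses \eqref{eq:Gamma in terms of tgamma2}--\eqref{eq:Gamma in terms of tgamma3}. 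Second, condition \eqref{eq:condition for Riemannian F-manifold}: since $\frac{\d}{\d u^z}\circ$ projects onto the $z$-th component, the left-hand side becomes the cyclic sum $R^z{}_{xwy}\frac{\d}{\d u^z}+R^w{}_{xyz}\frac{\d}{\d u^w}+R^y{}_{xzw}\frac{\d}{\d u^y}$; organizing by the coincidence pattern of $z,w,y$, one sees that terms cancel pairwise by the antisymmetries of $R$ whenever two of these indices agree, while for $z,w,y$ distinct the only surviving contributions are three-index components, which vanish by \eqref{eq:Darboux-Egoroff for tgamma,1}. Hence $(M,g,\circ,e)$ is a Riemannian F-manifold with Killing unit field.

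For Part 2 I would treat uniqueness and existence separately. Uniqueness is coordinate-free: if $\nabla$ and $\nabla'$ are torsionless and both satisfy \eqref{eq:invariant definition of a metric}, then they have the same non-metricity, so the difference tensor $A^l_{ki}=\Gamma^l_{ki}-(\Gamma')^l_{ki}$ is symmetric in $(k,i)$ and satisfies $g_{lj}A^l_{ki}+g_{il}A^l_{kj}=0$; the standard cyclic argument then forces $A=0$. For existence I would run Part 1 in reverse. In canonical coordinates set $H_i=\sqrt{g_i}$ and $\tgamma^i_j=\frac{1}{H_i}\frac{\d H_i}{\d u^j}$; the curvature condition \eqref{eq:condition for Riemannian F-manifold} forces the three-index Levi--Civita curvature to vanish, giving \eqref{eq:Darboux-Egoroff for tgamma,1}, while $\Lie_e g=0$ gives $\sum_k\frac{\d H_i}{\d u^k}=0$ and hence \eqref{eq:Darboux-Egoroff for tgamma,2}. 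By Theorem \ref{theorem:local description of semisimple flat F-manifolds} the $\tgamma^i_j$ then define a flat F-manifold whose connection $\nabla$, given by \eqref{eq:Gamma in terms of tgamma1}--\eqref{eq:Gamma in terms of tgamma3}, is torsionless, satisfies $\nabla e=0$ (because $\sum_k\Gamma^i_{jk}=0$), and satisfies \eqref{eq:defining property for gi} by construction, hence \eqref{eq:invariant definition of a metric} by Proposition \ref{proposition:two descriptions of a metric}. By the uniqueness just proved, this is the connection sought, and $(M,\nabla,\circ,e)$ is a flat F-manifold.

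The main obstacle is the index bookkeeping in condition \eqref{eq:condition for Riemannian F-manifold}, used in one direction for Part 1 and in reverse for existence in Part 2: I must show that, once the idempotent product is inserted, the cyclically symmetrized curvature collapses exactly onto the three-index components and is completely insensitive to the two-index ``Gaussian'' curvatures $R_{ijij}$. The latter is essential, since the metric of a generic flat F-manifold is not flat, so $R_{ijij}$ must remain unconstrained; seeing these terms drop out requires careful use of the antisymmetries $R_{abcd}=-R_{bacd}=-R_{abdc}$ together with the first Bianchi identity, after which only \eqref{eq:Darboux-Egoroff for tgamma,1} survives.
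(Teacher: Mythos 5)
Your proposal is correct and follows essentially the same route as the paper's proof: canonical coordinates, reduction of the cyclic curvature condition~\eqref{eq:condition for Riemannian F-manifold} to the vanishing of the three-index components $R^k_{iij}$ (the Darboux--Tsarev system~\eqref{eq:Darboux-Tsarev system}) and its equivalence with~\eqref{eq:Darboux-Egoroff for tgamma,1}, the Killing condition accounting for~\eqref{eq:Darboux-Egoroff for tgamma,2}, and Theorem~\ref{theorem:local description of semisimple flat F-manifolds} for the converse direction. Your uniqueness argument via the difference tensor is just a repackaging of the paper's Koszul-type cyclic-sum computation (and your anticipated appeal to the first Bianchi identity is unnecessary---the antisymmetries of $R$ alone make the two-index components drop out), so the two proofs are essentially identical.
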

\begin{proof}
Consider a Riemannian F-manifold $(M,g,\circ,e)$ around a semisimple point. Let $u^1,\ldots,u^N$ be the canonical coordinates. The compatibility with the product~$\circ$ implies that the metric $g$ has the form $g=\sum_i g_i(du^i)^2$ and, therefore, the Christoffel symbols $\tGamma^i_{jk}$ of the connection $\tnabla$ are given by
\begin{align*}
\tGamma^i_{jk}&=0,&& i\ne j\ne k\ne i,\\
\tGamma^i_{ij}&=\frac{1}{2g_i}\frac{\d g_i}{\d u^j},&& 1\le i,j\le N,\\
\tGamma^j_{ii}&=-\frac{1}{2g_j}\frac{\d g_i}{\d u^j},&& i\ne j.
\end{align*}

\smallskip

Equation \eqref{eq:condition for Riemannian F-manifold} reads
\begin{gather}\label{eq:Riemann tensor condition,1}
R^m_{lij}\delta^m_{k}+R^m_{lki}\delta^m_{j}+R^m_{ljk}\delta^m_{i}=0,\quad 1\le i,j,k,l,m\le N.
\end{gather}
Note that, because of the skewsymmetry of the Riemann tensor with respect to the second and the third lower indices, the condition above is satisfied if some of the indices $i,j,k$ coincide. Note also that condition~\eqref{eq:Riemann tensor condition,1} is trivially satisfied when $m$ is distinct from $i,j,k$. Therefore, condition~\eqref{eq:Riemann tensor condition,1} is nontrivial only if the indices $i,j,k$ are distinct and $m$ coincides with one of them, which gives the system
$$
R^k_{lij}=0,\quad k\ne i\ne j\ne k.
$$

\smallskip

The vanishing of the Christoffel symbols $\tGamma^i_{jk}$ with distinct indices $i,j,k$ implies that $R^k_{lij}$ vanishes if all the indices $i,j,k,l$ are distinct. Therefore, we come to the system
\begin{gather*}
R^k_{kij}=0,\qquad R^k_{iij}=0,\qquad k\ne i\ne j\ne k.
\end{gather*}
The skewsymmetry of the tensor $R_{ijkl}=g_i R^i_{jkl}$ with the respect to the first two lower indices implies that $R^k_{kij}=0$. Thus, condition~\eqref{eq:condition for Riemannian F-manifold} is equivalent to the system
$$
R^k_{iij}=0,\qquad k\ne i\ne j\ne k.
$$
Since
\begin{gather*}
R^k_{iij}=-\frac{\d\tGamma^k_{ii}}{\d u^j}+\tGamma^k_{ii}\tGamma^i_{ij}-\tGamma^k_{jj}\tGamma^j_{ii}-\tGamma^k_{kj}\tGamma^k_{ii},\quad i\ne j\ne k\ne i,
\end{gather*}
we conclude that the datum of a Riemannian F-manifold in canonical coordinates is equivalent to a diagonal metric $g=\sum_i g_i(du^i)^2$ satisfying the Darboux-Tsarev system
\begin{gather}\label{eq:Darboux-Tsarev system}
-\frac{\d\tGamma^k_{ii}}{\d u^j}+\tGamma^k_{ii}\tGamma^i_{ij}-\tGamma^k_{jj}\tGamma^j_{ii}-\tGamma^k_{kj}\tGamma^k_{ii}=0,\quad i\ne j\ne k\ne i.
\end{gather}

\smallskip

Let us prove Part 1 of the theorem. Consider a flat F-manifold $(M,\nabla,\circ,e)$ and a metric~$g$ compatible with the product $\circ$ and satisfying~\eqref{eq:condition for Riemannian F-manifold}. By Proposition~\ref{proposition:two descriptions of a metric}, in canonical coordinates we have $g=\sum_i g_i(du^i)^2$, where the $g_i$'s satisfy~\eqref{eq:defining property for gi}. Note that
\begin{align*}
&\tGamma^i_{ij}=\Gamma^i_{ij}=\tgamma^i_j,\qquad\tGamma^j_{ii}=-\frac{g_i}{g_j}\tGamma^i_{ij}=-\frac{g_i}{g_j}\tgamma^i_j,&& i\ne j,\\
&\tGamma^i_{ii}=\Gamma^i_{ii}=-\sum_{j\ne i}\tgamma^i_j.
\end{align*}
Using these formulas, it is now easy to check that system~\eqref{eq:Darboux-Tsarev system} follows from system~\eqref{eq:Darboux-Egoroff for tgamma,1}. Since $\sum_j\frac{1}{g_i}\frac{\d g_i}{\d u^j}=\sum_j\Gamma^i_{ij}=0$, the unit vector field is Killing.\\

Let us prove Part 2 of the theorem. Consider a Riemannian F-manifold $(M,g,\circ,e)$ with Killing unit vector field. Let $\tgamma^i_j:=\tGamma^i_{ij}$, $i\ne j$. It is easy to check that the Darboux--Tsarev system~\eqref{eq:Darboux-Tsarev system} implies that the functions~$\tgamma^i_j$ satisfy system~\eqref{eq:Darboux-Egoroff for tgamma,1}. The fact that the unit vector field $e$ is Killing implies that 
$$
\sum_k\frac{\d g_i}{\d u^k}=0\Rightarrow\sum_k\frac{\d\tGamma^i_{ij}}{\d u^k}=0.
$$
Therefore, equation~\eqref{eq:Darboux-Egoroff for tgamma,2} is also satisfied and, thus, by Theorem~\ref{theorem:local description of semisimple flat F-manifolds}, the connection~$\nabla$ given by equations~\eqref{eq:Gamma in terms of tgamma1}--\eqref{eq:Gamma in terms of tgamma3} defines a flat F-manifold $(M,\nabla,\circ,e)$. The fact that $\nabla$ satisfies condition~\eqref{eq:invariant definition of a metric} or, equivalently, condition~\eqref{eq:defining property for gi} is obvious.\\

It remains to check that condition~\eqref{eq:invariant definition of a metric} determines a connection~$\nabla$ uniquely. Denote the tensor on the right-hand side of this equation by~$\Delta_{kij}$. Let us write three equations~\eqref{eq:invariant definition of a metric} corresponding to the cyclic permutations of the indices $i,j,k$:
\begin{align*}
&\frac{\d g_{ij}}{\d u^k}-\Gamma_{kij}-\Gamma_{kji}=\Delta_{kij},\\
&\frac{\d g_{ki}}{\d u^j}-\Gamma_{jki}-\Gamma_{jik}=\Delta_{jki},\\
&\frac{\d g_{jk}}{\d u^i}-\Gamma_{ijk}-\Gamma_{ikj}=\Delta_{ijk},
\end{align*} 
where $\Gamma_{kij}:=\sum_s\Gamma^s_{ki}g_{sj}$. Summing the first and the third equations and subtracting the second one, we get
$$
\Gamma_{kij}=\frac{1}{2}\left(\frac{\d g_{ij}}{\d u^k}-\frac{\d g_{ik}}{\d u^j}+\frac{\d g_{jk}}{\d u^i}-\Delta_{kij}+\Delta_{jik}-\Delta_{ijk}\right),
$$
which completes the proof of the theorem.
\end{proof}

\smallskip

\subsection{Homogeneous flat F-manifolds}

In this section, we will prove that the metric, constructed in Section~\ref{subsection:metric for a flat F-manifold}, in the case of homogeneous flat F-manifolds satisfies an additional homogeneity property. We will also show that an additional homogeneity property allows to fix uniquely the choice of $R$-matrices from Proposition~\ref{proposition:matrices R_k}.\\

Consider a flat F-manifold $(M,\nabla,\circ,e)$ around a semisimple point. Let $t^1,\ldots,t^N$ be flat coordinates. Suppose that our flat F-manifold is
 homogeneous with an Euler vector field
$$
E=\sum_{\alpha=1}^N((1-q_\alpha) t^\alpha+r^\alpha)\frac{\d}{\d t^\alpha},\quad q_\alpha,r^\alpha\in\mbC.
$$
In canonical coordinates, we have $E=\sum_i(u^i+a^i)\frac{\d}{\d u^i}$ for some $a^i\in\mbC$. After an appropriate shift of the coordinates, we can assume that
$$
E=\sum_i u^i\frac{\d}{\d u^i}.
$$
Consider the matrices $\tPsi,\tGamma=(\tgamma^i_j),\Gamma=(\gamma^i_j)$, the diagonal matrix $\tD$ (consisting of one-forms) and the metric $g=\sum_i g_i(du^i)^2$, $g_i=H_i^2$, constructed in Section~\ref{subsection:metric for a flat F-manifold}.

\smallskip

\begin{proposition}[\cite{Lor14}]\label{proposition:homogeneity of H and gamma}
1. The diagonal matrix $i_E\tD$ is constant, $i_E\tD=-\diag(\delta_1,\ldots,\delta_N)$, $\delta_i\in\mbC$.\\
2. We have 
\begin{gather}\label{eq:homogeneity for H and Gamma}
\sum_j u^j\frac{\d H_i}{\d u^j}=\delta_i H_i,\qquad \sum_k u^k\frac{\d\gamma^i_j}{\d u^k}=(\delta_i-\delta_j-1)\gamma^i_j.
\end{gather}
\end{proposition}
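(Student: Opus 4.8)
The plan is to transport the homogeneity of the flat F-manifold from flat to canonical coordinates through the matrix $\tPsi$, and then read off the weights of $\tD$, the $H_i$ and the $\gamma^i_j$ from the two blocks of the connection form $M:=d\tPsi\cdot\tPsi^{-1}=\tD+[\tGamma,dU]$.

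First I would compute the action of $E$ on the entries of $\tPsi$. Since $E=\sum_i u^i\frac{\partial}{\partial u^i}$ in canonical coordinates, one has $E(u^i)=u^i$, that is $\sum_\gamma E^\gamma\tPsi^i_\gamma=u^i$ where $E^\gamma=(1-q_\gamma)t^\gamma+r^\gamma$. Differentiating this identity with respect to $t^\alpha$ and using $\frac{\partial E^\gamma}{\partial t^\alpha}=(1-q_\alpha)\delta^\gamma_\alpha$ yields $E(\tPsi^i_\alpha)=q_\alpha\tPsi^i_\alpha$, i.e. $E(\tPsi)=\tPsi Q$ with $Q:=\diag(q_1,\ldots,q_N)$ constant. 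From $E(\tPsi^{-1})=-\tPsi^{-1}E(\tPsi)\tPsi^{-1}=-Q\tPsi^{-1}$, together with the fact that the Lie derivative commutes with $d$ (so $\Lie_E(d\tPsi)=d(\tPsi Q)=d\tPsi\cdot Q$), a one-line Leibniz computation gives
\[
\Lie_E M=(d\tPsi\cdot Q)\,\tPsi^{-1}-d\tPsi\cdot(Q\tPsi^{-1})=0.
\]
Since $\tD$ is diagonal while $[\tGamma,dU]$ has vanishing diagonal, and $\Lie_E$ preserves this splitting, I conclude separately that $\Lie_E\tD=0$ and $\Lie_E[\tGamma,dU]=0$.

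Part 1 then follows from Cartan's formula: because $d\tD=0$ by Proposition~\ref{proposition:properties of tGamma}, we have $\Lie_E\tD=d(i_E\tD)$, so $\Lie_E\tD=0$ forces $i_E\tD$ to be a constant diagonal matrix, which I name $-\diag(\delta_1,\ldots,\delta_N)$. For the first identity in Part 2, I would contract the defining relation $dH\cdot H^{-1}=-\tD$ with $E$; its $i$-th diagonal entry reads $\frac{E(H_i)}{H_i}=i_E(d\log H_i)=-i_E\tD_i=\delta_i$, i.e. $\sum_j u^j\frac{\partial H_i}{\partial u^j}=\delta_i H_i$. For the second identity I extract the weight of $\tgamma^i_j$ from $\Lie_E[\tGamma,dU]=0$: using $\Lie_E(du^j)=du^j$ and that the $(i,j)$ entry of $[\tGamma,dU]$ equals $\tgamma^i_j(du^j-du^i)$, the vanishing of its Lie derivative gives $(E(\tgamma^i_j)+\tgamma^i_j)(du^j-du^i)=0$, hence $E(\tgamma^i_j)=-\tgamma^i_j$. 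Finally, writing $\gamma^i_j=H_i\tgamma^i_j H_j^{-1}$ and combining the three weights by the Leibniz rule produces $E(\gamma^i_j)=(\delta_i-\delta_j-1)\gamma^i_j$, as claimed.

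The only genuinely non-formal step is establishing $\Lie_E M=0$, namely that homogeneity amounts to $E$ scaling the columns of $\tPsi$ by the \emph{constants} $q_\alpha$; once this is secured, everything else is bookkeeping with the diagonal/off-diagonal decomposition of $M$ and with Cartan's formula. I expect no serious obstacle beyond keeping the flat-coordinate and canonical-coordinate index ranges straight in the computation of $E(\tPsi)$.
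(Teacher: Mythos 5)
Your proof is correct, and it reaches the crucial identity by a genuinely different route than the paper. The paper derives the equation $dB+[B,M]=0$, with $M=d\tPsi\cdot\tPsi^{-1}$ and $B=i_EM$, from the flatness of the extended family of connections $\tnabla^\lambda$ on the product of the manifold with $\mbC^*$, a structural result imported from \cite{BB19}; since $dB+i_E(dM)=\Lie_EM$ by Cartan's formula, this is exactly your statement $\Lie_EM=0$, but obtained as a byproduct of an external theorem. You instead prove $\Lie_EM=0$ directly from the definition of homogeneity via the self-contained computation $E(\tPsi)=\tPsi Q$ (which, note, silently uses the symmetry of mixed partials, $\frac{\d\tPsi^i_\gamma}{\d t^\alpha}=\frac{\d\tPsi^i_\alpha}{\d t^\gamma}$, and the normalization $E=\sum_iu^i\frac{\d}{\d u^i}$ fixed in the paper just before the proposition --- both legitimate). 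Downstream the two arguments also diverge in bookkeeping: for the weight of the rotation coefficients the paper writes $B=-\Delta+[\tGamma,U]$, substitutes $\tGamma=H^{-1}\Gamma H$ into $dB+[B,M]=0$ and contracts with $i_E$ to obtain the single matrix identity $\sum_iu^i\frac{\d\Gamma}{\d u^i}=[\Delta,\Gamma]-\Gamma$, whereas you read off $E(\tgamma^i_j)=-\tgamma^i_j$ entrywise from $\Lie_E[\tGamma,dU]=0$ (valid because $du^j-du^i\ne0$ for $i\ne j$, the $du^i$ being independent) and then combine with $E(H_i)=\delta_iH_i$ by the Leibniz rule applied to $\gamma^i_j=H_i\tgamma^i_jH_j^{-1}$. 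What your approach buys is independence from the $\tnabla^\lambda$ machinery and a transparent view of exactly where homogeneity enters; what the paper's buys is the companion equation $[U,M]+[B,dU]=0$ for free and consistency with the \cite{BB19} framework used elsewhere in the text.
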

\begin{proof}
Define a diagonal matrix $Q$ by $Q:=\diag(q_1,\ldots,q_N)$. In \cite{BB19} the authors introduced a family of connections $\tnabla^\lambda$ on $M\times\mbC^*$, depending on a complex parameter $\lambda$, by
\begin{align}
\tnabla^\lambda_X Y:=&\nabla_X Y+z X\circ Y,\notag\\
\tnabla^\lambda_{\frac{\d}{\d z}}Y:=&\frac{\d Y}{\d z}+E\circ Y+\frac{\lambda-Q}{z}Y,\label{eq:connection tnabla,2}\\
\tnabla^\lambda_X\frac{\d}{\d z}=\tnabla^\lambda_{\frac{\d}{\d z}}\frac{\d}{\d z}:=&0,\notag
\end{align}
where $z$ is the coordinate on $\mbC^*$ and $X,Y$ are local vector fields on $M\times\mbC^*$ having zero component along $\mbC^*$. The authors proved that the connection $\tnabla^\lambda$ is flat for any value of $\lambda$. Let us show how to derive the proposition from the flatness of $\tnabla^\lambda$.\\

Note that equation~\eqref{eq:connection tnabla,2} can be rewritten as
$$
\tnabla^\lambda_{\frac{\d}{\d z}}Y=\frac{\d Y}{\d z}+E\circ Y+\frac{1}{z}\nabla_Y E+\frac{\lambda-1}{z}Y.
$$
In canonical coordinates, we have
$$
\nabla_Y E=\sum_i Y^i\frac{\d}{\d u^i}+\left(\tPsi\sum_iY^i\frac{\d(\tPsi^{-1})}{\d u^i}\right)E,
$$
and it is easy to check that
$$
\left(\tPsi\sum_iY^i\frac{\d(\tPsi^{-1})}{\d u^i}\right)E=\left(\tPsi\sum_iu^i\frac{\d(\tPsi^{-1})}{\d u^i}\right)Y.
$$
For an $N\times N$ matrix $A=(A^i_j)$ denote by $\Ext(A)$ the $(N+1)\times(N+1)$ matrix defined by
$$
\Ext(A)^i_j:=
\begin{cases}
A^i_j,&\text{if $1\le i,j\le N$},\\
0,&\text{otherwise}.
\end{cases}
$$
We see that in canonical coordinates the connection $\tnabla^\lambda$ is given by
$$
\tnabla^\lambda=d+\Ext\left(-d\tPsi\cdot\tPsi^{-1}+zdU+\left(U+\frac{\lambda}{z}-\frac{1}{z}\left(\sum_iu^i\frac{\d\tPsi}{\d u^i}\right)\tPsi^{-1}\right)dz\right).
$$

\smallskip

The flatness of the connection $\tnabla^\lambda$ for any value of $\lambda$ is equivalent to equation~\eqref{eq:equations for M} together with the equations
\begin{align}
dB+[B,M]=&0,\label{eq:B and M}\\
[U,M]+[B,dU]=&0,\notag
\end{align}
where $M:=d\tPsi\cdot\tPsi^{-1}$ and $B:=i_E M=\left(\sum_iu^i\frac{\d\tPsi}{\d u^i}\right)\tPsi^{-1}$. Equation~\eqref{eq:B and M} implies that
$$
dB+i_E(M\wedge M)=0\stackrel{\text{by eq.~\eqref{eq:equations for M}}}{\Rightarrow}dB+i_E(dM)=0.
$$
The diagonal part of $dM$ is $d\tD=0$. Therefore, the diagonal part of $dB$, which is $d(i_E\tD)$, vanishes. This proves the first part of the proposition.\\

Denote $\Delta:=-i_E\tD$. Since $H$ is defined by $dH\cdot H^{-1}=-\tD$, we get $\sum_i u^i\frac{\d H}{\d u^i}=\Delta H$, which is exactly the first equation in~\eqref{eq:homogeneity for H and Gamma}. We see that $B=-\Delta+[\tGamma,U]$. Therefore, equation~\eqref{eq:B and M} implies that
$$
d[\tGamma,U]+[-\Delta+[\tGamma,U],\tD+[\tGamma,dU]]=0.
$$
Substituting $\tGamma=H^{-1}\Gamma H$, we get
$$
d[\Gamma,U]=[[\Delta,\Gamma],dU]+[[\Gamma,dU],[\Gamma,U]].
$$
Applying the contraction $i_E$ to both sides of this equation, we get
$$
\left[\sum_iu^i\frac{\d\Gamma}{\d u^i},U\right]+[\Gamma,U]=[[\Delta,\Gamma],U]\Rightarrow\sum_iu^i\frac{\d\Gamma}{\d u^i}=[\Delta,\Gamma]-\Gamma,
$$
which is exactly the second equation in~\eqref{eq:homogeneity for H and Gamma}. This completes the proof of the proposition.
\end{proof}

\smallskip

\begin{remark}
If our homogeneous flat F-manifold is a conformal Frobenius manifold of conformal dimension $\delta$, meaning that $\Lie_E\eta=(2-\delta)\eta$, then $\delta_i=-\frac{\delta}{2}$, $1\le i\le N$.
\end{remark}

\smallskip

\begin{proposition}\label{proposition:unique R-matrix in the homogeneous case}
There exists a unique sequence of matrices $R_0=\Id,R_1,R_2,\ldots$ satisfying the differential equation~\eqref{eq:equations for R_k} and the homogeneity condition
\begin{gather}\label{eq:homogeneity for Rk}
\sum_s u^s\frac{\d(R_k)^i_j}{\d u^s}=(\delta_i-\delta_j-k)(R_k)^i_j,\quad k\ge 0.
\end{gather}
\end{proposition}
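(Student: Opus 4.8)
The plan is to prove existence and uniqueness simultaneously by induction on $k$, following the recursive construction of the $R_k$ given in the proof of Proposition~\ref{proposition:matrices R_k}. First I would introduce the Euler operator $\mcE:=\sum_s u^s\frac{\d}{\d u^s}$, which acts on a function $f$ by $\mcE f=i_E(df)$, so that the homogeneity condition~\eqref{eq:homogeneity for Rk} simply says that each entry $(R_k)^i_j$ is $\mcE$-homogeneous of degree $\delta_i-\delta_j-k$. Two elementary facts drive the argument: the commutation relation $[\mcE,\frac{\d}{\d u^i}]=-\frac{\d}{\d u^i}$, which shows that $\frac{\d}{\d u^i}$ lowers the homogeneity degree by one, and the second identity in~\eqref{eq:homogeneity for H and Gamma} from Proposition~\ref{proposition:homogeneity of H and gamma}, which says that $\gamma^i_j$ is $\mcE$-homogeneous of degree $\delta_i-\delta_j-1$.

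The base case $k=0$ is immediate, since $R_0=\Id$ has constant entries and the required degree $\delta_i-\delta_i-0=0$ on the diagonal is consistent with $\mcE(1)=0$. For the inductive step, suppose $R_0,\ldots,R_m$ are already determined and satisfy~\eqref{eq:homogeneity for Rk}. The nondiagonal entries of $R_{m+1}$ are then forced by formula~\eqref{eq:non-diagonal part of R_m}, and a degree count settles them with no freedom: by the inductive hypothesis $(R_m)^i_i$ has degree $-m$ and $(R_m)^i_j$ has degree $\delta_i-\delta_j-m$, so both $(R_m)^i_i\gamma^i_j$ and $\frac{\d(R_m)^i_j}{\d u^i}$ have degree $\delta_i-\delta_j-(m+1)$, whence so does $(R_{m+1})^i_j$. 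Thus the nondiagonal part of $R_{m+1}$ is automatically homogeneous of the correct degree.

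The main point, and the only place where integration constants enter, is the diagonal part, defined by integrating the one-form $\omega$ on the right-hand side of~\eqref{eq:diagonal part of R_m}. I would first check, again by a degree count, that $\Lie_E\omega=-(m+1)\omega$: the coefficient $(R_{m+1})^i_j\gamma^j_i$ has degree $-(m+2)$ while each $du^i-du^j$ contributes $+1$ to the Lie-derivative degree. Since $\omega$ is closed --- this was established in the proof of Proposition~\ref{proposition:matrices R_k} --- Cartan's formula gives $d(i_E\omega)=\Lie_E\omega=-(m+1)\,d\phi$ for any local primitive $\phi$ with $d\phi=\omega$, so that $\mcE\phi=i_E\omega=-(m+1)\phi+c$ for some constant $c$ on a connected neighbourhood. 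Replacing $\phi$ by $\phi-\frac{c}{m+1}$ --- which is possible and produces a unique value precisely because $m+1\ge 1\ne 0$ --- yields the unique primitive satisfying $\mcE\phi=-(m+1)\phi$, i.e. the unique homogeneous choice of the diagonal entry $(R_{m+1})^i_i$. This simultaneously gives existence and uniqueness and closes the induction.

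The hard part is exactly this diagonal step: the nondiagonal entries carry no freedom and take care of themselves by bookkeeping of degrees, whereas the diagonal entries are where the integration constants of Proposition~\ref{proposition:matrices R_k} live. The crucial structural fact making the argument work is that the relevant weight $-(m+1)$ is nonzero for every $k=m+1\ge 1$, which is what allows the single integration constant to be pinned down uniquely; were the weight zero, the equation $\mcE\phi=\lambda\phi+c$ would not determine the constant. I would also take care to verify the closedness and the exact homogeneity degree of $\omega$, and to work on a simply connected neighbourhood of the semisimple point, so that closed one-forms are exact and \emph{constant} genuinely means constant.
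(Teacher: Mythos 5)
Your proof is correct and takes essentially the same route as the paper: induction on the recursive construction from Proposition~\ref{proposition:matrices R_k}, with the homogeneity of the nondiagonal entries following automatically from the degree count via Proposition~\ref{proposition:homogeneity of H and gamma}, and the diagonal integration constant pinned down uniquely by the nonvanishing of the weight $-(m+1)$. The only cosmetic difference is that where you normalize the primitive abstractly via Cartan's formula, the paper applies the Euler identity directly to the relation $\sum_s u^s\frac{\d}{\d u^s}\frac{\d(R_{m+1})^i_i}{\d u^j}=-(m+2)\frac{\d(R_{m+1})^i_i}{\d u^j}$ and writes the unique homogeneous diagonal entry in closed form, $(R_{m+1})^i_i=-\frac{1}{m+1}\sum_{j\ne i}(u^j-u^i)(R_{m+1})^i_j\gamma^j_i$.
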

\begin{proof}
As we know from the proof of Proposition~\ref{proposition:matrices R_k}, without requiring property~\eqref{eq:homogeneity for Rk} a sequence of matrices $R_k$ is recursively determined in the following way. Suppose that matrices $R_0,R_1,\ldots,R_m$, $m\ge 0$, are already constructed. Then the nondiagonal entries of $R_{m+1}$ are given by
\begin{gather}\label{eq:nondiagonal entries of R}
(R_{m+1})^i_j=(R_m)^i_i\gamma^i_j-\frac{\d(R_m)^i_j}{\d u^i},\quad i\ne j.
\end{gather}
The diagonal entries of $R_{m+1}$ are determined by the equation
\begin{gather}\label{eq:diagonal entries of R}
\frac{\d(R_{m+1})^i_i}{\d u^j}=
\begin{cases}
(R_{m+1})^i_j\gamma^j_i,&\text{if $j\ne i$},\\
-\sum_{k\ne i}(R_{m+1})^i_k\gamma^k_i,&\text{if $j=i$},
\end{cases}\quad 1\le i,j\le N,
\end{gather}
uniquely up to constants, which can be arbitrary. Suppose that the matrix $R_m$ satisfies the homogeneity condition~\eqref{eq:homogeneity for Rk}. From formula~\eqref{eq:nondiagonal entries of R} and Proposition~\ref{proposition:homogeneity of H and gamma} it follows that the nondiagonal entries of $R_{m+1}$ also satisfy the homogeneity condition~\eqref{eq:homogeneity for Rk}. Formula~\eqref{eq:diagonal entries of R} implies that $\sum_s u^s\frac{\d}{\d u^s}\frac{\d(R_{m+1})^i_i}{\d u^j}=-(m+2)\frac{\d(R_{m+1})^i_i}{\d u^j}$ for any $i$ and $j$. Thus, unique functions~$(R_{m+1})^i_i$ satisfying equations~\eqref{eq:homogeneity for Rk} and~\eqref{eq:diagonal entries of R} are given by
$$
(R_{m+1})^i_i=-\frac{1}{m+1}\sum_{j\ne i}(u^j-u^i)(R_{m+1})^i_j\gamma^j_i.
$$
The proposition is proved.
\end{proof}

\smallskip

\begin{remark}
In~\cite{AL13}, the authors introduced the notion of a bi-flat F-manifold, which is the datum of two different flat F-manifold structures $(\nabla,\circ,e)$ and $(\nabla^{*},*,E)$ on the same manifold~$M$ intertwined by the following conditions:
\begin{enumerate}
\item $[e,E]=e$, $\Lie_E(\circ)=\circ$;
\item $X*Y=(E\circ)^{-1}\,X\circ Y$ (or $X\circ Y=(e*)^{-1}X*Y$) for all local vector fields $X,Y$ on~$M$; 
\item $(d_{\nabla}-d_{\nabla^{*}})(X\,\circ)=0$ for all local vector fields $X$ on $M$, where $d_{\nabla}$ is the exterior covariant derivative.  
\end{enumerate}
Using the fact that the connection $\nabla^*$ can be expressed in terms of the other data, in~\cite{AL17}, the authors proved that in the semisimple case the structure of a bi-flat F-manifold is equivalent to the datum of a flat F-manifold, equipped with an invertible vector field $E$ satisfying condition~(1) from the list above and also the property $\nabla\nabla E=0$ (see~\cite{KMS18} for the discussion of the regular case). In other words, on the complement of the discriminant (i.e., at the points where $E$ is invertible) any regular homogeneous flat F-manifold is equipped with a bi-flat structure.
\end{remark}

\smallskip

\begin{remark}
The structure $(\nabla,\circ,e,E)$ where $E$ is a linear Euler vector field can be also characterized in terms of a flat meromorphic connection on the bundle $\pi^*TM$ on $\mathbb{P}\times M$ called Saito structure without metric (see \cite{Sab98} for details). In particular, assuming the existence of flat coordinates diagonalizing the matrix $\nabla E$, it turns out that Saito structures without metric are equivalent to homogeneous flat structures.
\end{remark}

\smallskip

\subsubsection{Example: extended $2$-spin theory}\label{subsubsection:example1}

An example of a flat F-manifold is given by the extended $r$-spin theory, constructed in~\cite{JKV01} and then studied in details in~\cite{BCT19,Bur18,BR18}. Let us consider the case $r=2$ and compute the metric, the rotation coefficients, and the $R$-matrices.\\

The vector potential of the flat F-manifold of the extended $2$-spin theory is given by (see, e.g.,~\cite[Section~4.3]{BR18})
$$
F^1(t^1,t^2)=\frac{(t^1)^2}{2},\qquad F^2(t^1,t^2)=t^1t^2-\frac{(t^2)^3}{12}.
$$
The unit is $\frac{\d}{\d t^1}$. The flat F-manifold is homogeneous with the Euler vector field
$$
E=t^1\frac{\d}{\d t^1}+\frac{1}{2}t^2\frac{\d}{\d t^2}.
$$
The set of non-semisimple points coincides with the $t^1$-axis. Consider our flat F-manifold around a point $(0,\tau)$, $\tau\ne 0$. The canonical coordinates satisfy the system of differential equations
$$
\frac{\d u^i}{\d t^\alpha}\frac{\d u^i}{\d t^\beta}=c_{\alpha\beta}^\gamma\frac{\d u^i}{\d t^\gamma},\quad 1\le\alpha,\beta,i\le 2,
$$
from which we find
$$
\left\{
\begin{aligned}
u^1=&t^1,\\
u^2=&t^1-\frac{(t^2)^2}{2},
\end{aligned}
\right.\qquad
\left\{
\begin{aligned}
t^1=&u^1,\\
t^2=&\sqrt{2(u^1-u^2)},
\end{aligned}
\right.\qquad
\tPsi=\begin{pmatrix}
1 & 0\\
1 & -t^2
\end{pmatrix}.
$$
We then compute
$$
d\tPsi\cdot\tPsi^{-1}=\begin{pmatrix}
0 & 0\\
-\frac{dt^2}{t^2} & \frac{dt^2}{t^2}
\end{pmatrix},
\qquad
dU=\begin{pmatrix}
dt^1 & 0\\
0 & dt^1-t^2dt^2
\end{pmatrix},
$$
and find
$$
\tGamma=\begin{pmatrix}
0 & 0\\
-\frac{1}{(t^2)^2} & 0
\end{pmatrix},
\qquad
H=\begin{pmatrix}
\lambda_1 & 0\\
0 & \frac{\lambda_2}{t^2}
\end{pmatrix},\quad\lambda_1,\lambda_2\in\mbC^*.
$$
Let us choose the parameters $\lambda_1,\lambda_2$ such that $H|_{(t^1,t^2)=(0,\tau)}=\Id$. We obtain
$$
H=
\begin{pmatrix}
1 & 0\\
0 & \frac{\tau}{t^2}
\end{pmatrix},\qquad
\Gamma=
\begin{pmatrix}
0 & 0\\
-\frac{\tau}{(t^2)^3} & 0
\end{pmatrix},\qquad
\Psi=
\begin{pmatrix}
1 & 0\\
\frac{\tau}{t^2} & -\tau
\end{pmatrix}.
$$
We see that 
$$
\delta_1=0,\qquad\delta_2=-\frac{1}{2}.
$$
It is easy to check that a unique sequence of $R$-matrices given by Proposition~\ref{proposition:unique R-matrix in the homogeneous case} is the following:
$$
R_m=\begin{pmatrix}
0 & 0\\
(-1)^m\tau\frac{(2m-1)!!}{(t^2)^{2m+1}} & 0
\end{pmatrix},\quad m\ge 1.
$$
In particular, we have
$$
\Psi\big|_{(t^1,t^2)=(0,\tau)}=
\begin{pmatrix}
1 & 0\\
1 & -\tau
\end{pmatrix},\qquad
R_m\big|_{(t^1,t^2)=(0,\tau)}=\begin{pmatrix}
0 & 0\\
(-1)^m\frac{(2m-1)!!}{\tau^{2m}} & 0
\end{pmatrix},\quad m\ge 1.
$$
\\


\section{Givental-type theory for flat F-manifolds}\label{section:Givental theory for flat F-manifolds}

In this section, we recall the notion of a calibrated flat F-manifold, introduced in~\cite{BB19}, and interpret such an object as a certain infinite dimensional submanifold in the space $\mbC^N[[z,z^{-1}]]$. This generalizes a similar result of A. Givental~\cite{Giv04} about Frobenius manifolds and allows to introduce a group action on the space of calibrated flat F-manifolds. We then prove that this group action, combined with linear changes of coordinates, is transitive on the space of semisimple calibrated flat F-manifolds.\\

Let us fix a point $\ot_\orig=(t^1_\orig,\ldots,t^N_\orig)\in\mbC^N$ and denote by $\mcR^{\ot_{\orig}}$ the ring of formal power series in the shifted variables $t^\alpha-t^\alpha_\orig$. In this section, we consider flat F-manifolds defined on a formal neighbourhood of $\ot_\orig$, which means that the functions describing the structure of our flat F-manifolds belong to the ring $\mcR^{\ot_{\orig}}$. 

\smallskip

\subsection{Calibrated flat F-manifolds and descendant vector potentials}\label{subsection:calibrated flat F-manifold}

Consider a flat F-manifold structure given by a vector potential $\oF=(F^1,\ldots,F^N)$, $F^\alpha\in\mcR^{\ot_{\orig}}$, and a unit $A^\alpha\frac{\d}{\d t^\alpha}$, $A^\alpha\in\mbC$. We will often denote the unit by $\frac{\d}{\d t^\un}$. A {\it calibration} of our flat F-manifold is a collection of functions~$\Omega^{\alpha,d}_{\beta,0}\in\mcR^{\ot_{\orig}}$, $1\le\alpha,\beta\le N$, $d\ge -1$, satisfying $\Omega^{\alpha,-1}_{\beta,0}=\delta^\alpha_\beta$ and the property
\begin{gather}\label{eq:property of a calibration}
\frac{\d\Omega^{\alpha,d}_{\beta,0}}{\d t^\gamma}=c^\mu_{\gamma\beta}\Omega^{\alpha,d-1}_{\mu,0}, \quad d\ge 0,
\end{gather}
where $c^\alpha_{\beta\gamma}=\frac{\d^2 F^\alpha}{\d t^\beta\d t^\gamma}$. Introduce $N\times N$ matrices $\Omega^d_0$, $d\ge -1$, by $(\Omega^d_0)^\alpha_\beta:=\Omega^{\alpha,d}_{\beta,0}$. Equation~\eqref{eq:property of a calibration} implies that $c^\alpha_{\beta\gamma}=\frac{\d\Omega^{\alpha,0}_{\beta,0}}{\d t^\gamma}$ and, thus, equation~\eqref{eq:property of a calibration} can be written in the matrix form as
$$
d\Omega^p_0=\Omega^{p-1}_0\cdot d\Omega^0_0,\quad p\ge 0,
$$
where $d(\cdot)$ denotes the full differential. A calibration is determined uniquely up to a transformation
\begin{gather}\label{eq:transformation of calibration}
\left(\Id+\sum_{d\ge 1}\Omega^{d-1}_0 z^d\right)\mapsto G(z)\left(\Id+\sum_{d\ge 1}\Omega^{d-1}_0 z^d\right),\quad G(z)\in\End(\mbC^N)[[z]],\quad G(0)=\Id.
\end{gather}
A flat F-manifold together with a calibration is called a {\it calibrated flat F-manifold}.\\

To our calibrated flat F-manifold one can associate a two-parameter family of matrices $\Omega^p_q$, $p,q\ge 0$, in the following way. Let us introduce matrices $\Omega^0_d=(\Omega^{\alpha,0}_{\beta,d})$, $d\ge 0$, by the equation
\begin{gather}\label{eq:upper-lower relation}
\left(\Id+\sum_{d\ge 1}(-1)^d\Omega^0_{d-1}z^d\right)\left(\Id+\sum_{d\ge 1}\Omega^{d-1}_0 z^d\right)=\Id.
\end{gather}
By definition, we put $\Omega^0_{-1}:=\Id$. The matrices $\Omega^0_p$ satisfy the equation
\begin{gather}\label{eq:TRR with differential}
d\Omega^0_p=d\Omega^0_0\cdot\Omega^0_{p-1},\quad p\ge 0.
\end{gather}
We define matrices $\Omega^p_q=(\Omega^{\alpha,p}_{\beta,q})$, $p,q\ge 0$, by
\begin{gather}\label{eq:definition of Omegapq}
\Omega^p_q:=\sum_{i=0}^q(-1)^{q-i}\Omega^{p+q-i}_0\Omega^0_{i-1}\stackrel{\scriptsize{\text{eq. \eqref{eq:upper-lower relation}}}}{=}\sum_{i=0}^p(-1)^{p-i}\Omega^{i-1}_0\Omega^0_{p+q-i},
\end{gather}

\smallskip

Let us present the construction of the descendant vector potentials associated to our calibrated flat F-manifold. We will use the notation $\Omega^{\alpha,p}_{\un,q}:=A^\beta\Omega^{\alpha,p}_{\beta,q}$. Equation~\eqref{eq:property of a calibration} implies that $\Omega^{\alpha,0}_{\un,0}-t^\alpha$ is a constant, $\Omega^{\alpha,0}_{\un,0}=t^\alpha+c^\alpha$, $\oc=(c^1,\ldots,c^N)\in\mbC^N$. Let
$$
(\Omega^\oc)^{\alpha,p}_{\beta,q}:=\left.\Omega^{\alpha,p}_{\beta,q}\right|_{t^\gamma\mapsto t^\gamma-c^\gamma}\in\mcR^{\ot_\orig+\oc}.
$$
Consider the {\it principal hierarchy} associated to our calibrated flat F-manifold:
\begin{gather}\label{eq:principal hierarchy for F-man}
\frac{\d v^\alpha}{\d t^\beta_d}=\d_x\left(\left.(\Omega^\oc)^{\alpha,0}_{\beta,d}\right|_{t^\gamma=v^\gamma}\right),\quad 1\le\alpha,\beta\le N,\quad d\ge 0.
\end{gather}
The flows of the principal hierarchy pairwise commute. Since $(\Omega^\oc)^{\alpha,0}_{\un,0}=t^\alpha$, we can identify the flows $\frac{\d}{\d x}$ and $\frac{\d}{\d t^\un_0}=A^\alpha\frac{\d}{\d t^\alpha_0}$. Clearly, the functions $v^\alpha=t^\alpha_0$ satisfy the subsystem of system~\eqref{eq:principal hierarchy for F-man} given by the flows~$\frac{\d}{\d t^\beta_0}$. Denote by $(v^\top)^\alpha\in\mcR^{\ot_{\orig}+\oc}[[t^*_{\ge 1}]]$ the solution of the principal hierarchy specified by the initial condition
$$
\left.(v^\top)^\alpha\right|_{t^*_{\ge 1}=0}=t^\alpha_0.
$$
It is often called the {\it topological solution}.\\

Let 
$$
(\Omega^\top)^{\alpha,p}_{\beta,q}:=\left.(\Omega^\oc)^{\alpha,p}_{\beta,q}\right|_{t^\gamma\mapsto(v^\top)^\gamma}\in\mcR^{\ot_\orig+\oc}[[t^*_{\ge 1}]].
$$
The {\it descendant vector potentials} $\omcF^a=(\mcF^{1,a},\ldots,\mcF^{N,a})$, $a\ge 0$, of our calibrated flat F-manifold are defined by
$$
\mcF^{\alpha,a}:=\sum_{b\ge 0}(\Omega^\top)^{\alpha,a}_{\beta,b}q^\beta_b\in\mcR^{\ot_\orig+\oc}[[t^*_{\ge 1}]],\quad a\ge 0,
$$
where $q^\beta_b:=t^\beta_b-A^\beta\delta_{b,1}$. We have the property
\begin{gather}\label{eq:derivative of descendant vector potential}
\frac{\d\mcF^{\alpha,a}}{\d t^\beta_b}=(\Omega^\top)^{\alpha,a}_{\beta,b}.
\end{gather}
The function $F^\alpha$ coincides with the function $\left.\mcF^{\alpha,0}\right|_{\substack{t^\beta\mapsto t^\beta+c^\beta\\t^*_{\ge 1}=0}}$ up to an affine function in the variables $t^\gamma$. 

\smallskip

\begin{remark}\label{remark:about shifts}
We see that in the construction of the descendant vector potentials we have to go from the functions $\Omega^{\alpha,p}_{\beta,q}$ to the shifted functions $(\Omega^\oc)^{\alpha,p}_{\beta,q}$. Actually, if we start from the shifted vector potential $((F^\oc)^1,\ldots,(F^\oc)^N)$ given by $(F^\oc)^\alpha:=F^\alpha|_{t^\beta\mapsto t^\beta-c^\beta}$, then the functions $(\Omega^\oc)^{\alpha,p}_{\beta,0}$ define a calibration and in the further construction of the descendant vector potentials we don't have to do any shifts and get the same functions $\mcF^{\alpha,a}$. That is why, in~\cite{BB19} the authors don't write explicitly the shifts needed in the construction of the descendant vector potetentials. We do it because we want to study the action of transformations~\eqref{eq:transformation of calibration} on descendant vector potentials and such a transformation shifts a point around which formal functions $\mcF^{\alpha,a}$ are defined.
\end{remark}

\smallskip

Let us adopt the convention
$$
\mcF^{\alpha,a}:=(-1)^{a+1}q^\alpha_{-a-1},\quad\text{if $a<0$}.
$$

\smallskip

\begin{proposition}\label{proposition:descendant vector potentials,equivalent approach}
A sequence of $N$-tuples of functions~$(\mcF^{1,a},\ldots,\mcF^{N,a})$, $\mcF^{\alpha,a}\in\mcR^{\ot_{\orig}}[[t^*_{\ge 1}]]$, $a\ge 0$, is a sequence of descendant vector potentials of a flat F-manifold if and only if the following equations are satisfied:
\begin{align}
\sum_{b\ge 0}q^\beta_{b+1}\frac{\d\mcF^{\alpha,a}}{\d q^\beta_b}=&-\mcF^{\alpha,a-1},&& a\in\mbZ,\label{eq:vector string equation}\\
\sum_{b\ge 0}q^\beta_b\frac{\d\mcF^{\alpha,a}}{\d q^\beta_b}=&\mcF^{\alpha,a},&& a\in\mbZ,\label{eq:vector dilaton equation}\\
\frac{\d^2\mcF^{\alpha,0}}{\d q^\beta_{b+1}\d q^\gamma_c}=&\frac{\d\mcF^{\mu,0}}{\d q^\beta_b}\frac{\d^2\mcF^{\alpha,0}}{\d q^\mu_0\d q^\gamma_c},&& b,c\ge 0,\label{eq:basic vector TRR}\\
\frac{\d\mcF^{\alpha,a+1}}{\d q^\beta_b}+\frac{\d\mcF^{\alpha,a}}{\d q^\beta_{b+1}}=&\frac{\d\mcF^{\alpha,a}}{\d q^\mu_0}\frac{\d\mcF^{\mu,0}}{\d q^\beta_b},&& a,b\ge 0.\label{eq:important vector relation}
\end{align}
\end{proposition}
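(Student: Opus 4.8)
The statement is an equivalence, so the plan is to prove the two implications separately; the forward one is a direct computation from the explicit construction, whereas the reverse one is a genuine reconstruction and carries the real content. Throughout I write $\Omega^{\alpha,a}_{\beta,b}:=\partial_{q^\beta_b}\mcF^{\alpha,a}$; in the forward direction these coincide, by \eqref{eq:derivative of descendant vector potential}, with the two-point matrices $(\Omega^\top)^{\alpha,a}_{\beta,b}$ pulled back along the topological solution, while in the reverse direction they are the data to be reorganised into a calibration.

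For the implication ``descendant vector potentials $\Rightarrow$ \eqref{eq:vector string equation}--\eqref{eq:important vector relation}'' I would verify the four identities from the construction $\mcF^{\alpha,a}=\sum_{b\ge0}(\Omega^\top)^{\alpha,a}_{\beta,b}q^\beta_b$ together with \eqref{eq:derivative of descendant vector potential}. Equation \eqref{eq:vector dilaton equation} is then just the definition of $\mcF^{\alpha,a}$. Equation \eqref{eq:important vector relation} reduces to the algebraic matrix identity $\Omega^{a+1}_b+\Omega^a_{b+1}=\Omega^a_0\Omega^0_b$, which follows by a telescoping cancellation from \eqref{eq:definition of Omegapq}; pulling it back along $v^\top$ and reading the $(\alpha,\beta)$-entry gives \eqref{eq:important vector relation}. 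Equation \eqref{eq:basic vector TRR} is the topological recursion relation and follows from the calibration property \eqref{eq:property of a calibration} (equivalently \eqref{eq:TRR with differential}) via the chain rule and the principal hierarchy \eqref{eq:principal hierarchy for F-man} obeyed by $v^\top$. The delicate identity is the string equation \eqref{eq:vector string equation}: here I would write the string operator as $\sum_b q^\beta_{b+1}\partial_{q^\beta_b}=\sum_b t^\beta_{b+1}\partial_{t^\beta_b}-\partial_{t^\un_0}$ and combine the behaviour of $v^\top$ under the string flow with the shift relation among the $\Omega^p_q$ encoded in \eqref{eq:upper-lower relation}, also checking the $a\le0$ cases fixed by the stated convention.

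For the reverse implication I would reconstruct the calibrated flat F-manifold. Set $c^\alpha_{\beta\gamma}:=\partial_{q^\beta_0}\partial_{q^\gamma_0}\mcF^{\alpha,0}$, symmetric in $\beta,\gamma$. Associativity \eqref{eq:axiom2 of flat F-man} comes out cleanly: \eqref{eq:basic vector TRR} with $b=c=0$ gives $\partial_{q^\beta_1}\partial_{q^\delta_0}\mcF^{\alpha,0}=\Omega^{\mu,0}_{\beta,0}c^\alpha_{\delta\mu}$; differentiating by $q^\gamma_0$ and comparing with the same computation after the swap $\gamma\leftrightarrow\delta$, the terms involving third $q_0$-derivatives of $\mcF^{\alpha,0}$ cancel by their total symmetry, leaving $c^\mu_{\beta\gamma}c^\alpha_{\delta\mu}=c^\mu_{\beta\delta}c^\alpha_{\gamma\mu}$. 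The unit axiom \eqref{eq:axiom1 of flat F-man} follows from \eqref{eq:vector string equation} at $a=0$, rewritten as $\partial_{t^\un_0}\mcF^{\alpha,0}=q^\alpha_0+\sum_b t^\beta_{b+1}\partial_{t^\beta_b}\mcF^{\alpha,0}$: differentiating by $q^\gamma_0$ and restricting to $t^*_{\ge1}=0$, where the $q^\gamma_0$-derivative of the last term vanishes, yields $A^\mu c^\alpha_{\mu\gamma}=\delta^\alpha_\gamma$. Hence $F^\alpha:=\mcF^{\alpha,0}|_{t^*_{\ge1}=0}$ defines a flat F-manifold with unit $A^\alpha\frac{\partial}{\partial t^\alpha}$, and it remains to recover the calibration by checking that the $\Omega^{\alpha,a}_{\beta,b}$ satisfy \eqref{eq:property of a calibration}, \eqref{eq:upper-lower relation} and \eqref{eq:definition of Omegapq} --- forced by \eqref{eq:basic vector TRR} and \eqref{eq:important vector relation} --- and that running the construction of Section~\ref{subsection:calibrated flat F-manifold}, with the shift by $\oc$ of Remark~\ref{remark:about shifts}, returns the given $\mcF^{\alpha,a}$.

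The main obstacle is this last closing-the-loop step. One must show that the data $\Omega^{\alpha,a}_{\beta,b}$ extracted from the $\mcF^{\alpha,a}$ depends on the higher times $t^*_{\ge1}$ only through the topological solution of the reconstructed principal hierarchy, and that the recursion \eqref{eq:important vector relation} together with \eqref{eq:basic vector TRR} propagates the whole descendant tower ($a\ge1$) from its $a=0$ part consistently with \eqref{eq:definition of Omegapq}. Correctly tracking the constant shift $\oc$, so that the centre of the formal expansions is preserved, is the subtle bookkeeping point, exactly the one flagged in Remark~\ref{remark:about shifts}.
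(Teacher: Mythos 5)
Your proposal is correct and follows essentially the same route as the paper's proof: the forward direction is a direct verification from the construction (the dilaton equation via \eqref{eq:derivative of descendant vector potential}, relation \eqref{eq:important vector relation} by the telescoping cancellation in \eqref{eq:definition of Omegapq}, the TRR from \eqref{eq:TRR with differential}, and the string equation, which the paper simply cites from \cite{BB19} rather than rederiving as you sketch), while the converse proceeds, exactly as in the paper, by restricting $\mcF^{\alpha,0}$ and $\d\mcF^{\alpha,d}/\d t^\beta_0$ at $t^*_{\ge 1}=0$ to recover the vector potential and the calibration. The paper declares this converse ``straightforward to check,'' so your explicit derivations of associativity and the unit axiom, and your flagging of the closing-the-loop verification and the shift by $\oc$, are consistent with (and somewhat more detailed than) the published argument.
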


\smallskip

It is not hard to check that equations~\eqref{eq:basic vector TRR} and~\eqref{eq:important vector relation} imply the following generalizations of equation~\eqref{eq:basic vector TRR}:
\begin{align}
\frac{\d^2\mcF^{\alpha,a}}{\d q^\beta_{b+1}\d q^\gamma_c}=&\frac{\d\mcF^{\mu,0}}{\d q^\beta_b}\frac{\d^2\mcF^{\alpha,a}}{\d q^\mu_0\d q^\gamma_c},&& a,b,c\ge 0,\label{eq:vector TRR1}\\
\frac{\d^2\mcF^{\alpha,a+1}}{\d q^\beta_b\d q^\gamma_c}=&\frac{\d\mcF^{\alpha,a}}{\d q^\mu_0}\frac{\d^2\mcF^{\mu,0}}{\d q^\beta_b\d q^\gamma_c},&& a,b,c\ge 0.\label{eq:vector TRR2}
\end{align}

\smallskip

\begin{proof}[Proof of Proposition~\ref{proposition:descendant vector potentials,equivalent approach}]
Suppose that $N$-tuples $(\mcF^{1,a},\ldots,\mcF^{N,a})$, $a\ge 0$, are descendant vector potentials of a flat F-manifold. Equation~\eqref{eq:vector string equation} was proved in~\cite{BB19}. Also we have $\sum_{b\ge 0}q^\beta_b\frac{\d (v^{\top})^\alpha}{\d q^\beta_b}=0$~\cite{BB19}, which implies that $\sum_{b\ge 0}q^\beta_b\frac{\d(\Omega^\top)^{\alpha,a}_{\gamma,c}}{\d q^\beta_b}=0$. Therefore, equation~\eqref{eq:vector dilaton equation} is true. Equation~\eqref{eq:basic vector TRR} follows from equations~\eqref{eq:TRR with differential} and~\eqref{eq:derivative of descendant vector potential}. The last equation~\eqref{eq:important vector relation} follows from equation~\eqref{eq:definition of Omegapq}.\\

Suppose now that functions $\mcF^{\alpha,a}\in\mcR^{\ot_{\orig}}[[t^*_{\ge 1}]]$ satisfy equations~\eqref{eq:vector string equation}--\eqref{eq:important vector relation}. Then it is straightforward to check that the functions $F^\alpha$ and $\Omega^{\alpha,d}_{\beta,0}$ given by
$$
F^\alpha:=\left.\mcF^{\alpha,0}\right|_{t^*_{\ge 1}=0},\qquad \Omega^{\alpha,d}_{\beta,0}:=\left.\frac{\d\mcF^{\alpha,d}}{\d t^\beta_0}\right|_{t^*_{\ge 1}=0},
$$
define a calibrated flat F-manifold such that the $N$-tuples $(\mcF^{1,a},\ldots,\mcF^{N,a})$ are its descendant vector potentials.
\end{proof}

\smallskip

Let $\omcF^a(t^*_*)$, $a\ge 0$, be a collection of descendant vector potentials and consider a linear change of variables 
\begin{gather*}
t^\alpha_a\mapsto\tt^\alpha_a(t^*_*)=M^\alpha_\mu t^\mu_a,
\end{gather*}
where $M=(M^\alpha_\beta)\in\GL(\mbC^N)$. Then it is easy to see that the collection of functions $(M.\mcF)^{\alpha,a}(\tt^*_*)$ defined by
\begin{gather}\label{eq:transformation of descendant potentials from linear changes}
(M.\mcF)^{\alpha,a}:=\left.M^\alpha_\mu\mcF^{\mu,a}\right|_{t^\beta_b=(M^{-1})^\beta_\gamma\tt^\gamma_b},\quad a\ge 0,
\end{gather}
satisfies all the equations~\eqref{eq:vector string equation}--\eqref{eq:important vector relation} and, thus, gives a collection of descendant vector potentials. The unit vector field $\tA^\alpha\frac{\d}{\d\tt^\alpha}$ of the associated flat F-manifold is given by $\tA^\alpha=M^\alpha_\mu A^\mu$. Clearly, this defines a $\GL(\mbC^N)$-action on collections of descendant vector potentials. We will use the notation
$$
\overline{M.\mcF}^a:=((M.\mcF)^{1,a},\ldots,(M.\mcF)^{N,a}).
$$

\smallskip

\subsection{Ancestor vector potentials}

Consider a collection of descendant vector potentials $\omcF^a$, $\mcF^{\alpha,a}\in\mcR^{\ot_{\orig}}[[t^*_{\ge 1}]]$. They are called {\it ancestor} if $\ot_{\orig}=0$ and the following property is satisfied:
\begin{equation}\label{eq:ancestor vector potential}
\left.\frac{\d\mcF^{\alpha,a}}{\d t^\beta_b}\right|_{t^*_*=0}=0,\quad a,b\ge 0.
\end{equation}

\smallskip

\begin{lemma}
Consider a flat F-manifold structure given by a vector potential $\oF$ with $F^\alpha\in\mbC[[t^1,\ldots,t^N]]$. Then there exists a unique calibration giving a collection of ancestor potentials. 
\end{lemma}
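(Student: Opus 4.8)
The plan is to encode a calibration in the single generating matrix $S(z):=\Id+\sum_{d\ge 1}\Omega^{d-1}_0 z^d\in\End(\mbC^N)[[z]]$ (with entries in $\mcR^{\ot_\orig}$), to rewrite the ancestor requirement \eqref{eq:ancestor vector potential} as the normalization $S(z)|_{t=0}=\Id$, and then to use the fact that the calibration ambiguity \eqref{eq:transformation of calibration} is exactly left multiplication $S(z)\mapsto G(z)S(z)$ by a $t$-independent $G(z)$ with $G(0)=\Id$, which turns both the construction of the normalized calibration and its uniqueness into one-line computations.

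First I would translate the ancestor property. By \eqref{eq:derivative of descendant vector potential} condition \eqref{eq:ancestor vector potential} reads $(\Omega^\top)^{\alpha,a}_{\beta,b}|_{t^*_*=0}=0$ for $a,b\ge 0$. Since the descendant potentials are a priori formal functions centered at $\ot_\orig+\oc=\oc$, the ancestor requirement $\mcF^{\alpha,a}\in\mcR^{\ot_\orig}[[t^*_{\ge 1}]]$ (centered at the origin, as $\ot_\orig=0$) forces $\oc=0$; then $(\Omega^\oc)^{\alpha,p}_{\beta,q}=\Omega^{\alpha,p}_{\beta,q}$, and using $(v^\top)^\gamma|_{t^*_{\ge 1}=0}=t^\gamma_0$ the topological solution specializes to $(v^\top)^\gamma|_{t^*_*=0}=0$, so the condition becomes $\Omega^{p}_{q}|_{t=0}=0$ for all $p,q\ge 0$. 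Next I would observe that this doubly-indexed system collapses to a single normalization: taking $q=0$ gives $\Omega^{p}_0|_{t=0}=0$ for all $p\ge 0$, i.e. $S(z)|_{t=0}=\Id$; conversely, the first expression in \eqref{eq:definition of Omegapq} writes each $\Omega^p_q$ as a sum of products $\Omega^{p+q-i}_0\,\Omega^0_{i-1}$ with $0\le i\le q$, and every upper index $p+q-i$ lies in $[p,p+q]\subseteq\mbZ_{\ge 0}$, so $S(z)|_{t=0}=\Id$ already forces $\Omega^{p}_{q}|_{t=0}=0$ for all $p,q$. Finally, $S(z)|_{t=0}=\Id$ contains the vanishing of $\Omega^0_0|_{t=0}$, which via $\Omega^{\alpha,0}_{\un,0}=t^\alpha+c^\alpha$ is precisely $\oc=0$; thus the ancestor property is equivalent to the single equation $S(z)|_{t=0}=\Id$.

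With this reformulation, existence and uniqueness are immediate. A calibration exists because the recursion \eqref{eq:property of a calibration} is integrable by the flat F-manifold axioms and exact over the formal ring $\mbC[[t^1,\dots,t^N]]$. For any such calibration the matrix $C(z):=S(z)|_{t=0}$ has $t$-independent entries and satisfies $C(z)=\Id+O(z)$, hence is invertible in $\End(\mbC^N)[[z]]$ with $C(0)=\Id$. Applying \eqref{eq:transformation of calibration} with $G(z)=C(z)^{-1}$, an admissible calibration change of the same flat F-manifold, replaces $S(z)$ by $C(z)^{-1}S(z)$, whose value at $t=0$ is $C(z)^{-1}C(z)=\Id$; this gives existence. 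For uniqueness, two ancestor calibrations $S(z),S'(z)$ are related by $S'(z)=G(z)S(z)$ for a unique $G(z)$ with $G(0)=\Id$, and evaluating at $t=0$ (using $t$-independence of $G$ and $S|_{t=0}=S'|_{t=0}=\Id$) yields $\Id=G(z)\,\Id=G(z)$, so $S=S'$.

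I expect the only genuine subtlety to be the translation step rather than the final normalization: one must correctly track the shift $\oc$, confirming that the ancestor requirement forces $\oc=0$ (consistently with the shift-free construction of Remark~\ref{remark:about shifts}), and one must recognize that the apparently infinite, doubly-indexed system $\Omega^{p}_{q}|_{t=0}=0$ reduces, through \eqref{eq:definition of Omegapq}, to the single matrix normalization $S(z)|_{t=0}=\Id$. Once this is in place, the torsor structure of calibrations under \eqref{eq:transformation of calibration} makes the remainder routine.
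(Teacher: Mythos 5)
Your proof is correct and follows essentially the same route as the paper: the paper's one-line argument likewise characterizes the ancestor calibration by the normalization $\left.\Omega^{\alpha,a}_{\beta,0}\right|_{t^*=0}=0$ for $a\ge 0$ (your $S(z)|_{t=0}=\Id$) together with $\Omega^{\alpha,0}_{\beta,0}=\frac{\d F^\alpha}{\d t^\beta}-\left.\frac{\d F^\alpha}{\d t^\beta}\right|_{t^*=0}$, with existence and uniqueness coming from the recursion \eqref{eq:property of a calibration} and the ambiguity \eqref{eq:transformation of calibration}. Your reduction of the ancestor condition \eqref{eq:ancestor vector potential} to this normalization --- forcing $\oc=0$ and collapsing the doubly-indexed vanishing $\left.\Omega^p_q\right|_{t=0}=0$ to $\left.\Omega^p_0\right|_{t=0}=0$ via \eqref{eq:definition of Omegapq} --- correctly supplies the details the paper dismisses as ``easy to check.''
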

\begin{proof}
It is easy to check that such a calibration is uniquely determined by equation~\eqref{eq:property of a calibration} and the properties $\Omega^{\alpha,0}_{\beta,0}=\frac{\d F^\alpha}{\d t^\beta}-\left.\frac{\d F^\alpha}{\d t^\beta}\right|_{t^*=0}$ and $\left.\Omega^{\alpha,a}_{\beta,0}\right|_{t^*=0}=0$, $a\ge 0$.
\end{proof}

\smallskip

A unique calibration described by this lemma will be called the {\it ancestor calibration}.\\

Let us assign to the variable $t^\beta_b$ degree $b-1$, $\deg t^\beta_b:=b-1$. 

\smallskip

\begin{lemma}\label{lemma:technical property of ancestor potentials}
Consider a collection of ancestor vector potentials $\omcF^a$, $a\ge 0$. Then 
\begin{gather}\label{eq:degree of Falphaa}
\deg\mcF^{\alpha,a}\le -a-2,\quad a\ge 0,
\end{gather}
which means that all the monomials that form the power series $\mcF^{\alpha,a}$ have degree less or equal to~$-a-2$.
\end{lemma}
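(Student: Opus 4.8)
The plan is to induct on $a$, tracking at each stage how the four master equations \eqref{eq:vector string equation}--\eqref{eq:important vector relation} interact with the filtration by degree. Throughout I would work with the honest expansion of each $\mcF^{\alpha,a}$ as a power series in the $t^\beta_b$ around $t^*_*=0$, and record two elementary facts. First, $\d_{t^\beta_b}$ lowers degree by $b-1$ while $\d_{t^\un_0}=A^\gamma\d_{t^\gamma_0}$ raises it by $1$ (since $\deg t^\gamma_0=-1$). Second, evaluating the string equation \eqref{eq:vector string equation} at $t^*_*=0$ and using the ancestor condition \eqref{eq:ancestor vector potential} gives $\mcF^{\alpha,a}|_{t^*_*=0}=0$ for every $a\ge 0$; this is exactly what is needed to exclude a stray constant term in the estimates below.

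For the inductive step, assuming \eqref{eq:degree of Falphaa} for index $a$ and for index $0$, I would read \eqref{eq:important vector relation} as a formula for $\d_{t^\beta_b}\mcF^{\alpha,a+1}$ and bound the right-hand side factor by factor: $\d_{t^\mu_0}\mcF^{\alpha,a}$ has degree $\le(-a-2)+1$, $\d_{t^\beta_b}\mcF^{\mu,0}$ has degree $\le-2-(b-1)$, and $\d_{t^\beta_{b+1}}\mcF^{\alpha,a}$ has degree $\le(-a-2)-b$, so that $\deg\d_{t^\beta_b}\mcF^{\alpha,a+1}\le-a-2-b$ for all $\beta,b$. I would then run a highest-degree-component argument: if $P$ is the top-degree part of $\mcF^{\alpha,a+1}$ and $P$ is nonconstant, then $\d_{t^\beta_b}P\ne0$ for some $\beta,b$ is the uncancelled top part of $\d_{t^\beta_b}\mcF^{\alpha,a+1}$, forcing $\deg\mcF^{\alpha,a+1}\le-a-3$; a constant top part is impossible because $\mcF^{\alpha,a+1}|_{t^*_*=0}=0$. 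This yields the bound for $a+1$.

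The crux is the base case $\deg\mcF^{\alpha,0}\le-2$, which is the genus-$0$ dimension constraint and is not reachable from \eqref{eq:important vector relation} (which degenerates at $a=-1$). I would first prove $\deg (v^\top)^\alpha\le-1$ for the topological solution, by induction on the number of descendant variables $t^\beta_{\ge1}$ present: the base is $v^\top|_{t^*_{\ge1}=0}=t_0$, and the step uses the principal hierarchy \eqref{eq:principal hierarchy for F-man} in the form $\d_{t^\beta_b}v^\top=\d_{t^\un_0}\big(\Omega^{\cdot,0}_{\beta,b}\circ v^\top\big)$ together with the fact that, for the ancestor calibration, the small-phase-space function $\Omega^{\cdot,0}_{\beta,b}$ vanishes to order $b+1$ at the origin (from \eqref{eq:property of a calibration}). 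The extra descendant insertion then over-compensates for the single degree raised by $\d_{t^\un_0}$, and a top-degree-component argument closes the induction. From $\deg v^\top\le-1$ one gets $\deg\d_{t^\beta_0}\mcF^{\alpha,0}\le-1$ by composition; the genus-$0$ topological recursion \eqref{eq:vector TRR1} (equivalently \eqref{eq:basic vector TRR}), rewritten as $d\big(\d_{t^\beta_{b+1}}\mcF^{\alpha,0}\big)=\big(\d_{t^\beta_b}\mcF^{\mu,0}\big)\,d\big(\d_{t^\mu_0}\mcF^{\alpha,0}\big)$, then gives $\deg\d_{t^\beta_b}\mcF^{\alpha,0}\le-b-1$ by a second induction on $b$ (using once more that the value at $t^*_*=0$ vanishes, so that $d$ preserves degree). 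Finally the dilaton equation \eqref{eq:vector dilaton equation}, which reads $\mcF^{\alpha,0}=\sum_{\beta,b}q^\beta_b\,\d_{t^\beta_b}\mcF^{\alpha,0}$ with $\deg q^\beta_b=b-1$, assembles these into $\deg\mcF^{\alpha,0}\le-2$.

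I expect the main obstacle to be this base case, and the delicate point to be a structural one: one must resist passing to the $q$-variable expansion and invoking that the string operator $\sum_b q^\beta_{b+1}\d_{q^\beta_b}$ has only constants in its kernel. That is true for polynomials (by multiplication by $x_1+\dots+x_k$ on each fixed dilaton degree), but it fails for the series at hand, which are expanded around $t^*_*=0$ rather than the dilaton point $q^*_*=0$; at a non-fixed point of that vector field the kernel is large, and the slick argument collapses. This is precisely why every step above is phrased through derivative-degree bounds, highest-degree components, and the vanishing of values at $t^*_*=0$, so that control is never lost to cancellation when passing between a function and its derivatives.
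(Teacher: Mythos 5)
Your proof is correct, and it follows the same skeleton as the paper's proof, which is extremely terse: the paper derives $\mcF^{\alpha,a}|_{t^*_*=0}=0$ from the dilaton equation \eqref{eq:vector dilaton equation} (you get the same vanishing from the string equation \eqref{eq:vector string equation} plus \eqref{eq:ancestor vector potential}; either works) and then asserts that the derivative bounds $\deg\big(\d\mcF^{\alpha,a}/\d t^\beta_b\big)\le -a-b-1$ --- exactly the bounds you establish --- ``can be easily checked by induction using relations \eqref{eq:vector TRR1} and \eqref{eq:vector TRR2}'', with no indication of how the induction is seeded. Where you genuinely diverge is in the base case, and your instinct that this is the crux is well judged: \eqref{eq:vector TRR1} and \eqref{eq:vector TRR2} only propagate the bounds in $b$ and $a$, and seeding them purely from the axioms of Proposition \ref{proposition:descendant vector potentials,equivalent approach} requires a descendant-counting induction that the paper leaves entirely implicit. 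You instead import the constructive definition of ancestor potentials: the topological solution of the principal hierarchy \eqref{eq:principal hierarchy for F-man} together with the order-$(b+1)$ vanishing of $\Omega^{\alpha,0}_{\beta,b}$ at the origin (which, strictly speaking, follows from the lower-index recursion \eqref{eq:TRR with differential} obtained from \eqref{eq:upper-lower relation}, rather than from \eqref{eq:property of a calibration} directly --- a cosmetic slip). Your descendant-counting induction for $\deg (v^\top)^\alpha\le-1$ is sound and is essentially isomorphic to the argument needed to seed the paper's axiomatic induction, so the two routes cost about the same; the paper's buys a proof that works verbatim for any solution of \eqref{eq:vector string equation}--\eqref{eq:important vector relation}, while yours is more self-contained and stays closer to the actual definition (the two settings agree by Proposition \ref{proposition:descendant vector potentials,equivalent approach}, so nothing is lost). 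One small caveat: since the degrees of monomials in a formal series in the $t^\beta_b$ are a priori unbounded above, ``the top-degree part $P$'' need not exist; but your argument is really monomial-wise --- differentiation acts on monomials without cancellation, every nonconstant monomial is detected by some $\d_{t^\beta_b}$, and the constant term vanishes --- so this is a phrasing issue, not a gap.
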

\begin{proof}
By equation~\eqref{eq:vector dilaton equation}, we have $\left.\mcF^{\alpha,a}\right|_{t^*_*=0}=0$. Therefore, property~\eqref{eq:degree of Falphaa} follows from the property
\begin{gather*}
\deg\left(\frac{\d\mcF^{\alpha,a}}{\d t^\beta_b}\right)\le -a-b-1,\quad a,b\ge 0,
\end{gather*}
which can be easily checked by induction using relations~\eqref{eq:vector TRR1} and~\eqref{eq:vector TRR2}. The lemma is proved.
\end{proof}

\smallskip

\begin{remark}\label{remark:polynomiality of ancestor potentials}
Lemma~\ref{lemma:technical property of ancestor potentials} implies that for a collection of ancestor vector potentials $\omcF^a$ we have
$$
\mcF^{\alpha,a}\in\mbC[t^*_{\ge 2}][[t^*_0,t^*_1]].
$$
Therefore, for any collection of constants $c^\beta_b\in\mbC$ such that $c^\beta_0=c^\beta_1=0$ the formal power series~$\mcF^{\alpha,a}$ can be expressed as a formal power series in the shifted variables $(t^\beta_b+c^\beta_b)$, for which property~\eqref{eq:degree of Falphaa} still holds.
\end{remark}

\smallskip

\subsection{Constant flat F-manifolds}

A flat F-manifold given by a vector potential $\oF=(F^1,\ldots,F^N)$ is called {\it constant} if its structure constants $c^\alpha_{\beta\gamma}=\frac{\d^2 F^\alpha}{\d t^\beta\d t^\gamma}$ are constants.\\ 

In any rank $N\ge 1$, the simplest constant flat F-manifold has the vector potential $\oF=\left(\frac{(t^1)^2}{2},\ldots,\frac{(t^N)^2}{2}\right)$ and the unit $\sum_{\alpha=1}^N\frac{\d}{\d t^\alpha}$. Its ancestor vector potentials are given by 
$$
\mcF^{\alpha,a}=\sum_{n\ge a+2}\sum_{\substack{d_1,\ldots,d_n\ge 0\\\sum d_i=n-2-a}}\frac{1}{n(n-1)}\frac{\prod t^\alpha_{d_i}}{a!\prod d_i!},\quad a\ge 0.
$$
This flat F-manifold is called the {\it trivial flat F-manifold of dimension $N$}. Applying to it the $\GL(\mbC^N)$-action~\eqref{eq:transformation of descendant potentials from linear changes}, we can get the ancestor vector potentials of any semisimple constant flat F-manifold. Clearly, in canonical coordinates the Christoffel symbols of the connection $\nabla$ and the rotation coefficients of any semisimple constant flat F-manifold vanish.

\smallskip

\subsection{Flat F-manifolds and the geometry of $\mbC^N[[z,z^{-1}]]$}

Let $\mcH:=\mbC^N[[z,z^{-1}]]$. Denote by $\phi_1,\ldots,\phi_N$ the standard basis in $\mbC^N$. Any element $f(z)\in\mcH$ can be expressed as
$$
f(z)=\sum_{d\ge 0}\frac{(-1)^{d+1}p^\alpha_d\phi_\alpha}{z^{d+1}}+\sum_{d\ge 0}q^\alpha_d\phi_\alpha z^d,\quad p^\alpha_d,q^\alpha_d\in\mbC,
$$
in a unique way. We view the coefficients $p^\alpha_d$ and $q^\alpha_d$ as coordinates on the space $\mcH$. We have the decomposition $\mcH=\mcH_+\oplus\mcH_-$, where $\mcH_+=\mbC^N[[z]]$ and $\mcH_-=z^{-1}\mbC^N[[z^{-1}]]$.\\

Consider a collection of functions $\mcF^{\alpha,a}\in\mcR^{\ot_\orig}[[t^*_{\ge 1}]]$, $1\le\alpha\le N$, $a\ge 0$. Recall that we relate the variables $t^\beta_b$ and $q^\beta_b$ by $q^\beta_b=t^\beta_b-A^\beta\delta_{b,1}$. We view~$\mcF^{\alpha,a}$ as formal functions on the space $\mcH_+$ near the point $\ot_\orig-\phi_\un z$, where $\phi_\un:=A^\alpha\phi_\alpha$. Consider the graph of the collection of functions $\mcF^{\alpha,a}$ in the space $\mcH$:
$$
\mcC:=\left\{(p^*_*,q^*_*)\in\mcH|p^\alpha_a=\mcF^{\alpha,a}\right\}\subset\mcH.
$$

\smallskip

\begin{theorem}\label{theorem:flat F-manifold as a cone}
The functions $\mcF^{\alpha,a}$ satisfy equations~\eqref{eq:vector string equation}--\eqref{eq:important vector relation} if and only if the following three conditions are satisfied:
\begin{itemize}
\item[1.] $\mcC$ is a cone with the vertex at the origin;
\item[2.] for any point $f\in\mcC$, we have $f\in zT_f\mcC$, where $T_f\mcC\subset\mcH$ denotes the tangent space to $\mcC$ at the point $f$;
\item[3.] for any point $f\in\mcC$, the tangent space $T_f\mcC$ is tangent to $\mcC$ along $zT_f\mcC$.
\end{itemize}
\end{theorem}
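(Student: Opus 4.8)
The plan is to translate each of the three geometric conditions into a statement about the Laurent expansion in $z$ of the point $f$ and of the tangent vectors $v_{\beta,b}:=\frac{\d f}{\d q^\beta_b}$ spanning $T_f\mcC$, and then to match these statements coefficient-by-coefficient against equations~\eqref{eq:vector string equation}--\eqref{eq:important vector relation}. Writing a point of $\mcC$ as
$$
f=\sum_{d\ge 0}q^\alpha_d\phi_\alpha z^d+\sum_{a\ge 0}\frac{(-1)^{a+1}\mcF^{\alpha,a}}{z^{a+1}}\phi_\alpha,
$$
one computes $v_{\beta,b}=\phi_\beta z^b+\sum_{a\ge 0}\frac{(-1)^{a+1}}{z^{a+1}}\frac{\d\mcF^{\alpha,a}}{\d q^\beta_b}\phi_\alpha$. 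The correspondence I expect is: condition~1 $\Leftrightarrow$ the dilaton equation~\eqref{eq:vector dilaton equation}; condition~2 $\Leftrightarrow$ the string equation~\eqref{eq:vector string equation}; and condition~3 $\Leftrightarrow$ the pair~\eqref{eq:basic vector TRR},~\eqref{eq:important vector relation}.

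Conditions 1 and 2 are the straightforward ones. Since scaling $f$ by $\lambda$ scales all coordinates $q^*_*,p^*_*$ simultaneously, $\mcC$ is a cone with vertex at the origin if and only if each $\mcF^{\alpha,a}$ is homogeneous of degree $1$ in the variables $q^*_*$; by Euler's identity this is exactly~\eqref{eq:vector dilaton equation} (and it forces $\mcF^{\alpha,a}|_{q=0}=0$, placing the vertex at the origin). For condition 2, I would expand
$$
z^{-1}f=\sum_{d\ge 0}q^\alpha_{d+1}\phi_\alpha z^d+q^\alpha_0\phi_\alpha z^{-1}+\sum_{a\ge 1}\frac{(-1)^a\mcF^{\alpha,a-1}}{z^{a+1}}\phi_\alpha,
$$
and observe that $f\in zT_f\mcC$ means $z^{-1}f\in T_f\mcC$, i.e. $z^{-1}f=\sum_{\beta,b}c^\beta_b v_{\beta,b}$. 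Comparing the non-negative powers of $z$ forces $c^\beta_b=q^\beta_{b+1}$; comparing the remaining negative powers then yields $\sum_{b\ge 0}q^\beta_{b+1}\frac{\d\mcF^{\alpha,a}}{\d q^\beta_b}=-\mcF^{\alpha,a-1}$ for all $a$, which is~\eqref{eq:vector string equation} once the convention $\mcF^{\alpha,a}=(-1)^{a+1}q^\alpha_{-a-1}$ for $a<0$ is used to interpret the $a=0$ term.

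Condition 3 is the substantive one, and I read ``$T_f\mcC$ is tangent to $\mcC$ along $zT_f\mcC$'' as the conjunction of two facts: first $zT_f\mcC\subseteq T_f\mcC$ (so the tangent spaces are $\mbC[z]$-modules and $\mcC$ is ruled by the subspaces $zT_f\mcC$), and second that the tangent space is constant as one moves along these ruling directions. For the first, I would compute $zv_{\beta,b}$ and show that its non-negative-power part equals $\phi_\beta z^{b+1}-\frac{\d\mcF^{\alpha,0}}{\d q^\beta_b}\phi_\alpha$; matching this against the leading terms $\phi_\mu z^c$ of the $v_{\mu,c}$ forces the only possible expansion $zv_{\beta,b}=v_{\beta,b+1}-\frac{\d\mcF^{\mu,0}}{\d q^\beta_b}v_{\mu,0}$, and equality of the negative-power parts is then precisely~\eqref{eq:important vector relation}. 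For the second, letting $D_{\beta,b}:=\frac{\d}{\d q^\beta_{b+1}}-\frac{\d\mcF^{\mu,0}}{\d q^\beta_b}\frac{\d}{\d q^\mu_0}$ be the parameter-space vector field producing the ruling direction $zv_{\beta,b}$, constancy of the tangent space amounts to $D_{\beta,b}v_{\gamma,c}\in T_f\mcC$; since $D_{\beta,b}v_{\gamma,c}=\sum_{a\ge 0}\frac{(-1)^{a+1}}{z^{a+1}}\bigl(\frac{\d^2\mcF^{\alpha,a}}{\d q^\beta_{b+1}\d q^\gamma_c}-\frac{\d\mcF^{\mu,0}}{\d q^\beta_b}\frac{\d^2\mcF^{\alpha,a}}{\d q^\mu_0\d q^\gamma_c}\bigr)\phi_\alpha$ has only negative powers of $z$, while every nonzero element of $T_f\mcC$ has a nonzero non-negative part, membership forces it to vanish, and its $a=0$ component is exactly~\eqref{eq:basic vector TRR} (the general $a$ being the consequence~\eqref{eq:vector TRR1}).

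Finally, the two directions of the equivalence are assembled from these computations: assuming~\eqref{eq:vector string equation}--\eqref{eq:important vector relation} one reads off conditions 1--3 (using the consequences~\eqref{eq:vector TRR1},~\eqref{eq:vector TRR2} where convenient), and conversely the coefficient comparisons above extract exactly~\eqref{eq:vector string equation}--\eqref{eq:important vector relation} from the geometry. The main obstacle I anticipate is making condition 3 precise in this formal, infinite-dimensional, Laurent-series setting---in particular checking that the ruling subspaces $zT_f\mcC$ genuinely lie on $\mcC$ rather than merely inside $T_f\mcC$, and controlling the bookkeeping of the two infinite families of coordinates together with the sign conventions---whereas the algebra reducing each condition to a coefficient identity is routine.
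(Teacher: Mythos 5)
Your proposal is correct and takes essentially the same route as the paper's proof: condition 1 is matched to the dilaton equation~\eqref{eq:vector dilaton equation} via the Euler/scaling vector field, condition 2 to the string equation~\eqref{eq:vector string equation} by expanding $z^{-1}f$ in the tangent basis, and condition 3 is split exactly as in the paper into $zT_f\mcC\subset T_f\mcC\Leftrightarrow$~\eqref{eq:important vector relation} together with constancy of the tangent space along the directions $z e_{\beta,b}$, which (given~\eqref{eq:important vector relation}) reduces to~\eqref{eq:basic vector TRR}. The only cosmetic difference is that you justify the vanishing of $D_{\beta,b}v_{\gamma,c}$ by observing it has only negative powers of $z$ while every nonzero tangent vector has a nonzero nonnegative part, whereas the paper phrases the same step as the vanishing of the Lie derivative of the $\frac{\d}{\d p^\alpha_a}$-coefficients of $e_{\gamma,c}$ along $z e_{\beta,b}$.
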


\smallskip

Note that, comparing to a similar result about Frobenius manifolds~\cite[Theorem~1]{Giv04}, we just drop the condition that the cone is Lagrangian and the condition that any tangent space~$L$ is tangent to $\mcC$ {\it exactly} along $zL$. Actually, the second of these two conditions follows automatically from the conditions of Theorem~\ref{theorem:flat F-manifold as a cone}. We will show it in~Lemma~\ref{lemma:last condition} after the proof of the theorem.

\smallskip

\begin{proof}[Proof of Theorem~\ref{theorem:flat F-manifold as a cone}]
Clearly, $\mcC$ is a cone with the vertex at the origin if and only if the vector field $\sum_{a\ge 0}q^\alpha_a\frac{\d}{\d q^\alpha_a}+\sum_{a\ge 0}p^\alpha_a\frac{\d}{\d p^\alpha_a}$ is tangent to $\mcC$, which is equivalent to equation~\eqref{eq:vector dilaton equation}.\\

Assume now that equation~\eqref{eq:vector dilaton equation} is satisfied. Therefore, for any $f\in\mcC$ the tangent space $T_f\mcC$ passes through the origin. Since $\mcC$ is the graph of the collection of functions~$\mcF^{\alpha,a}$, we can lift the coordinate vector fields $\frac{\d}{\d q^\alpha_a}$ on $\mcH_+$ to vector fields~$e_{\alpha,a}$ on $\mcC$, which give a natural basis in the tangent spaces to $\mcC$. We have
$$
e_{\alpha,a}=\frac{\d}{\d q^\alpha_a}+\sum_{b\ge 0}\frac{\d\mcF^{\beta,b}}{\d q^\alpha_a}\frac{\d}{\d p^\beta_b},\quad1\le\alpha\le N,\quad a\ge 0.
$$
Consider now a point $f=(p^\beta_b,q^\alpha_a)\in\mcC$. We have $z^{-1}f=(\tp^\beta_b,\tq^\alpha_a)$, where
$$
\tq^\alpha_a=q^\alpha_{a+1},\qquad 
\tp^\beta_b=
\begin{cases}
-q^\beta_0,&\text{if $b=0$},\\
-\mcF^{\beta,b-1},&\text{if $b\ge 1$}.
\end{cases}
$$
The condition $f\in zT_f\mcC\Leftrightarrow z^{-1}f\in T_f\mcC$ is satisfied if and only if the vector $$
\sum_{a\ge 0}q^\alpha_{a+1}\frac{\d}{\d q^\alpha_a}-q^\alpha_0\frac{\d}{\d p^\alpha_0}-\sum_{a\ge 0}\mcF^{\alpha,a}\frac{\d}{\d p^\alpha_{a+1}}\in T_f\mcH
$$
belongs to $T_f\mcC$. The last property is equivalent to the equation
$$
\sum_{a\ge 0}q^\alpha_{a+1}\frac{\d\mcF^{\beta,b}}{\d q^\alpha_a}=
\begin{cases}
-\mcF^{\beta,b-1},&\text{if $b\ge 1$},\\
-q^\beta_0,&\text{if $b=0$},
\end{cases}
$$
which is exactly equation~\eqref{eq:vector string equation}.\\

Let us assume that equations~\eqref{eq:vector string equation} and~\eqref{eq:vector dilaton equation} are satisfied. For a tangent space $T_f\mcC$, a basis in $z T_f\mcC$ is given by the vectors
$$
z e_{\alpha,a}=\frac{\d}{\d q^\alpha_{a+1}}-\frac{\d\mcF^{\beta,0}}{\d q^\alpha_a}\frac{\d}{\d q^\beta_0}-\sum_{b\ge 1}\frac{\d\mcF^{\beta,b}}{\d q^\alpha_a}\frac{\d}{\d p^\beta_{b-1}},\quad 1\le\alpha\le N,\quad a\ge 0.
$$
Thus,
\begin{gather*}
zT_f\mcC\subset T_f\mcC\Leftrightarrow ze_{\alpha,a}=e_{\alpha,a+1}-\frac{\d\mcF^{\mu,0}}{\d q^\alpha_a}e_{\mu,0}\Leftrightarrow \frac{\d\mcF^{\beta,b+1}}{\d q^\alpha_a}+\frac{\d\mcF^{\beta,b}}{\d q^\alpha_{a+1}}=\frac{\d\mcF^{\mu,0}}{\d q^\alpha_a}\frac{\d\mcF^{\beta,b}}{\d q^\mu_0},
\end{gather*}
where the last equation coincides with~\eqref{eq:important vector relation}.\\

Let us assume now that equations~\eqref{eq:vector string equation},~\eqref{eq:vector dilaton equation} and~\eqref{eq:important vector relation} are satisfied. We already know that for any point $f\in\mcC$ we have $f\in zT_f\mcC\subset T_f\mcC$. Therefore, the space $T_f\mcC$ is tangent to~$\mcC$ along~$zT_f\mcC$ if and only if the Lie derivative of the coefficient of $\frac{\d}{\d p^\alpha_a}$ in $e_{\gamma,c}$ along the vector field~$z e_{\beta,b}$ is zero. The last property is equivalent to the equation
$$
\left(\frac{\d}{\d q^\beta_{b+1}}-\frac{\d\mcF^{\mu,0}}{\d q^\beta_b}\frac{\d}{\d q^\mu_0}\right)\frac{\d\mcF^{\alpha,a}}{\d q^\gamma_c}=0.
$$
Since we have assumed that equation~\eqref{eq:important vector relation} is satisfied, the last equation is equivalent to equation~\eqref{eq:basic vector TRR}. The theorem is proved. 
\end{proof}

\smallskip

The cone $\mcC\subset\mcH$ obtained from a calibrated flat F-manifold by the construction described above will be called a {\it generalized Givental cone}. The generalized Givental cone corresponding to an ancesor calibration will be called an {\it ancestor cone}. The ancestor cone corresponding to the trivial flat F-manifold of dimension $N$ will be denoted by $\mcC^\triv_N$.

\smallskip

\begin{lemma}\label{lemma:last condition}
Consider functions $\mcF^{\alpha,a}$ satisfying equations~\eqref{eq:vector string equation}--\eqref{eq:important vector relation} and the associated cone $\mcC\subset\mcH$. Then each tangent space $L$ to the cone $\mcC$ is tangent to it exactly along~$zL$.
\end{lemma}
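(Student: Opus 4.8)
The plan is to unpack the phrase ``tangent exactly along $zL$'' into a set equality and then observe that the only new inclusion is supplied, essentially for free, by part~2 of Theorem~\ref{theorem:flat F-manifold as a cone}. Fix $f\in\mcC$ and put $L:=T_f\mcC$. Since $\mcC$ is a graph over $\mcH_+$ and, by \eqref{eq:vector dilaton equation}, every tangent space passes through the origin, all the tangent spaces $T_g\mcC$ may be regarded as linear subspaces of the single ambient space $\mcH$, and ``$L$ is tangent to $\mcC$ exactly along $zL$'' becomes the assertion
$$
\Sigma_L:=\{g\in\mcC\mid T_g\mcC=L\}=zL.
$$
I would therefore prove the two inclusions $zL\subseteq\Sigma_L$ and $\Sigma_L\subseteq zL$ separately.

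The inclusion $zL\subseteq\Sigma_L$ is exactly the content of part~3 of Theorem~\ref{theorem:flat F-manifold as a cone}, which states that $T_f\mcC$ is tangent to $\mcC$ along $zT_f\mcC$; in particular this already records that $zL\subseteq\mcC$ and that $T_g\mcC=L$ for every $g\in zL$, so nothing further is needed here. The reverse inclusion $\Sigma_L\subseteq zL$ is where ``exactly'' does its work, and it is the step the remark following Theorem~\ref{theorem:flat F-manifold as a cone} flags as automatic: given any $g\in\Sigma_L$, one has $g\in\mcC$ with $T_g\mcC=L$, so applying part~2 of Theorem~\ref{theorem:flat F-manifold as a cone} \emph{at the point} $g$ yields $g\in zT_g\mcC=zL$. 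As $g\in\Sigma_L$ was arbitrary this gives $\Sigma_L\subseteq zL$, and together with the first inclusion we conclude $\Sigma_L=zL$.

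I do not anticipate a substantive obstacle: the whole argument is the bookkeeping remark that part~2 of the theorem is a statement about \emph{every} point of $\mcC$, so it may be invoked at $g$ rather than at $f$, whereupon its conclusion $g\in zT_g\mcC$ degenerates to $g\in zL$ precisely on the locus where $T_g\mcC=L$. The only points requiring care are foundational rather than computational: one must fix once and for all that $T_f\mcC$ and $T_g\mcC$ are being compared as subspaces of the common $\mcH$, and that $zL$ genuinely sits inside $\mcC$ (so that $\Sigma_L$ is nonempty along the whole ruling), both of which are guaranteed by the graph structure and by parts~2 and~3 of Theorem~\ref{theorem:flat F-manifold as a cone}.
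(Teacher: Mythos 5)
Your argument is correct, and it is genuinely different from the paper's proof. You obtain the reverse inclusion $\Sigma_L\subseteq zL$ purely formally: if $T_g\mcC=L$, then part~2 of Theorem~\ref{theorem:flat F-manifold as a cone}, invoked at $g$ rather than at $f$, gives $g\in zT_g\mcC=zL$. Since part~2 was shown in the proof of that theorem to be equivalent to the string equation~\eqref{eq:vector string equation}, which is an identity of formal power series, your step is valid uniformly at every point of the formal germ $\mcC$ (i.e., functorially, over any test ring), which is exactly the reading needed for your set-theoretic formulation $\Sigma_L=zL$ to carry content --- note that the naive set of honest $\mbC$-points of $\mcC$ is essentially just the base point, so this functorial reading is not optional. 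The paper instead proves the ``exactly'' infinitesimally: the coefficients of $\frac{\d}{\d p^\beta_b}$ in the frame $e_{\alpha,a}$ are natural functions on the space of tangent spaces, and the computation $\left.\left(L_{e_{\alpha,0}}\frac{\d\mcF^{\beta,0}}{\d q^\un_0}\right)\right|_{q^\gamma_c=\delta_{c,0}t^\gamma_\orig-\delta_{c,1}A^\gamma}=\delta^\beta_\alpha$, together with the fact that the $e_{\alpha,0}$ span $L/zL$, shows that the tangent space varies with nondegenerate first derivative transverse to the ruling, so the tangency locus cannot exceed $zL$. Your route is shorter and more conceptual; the paper's transversality computation buys the sharper explicit fact that the Gauss map $g\mapsto T_g\mcC$ is immersive transverse to $zL$ at the base point, with the functions $\frac{\d\mcF^{\beta,0}}{\d q^\un_0}$ serving as transverse coordinates. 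One presentational caveat: your claim that part~3 ``already records'' $zL\subseteq\mcC$ and $T_g\mcC=L$ for all $g\in zL$ glosses the integration step from the differential form of condition~3 used in the theorem's proof (vanishing of the Lie derivatives along the vector fields $ze_{\beta,b}$) to the integrated pointwise statement along the ruling; however, the paper's own proof of the lemma equally takes this direction for granted and only addresses the ``exactly'' part, so your proposal is consistent with the paper's standard of rigor on that point.
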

\begin{proof}
The subspace $zL\subset L$ has codimension~$N$ and a basis in $L/zL$ is given by the vectors~$e_{\alpha,0}$. The coefficients of $\frac{\d}{\d p^\beta_b}$ in the vector fields $e_{\alpha,a}$ give natural functions on the space of tangent spaces to $\mcC$. Consider the coefficient $\Coef_{\d/\d p^\beta_0}e_{\un,0}=\frac{\d\mcF^{\beta,0}}{\d q^\un_0}$ and the Lie derivative $L_{e_{\alpha,0}}$ of it along the vector field $e_{\alpha,0}$. Since
$$
\left.\left(L_{e_{\alpha,0}}\frac{\d\mcF^{\beta,0}}{\d q^\un_0}\right)\right|_{q^\gamma_c=\delta_{c,0}t^\gamma_\orig-\delta_{c,1}A^\gamma}=\delta^\beta_\alpha,
$$
we see that any tangent space $L$ to $\mcC$ is tangent to it exactly along $zL$.
\end{proof}

\smallskip

Consider a map $M\in\GL(\mbC^N)$, $M\phi_{\alpha}=M^\beta_\alpha\phi_\beta$. It induces a linear map $\mcH\mapsto\mcH$ and, obviously, the conditions for a cone~$\mcC$ formulated in Theorem~\ref{theorem:flat F-manifold as a cone} are preserved by this map. It is easy to see that the transformation of the corresponding descendant vector potentials is described by formula~\eqref{eq:transformation of descendant potentials from linear changes}.

\smallskip

\subsection{$J$-function}

Consider a generalized Givental cone $\mcC\subset\mcH$. In the same way as for the case of Frobenius manifolds~\cite{Giv04}, one can introduce the $J$-function associated to~$\mcC$. For this we consider the intersection of the cone $\mcC$ with the affine space $-z\phi_\un+z\mcH_-$. Via the projection to $-z\phi_\un+\mbC^N$ along $\mcH_-$, the intersection becomes the graph of a function from $\mbC^N$ to $\mcH$ called the {\it $J$-function}:
$$
\mbC^N\ni t^\alpha\phi_\alpha\mapsto J(-z,t^*)=-z\phi_\un+t^\alpha\phi_\alpha+\sum_{j\ge 1}J_k(t^*)(-z)^{-k}.
$$
If $\omcF^a$ are the descendant vector potentials, corresponding to the cone $\mcC$, then, clearly, 
$$
J_k(t^*)=\left.\mcF^{\alpha,k-1}\phi_\alpha\right|_{t^*_{\ge 1}=0},\quad k\ge 1.
$$
The cone $\mcC$ is uniquely determined by its $J$-function. This fact can be derived, for example, from equation~\eqref{eq:important vector relation}.\\

The $J$-function of the cone $\mcC^\triv_N$ is 
$$
J(z,t^*)=\sum_{\alpha=1}^N ze^{t^\alpha/z}\phi_\alpha.
$$

\smallskip

\subsection{Generalized Givental group action}

Consider two groups
\begin{align*}
G_+:=&\left\{R(z)=\Id+\sum_{i\ge 1}R_i z^i\in\End(\mbC^N)[[z]]\right\},\\
G_-:=&\left\{S(z)=\Id+\sum_{i\ge 1}S_i z^{-i}\in\End(\mbC^N)[[z^{-1}]]\right\}.
\end{align*}
Let us call the groups $G_+$ and $G_-$ the {\it upper triangular} and the {\it lower triangular group}, respectively. If we ignore for a moment potential problems caused by infinite summation, we can say that the groups $G_+$ and $G_-$ act on the space $\mcH$ by the left multiplication. Moreover, the conditions for a cone~$\mcC$ formulated in Theorem~\ref{theorem:flat F-manifold as a cone} are preserved by these actions. Thus, we get $G_{\pm}$-actions on the space of calibrated flat F-manifolds. Let us analyze these actions more carefully.

\smallskip

\subsubsection{Upper triangular group}\label{subsubsection:upper triangular group}

\begin{proposition}\label{proposition:R-action is well-defined}
The $G_+$-action on the space of ancestor cones is well defined.
\end{proposition}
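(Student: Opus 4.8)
The plan is to make the $G_+$-action completely explicit on the level of descendant vector potentials and then to isolate exactly the infinite summations that must be controlled. Given an ancestor cone $\mcC$ with descendant vector potentials $\mcF^{\alpha,a}$, a general point of $\mcC$ is $f=\sum_{d\ge 0}q^\alpha_d\phi_\alpha z^d+\sum_{a\ge 0}(-1)^{a+1}\mcF^{\alpha,a}\phi_\alpha z^{-a-1}$, and I would compute $R(z)f$ and read off the coordinates of the image. Multiplying the nonnegative part by $R(z)=\Id+\sum_{i\ge 1}R_iz^i$ produces only finite sums, so the sole danger comes from the positive powers $R_iz^i$ hitting the strictly negative ($p$) part. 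Extracting the coefficient of $z^{-m-1}$ gives the transformed value $\tp^\alpha_m=\sum_{i\ge 0}(-1)^i(R_i)^\alpha_\mu\mcF^{\mu,m+i}$, and extracting the coefficient of $z^k$, $k\ge 0$, gives $\tq^\alpha_k=\sum_{i=0}^k(R_i)^\alpha_\mu q^\mu_{k-i}+\sum_{j\ge 0}(-1)^{j+1}(R_{k+1+j})^\alpha_\mu\mcF^{\mu,j}$. Both expressions contain an infinite sum over a descendant index, and this is precisely the obstruction flagged before the statement.

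The heart of the proposition, and the main obstacle, is to show that these sums are well-defined formal power series. Here I would invoke Lemma~\ref{lemma:technical property of ancestor potentials}: with $\deg t^\beta_b=b-1$ one has $\deg\mcF^{\mu,m+i}\le-(m+i)-2$, so the summand $(R_i)^\alpha_\mu\mcF^{\mu,m+i}$ consists only of monomials of degree at most $-(m+i)-2$. Fixing any monomial in the variables $t^*_*$, say of degree $D$, it can occur in $\mcF^{\mu,m+i}$ only when $D\le-(m+i)-2$, that is for at most finitely many values of $i$; the same estimate governs the tail in $\tq^\alpha_k$. Hence every coefficient of every monomial receives only finitely many contributions, and $\tp^\alpha_m$ and $\tq^\alpha_k$ are genuine elements of the relevant formal power series ring. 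This is the exact mechanism by which the degree estimate of Lemma~\ref{lemma:technical property of ancestor potentials} repairs the infinite-summation problem.

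It then remains to see that the image $R(z)\mcC$ is again an ancestor cone. For the three properties of Theorem~\ref{theorem:flat F-manifold as a cone} I would argue structurally rather than recompute: $R(z)$ is a linear map commuting with multiplication by $z$, so once it is shown to be well-defined on $\mcC$ and on its tangent spaces (the same degree estimate applied to the basis fields $e_{\alpha,a}$ and $ze_{\alpha,a}$), it transports the vertex at the origin, the relation $f\in zT_f\mcC$, and the tangency of $T_f\mcC$ along $zT_f\mcC$ directly to $R(z)\mcC$. For the ancestor normalization I would track degrees: the transformed potential $\tp^\alpha_m$ has degree at most $-m-2$, and the change of variables $q\mapsto\tq$ has the form $\tq^\alpha_k=q^\alpha_k+(\text{strictly lower degree})$, whose linear part is unitriangular in the index $k$ (the potentials $\mcF^{\mu,j}$ carry no linear term, since those would have degree at least $-1$), hence invertible; thus it is a formal diffeomorphism whose inverse has the same shape, and substituting preserves the estimate, giving $\deg\tmcF^{\alpha,a}\le-a-2$.

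Finally I would treat the one genuinely delicate point. The base point $-z\phi_\un$ is sent to $-z\phi_\un-\sum_{i\ge 1}z^{i+1}R_i\phi_\un$, so the old origin lands at a point with $\tq^\alpha_0=0$ and $\tq^\alpha_1=-A^\alpha$ unchanged (in particular the unit is preserved), but with $\tq^\alpha_k=-(R_{k-1})^\alpha_\mu A^\mu$, $k\ge 2$, equal to nonzero constants. Passing from this point to the true origin $\tq^*_{\ge 2}=0$ is a shift of the variables $t^*_{\ge 2}$ by constants, under which the estimate $\deg\tmcF^{\alpha,a}\le-a-2$ is preserved by Remark~\ref{remark:polynomiality of ancestor potentials}, the polynomial dependence on $t^*_{\ge 2}$ making the re-expansion legitimate. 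Since this degree bound forces $\tmcF^{\alpha,a}$ to contain neither a constant nor a linear term, the transformed cone satisfies the ancestor normalization~\eqref{eq:ancestor vector potential}, so $R(z)\mcC$ is an ancestor cone and the $G_+$-action is well defined.
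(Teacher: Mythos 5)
Your proof is correct and follows essentially the same route as the paper's: the explicit transformation formulas for $\tq^\alpha_a$ and $\tmcF^{\alpha,a}$ you derive coincide with~\eqref{eq:R-transformation of q} and~\eqref{eq:R-transformation of mcF}, and you control the infinite sums and the change of variables via the degree bound of Lemma~\ref{lemma:technical property of ancestor potentials}, exactly as in the paper. Your handling of the shifted base point through Remark~\ref{remark:polynomiality of ancestor potentials}, forcing the ancestor normalization~\eqref{eq:ancestor vector potential}, is also the paper's argument, with your unitriangularity remark a harmless elaboration.
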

\begin{proof}
Let $\mcC\subset\mcH$ be the cone corresponding to a collection $\omcF^a$ of ancestor vector potentials. Consider a point 
$$
f(z)=\sum_{a\ge 0}\frac{(-1)^{a+1}\mcF^{\alpha,a}\phi_\alpha}{z^{a+1}}+\sum_{a\ge 0}q^\alpha_a\phi_\alpha z^a\in\mcC.
$$
For $R(z)\in G_+$, we have 
$$
R(z)f(z)=\sum_{a\ge 0}\frac{(-1)^{a+1}\tmcF^{\alpha,a}\phi_\alpha}{z^{a+1}}+\sum_{a\ge 0}\tq^\alpha_a\phi_\alpha z^a,
$$
where
\begin{align}
\tq^\alpha_a=&q^\alpha_a+\sum_{i=1}^a(R_i)^\alpha_\mu q^\mu_{a-i}+\sum_{i\ge a+1}(R_i)^\alpha_\mu(-1)^{i-a}\mcF^{\mu,i-a-1},&& a\ge 0,\label{eq:R-transformation of q}\\
\tmcF^{\alpha,a}=&\mcF^{\alpha,a}+\sum_{i\ge 1}(-1)^i(R_i)^\alpha_\mu\mcF^{\mu,a+i},&& a\ge 0.\label{eq:R-transformation of mcF}
\end{align}
Lemma~\ref{lemma:technical property of ancestor potentials} implies that the infinite sums on the right-hand sides of these two equations are well defined. Lemma~\ref{lemma:technical property of ancestor potentials} also implies that the transformation $q^\alpha_a\mapsto\tq^\alpha_a$ can be considered as a change of variables. Therefore, we can express the functions $\tmcF^{\alpha,a}(q^*_*)$ as functions of the variables $\tq^\beta_b$, $\tmcF^{\alpha,a}(q^*_*)=\hmcF^{\alpha,a}(\tq^*_*)$. Note that the function $\hmcF^{\alpha,a}$ is a formal function around the point 
$$
R(z)(-\phi_\un z)=-\phi_\un z-\sum_{i\ge 1}(R_i)^\mu_\un\phi_\mu z^{i+1}\in\mcH_+,
$$
but, by Remark~\ref{remark:polynomiality of ancestor potentials}, it can be expressed as a formal function around the point $-\phi_\un z$.\\

It remains to check that the descendant vector potentials $(\hmcF^{1,a},\ldots,\hmcF^{N,a})$ are ancestor. Let us express the functions $\hmcF^{\alpha,a}$ as formal power series in the variables $\tt^\beta_b=\tq^\beta_b+A^\beta\delta_{b,1}$. Formulas~\eqref{eq:R-transformation of q} and~\eqref{eq:R-transformation of mcF} imply that property~\eqref{eq:degree of Falphaa} holds for the functions $\hmcF^{\alpha,a}$. Therefore, $\left.\frac{\d\hmcF^{\alpha,a}}{\d\tt^\beta_b}\right|_{\tt^*_*=0}=0$ and the descendant vector potentials $(\hmcF^{1,a},\ldots,\hmcF^{N,a})$ are ancestor.
\end{proof}

\smallskip

Consider a collection of ancestor vector potentials $\omcF^a$ and the corresponding cone $\mcC\subset\mcH$. For $R(z)\in G_+$, denote by $\overline{R(z).\mcF}^a=\left((R(z).\mcF)^{1,a},\ldots,(R(z).\mcF)^{N,a}\right)$ the ancestor vector potentials, corresponding to the cone $R(z)\mcC\subset\mcH$. Recall that we use the convention
$$
\mcF^{\alpha,a}=(-1)^{a+1}q^\alpha_{-a-1},\quad\text{if $a<0$}.
$$

\smallskip

\begin{proposition}\label{proposition:infinitesimal R-action}
The infinitesimal action of the group $G_+$ on the space of ancestor vector potentials is given by
\begin{gather}\label{eq:infinitesimal R-action}
\left.\frac{d}{d\eps}(e^{\eps r(z)}.\mcF)^{\alpha,a}\right|_{\eps=0}=\sum_{i\ge 1}(-1)^i(r_i)^\alpha_\mu\mcF^{\mu,a+i}+\sum_{i\ge 1,\,j\ge 0}(-1)^{i-j-1}(r_i)^\mu_\nu\frac{\d\mcF^{\alpha,a}}{\d t^\mu_j}\mcF^{\nu,i-j-1},\quad a\ge 0,
\end{gather}
where $r(z)=\sum_{i\ge 1}r_i z^i$, $r_i\in\End(\mbC^N)$.
\end{proposition}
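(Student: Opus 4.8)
The plan is to linearize the finite $G_+$-action, which in the proof of Proposition~\ref{proposition:R-action is well-defined} is encoded by formulas~\eqref{eq:R-transformation of q} and~\eqref{eq:R-transformation of mcF}. First I would substitute $R(z)=e^{\eps r(z)}$, so that $R_i=\eps\,r_i+O(\eps^2)$, and extract the two first-order ingredients. On one hand, the transported potential, expressed in the \emph{old} variables $q$, reads $\tmcF^{\alpha,a}=\mcF^{\alpha,a}+\eps\sum_{i\ge1}(-1)^i(r_i)^\alpha_\mu\mcF^{\mu,a+i}+O(\eps^2)$. On the other hand, the accompanying change of variables is $\tq^\beta_b=q^\beta_b+\eps\,\delta q^\beta_b+O(\eps^2)$ with $\delta q^\beta_b=\sum_{i=1}^b(r_i)^\beta_\mu q^\mu_{b-i}+\sum_{i\ge b+1}(r_i)^\beta_\mu(-1)^{i-b}\mcF^{\mu,i-b-1}$. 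That all these sums are well defined is guaranteed, exactly as in Proposition~\ref{proposition:R-action is well-defined}, by the degree bound of Lemma~\ref{lemma:technical property of ancestor potentials}.

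The crucial point is that $(e^{\eps r(z)}.\mcF)^{\alpha,a}=\hmcF^{\alpha,a}$ denotes the transported potential written in the \emph{new} coordinates $\tq$, so that the two are related by $\hmcF^{\alpha,a}(\tq(q))=\tmcF^{\alpha,a}(q)$. I would differentiate this identity in $\eps$ at a fixed $q$ and evaluate at $\eps=0$, where $\tq(q)=q$ and $\hmcF^{\alpha,a}=\mcF^{\alpha,a}$. The chain rule then yields $\frac{d}{d\eps}\hmcF^{\alpha,a}\big|_{\eps=0}=\sum_{i\ge1}(-1)^i(r_i)^\alpha_\mu\mcF^{\mu,a+i}-\sum_{b\ge0}\frac{\d\mcF^{\alpha,a}}{\d q^\beta_b}\,\delta q^\beta_b$, where $\beta$ is summed by Einstein's convention. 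The first term is already the first sum appearing in~\eqref{eq:infinitesimal R-action}.

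It then remains to rewrite the correction $-\sum_{b\ge0}\frac{\d\mcF^{\alpha,a}}{\d q^\beta_b}\,\delta q^\beta_b$ as the second sum in~\eqref{eq:infinitesimal R-action}. Here I would use the convention $\mcF^{\mu,m}=(-1)^{m+1}q^\mu_{-m-1}$ for $m<0$: for $1\le i\le b$ the index $i-b-1$ is negative and $q^\mu_{b-i}=(-1)^{i-b}\mcF^{\mu,i-b-1}$, so the $q$-linear part of $\delta q^\beta_b$ and its $\mcF$-part combine into the single sum $\sum_{i\ge1}(r_i)^\beta_\mu(-1)^{i-b}\mcF^{\mu,i-b-1}$. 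Relabelling $\beta\mapsto\mu$, $b\mapsto j$, $\mu\mapsto\nu$, using $\frac{\d}{\d q^\mu_j}=\frac{\d}{\d t^\mu_j}$, and absorbing the overall minus sign via $-(-1)^{i-j}=(-1)^{i-j-1}$, the correction becomes exactly $\sum_{i\ge1,\,j\ge0}(-1)^{i-j-1}(r_i)^\mu_\nu\frac{\d\mcF^{\alpha,a}}{\d t^\mu_j}\mcF^{\nu,i-j-1}$, which is the second sum in~\eqref{eq:infinitesimal R-action}.

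I expect the main obstacle to be precisely the bookkeeping of this last step rather than any conceptual difficulty: one must carefully separate the two sources of $\eps$-dependence in $\hmcF^{\alpha,a}$ (the transported potential and the implicit coordinate change), and it is the sign convention for negative descendants that lets the two structurally distinct pieces of $\delta q^\beta_b$ be written uniformly. Everything else is a routine first-order expansion of~\eqref{eq:R-transformation of q}--\eqref{eq:R-transformation of mcF}.
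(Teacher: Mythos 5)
Your proposal is correct and follows essentially the same route as the paper: the paper's proof consists precisely of writing the identity $\left.e^{\eps r(z)}[\mcF]^{\alpha,a}\right|_{q^\beta_b\mapsto\tq^\beta_b}=\tmcF^{\alpha,a}$ from the proof of Proposition~\ref{proposition:R-action is well-defined} with $R(z)=e^{\eps r(z)}$ and differentiating at $\eps=0$, which is exactly your chain-rule computation. Your added bookkeeping — the first-order expansion of~\eqref{eq:R-transformation of q}--\eqref{eq:R-transformation of mcF}, the use of the convention $\mcF^{\mu,m}=(-1)^{m+1}q^\mu_{-m-1}$ for $m<0$ to merge the two pieces of $\delta q^\beta_b$, and the sign and index relabelling — is precisely what the paper leaves implicit, and all of it checks out.
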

\begin{proof}
We have
$$
\left.e^{\eps r(z)}[\mcF]^{\alpha,a}\right|_{q^\beta_b\mapsto\tq^\beta_b}=\tmcF^{\alpha,a},
$$
where $\tq^\beta_b$ and $\tmcF^{\alpha,a}$ are given by formulas~\eqref{eq:R-transformation of q} and~\eqref{eq:R-transformation of mcF} with $R(z)=e^{\eps r(z)}$. Differentiating both sides of this equation with respect to  $\eps$ and setting $\eps=0$, we get formula~\eqref{eq:infinitesimal R-action}.
\end{proof}

\smallskip

\subsubsection{Lower triangular group}\label{subsubsection:lower triangular group}

Consider a flat F-manifold with a vector potential $\oF$, $F^\alpha\in\mcR^{\ot_\orig}$, its calibration given by matrices $\Omega^p_0$, and the associated descendant vector potentials~$\omcF^a$, $\mcF^{\alpha,a}\in\mcR^{\ot_\orig+\oc}[[t^*_{\ge 1}]]$, where $c^\alpha=\Omega^{\alpha,0}_{\un,0}-t^\alpha$. Let $\mcC\subset\mcH$ be the associated cone.

\smallskip

\begin{proposition}\label{proposition:lower triangular group action}
Consider an arbitrary element $S(z)=\Id+\sum_{i\ge 1}S_i z^{-i}\in G_-$ and let $\log S(z)=s(z)=\sum_{i\ge 1}s_i z^{-i}$.\\
1. The cone $S(z)\mcC$ is well defined. It corresponds to a collection of descendant vector potentials, which we denote by $\overline{S(z).\mcF}^a=\left((S(z).\mcF)^{1,a},\ldots,(S(z).\mcF)^{N,a}\right)$, with $(S(z).\mcF)^{\alpha,a}\in\mcR^{\ot_\orig+\oc-S_1\phi_\un}[[t^*_{\ge 1}]]$ given by
\begin{gather}\label{eq:S-action on mcF}
(S(z).\mcF)^{\alpha,a}=e^{-\widehat{s(z)}}\left(\mcF^{\alpha,a}+\sum_{i=1}^a(-1)^i(S_i)^\alpha_\mu\mcF^{\mu,a-i}+(-1)^{a+1}\sum_{i\ge a+1}(S_i)^\alpha_\mu q^\mu_{i-a-1}\right),
\end{gather}
where $\widehat{s(z)}=\sum_{\substack{i\ge 1\\j\ge 0}}(s_i)^\alpha_\beta q^\beta_{i+j}\frac{\d}{\d q^\alpha_j}$.\\
2. The descendant vector potentials $\overline{S(z).\mcF}^a$ correspond to the same flat F-manifold, but with the different calibration given by the matrices $\tOmega^d_0$ defined by
$$
\Id+\sum_{d\ge 1}\tOmega^{d-1}_0 z^d=S(-z^{-1})\left(\Id+\sum_{d\ge 1}\Omega^{d-1}_0 z^d\right).
$$
\end{proposition}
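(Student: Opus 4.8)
The plan is to prove Part~1 by an explicit computation of the left multiplication by $S(z)$ on a generic point of $\mcC$, followed by the change of coordinates it induces, and to deduce Part~2 from the transformation law of the calibration generating series. First I would take a generic point
$$
f(z)=\sum_{a\ge0}\frac{(-1)^{a+1}\mcF^{\alpha,a}\phi_\alpha}{z^{a+1}}+\sum_{a\ge0}q^\alpha_a\phi_\alpha z^a\in\mcC
$$
and expand $S(z)f(z)=f(z)+\sum_{i\ge1}S_iz^{-i}f(z)$ in powers of $z$. The coefficient of $z^{-a-1}$ yields $(-1)^{a+1}$ times the bracketed expression in~\eqref{eq:S-action on mcF}, the finite sum $\sum_{i=1}^a$ arising from the negative part of $f$ and the tail $\sum_{i\ge a+1}$ from its positive part, while the coefficient of $z^a$ gives the new positive coordinates $\widetilde q^\alpha_a=q^\alpha_a+\sum_{i\ge1}(S_i)^\alpha_\mu q^\mu_{a+i}$. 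The key observation is that this last substitution is the time-one flow of $\widehat{s(z)}$: a direct check gives $\widehat{s(z)}q^\alpha_a=\sum_{i\ge1}(s_i)^\alpha_\mu q^\mu_{a+i}$, so that $S(z)=e^{s(z)}$ forces $e^{\widehat{s(z)}}q^\alpha_a=\widetilde q^\alpha_a$. Since a change of variables $\widetilde q=e^{\widehat{s(z)}}q$ transforms the coordinate expression of any function by the inverse operator $e^{-\widehat{s(z)}}$, and $(S(z).\mcF)^{\alpha,a}$ is exactly the negative coefficient rewritten in the variables $\widetilde q$, this produces formula~\eqref{eq:S-action on mcF}.

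The delicate point of Part~1 is well-definedness, which I would settle by grading monomials according to $\sum b$ over their factors $q^\beta_b$ with $b\ge1$. Every term of $\widehat{s(z)}$, and hence $e^{-\widehat{s(z)}}$, raises this grading by at least one, and each graded component involves only finitely many monomials in the variables $t^*_{\ge1}$; therefore the operator $e^{-\widehat{s(z)}}$ and all the infinite sums over $i$ converge termwise in $\mcR^{\ot_\orig+\oc-S_1\phi_\un}[[t^*_{\ge1}]]$. Tracking the base point, the dilaton shift survives because $\widetilde q^\alpha_1=-A^\alpha$, whereas the origin in the $t_0$-directions moves by $-S_1\phi_\un$ since $\widetilde q^\alpha_0=q^\alpha_0+(S_1)^\alpha_\mu q^\mu_1+\cdots$ evaluates to $(\ot_\orig+\oc)^\alpha-(S_1\phi_\un)^\alpha$ at the base point. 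As the $G_-$-action preserves the three conditions of Theorem~\ref{theorem:flat F-manifold as a cone}, the cone $S(z)\mcC$ is again a generalized Givental cone and $\overline{S(z).\mcF}^a$ is a genuine collection of descendant vector potentials.

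For Part~2, I would recover the calibration of the transformed manifold and compare it with the original one. Writing $M_0(z):=\Id+\sum_{d\ge1}\Omega^{d-1}_0z^d$, property~\eqref{eq:property of a calibration} is equivalent to $dM_0(z)=zM_0(z)\,d\Omega^0_0$, where $(d\Omega^0_0)^\alpha_\beta=c^\alpha_{\beta\gamma}dt^\gamma$ encodes the product $\circ$. Because $S(-z^{-1})=\Id+\sum_{i\ge1}(-1)^iS_iz^i$ is independent of $t$ and equals $\Id$ at $z=0$, the series $S(-z^{-1})M_0(z)$ solves the \emph{same} equation with the \emph{same} matrix $d\Omega^0_0$; it is thus a calibration of the very same flat F-manifold, related to $M_0(z)$ by the admissible transformation~\eqref{eq:transformation of calibration} with $G(z)=S(-z^{-1})$. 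It then remains to read the calibration of $\overline{S(z).\mcF}^a$ off formula~\eqref{eq:S-action on mcF}, via $\tOmega^{\alpha,d}_{\beta,0}=\frac{\d(S(z).\mcF)^{\alpha,d}}{\d\tt^\beta_0}\big|_{\tt^*_{\ge1}=0}$, and to check it agrees with $S(-z^{-1})M_0(z)$. The main obstacle is precisely this last bookkeeping: keeping straight the signs $(-1)^i$, the argument $-z^{-1}$ versus $z$, and the interplay of $e^{-\widehat{s(z)}}$ with the restriction $\tt^*_{\ge1}=0$ is where a computation is most likely to go wrong, everything else being controlled by the termwise convergence secured through the index grading.
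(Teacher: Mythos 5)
Your proposal is correct. Part 1 is essentially the paper's own argument: the paper makes the identical expansion of $S(z)f(z)$, identifies $(q')^\alpha_a=e^{\widehat{s(z)}}q^\alpha_a$, and obtains \eqref{eq:S-action on mcF} by rewriting in the new variables; you merely add the convergence bookkeeping the paper leaves implicit. One caution on that bookkeeping: the piece of $\widehat{s(z)}$ produced by the dilaton shift, $-(s_1)^\alpha_\mu A^\mu\frac{\d}{\d t^\alpha_0}$, does \emph{not} raise your grading by one --- it is a constant vector field whose exponential is exactly the base-point shift by $-S_1\phi_\un$, so the grading argument controls $e^{-\widehat{s(z)}}$ only after this degree-zero part is split off and absorbed into the new expansion point, which your base-point tracking implicitly accomplishes. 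Part 2 is where you take a genuinely different route. The paper works forwards from the new calibration: writing $S(z)^{-1}=\Id+\sum_{i\ge1}\tS_iz^{-i}$, it computes $\tOmega^0_p=\sum_{i=0}^{p+1}\Omega^0_{p-i}\tS_i$, shows the topological solution transforms as $(\tv^\top)^\alpha=e^{-\widehat{s(z)}}(v^\top)^\alpha$, evaluates $\tmcF^{\alpha,0}=\sum_{b\ge0}q^\beta_b(\tOmega^\top)^{\alpha,0}_{\beta,b}$ by a resummation, matches it against $(S(z).\mcF)^{\alpha,0}$ from Part 1, and notes that agreement at $a=0$ forces agreement for all $a$. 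You instead work backwards through the bijection underlying Proposition~\ref{proposition:descendant vector potentials,equivalent approach}, reading the calibration off the transformed potentials at $\tt^*_{\ge1}=0$. This is legitimate and shorter, and the verification you defer does close: differentiating \eqref{eq:S-action on mcF} in $\tt^\beta_0$, only the $i=a+1$ term of the linear tail contains $q^\mu_0$ and contributes $(-1)^{a+1}(S_{a+1})^\alpha_\beta$, while $e^{-\widehat{s(z)}}$ restricted to $\tt^*_{\ge1}=0$ reduces to the substitution $t^\mu_0\mapsto\tt^\mu_0+(S_1\phi_\un)^\mu$; the result is $\Omega^a_0+\sum_{i=1}^a(-1)^iS_i\Omega^{a-i}_0+(-1)^{a+1}S_{a+1}$ evaluated at the shifted point, which is precisely the coefficient of $z^{a+1}$ in $S(-z^{-1})\left(\Id+\sum_{d\ge1}\Omega^{d-1}_0z^d\right)$ with the shift prescribed by Remark~\ref{remark:about shifts}. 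What the paper's route buys is a direct check that the whole descendant construction (principal hierarchy and topological solution) is equivariant under $G_-$; what yours buys is economy --- a one-line differentiation in place of the $\tS_i$-resummation --- at the price of leaning on the uniqueness half of the correspondence between descendant vector potentials and calibrated flat F-manifolds, which you should invoke explicitly when writing this up.
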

\begin{proof}
1. For a point 
$$
f(z)=\sum_{a\ge 0}\frac{(-1)^{a+1}\mcF^{\alpha,a}\phi_\alpha}{z^{a+1}}+\sum_{a\ge 0}q^\alpha_a\phi_\alpha z^a\in\mcC,
$$
we have
$$
S(z)f(z)=\sum_{a\ge 0}\frac{(-1)^{a+1}(\mcF')^{\alpha,a}\phi_\alpha}{z^{a+1}}+\sum_{a\ge 0}(q')^\alpha_a\phi_\alpha z^a,
$$
where
\begin{align*}
(q')^\alpha_a=&q^\alpha_a+\sum_{i\ge 1}(S_i)^\alpha_\mu q^\mu_{a+i}=e^{\widehat{s(z)}} q^\alpha_a,&& a\ge 0,\\
(\mcF')^{\alpha,a}=&\mcF^{\alpha,a}+\sum_{i=1}^a(-1)^i(S_i)^\alpha_\mu\mcF^{\mu,a-i}+(-1)^{a+1}\sum_{i\ge a+1}(S_i)^\alpha_\mu q^\mu_{i-a-1},&& a\ge 0.
\end{align*}
Expressing the function $(\mcF')^{\alpha,a}$ as a function of the variables $(q')^{\beta}_b$, we get formula~\eqref{eq:S-action on mcF}. This proves Part 1 of the proposition.\\

2. Denote $S(z)^{-1}=\Id+\sum_{i\ge 1}\tS_i z^{-i}$. Then we have $\tOmega^0_p=\sum_{i=0}^{p+1}\Omega^0_{p-i}\tS_i$, $p\ge -1$. From this, it is easy to see that the topological solution $(\tv^\top)^\alpha$ and the matrices $(\tOmega^\top)^0_b$ corresponding to the new calibration are given by
$$
(\tv^\top)^\alpha=e^{-\widehat{s(z)}}(v^\top)^\alpha,\qquad (\tOmega^\top)^0_b=e^{-\widehat{s(z)}}\left(\sum_{i=0}^{b+1}(\Omega^\top)^0_{b-i}\tS_i\right).
$$
Then for the descendant vector potentials $(\tmcF^{1,a},\ldots,\tmcF^{N,a})$ corresponding to the new calibration we get
\begin{align*}
\tmcF^{\alpha,0}=&\sum_{b\ge 0}q^\beta_b(\tOmega^\top)^{\alpha,0}_{\beta,b}=\sum_{b\ge 0}q^\beta_b e^{-\widehat{s(z)}}\left(\sum_{i=0}^{b+1}(\Omega^\top)^{\alpha,0}_{\mu,b-i}(\tS_i)^\mu_\beta\right)=\\
=&e^{-\widehat{s(z)}}\left(\sum_{b,j\ge 0}(S_j)^\beta_\nu q^\nu_{b+j}\sum_{i=0}^{b+1}(\Omega^\top)^{\alpha,0}_{\mu,b-i}(\tS_i)^\mu_\beta\right)=
\end{align*}
\begin{align*}
=&e^{-\widehat{s(z)}}\left(\sum_{i,j,l\ge 0}(\Omega^\top)^{\alpha,0}_{\mu,l}(\tS_i)^\mu_\beta(S_j)^\beta_\nu q^\nu_{i+j+l}+\sum_{b,j\ge 0}(\tS_{b+1})^\alpha_\beta(S_j)^\beta_\nu q^\nu_{b+j}\right)=\\
=&e^{-\widehat{s(z)}}\left(\mcF^{\alpha,0}-\sum_{l\ge 0}(S_{l+1})^\alpha_\nu q^\nu_l\right)=(S(z).\mcF)^{\alpha,0}.
\end{align*}
Knowing that $\tmcF^{\alpha,0}=(S(z).\mcF)^{\alpha,0}$ is enough to conclude that $\tmcF^{\alpha,a}=(S(z).\mcF)^{\alpha,a}$ for all $a\ge 0$. This completes the proof of part 2 of the proposition. 
\end{proof}

\smallskip

\begin{proposition}\label{proposition:all from ancestor}
Any generalized Givental cone can be obtained from some ancestor cone by the action of an element from the group~$G_-$. 
\end{proposition}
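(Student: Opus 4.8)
The plan is to exhibit the given cone as the image, under a suitable element of $G_-$, of the ancestor cone attached to its own underlying flat F-manifold; the point is that, by Proposition~\ref{proposition:lower triangular group action}, the $G_-$-action on such cones is nothing but a change of calibration. So let $\mcC$ be a generalized Givental cone coming from a calibrated flat F-manifold with vector potential $\oF$ and calibration $\{\Omega^p_0\}$, and let $p=\ot_\orig+\oc$ be the point around which its descendant vector potentials are expanded. First I would forget the calibration and keep only the flat F-manifold; centring it at $p$ (so that, after the translation normalising $\ot_\orig=0$, its origin becomes the expansion point) and equipping it with its unique ancestor calibration produces an ancestor cone, which I call $\mcC_0$.

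The core step is to manufacture the element $S(z)\in G_-$ with $S(z)\mcC_0=\mcC$. By Proposition~\ref{proposition:lower triangular group action}(2), acting by $S(z)$ leaves the flat F-manifold unchanged and replaces the calibration generating series $\Id+\sum_{d\ge1}\check\Omega^{d-1}_0z^d$ of $\mcC_0$ by $S(-z^{-1})\bigl(\Id+\sum_{d\ge1}\check\Omega^{d-1}_0z^d\bigr)$. Any two calibrations of a fixed flat F-manifold differ exactly by a transformation \eqref{eq:transformation of calibration} with a constant series $G(z)\in\End(\mbC^N)[[z]]$, $G(0)=\Id$; and the assignment $S(z)\mapsto S(-z^{-1})=\Id+\sum_{i\ge1}(-1)^iS_iz^i$ is a bijection from $G_-$ onto the set of all such $G(z)$. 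Hence there is a unique $S(z)\in G_-$ for which the transformed series equals the calibration series $\Id+\sum_{d\ge1}\Omega^{d-1}_0z^d$ of $\mcC$, and for this $S(z)$ the cone $S(z)\mcC_0$ shares both its flat F-manifold and its calibration with $\mcC$.

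What is left is to match the expansion points, and this is where I expect the only genuine friction. By Proposition~\ref{proposition:lower triangular group action}(1) the cone $S(z)\mcC_0$ is expanded around the point obtained from the origin of $\mcC_0$ by the shift $-S_1\phi_\un$, and I would check that this equals $p$ automatically. Reading off the $z^1$-coefficient of the calibration identity, with $G(z)=S(-z^{-1})=\Id+G_1z+\cdots$ one gets $\Omega^0_0=G_1+\check\Omega^0_0$; contracting with the unit $A^\beta$ and using $\check\Omega^{\alpha,0}_{\un,0}=t^\alpha$ for the ancestor calibration gives $\oc^\alpha=\Omega^{\alpha,0}_{\un,0}-t^\alpha=(G_1\phi_\un)^\alpha=-(S_1\phi_\un)^\alpha$, so the shift prescribed by part (1) is exactly $\oc$. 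Thus $S(z)\mcC_0$ and $\mcC$ have the same descendant vector potentials and therefore coincide.

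The main obstacle is precisely this interplay of the two parts of Proposition~\ref{proposition:lower triangular group action}: one must confirm that the single element $S(z)$ dictated by matching the calibrations also realises the correct shift of the expansion point. A related point requiring care is the preliminary normalisation $\ot_\orig=0$, since the $G_-$-action fixes the locus where the vector potential is formal and moves only the calibration-induced shift $\oc$; viewing $\mcC_0$ as the ancestor cone of the flat F-manifold re-centred at $p$ is what makes the two contributions to the expansion point add up to $p$.
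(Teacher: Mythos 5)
Your proof is correct and is essentially the paper's own argument run in the opposite direction: both rest entirely on Proposition~\ref{proposition:lower triangular group action}, with the $z^{-1}$-coefficient $S_1$ accounting for the shift of the expansion point and the remaining coefficients realizing the change of calibration, using that calibrations of a fixed flat F-manifold are unique up to a constant series $G(z)$. The paper carries $\mcC$ to an ancestor cone by composing two explicit elements of $G_-$ (first an $\Id+S_1z^{-1}$ killing $\ot_\orig+\oc$, then $\bigl(\Id+\sum_{j\ge 1}(-1)^jz^{-j}\Omega^{j-1}_0(0)\bigr)^{-1}$ making the calibration vanish at the origin), while you build the single inverse element $\mcC_0\to\mcC$ abstractly; since $G_-$ is a group the two formulations are equivalent, and your bookkeeping of the expansion point (after the harmless normalisation $\ot_\orig=0$, which is justified by Remark~\ref{remark:about shifts}) checks out.
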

\begin{proof}
Choosing a matrix $S_1$ such that $S_1\phi_\un=\ot_\orig+\oc$ and using Proposition~\ref{proposition:lower triangular group action}, we get
$$
((\Id+S_1z).\mcF)^{\alpha,a}\in\mcR^0[[t^*_{\ge 1}]].
$$
So, without loss of generality, we can assume that $\ot_\orig=\oc=0$. Using again Proposition~\ref{proposition:lower triangular group action}, we see that the cone
$$
\left(\Id+\sum_{j\ge 1}(-1)^jz^{-j}\Omega^{j-1}_0(0)\right)^{-1}\mcC
$$
is ancestor. The proposition is proved.
\end{proof}

\smallskip

Let $\oF=(F^1,\ldots,F^N)$, $F^\alpha\in\mbC[[t^1,\ldots,t^N]]$, be a vector potential of a flat F-manifold. Denote by $\mcC$ the associated ancestor cone and by $\Omega^j_{0}$ the matrices defining the ancestor calibration. Consider a family of vector potentials $\oF_{\otau}$, depending on formal parameters $\tau^1,\ldots,\tau^N$, $\otau=(\tau^1,\ldots,\tau^N)$, defined by
$$
F^\alpha_{\otau}:=F^\alpha|_{t^\beta\mapsto t^\beta+\tau^\beta}\in\mbC[[\tau^*]][[t^*]].
$$
Denote by $\mcC_{\otau}$ the associated family of ancestor cones.

\smallskip

\begin{lemma}\label{lemma:ancestor cones at shifted points}
We have
\begin{gather}\label{eq:ancestor cones at shifted points}
\mcC_{\otau}=\left(\Id+\sum_{j\ge 1}(-1)^j\Omega^{j-1}_0(\tau^*)z^{-j}\right)^{-1}\mcC.
\end{gather}
\end{lemma}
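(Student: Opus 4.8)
The plan is to prove the equivalent identity $T(z)\,\mcC_{\otau}=\mcC$, where $T(z):=\Id+\sum_{j\ge 1}(-1)^j\Omega^{j-1}_0(\tau^*)z^{-j}\in G_-$; inverting this relation then gives~\eqref{eq:ancestor cones at shifted points}. First I would compute the calibration of the cone $T(z)\mcC_{\otau}$ by means of Proposition~\ref{proposition:lower triangular group action}, Part~2. The cone $\mcC_{\otau}$ is by definition the ancestor cone of $\oF_{\otau}$, so its calibration is the ancestor calibration of $\oF_{\otau}$; write its generating series as $P_{\otau}(z;t):=\Id+\sum_{d\ge 1}\Omega^{d-1}_{0,\otau}(t)z^d$. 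Proposition~\ref{proposition:lower triangular group action} then says that the calibration of $T(z)\mcC_{\otau}$ has generating series $T(-z^{-1})\,P_{\otau}(z;t)$, and the substitution $z\mapsto -z^{-1}$ collapses the signs to give $T(-z^{-1})=\Id+\sum_{j\ge 1}\Omega^{j-1}_0(\tau^*)z^j$.

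The key computational step is to express the ancestor calibration of $\oF_{\otau}$ through that of $\oF$. The shifted matrices $\Omega^{d-1}_0(t+\tau)$ already form a calibration of $\oF_{\otau}$ (not the ancestor one): the recursion~\eqref{eq:property of a calibration} is preserved under $t\mapsto t+\tau$ because the structure constants of $\oF_{\otau}$ are exactly $c^\alpha_{\beta\gamma}(t+\tau)$. By the calibration ambiguity~\eqref{eq:transformation of calibration}, $P_{\otau}(z;t)=G(z)\bigl(\Id+\sum_{d\ge 1}\Omega^{d-1}_0(t+\tau)z^d\bigr)$ for a unique constant series $G(z)=\Id+O(z)$; evaluating at $t=0$, where the ancestor normalization forces $P_{\otau}(z;0)=\Id$, yields $G(z)=\bigl(\Id+\sum_{d\ge 1}\Omega^{d-1}_0(\tau^*)z^d\bigr)^{-1}$. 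Plugging this into $T(-z^{-1})P_{\otau}(z;t)$, the two outer factors cancel and the calibration of $T(z)\mcC_{\otau}$ becomes exactly $\Id+\sum_{d\ge 1}\Omega^{d-1}_0(t+\tau)z^d$. In other words, $T(z)\mcC_{\otau}$ is the cone attached to $\oF_{\otau}$ equipped with the shifted calibration $\{\Omega^{d-1}_0(t+\tau)\}$.

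It then remains to identify this last cone with $\mcC$, which I would do at the level of $J$-functions, since a generalized Givental cone is uniquely determined by its $J$-function. For the pair $(\oF_{\otau},\{\Omega^{d-1}_0(t+\tau)\})$ one has $\Omega^{\alpha,0}_{\un,0}(t+\tau)=t^\alpha+\tau^\alpha$, hence the shift constant is $\oc'=\tau$; therefore the internal shift in the descendant construction cancels the $\tau$, and the shifted calibration entering the construction is $\Omega^{\alpha,p}_{\beta,q}(t)$, now expanded around $t=\tau$. For $\mcC$ the corresponding object is the same function $\Omega^{\alpha,p}_{\beta,q}(t)$ expanded around $t=0$. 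Computing $J_k=\mcF^{\alpha,k-1}\phi_\alpha|_{t^*_{\ge 1}=0}$ in both cases gives the single analytic formula $J_k=\bigl(\Omega^{\alpha,k-1}_{\beta,0}(t)\,t^\beta-\Omega^{\alpha,k-1}_{\un,1}(t)\bigr)\phi_\alpha$, merely restricted to formal neighbourhoods of $t=0$ and $t=\tau$ respectively. Since these $J$-functions coincide as analytic maps, the two cones are equal, which gives $T(z)\mcC_{\otau}=\mcC$ and hence the lemma.

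The hard part will be this final identification: one must argue that the recentering from $t=0$ to $t=\tau$, induced by passing from $\oF$ to the shifted potential $\oF_{\otau}$, does not change the cone, i.e. that a generalized Givental cone depends only on the germ of the calibrated flat F-manifold and not on the flat-coordinate origin used to present it. The cleanest justification is the $J$-function comparison above, using that both cones are conical and that the reconstruction of a cone from its $J$-function is insensitive to the chosen expansion point. A secondary point requiring care is the bookkeeping of signs and of the substitution $z\mapsto -z^{-1}$ in Proposition~\ref{proposition:lower triangular group action}, which is what makes the outer factors in the calibration of $T(z)\mcC_{\otau}$ cancel exactly.
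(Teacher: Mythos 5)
Your argument is correct and is essentially the paper's own proof run in the opposite direction: the paper applies $S(z)=\bigl(\Id+\sum_{j\ge 1}(-1)^j\Omega^{j-1}_0(\tau^*)z^{-j}\bigr)^{-1}\in G_-$ directly to $\mcC$, reads off the transformed calibration from Proposition~\ref{proposition:lower triangular group action}, and recognizes it as the ancestor calibration of $\oF_{\otau}$ via Remark~\ref{remark:about shifts} together with the vanishing of that calibration at the shifted origin — exactly the inputs you use. Your only departures are cosmetic: you pin down the ancestor calibration of $\oF_{\otau}$ through the calibration ambiguity~\eqref{eq:transformation of calibration} normalized at $t^*=0$, and you re-derive the content of Remark~\ref{remark:about shifts} by the $J$-function comparison, relying on the same formal-$\otau$ identification of germs over $-\phi_\un z$ and $\pm\otau-\phi_\un z$ that the paper spells out right after the lemma statement.
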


\smallskip

According to Proposition~\ref{proposition:lower triangular group action}, the object on the right-hand side of equation~\eqref{eq:ancestor cones at shifted points} is a germ of a graph over the point $-\tau^\alpha\phi_\alpha-\phi_\un z\in\mcH_+$, while $\mcC_{\otau}$ is a germ of a graph over the point $-\phi_\un z\in\mcH_+$. However, since the parameters $\tau^\alpha$ are formal, the cone on the right-hand side can be considered as a germ of a graph over the point $-\phi_\un z\in\mcH_+$. 

\smallskip

\begin{proof}[Proof of Lemma~\ref{lemma:ancestor cones at shifted points}]
By Proposition~\ref{proposition:lower triangular group action}, the cone on the right-hand side of equation~\eqref{eq:ancestor cones at shifted points} corresponds to the same flat F-manifold given by the vector potential $\oF$ together with the calibration
$$
\Id+\sum_{j\ge 1}\tOmega^{j-1}_0(t^*)z^j=\left(\Id+\sum_{j\ge 1}\Omega^{j-1}_0(\tau^*)z^j\right)^{-1}\left(\Id+\sum_{j\ge 1}\Omega^{j-1}_0(t^*)z^j\right).
$$
By Remark~\ref{remark:about shifts}, this cone also corresponds to the flat F-manifold given by the vector potential~$\oF_{\otau}$ together with the calibration $\Id+\sum_{j\ge 1}(\tOmega^{-\otau})^{j-1}_0z^j$. Since
$$
\left.\left(\Id+\sum_{j\ge 1}(\tOmega^{-\otau})^{j-1}_0z^j\right)\right|_{t^\alpha=0}=\left.\left(\Id+\sum_{j\ge 1}\tOmega^{j-1}_0z^j\right)\right|_{t^\alpha=\tau^\alpha}=\Id,
$$
the cone on the right-hand side of~\eqref{eq:ancestor cones at shifted points} is indeed the ancestor cone corresponding to the flat F-manifold with the vector potential~$\oF_{\otau}$.
\end{proof}

\smallskip

\subsection{Reconstruction of semisimple flat F-manifolds}\label{section:reconstruction}

Consider a flat F-manifold given by a vector potential $\oF=(F^1,\ldots,F^N)$, $F^\alpha\in\mbC[[t^1,\ldots,t^N]]$. Consider the constant flat F-manifold given by the structure constants of our flat F-manifold at the origin. We call this constant flat F-manifold the {\it constant part of our flat F-manifold}. 

\smallskip

\begin{theorem}\label{theorem:reconstruction}
Suppose that our flat F-manifold is semisimple at the origin. Then the corresponding ancestor cone $\mcC$ can be obtained from the ancestor cone $\mcC^\const$ of the constant part by some element $R(z)$ of the group $G_+$.
\end{theorem}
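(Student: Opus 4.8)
The plan is to take as the required element of $G_+$ the matrix series assembled from the $R$-matrices of Proposition~\ref{proposition:matrices R_k}, evaluated at the base point. First I would introduce canonical coordinates $u^1,\dots,u^N$ around the origin and apply the $\GL(\mbC^N)$-action~\eqref{eq:transformation of descendant potentials from linear changes}, with $M$ the linearization $\tPsi|_{0}$ of the passage to canonical coordinates, in order to reduce to the frame in which the constant part is literally the trivial flat F-manifold: its structure constants become $c^i_{jk}=\delta^i_j\delta^i_k$, so its ancestor cone is $\mcC^\triv_N$. Since the rotation coefficients $\gamma^i_j$ of a constant semisimple flat F-manifold vanish, the recursion~\eqref{eq:non-diagonal part of R_m}--\eqref{eq:diagonal part of R_m} gives $R(z)\equiv\Id$ for the constant part, whereas for the full manifold it produces a nontrivial sequence $R_0=\Id,R_1,R_2,\dots$. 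In this frame my candidate is simply $R(z)|_{0}=\Id+\sum_{k\ge1}R_k|_{0}\,z^k\in G_+$, and the claim to prove becomes $\mcC=R(z)|_{0}\cdot\mcC^\triv_N$.

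The conceptual backbone is the interpretation of $R(z)$ as the regular part of a fundamental solution of the deformed flat connection $\nabla_{z^{-1}}$. As recalled after~\eqref{eq:equation for xi}, in canonical coordinates the flat sections of $\nabla_{z^{-1}}$ form the fundamental matrix $\Xi=e^{U/z}R(z)$, which for the trivial theory ($\gamma^i_j=0$) degenerates to the bare exponential $e^{U/z}$; thus the flat-section system of the full manifold is obtained from that of the constant part by right multiplication with $R(z)$. I would then translate this into the cone picture via Theorem~\ref{theorem:flat F-manifold as a cone}: the tangent space of an ancestor cone at the base point, together with its $z$-filtration $zT_f\mcC\subset T_f\mcC$, is exactly the data recorded by the calibration, which is in turn governed by the same connection. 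Matching the two descriptions shows that the formal left action of $R(z)|_{0}$ on $\mcH=\mbC^N[[z,z^{-1}]]$ carries the calibration, and hence the whole cone, of the trivial theory to that of the full manifold. Convergence of the infinite sums~\eqref{eq:R-transformation of q}--\eqref{eq:R-transformation of mcF} defining this action is secured by the degree estimate of Lemma~\ref{lemma:technical property of ancestor potentials}.

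The delicate point, and the step I expect to be the main obstacle, is that $R(z)$ genuinely depends on $u$, while an element of $G_+$ must be constant, so I must show that only the value $R(z)|_{0}$ is needed and that the $u$-dependence is accounted for by the base-point structure rather than obstructing the reconstruction. Here I would invoke Lemma~\ref{lemma:ancestor cones at shifted points}: displacing the base point is realized by the lower-triangular ($G_-$) factor built from the calibration matrices $\Omega^{d-1}_0(\tau)$, which corresponds precisely to the exponential part $e^{U/z}$ of the fundamental solution, while the differential equation~\eqref{eq:equations for R_k} satisfied by the $R_k$ is exactly the compatibility ensuring that the residual, constant, factor relating the two cones is the genuine $G_+$ element $R(z)|_{0}$. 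The heart of the argument is thus the clean separation, inside the comparison of flat-section data, of the $G_+$ contribution (the $R$-matrix frozen at the origin), the $G_-$ contribution (the base-point shift carried by $e^{U/z}$), and the $\GL$ frame change, together with the verification that the $G_+$ transformation so isolated reproduces both the genus-$0$ product of the full manifold and its ancestor calibration.
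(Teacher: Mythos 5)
Your candidate element and the general framework coincide with the paper's: reduce by the $\GL(\mbC^N)$-action so that the constant part becomes the trivial flat F-manifold, take the $R$-matrix of Proposition~\ref{proposition:matrices R_k} frozen at the origin, and interpret $e^{U/z}R(z)$ as a fundamental solution of the deformed connection. But the heart of the theorem is precisely the step you yourself flag as ``the main obstacle'' and then only assert: that the $u$-dependence of $R(z,t^*)$ can be absorbed, so that the single constant matrix obtained by evaluation at the origin maps one cone onto the other. The paper's mechanism for this has three concrete ingredients, none of which your sketch supplies in executable form: (i) Proposition~\ref{proposition:another parameterization}, the explicit parameterization $\mcC=\left\{\left(\sum_{j\ge 0}(-1)^j\Omega^{j-1}_0z^{-j}\right)\sum_{i\ge 1}q^\alpha_i\phi_\alpha z^i\right\}$ of a cone by its calibration, which is a nontrivial result proved by verifying the three conditions of Theorem~\ref{theorem:flat F-manifold as a cone} and comparing $J$-functions; (ii) the observation that $S_1(z,t^*)=\sum_{j\ge 0}\Omega^{j-1}_0z^{-j}$ and $S_2(z,t^*)=e^{U(t^*)/z}R(z,t^*)\Psi(t^*)$ satisfy the \emph{same} linear equation $dS=z^{-1}S\,d\Omega^0_0$ (via $d\Omega^0_0=\Psi^{-1}dU\Psi$ together with \eqref{eq:formulas for dPsi and dGammadU} and \eqref{eq:equation for R(z)}), so that the product $S_2(-z,t^*)S_1^{-1}(-z,t^*)$ is independent of $t^*$ --- this constancy, not any base-point consideration, is what licenses evaluation at $t^*=0$; and (iii) exhibiting the family $e^{-U(t^*)/z}\left(-z\sum_\alpha\phi_\alpha\right)$ inside the transformed cone, which pins down its $J$-function and identifies it with $\mcC^\triv_N$. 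Your phrase ``matching the two descriptions shows that the formal left action of $R(z)|_0$ carries the calibration, and hence the whole cone'' is a restatement of the theorem, not a proof of it; moreover products such as $S_2S_1^{-1}$ involve genuinely infinite sums in each power of $z$, which the paper tames with the notion of admissible Laurent series and which your sketch never confronts.

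There is also a concrete misstep in the device you propose for the delicate point. Lemma~\ref{lemma:ancestor cones at shifted points} relates the ancestor cones based at $0$ and at $\otau$ through the $G_-$ element $\Id+\sum_{j\ge1}(-1)^j\Omega^{j-1}_0(\tau^*)z^{-j}$ built from the calibration, i.e.\ through $S_1$; identifying this factor with ``the exponential part $e^{U/z}$ of the fundamental solution'' is incorrect, since $e^{U/z}$ is not the calibration factor and enters the actual argument only inside $S_2$ and in the parameterization of the final cone. The $u$-dependence of the matrices $R_k$ is not a shift of the cone's base point, so the lemma cannot substitute for the constancy argument (ii). One could imagine an alternative proof that differentiates the shifted-cone relation in $\otau$, but carrying that out would amount to re-deriving (i) and (ii); as written, your proposal correctly names the element of $G_+$ but leaves the reconstruction itself unproven.
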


\smallskip

Before proving the theorem, let us present an important technical result. Consider a calibrated flat F-manifold given by a vector potential $\oF$ and matrices $\Omega^p_0$, $F^\alpha,\Omega^{\alpha,p}_{\beta,0}\in\mcR^{\ot_\orig}$, and the associated descendant vector potentials $\omcF^a$, $\mcF^{\alpha,a}\in\mcR^{\ot_{\orig}+\oc}[[t^*_{\ge 1}]]$. Recall that the constants~$c^\alpha$ are given by $\Omega^{\alpha,0}_{\un,0}=t^\alpha+c^\alpha$. Consider the associated cone $\mcC\subset\mcH$.

\smallskip

\begin{proposition}\label{proposition:another parameterization}
The cone $\mcC$ has the following parameterization:
$$
\mcC=\left\{\left(\sum_{j\ge 0}(-1)^j\Omega^{j-1}_0 z^{-j}\right)\sum_{i\ge 1}q^\alpha_i\phi_\alpha z^i\right\}.
$$
\end{proposition}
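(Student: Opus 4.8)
The plan is to prove the equality of the two descriptions of $\mcC$ by inverting the asserted parameterization: I will apply the operator
$$
\mcS(z):=\sum_{j\ge0}(-1)^j\Omega^{j-1}_0 z^{-j}
$$
(inverted) to an arbitrary point of $\mcC$ and verify that the result lies in $z\mcH_+$. Writing $\mathcal{G}(z):=\Id+\sum_{d\ge1}\Omega^{d-1}_0 z^d$ for the calibration series, one has $\mcS(z)=\mathcal{G}(-z^{-1})$, so relation~\eqref{eq:upper-lower relation} gives at once
$$
\mcS(z)^{-1}=\sum_{d\ge0}\Omega^0_{d-1}z^{-d},
$$
where throughout the matrices are understood as the calibration functions evaluated at the topological solution $v^\top$ attached to the chosen point of $\mcC$. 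It is convenient to package a cone point into the single series $f(z)=\sum_{a\in\mbZ}(-1)^{a+1}\mcF^{\alpha,a}z^{-a-1}\phi_\alpha$, using the stated convention $\mcF^{\alpha,a}=(-1)^{a+1}q^\alpha_{-a-1}$ for $a<0$; this merges the $p$- and $q$-coordinates of $f$.

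The heart of the matter is to compute $g:=\mcS(z)^{-1}f$ and show $[z^{-m}]g=0$ for all $m\ge0$. For $m\ge1$ I would insert $\mcF^{\alpha,a}=\sum_b(\Omega^a_b)^\alpha_\beta(v^\top)q^\beta_b$ into the genuinely negative contributions and $[z^d]f=q_d$ into the nonnegative ones; collecting the coefficient of each $q^\beta_b$ reduces the vanishing to the pointwise algebraic identity
$$
\Omega^0_{b+k}=\sum_{e=0}^{k}(-1)^e\,\Omega^0_{k-1-e}\,\Omega^e_b,\qquad k,b\ge0,
$$
among the calibration matrices. This is just relation~\eqref{eq:definition of Omegapq} repackaged, as one checks by multiplying by $x^k$ and summing: the right-hand side factorizes as $\bigl(\sum_f\Omega^0_{f-1}x^f\bigr)\bigl(\sum_e(-x)^e\Omega^e_b\bigr)$, the first factor equals $\mathcal{G}(-x)^{-1}$ by~\eqref{eq:upper-lower relation}, the second equals $\mathcal{G}(-x)\sum_d\Omega^0_{d+b}x^d$ by~\eqref{eq:definition of Omegapq}, and the product telescopes to $\sum_d\Omega^0_{b+d}x^d$, the left-hand side. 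The remaining component $m=0$ is of a different nature: there only the nonnegative coefficients of $f$ contribute, so $[z^0]g=\sum_{d\ge0}\Omega^0_{d-1}(v^\top)q_d$, and its vanishing is exactly the $a=0$ case of the vector string equation~\eqref{eq:vector string equation}. Hence $g\in z\mcH_+$ and $f=\mcS(z)g$.

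It remains to check that $f\mapsto g$ is a bijection of $\mcC$ onto $z\mcH_+$, which yields both inclusions. Reading off $\mcS(z)^{-1}$ gives, for $i\ge1$, the relation $[z^i]g^\alpha=\hat q^\alpha_i+\sum_{d\ge1}(\Omega^0_{d-1})^\alpha_\beta(v^\top)\hat q^\beta_{i+d}$ between the coefficients of $g$ and the coordinates $\hat q^\alpha_k$ of $f$; this is triangular with identity leading term, so grading the variables by $\deg t^\beta_b=b-1$ and applying the formal inverse function theorem reconstructs $f$ from $g$. The step I expect to be the main obstacle is precisely the nonlinearity hidden in the dependence of the $\Omega$'s on $v^\top(q)$, which a priori couples all coordinates. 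The resolution is the clean dichotomy isolated above: every strictly negative power of $z$ cancels by the algebraic identity alone, carrying no information about $v^\top$, and only the single $z^0$ term genuinely invokes the string equation, so no truly nonlinear obstruction remains.
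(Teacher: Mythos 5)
Your forward computation is correct, and it is a genuinely different route from the paper's proof (which verifies the three conditions of Theorem~\ref{theorem:flat F-manifold as a cone} directly for the parameterized family and then matches $J$-functions to conclude equality). The identity $\Omega^0_{b+k}=\sum_{e=0}^{k}(-1)^e\Omega^0_{k-1-e}\Omega^e_b$ does follow from \eqref{eq:upper-lower relation} and \eqref{eq:definition of Omegapq} exactly by your generating-function telescoping; since it is a pointwise algebraic identity among the calibration matrices, it kills every coefficient of $z^{-m}$, $m\ge 1$, of $\mcS(z)^{-1}f$ regardless of the nonlinear dependence on $v^\top$; and the $z^0$ coefficient is $\sum_{d\ge 0}\Omega^0_{d-1}(v^\top)\,q_d$, whose vanishing is precisely the $a=0$ case of \eqref{eq:vector string equation}, via $\frac{\d\mcF^{\alpha,0}}{\d q^\beta_b}=(\Omega^\top)^{\alpha,0}_{\beta,b}$. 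This cleanly establishes the inclusion of $\mcC$ in the parameterized set.

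The gap is in your last step, and it is exactly the step carrying the reverse inclusion. The map $f\mapsto g$ is \emph{not} a bijection of $\mcC$ onto $z\mcH_+$: your relations $[z^i]g^\alpha=\hat q^\alpha_i+\sum_{d\ge 1}(\Omega^0_{d-1})^\alpha_\beta(v^\top)\,\hat q^\beta_{i+d}$ exist only for $i\ge 1$, while a point $f\in\mcC$ is determined by $\hat q^\alpha_a$ for \emph{all} $a\ge 0$. The unknowns $\hat q^\alpha_0$ enter only through $v^\top$, and their contribution to the Jacobian vanishes at the base point (there $\hat q^\beta_{i+d}=0$ for $i,d\ge 1$), so the fibers of $f\mapsto g$ are $N$-dimensional, parameterized by $\hat q_0$ or equivalently by $t=v^\top$. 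In fact a genuine bijection onto $z\mcH_+$ would \emph{contradict} the proposition: in the parameterization the matrices $\Omega^{j-1}_0$ are functions of $N$ additional free parameters $t^\alpha$ (in the paper's proof these are identified with $q^\alpha_0$, which does not appear in $\sum_{i\ge 1}q^\alpha_i\phi_\alpha z^i$), so for fixed $g$ the right-hand side sweeps out an $N$-parameter family of points, all of which must lie in $\mcC$, whereas your argument produces only the single point with $t=v^\top(q(f))$. The repair is to prove that $f\mapsto\bigl(v^\top(q(f)),g(f)\bigr)$ is a bijection of $\mcC$ onto a formal neighborhood in $\mbC^N\times z\mcH_+$, solving the coupled triangular system that adjoins the $N$ equations $v^\top(\hat q)=t$ for the $N$ extra unknowns $\hat q^\alpha_0$ (the same graded inverse-function-theorem argument applies, since $v^\top=\hat q_0+{}$positively graded corrections). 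Without this, the membership of $\mcS(z)|_{t}\,g$ in $\mcC$ for $t$ chosen independently of $g$ --- which is the actual content of the parameterization --- is left unproven; the paper's route avoids this bookkeeping entirely by checking the cone conditions on the parameterized set and invoking uniqueness of the cone given its $J$-function.
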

\begin{proof}
During the proof of the proposition we will denote the coordinates on $\mcH$ by $\tp^\beta_b$ and $\tq^\alpha_a$. We have
$$
\left(\sum_{j\ge 0}(-1)^j\Omega^{j-1}_0 z^{-j}\right)\sum_{i\ge 1}q^\alpha_i\phi_\alpha z^i=\sum_{a\ge 0}\frac{(-1)^{a+1}\tmcF^{\alpha,a}(q^*_*)\phi_\alpha}{z^{a+1}}+\sum_{a\ge 0}\tq^\alpha_a(q^*_*)\phi_\alpha z^a,
$$
where
\begin{align*}
\tq^\alpha_a(q^*_*)=&
\begin{cases}
t^\alpha_0+c^\alpha+\sum_{i\ge 1}(-1)^i\Omega^{\alpha,i-1}_{\mu,0}t^\mu_i,&\text{if $a=0$},\\
q^\alpha_a+\sum_{i\ge 1}(-1)^i\Omega^{\alpha,i-1}_{\mu,0} q^\mu_{i+a},&\text{if $a\ge 1$},
\end{cases}\\
\tmcF^{\alpha,a}(q^*_*)=&\sum_{i\ge 1}(-1)^i\Omega^{\alpha,i+a}_{\mu,0}q^\mu_i,\hspace{3.1cm} a\ge 0.
\end{align*}
We see that the transformation $q^\alpha_a\mapsto\tq^\alpha_a(q^*_*)$ can be considered as a change of variables and, therefore, we can express the functions $\tmcF^{\alpha,a}(q^*_*)$ as functions of the variables $\tq^\beta_b$, $\tmcF^{\alpha,a}(q^*_*)=\left.\hmcF^{\alpha,a}(\tq^*_*)\right|_{\tq^\beta_b=\tq^\beta_b(q^*_*)}$, where $\hmcF^{\alpha,a}\in\mbC[[\tq^\beta_0-(t^\beta_\orig+c^\beta),\tq^\beta_1+A^\beta,\tq^\beta_{\ge 2}]]$.\\

Let us first check that the submanifold $\mcC'\subset\mcH$ given by
$$
\mcC':=\left\{\left(\sum_{j\ge 0}(-1)^j\Omega^{j-1}_0 z^{-j}\right)\sum_{i\ge 1}q^\alpha_i\phi_\alpha z^i\right\}=\left\{\sum_{i\ge 1,\,j\ge 0}(-1)^j\Omega^{\mu,j-1}_{\nu,0}q^\nu_i\phi_\mu z^{i-j}\right\}
$$
satisfies the conditions from Theorem~\ref{theorem:flat F-manifold as a cone}. Regarding the first condition, we see that a vector
$$
f=\sum_{i\ge 1,\,j\ge 0}(-1)^j\Omega^{\mu,j-1}_{\nu,0}q^\nu_i\phi_\mu z^{i-j}\in\mcC'\subset\mcH
$$
depends linearly on the variables $q^\nu_i$ with $i\ge 1$. Therefore, $\mcC'$ is a cone with the vertex at the origin.\\ 

For $f\in\mcH$, denote by $f^\alpha_i$ the coefficient of $\phi_\alpha z^i$ in $f$ and let us use these coefficients as coordinates on $\mcH$. For $f\in\mcC'$ a basis in the tangent space $T_f\mcC'$ is given by the vectors
$$
e_{\alpha,a}:=
\begin{cases}
\sum_{i\ge 1,\,j\ge 0}(-1)^j\frac{\d\Omega^{\mu,j-1}_{\nu,0}}{\d q^\alpha_0}q^\nu_i\frac{\d}{\d f^\mu_{i-j}}=-\sum_{i\ge 1,\,j\ge 0}(-1)^j q^\nu_i c^\theta_{\nu\alpha}\Omega^{\mu,j-1}_{\theta,0}\frac{\d}{\d f^\mu_{i-j-1}},&\text{if $a=0$},\\
\sum_{j\ge 0}(-1)^j\Omega^{\mu,j-1}_{\alpha,0}\frac{\d}{\d f^\mu_{a-j}},&\text{if $a\ge 1$}.
\end{cases}
$$
One can immediately see that after the obvious identification of the vector spaces $\mcH$ and $T_f\mcH$ the vectors $z^{-1}f$ and $-e_{\un,0}$ become equal. Thus, the second condition from Theorem~\ref{theorem:flat F-manifold as a cone} is satisfied.\\

Let us check the third condition. We compute 
$$
z e_{\alpha,a}=
\begin{cases}
e_{\alpha,a+1},&\text{if $a\ge 1$},\\
-\sum_{i\ge 1}q^\nu_i c^\theta_{\nu\alpha}e_{\theta,i},&\text{if $a=0$},
\end{cases}
$$
which implies that $z T_f\mcC'\subset T_f\mcC'$. Define a matrix $M=(M^\alpha_\beta)$ by $M^\alpha_\beta:=-q^\theta_1 c^\alpha_{\theta\beta}$. Note that, since $M^\alpha_\beta=\delta^\alpha_\beta-t^\theta_1 c^\alpha_{\theta\beta}$, the matrix~$M$ is invertible. Therefore, the vectors $e_{\alpha,a}$ with $a\ge 1$ give a basis in $z T_f\mcC'$. We see that in order to check the third condition from Theorem~\ref{theorem:flat F-manifold as a cone} it is sufficient to check that $\frac{\d e_{\alpha,a}}{\d q^\beta_b}\in T_f\mcC'$ for $a\ge 0$ and $b\ge 1$. Clearly, $\frac{\d e_{\alpha,a}}{\d q^\beta_b}=0$ if $a,b\ge 1$. For $b\ge 1$ we have
$$
\frac{\d e_{\alpha,0}}{\d q^\beta_b}=-\sum_{j\ge 0}(-1)^jc^\theta_{\alpha\beta}\Omega^{\mu,j-1}_{\theta,0}\frac{\d}{\d f^\mu_{b-1-j}}=
\begin{cases}
-c^\theta_{\alpha\beta}e_{\theta,b-1},&\text{if $b\ge 2$},\\
-\sum_{j\ge 0}(-1)^jc^\theta_{\alpha\beta}\Omega^{\mu,j-1}_{\theta,0}\frac{\d}{\d f^\mu_{-j}},&\text{if $b=1$},
\end{cases}
$$
and it remains to check that 
$$
h_\alpha:=\sum_{j\ge 0}(-1)^j\Omega^{\mu,j-1}_{\alpha,0}\frac{\d}{\d f^\mu_{-j}}\in T_f\mcC'.
$$
We can easily see that
$$
M^\nu_\alpha h_\nu=e_{\alpha,0}+\sum_{i\ge 2}q^\nu_i c^\theta_{\nu\alpha}e_{\theta,i-1},
$$
and, since the matrix $M$ is invertible, we get $h_\alpha\in T_f\mcC'$.\\ 

Let us finally prove that $\mcC=\mcC'$. Note that $\left.\tq^\beta_b(q^*_*)\right|_{t^*_{\ge 1}=0}=\delta_{b,0}(t^\beta_0+c^\beta)-\delta_{b,1}A^\beta$ and
\begin{gather*}
\left.\tmcF^{\alpha,a}(q^*_*)\right|_{t^*_{\ge 1}=0}=\Omega^{\alpha,a+1}_{\un,0}=\mcF^{\alpha,a}\Big|_{\substack{t^\gamma_0\mapsto t^\gamma_0+c^\gamma\\t^*_{\ge 1}=0}},\quad a\ge 0.
\end{gather*}
Thus, the $J$-functions of the cones $\mcC$ and $\mcC'$ coincide, which implies that $\mcC=\mcC'$. The proposition is proved.
\end{proof}

\smallskip

\begin{proof}[Proof of Theorem~\ref{theorem:reconstruction}]
Consider the canonical coordinates $u^i(t^*)$ of our flat F-manifold. Making an appropriate shift, we can assume that $u^i(0)=0$. Consider the matrices $H$ and $\Psi$ constructed in Section~\ref{subsection:metric for a flat F-manifold}, and the matrices $R_i$ given by Proposition~\ref{proposition:matrices R_k}. We consider these matrices as functions of the variables $t^\alpha$, $H=H(t^*)$, $\Psi=\Psi(t^*)$, $R_i=R_i(t^*)$. Denote
$$
R(z,t^*):=1+\sum_{i\ge 1}R_i(t^*)z^i.
$$
Let us prove that 
$$
\mcC=\Psi^{-1}(0)R^{-1}(-z,0)\Psi(0)\mcC^\const.
$$
Since $\mcC_N^\triv=H^{-1}(0)\Psi(0)\mcC^\const$, it is sufficient to prove that
$$
\mcC^\triv_N=H^{-1}(0)R(-z,0)\Psi(0)\mcC.
$$

\smallskip

We say that a Laurent series $\sum_{i\in\mbZ}f_i(t^*)z^i$, $f_i\in\mbC[[t^1,\ldots,t^N]]$, is {\it admissible} if for each $i\le 0$ the formal power series $f_i(t^*)$ consists of monomials $t^{\alpha_1}\cdots t^{\alpha_k}$ with $k\ge -i$. Note that the product of any two admissible Laurent series is well defined and is also an admissible Laurent series. Any Laurent series $f$ with matrix coefficients can be considered as a matrix, whose entries are Laurent series, and we say that $f$ is admissible, if all the entries are admissible.\\  

Consider the following Laurent series with matrix coefficients:
\begin{gather*}
S_1(z,t^*):=\sum_{j\ge 0}\Omega^{j-1}_0(t^*)z^{-j},\qquad S_2(z,t^*):=e^{U(t^*)/z}R(z,t^*)\Psi(t^*),
\end{gather*}
where $U(t^*)=\diag(u^1(t^*),\ldots,u^N(t^*))$ and the matrices $\Omega^{j-1}_0(t^*)$ define the ancestor calibration for our flat F-manifold. Since $U(0)=0$, the Laurent series $S_2(z,t^*)$ is well defined and is admissible. By Lemma~\ref{lemma:technical property of ancestor potentials}, the Laurent series $S_1(z,t^*)$ is also admissible and we can consider the product 
$$
S_2(-z,t^*)S_1^{-1}(-z,t^*).
$$
Clearly, $R(-z,0)\Psi(0)=S_2(-z,0)S_1^{-1}(-z,0)$. Define an ancestor cone $\mcC_0$ by
$$
\mcC_0:=H^{-1}(0)S_2(-z,0)S_1^{-1}(-z,0)\mcC.
$$
We have to prove that $\mcC_0=\mcC^\triv_N$.\\

Both Laurent series $S_1$ and $S_2$ satisfy the same differential equation
$$
d S_{1,2}=z^{-1}S_{1,2}d\Omega^0_0.
$$
The equation $d S_1=z^{-1}S_1 d\Omega^0_0$ is a part of the definition of a calibration. In order to prove the equation $d S_2=z^{-1}S_2 d\Omega^0_0$, one should use the formula $d\Omega^0_0=\Psi^{-1}dU\Psi$ together with formulas~\eqref{eq:formulas for dPsi and dGammadU} and~\eqref{eq:equation for R(z)}. This implies that the product $S_2(z,t^*)S_1^{-1}(z,t^*)$ does not depend on the variables~$t^\alpha$. By Proposition~\ref{proposition:another parameterization}, the cone $\mcC_0$ can be parameterized as follows:
$$
\mcC_0=\left\{H^{-1}(0)S_2(-z,0)S_1^{-1}(-z,0)S_1(-z,t^*)\left(-zA^\alpha\phi_\alpha+\sum_{i\ge 1}t^\alpha_i\phi_\alpha z^i\right)\right\}.
$$
Since $S_2(-z,0)S_1^{-1}(-z,0)=S_2(-z,t^*)S_1^{-1}(-z,t^*)$, we get the following parameterization of the cone $\mcC_0$:
$$
\mcC_0=\left\{e^{-U(t^*)/z}H^{-1}(0)R(-z,t^*)\Psi(t^*)\left(-z A^\alpha\phi_\alpha+\sum_{i\ge 1}t^\alpha_i\phi_\alpha z^i\right)\right\}.
$$

\smallskip

Consider a family of vectors $f(z,t^*)\in z\mcH_+$, depending on $t^1,\ldots,t^N$, defined by
$$
f(z,t^*):=\Psi^{-1}(t^*)R^{-1}(-z,t^*)H(0)\left(-z\sum_{\alpha=1}^N\phi_\alpha\right).
$$
This family has the form
$$
f(z,t^*)=-zA^\alpha\phi_\alpha+\sum_{i\ge 1}f^\alpha_i(t^*)\phi_\alpha z^i,\quad f^\alpha_i\in\mbC[[t^*]],\quad f^\alpha_i(0)=0,
$$
and we clearly have
$$
H^{-1}(0)R(-z,t^*)\Psi(t^*)f(z,t^*)=-z\sum_{\alpha=1}^N\phi_\alpha.
$$
Therefore,
$$
e^{-U(t^*)/z}\left(-z\sum_{\alpha=1}^N\phi_\alpha\right)\in\mcC_0.
$$

\smallskip

We see that the cone $\mcC_0$ contains the family of vectors $e^{-U(t^*)/z}\left(-z\sum_{\alpha=1}^N\phi_\alpha\right)$ parameterized by $t^1,\ldots,t^N$. This implies that the $J$-function of the cone $\mcC_0$ is $\sum_{\alpha=1}^N ze^{t^\alpha/z}\phi_\alpha$. Therefore, $\mcC_0=\mcC^{\triv}_N$. This completes the proof of the theorem.
\end{proof}

\smallskip

Combining this theorem with Proposition~\ref{proposition:all from ancestor}, we get the following result.

\smallskip

\begin{theorem}
Any generalized Givental cone $\mcC$ such that the algebra structure of the flat F-manifold at $\ot_{\orig}$ is semisimple can be obtained from the cone $\mcC^\triv_N$ by the following composition of operators:
$$
\mcC=S(z)R(z)M\mcC^\triv_N,
$$
for some $S(z)\in G_-$, $R(z)\in G_+$ and $M\in\GL(\mbC^N)$.\\
\end{theorem}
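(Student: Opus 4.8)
The plan is to obtain the statement by concatenating Proposition~\ref{proposition:all from ancestor}, Theorem~\ref{theorem:reconstruction}, and the description of semisimple constant flat F-manifolds, reading off the three operators in the order $M$ (a linear change of coordinates), then $R(z)\in G_+$, and finally $S(z)\in G_-$. No new analytic input is needed; each of the three transformations has already been shown to be well defined on the relevant class of cones.

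First I would apply Proposition~\ref{proposition:all from ancestor} to write $\mcC=S(z)\mcC_{\mathrm{anc}}$ for some $S(z)\in G_-$ and some ancestor cone $\mcC_{\mathrm{anc}}$. By part~2 of Proposition~\ref{proposition:lower triangular group action}, the $G_-$-action modifies only the calibration (and the base point), not the underlying flat F-manifold, so I would check that the flat F-manifold attached to $\mcC_{\mathrm{anc}}$ is semisimple at its own origin precisely because the algebra of $\mcC$ at $\ot_\orig$ is semisimple by hypothesis: the reduction to ancestor form re-expands the same flat F-manifold around $\ot_\orig$, so that $\ot_\orig$ becomes the new origin and the germ of the multiplication at the base point is unchanged. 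Theorem~\ref{theorem:reconstruction} then applies and yields $R(z)\in G_+$ with $\mcC_{\mathrm{anc}}=R(z)\mcC^\const$, where $\mcC^\const$ is the ancestor cone of the constant part. Finally, the constant part is the semisimple constant flat F-manifold determined by the structure constants at the origin, and by the discussion in the subsection on constant flat F-manifolds its ancestor cone is obtained from the trivial one by a linear change of coordinates, $\mcC^\const=M\mcC^\triv_N$ for some $M\in\GL(\mbC^N)$, the $\GL(\mbC^N)$-action on cones being induced from the corresponding map on $\mcH$ as in~\eqref{eq:transformation of descendant potentials from linear changes}. Composing the three identities gives $\mcC=S(z)R(z)M\mcC^\triv_N$, and I would note in passing that each factor acts on a cone of the appropriate type ($M$ on $\mcC^\triv_N$, $R(z)$ on the ancestor cone $\mcC^\const$, $S(z)$ on the generalized cone $\mcC_{\mathrm{anc}}$), so the composition is legitimate.

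The main obstacle I anticipate is not computational but one of bookkeeping: showing that ``semisimple at $\ot_\orig$'' for $\mcC$ coincides with ``semisimple at the origin'' for $\mcC_{\mathrm{anc}}$ after the shift built into Proposition~\ref{proposition:all from ancestor}, and correspondingly that the constant part of the ancestor flat F-manifold agrees with the constant flat F-manifold determined by the structure constants of the original cone at $\ot_\orig$. Both statements follow from the fact that the $G_-$-reduction and the recentering preserve the germ of the product at the base point, so once this identification is recorded the argument is immediate and the theorem follows by simply concatenating the operators.
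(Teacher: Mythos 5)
Your proposal is correct and follows essentially the paper's own route: the paper obtains this theorem precisely by concatenating Proposition~\ref{proposition:all from ancestor} (giving the $S(z)\in G_-$ factor), Theorem~\ref{theorem:reconstruction} (giving $R(z)\in G_+$), and the observation from the subsection on constant flat F-manifolds that any semisimple constant ancestor cone equals $M\mcC^\triv_N$ for some $M\in\GL(\mbC^N)$. The bookkeeping point you single out --- that the $G_-$-reduction changes only the calibration and base point while preserving the underlying flat F-manifold, hence the semisimplicity of the algebra at $\ot_\orig$ --- is exactly Part~2 of Proposition~\ref{proposition:lower triangular group action} together with Remark~\ref{remark:about shifts}, which the paper uses implicitly.
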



\section{F-cohomological field theories}\label{section:F-CohFT}

F-cohomological field theories (F-CohFTs for short) were introduced in \cite{BR18} as a generalization of the notion of a cohomological field theory (or CohFT) \cite{KM94} and of a partial cohomological field theory \cite{LRZ15}. We recall here their definition and their relation with Frobenius and flat F-manifolds. In what follows, we denote by $\oM_{g,n}$ the Deligne--Mumford moduli space of genus $g$ stable curves with $n$ marked points, where $g,n\geq 0$ and $2g-2+n>0$.

\smallskip

\subsection{F-CohFTs, partial CohFTs and CohFTs}

We will denote by $H^*(X)$ the cohomology ring with coefficients in $\mbC$ of a topological space~$X$. When considering the moduli space of stable curves, $X=\oM_{g,n}$, the cohomology ring $H^{2k}(\oM_{g,n})$ can optionally be replaced by the Chow ring $A^k(\oM_{g,n})$, $k\geq 0$. 

\smallskip

\begin{definition}\label{definition:F-CohFT}
An F-cohomological field theory (or F-CohFT) is a system of linear maps 
$$
c_{g,n+1}\colon V^*\otimes V^{\otimes n} \to H^\even(\oM_{g,n+1}),\quad 2g-1+n>0,
$$
where $V$ is an arbitrary finite dimensional vector space, together with a special element $e\in V$, called the unit, such that, chosen any basis $e_1,\ldots,e_{\dim V}$ of $V$ and the dual basis $e^1,\ldots,e^{\dim V}$ of $V^*$, the following axioms are satisfied:
\begin{itemize}
\item[(i)] the maps $c_{g,n+1}$ are equivariant with respect to the $S_n$-action permuting the $n$ copies of~$V$ in $V^*\otimes V^{\otimes n}$ and the last $n$ marked points in $\oM_{g,n+1}$, respectively.
\item[(ii)] $\pi^* c_{g,n+1}(e^{\alpha_0}\otimes \otimes_{i=1}^n e_{\alpha_i}) = c_{g,n+2}(e^{\alpha_0}\otimes \otimes_{i=1}^n  e_{\alpha_i}\otimes e)$ for $1 \leq\alpha_0,\alpha_1,\ldots,\alpha_n\leq \dim V$, where $\pi\colon\oM_{g,n+2}\to\oM_{g,n+1}$ is the map that forgets the last marked point.\\
Moreover, $c_{0,3}(e^{\alpha}\otimes e_\beta \otimes e) = \delta^\alpha_\beta$ for $1\leq \alpha,\beta\leq \dim V$.
\item[(iii)] $\gl^* c_{g_1+g_2,n_1+n_2+1}(e^{\alpha_0}\otimes \otimes_{i=1}^{n_1+n_2} e_{\alpha_i}) = c_{g_1,n_1+2}(e^{\alpha_0}\otimes \otimes_{i\in I} e_{\alpha_i} \otimes e_\mu)\otimes c_{g_2,n_2+1}(e^{\mu}\otimes \otimes_{j\in J} e_{\alpha_j})$ for $1 \leq\alpha_0,\alpha_1,\ldots,\alpha_{n_1+n_2}\leq \dim V$, where $I \sqcup J = \{2,\ldots,n_1+n_2+1\}$, $|I|=n_1$, $|J|=n_2$, and $\gl\colon\oM_{g_1,n_1+2}\times\oM_{g_2,n_2+1}\to \oM_{g_1+g_2,n_1+n_2+1}$ is the corresponding gluing map.
\end{itemize}
\end{definition}

\smallskip

There is an obvious generalization of the notion of F-CohFT, where the maps $c_{g,n+1}$ take value in $H^\even(\oM_{g,n+1})\otimes K$, where $K$ is a $\mbC$-algebra. We will call such objects {\it F-cohomological field theories with coefficients in $K$}.\\

Given an F-CohFT $c_{g,n+1}\colon V^*\otimes V^{\otimes n} \to H^\even(\oM_{g,n+1})$, $\dim V=N$, and a basis $e_1,\ldots,e_N\in V$, an $N$-tuple of functions $(F^1,\ldots,F^N)$ satisfying equations~\eqref{eq:axiom1 of flat F-man} and~\eqref{eq:axiom2 of flat F-man} can be constructed as the following generating functions:
\begin{equation}\label{eq:genus 0 vector potential of an F-CohFT}
F^\alpha(t^1,\ldots,t^N):=\sum_{n\geq 2}\frac{1}{n!}\sum_{1\leq\alpha_1,\ldots,\alpha_n\leq N}\left(\int_{\oM_{0,n+1}}c_{0,n+1}(e^\alpha\otimes\otimes_{i=1}^n e_{\alpha_i})\right)\prod_{i=1}^n t^{\alpha_i},
\end{equation}
thus yielding an associated flat F-manifold structure on a formal neighbourhood of $0$ in $V$. Note that the unit vector field of the flat F-manifold is $\frac{\d}{\d t^\un}=A^\alpha\frac{\d}{\d t^\alpha}$, where $A^\alpha e_\alpha=e$. More in general, we have the following result involving genus $0$ intersection numbers of the F-CohFT with psi classes, where $\psi_i \in H^2(\oM_{g,n})$, $1\leq i\leq n$, is the first Chern class of the $i$-th tautological bundle on $\oM_{g,n}$ whose fiber at a point representing the class of a marked stable curve $(C,x_1,\ldots,x_n)$ is the cotangent line to $C$ at $x_i\in C$.

\smallskip

\begin{proposition}
For $1\leq \alpha\leq N$ and $a\geq 0$, the formal power series 
\begin{equation}\label{eq:genus 0 ancestor potential of an F-CohFT}
\mcF^{\alpha,a}(t^*_*):=\sum_{n\geq 2}\frac{1}{n!}\sum_{\substack{1\leq\alpha_1,\ldots,\alpha_n\leq N\\ a_1,\ldots,a_n\geq 0}}\left(\int_{\oM_{0,n+1}}c_{0,n+1}(e^\alpha\otimes\otimes_{i=1}^n e_{\alpha_i})\psi_1^{a}\prod_{i=1}^n\psi_{i+1}^{a_i}\right)\prod_{i=1}^n t^{\alpha_i}_{a_i}
\end{equation}
form a sequence of ancestor vector potentials of a flat F-manifold.
\end{proposition}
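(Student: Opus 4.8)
The plan is to invoke Proposition~\ref{proposition:descendant vector potentials,equivalent approach}, which reduces the claim to verifying that the functions $\mcF^{\alpha,a}$ defined by~\eqref{eq:genus 0 ancestor potential of an F-CohFT} satisfy the four equations~\eqref{eq:vector string equation}--\eqref{eq:important vector relation}, and then separately checking the ancestor vanishing~\eqref{eq:ancestor vector potential}. Since $q^\beta_b=t^\beta_b-A^\beta\delta_{b,1}$, one has $\frac{\d}{\d q^\beta_b}=\frac{\d}{\d t^\beta_b}$, so differentiating $\mcF^{\alpha,a}$ in $q^\beta_b$ amounts to adding one marked point carrying $e_\beta$ and the class $\psi^b$, the shift by $A^\beta\delta_{b,1}$ encoding the usual dilaton shift. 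Thus each of the four equations becomes an identity between generating series of genus~$0$ intersection numbers on the spaces $\oM_{0,n+1}$, which I would establish from standard tautological relations together with the axioms of Definition~\ref{definition:F-CohFT}.

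For the string equation~\eqref{eq:vector string equation} I would rewrite the string operator as $\sum_{b\ge 0}q^\beta_{b+1}\frac{\d}{\d q^\beta_b}=\sum_{b\ge0}t^\beta_{b+1}\frac{\d}{\d t^\beta_b}-A^\beta\frac{\d}{\d t^\beta_0}$, the last term inserting the unit $e$ with $\psi^0$. By axiom~(ii) this insertion is the pullback along the forgetful map $\pi\colon\oM_{0,n+2}\to\oM_{0,n+1}$, and the classical push-forward formula $\pi_*\!\left(\psi_1^{a}\prod_i\psi_i^{a_i}\right)=\psi_1^{a-1}\prod_i\psi_i^{a_i}+\sum_i\psi_1^{a}\psi_i^{a_i-1}\prod_{k\ne i}\psi_k^{a_k}$ produces the descendant lowerings; the lowerings on the free points cancel against the reindexing term, leaving $-\mcF^{\alpha,a-1}$. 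The base case, where lowering reaches the unstable $3$-pointed sphere, is supplied by the normalization $c_{0,3}(e^\alpha\otimes e_\beta\otimes e)=\delta^\alpha_\beta$ and the convention $\mcF^{\alpha,a}=(-1)^{a+1}q^\alpha_{-a-1}$ for $a<0$. The dilaton equation~\eqref{eq:vector dilaton equation} follows by writing the Euler operator, which records the number $n$ of vector insertions, plus the shift term $-A^\beta\frac{\d}{\d t^\beta_1}$; by axiom~(ii) and the dilaton relation $\pi_*\psi_{\mathrm{new}}=2\cdot0-2+(n+1)=n-1$ the latter contributes a factor $n-1$, and $n-(n-1)=1$ gives exactly $\mcF^{\alpha,a}$.

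The two remaining equations rest on the genus~$0$ expression of a $\psi$-class as a sum of boundary divisors. For the topological recursion relation~\eqref{eq:basic vector TRR} I would apply, at the point $j$ carrying $e_\beta$, the relation $\psi_j=\sum_{S}\delta_S$ over boundary divisors $\delta_S$ separating $j$ from both point~$1$ and the point carrying $e_\gamma$, and then use the gluing axiom~(iii). Here the F-CohFT structure is decisive: since each stable component must carry exactly one covector and point~$1$ (bearing $e^\alpha$) lies on the component with $e_\gamma$, the node insertion on the component of $e_\beta$ is forced to be the covector $e^\mu$, which yields precisely the product $\frac{\d\mcF^{\mu,0}}{\d q^\beta_b}\frac{\d^2\mcF^{\alpha,0}}{\d q^\mu_0\d q^\gamma_c}$. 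The important relation~\eqref{eq:important vector relation} is proved the same way starting from the exact divisor identity $\psi_1+\psi_j=\sum_{S\colon 1\in S,\,j\notin S}\delta_S$ on $\oM_{0,n+1}$, obtained by splitting the two standard presentations of $\psi_1$ and $\psi_j$; multiplying by $\psi_1^a\psi_j^b$, which restricts cleanly since $1$ and $j$ lie on opposite sides of every node occurring, and applying axiom~(iii) with the covector $e^\alpha$ staying on the $S$-side gives the right-hand side $\frac{\d\mcF^{\alpha,a}}{\d q^\mu_0}\frac{\d\mcF^{\mu,0}}{\d q^\beta_b}$. I expect the main obstacle to be exactly this covector bookkeeping at the nodes, namely checking in each boundary contribution that the one-covector-per-component constraint pins down on which side the node carries $e^\mu$ versus $e_\mu$, so that the resulting products are the asymmetric ones appearing in~\eqref{eq:basic vector TRR} and~\eqref{eq:important vector relation} rather than symmetrized combinations.

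Finally, the ancestor property is immediate: every monomial in~\eqref{eq:genus 0 ancestor potential of an F-CohFT} involves $n\ge 2$ of the variables $t^*_*$, so each first derivative $\frac{\d\mcF^{\alpha,a}}{\d t^\beta_b}$ is a series of monomials of positive degree, which vanishes at $t^*_*=0$; together with $\ot_{\orig}=0$ this is exactly condition~\eqref{eq:ancestor vector potential}, so the potentials are ancestor.
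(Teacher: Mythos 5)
Your proposal is correct and takes essentially the same route as the paper's proof: it reduces the claim via Proposition~\ref{proposition:descendant vector potentials,equivalent approach}, proves the string and dilaton equations from Axiom~(ii) together with the $\psi$-monomial comparison under the forgetful map and $\pi_*\psi_{n+1}=2g-2+n$, and derives~\eqref{eq:basic vector TRR} and~\eqref{eq:important vector relation} from the genus-$0$ boundary presentations of $\psi_i$ and of $\psi_i+\psi_j$ combined with the gluing Axiom~(iii), with the covector placed by the one-covector-per-component structure exactly as you describe. The ancestor property is likewise verified in the paper by the same observation that the series are at least quadratic in the variables $t^*_*$.
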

\begin{proof}
We are going to use the characterization of ancestor vector potentials given by Proposition \ref{proposition:descendant vector potentials,equivalent approach} and equation \eqref{eq:ancestor vector potential}. Therefore, we need to show that equations \eqref{eq:ancestor vector potential}, \eqref{eq:vector string equation}, \eqref{eq:vector dilaton equation}, \eqref{eq:basic vector TRR} and \eqref{eq:important vector relation} are valid for the power series \eqref{eq:genus 0 ancestor potential of an F-CohFT}.\\

Equation \eqref{eq:ancestor vector potential} follows from the fact that the formal power series $\mcF^{\alpha,a}$ are always at least quadratic in the variables $t^*_*$, by definition \eqref{eq:genus 0 ancestor potential of an F-CohFT}.\\

Equations \eqref{eq:vector string equation} and \eqref{eq:vector dilaton equation} can be proved by computing $\frac{\d \mcF^{\alpha,a}}{\d t^\un_0}$ and $\frac{\d \mcF^{\alpha,a}}{\d t^\un_1}$ using Axiom (ii) of Definition~\ref{definition:F-CohFT} together with the equations
\begin{gather}\label{eq:monomial in psi's and forgetful map}
\psi_1^{a_1}\ldots\psi_n^{a_n} =\pi^* (\psi_1^{a_1}\ldots\psi_n^{a_n})+ \sum_{i=1}^n \pi^*(\psi_1^{a_1}\ldots\psi_i^{a_i-1}\ldots\psi_n^{a_n}) \delta^{\{i,n+1\}}_0
\end{gather}
and
$$
\pi_*\psi_{n+1} = 2g-2+n,
$$
respectively, where $\pi\colon\oM_{g,n+1}\to\oM_{g,n}$ is the morphism forgetting the last marked point and~$\delta^{\{i,n+1\}}_0$ is the class of the irreducible boundary divisor in $\oM_{g,n+1}$ that is the closure of the locus of marked stable curves $(C,x_1,\ldots,x_{n+1})$ with two irreducible components, one of genus $0$ carrying the marked points $x_i$ and $x_{n+1}$ and the other of genus $g$ carrying all the other markings.\\

Proving equation~\eqref{eq:basic vector TRR} requires Axiom (iii) of Definition \ref{definition:F-CohFT} together with the following formula for the psi class $\psi_i$, valid in genus $0$ for fixed $i,j,k \in \{1,\ldots,n+1\}$ with $i\neq j\neq k\neq i$:
\begin{gather}\label{eq:psi class in genus 0}
\psi_i = \sum_{\substack{I\subset \{1,\ldots,n+1\}\\|I|\geq 2,\, i\in I,\, j,k \in I^c}} \delta^I_0 \in H^2(\oM_{0,n+1}),
\end{gather}
where $\delta^I_0$ is the closure of the locus of genus $0$ marked stable curves $(C,x_1,\ldots,x_{n+1})$ with two irreducible components, one carrying the marked points with labels in $I$ and the other carrying the marked points with labels in the complement $I^c=\{1,\ldots,n+1\}\backslash I$. Applying the formula to the case $i,j>0$, $k=1$, one obtains equation \eqref{eq:basic vector TRR}.\\

Summing two equations~\eqref{eq:psi class in genus 0}, where $i$ and $j$ are swapped, we obtain
$$
\psi_i+\psi_j = \sum_{\substack{I\subset \{1,\ldots,n+1\}\\|I|,|I^c|\geq 2,\, i\in I,\, j\in I^c}} \delta^I_0 \in H^2(\oM_{0,n+1}),
$$
which implies, taking $i=1$, equation~\eqref{eq:important vector relation}.
\end{proof}

\smallskip
 
\begin{definition}\cite{LRZ15}\label{definition:pCohFT}
A partial CohFT is a system of linear maps 
$$
c_{g,n}\colon V^{\otimes n} \to H^\even(\oM_{g,n}),\quad 2g-2+n>0,
$$
where $V$ is an arbitrary finite dimensional vector space, together with a special element $e\in V$, called the unit, and a symmetric nondegenerate bilinear form $\eta\in (V^*)^{\otimes 2}$, called a metric, such that, chosen any basis $e_1,\ldots,e_{\dim V}$ of $V$, the following axioms are satisfied:
\begin{itemize}
\item[(i)] the maps $c_{g,n}$ are equivariant with respect to the $S_n$-action permuting the $n$ copies of~$V$ in $V^{\otimes n}$ and the $n$ marked points in $\oM_{g,n}$, respectively.
\item[(ii)] $\pi^* c_{g,n}( \otimes_{i=1}^n e_{\alpha_i}) = c_{g,n+1}(\otimes_{i=1}^n  e_{\alpha_i}\otimes e)$ for $1 \leq\alpha_1,\ldots,\alpha_n\leq \dim V$, where $\pi\colon\oM_{g,n+1}\to\oM_{g,n}$ is the map that forgets the last marked point.\\
Moreover $c_{0,3}(e_{\alpha}\otimes e_\beta \otimes e) =\eta(e_\alpha\otimes e_\beta) =:\eta_{\alpha\beta}$ for $1\leq \alpha,\beta\leq \dim V$.
\item[(iii)] $\gl^* c_{g_1+g_2,n_1+n_2}( \otimes_{i=1}^{n_1+n_2} e_{\alpha_i}) = \eta^{\mu \nu}c_{g_1,n_1+1}(\otimes_{i\in I} e_{\alpha_i} \otimes e_\mu)\otimes c_{g_2,n_2+1}(\otimes_{j\in J} e_{\alpha_j}\otimes e_\nu)$ for $1\leq\alpha_1,\ldots,\alpha_{n_1+n_2}\leq \dim V$, where $I \sqcup J = \{1,\ldots,n_1+n_2\}$, $|I|=n_1$, $|J|=n_2$, and $\gl\colon\oM_{g_1,n_1+1}\times\oM_{g_2,n_2+1}\to \oM_{g_1+g_2,n_1+n_2}$ is the corresponding gluing map and where $\eta^{\alpha\beta}$ is defined by $\eta^{\alpha \mu}\eta_{\mu \beta} = \delta^\alpha_\beta$ for $1\leq \alpha,\beta\leq \dim V$.
\end{itemize}
\end{definition}

\smallskip

\begin{remark}
Clearly, given a partial CohFT $c_{g,n}\colon V^{\otimes n} \to H^\even(\oM_{g,n})$, the system of linear maps $c^\bullet_{g,n+1}\colon V^*\otimes V^{\otimes n}\to H^\even(\oM_{g,n+1})$ 
defined as $c^\bullet_{g,n+1}(e^{\alpha_0}\otimes\otimes_{i=1}^n e_{\alpha_i}):=\eta^{\alpha_0\mu}c_{g,n+1}(e_\mu\otimes\otimes_{i=1}^n e_{\alpha_i})$ forms an F-CohFT, called the associated F-CohFT.
\end{remark}

\smallskip

Given a partial CohFT $c_{g,n}\colon V^{\otimes n} \to H^\even(\oM_{g,n})$, $\dim V=N$, and a basis $e_1,\ldots,e_N\in V$, a function $F(t^1,\ldots,t^N)$ satisfying equations~\eqref{eq:axiom1 for Frobenius manifolds} and~\eqref{eq:axiom2 for Frobenius manifolds} can be constructed as the following generating function:
$$
F(t^1,\ldots,t^N):=\sum_{n\geq 3}\frac{1}{n!}\sum_{1\leq\alpha_1,\ldots,\alpha_n\leq N}\left(\int_{\oM_{0,n}}c_{0,n}(\otimes_{i=1}^n e_{\alpha_i})\right)\prod_{i=1}^n t^{\alpha_i},
$$
thus yielding an associated Frobenius manifold structure on a formal neighbourhood of $0$ in $V$.

\smallskip

\begin{definition}[\cite{KM94}]\label{definition:CohFT}
A CohFT is a partial CohFT $c_{g,n}\colon V^{\otimes n} \to H^\even(\oM_{g,n})$ such that the following extra axiom is satisfied:
\begin{itemize}
\item[(iv)] $\gl^* c_{g+1,n}(\otimes_{i=1}^n e_{\alpha_i}) = c_{g,n+2}(\otimes_{i=1}^n e_{\alpha_i}\otimes e_{\mu}\otimes e_\nu) \eta^{\mu \nu}$ for $1 \leq\alpha_1,\ldots,\alpha_n\leq \dim V$, where  $\gl\colon\oM_{g,n+2}\to \oM_{g+1,n}$ is the gluing map, which increases the genus by identifying the last two marked points.
\end{itemize}
\end{definition}

\smallskip

\subsection{Formal shift of an F-CohFT}

Let $\tau^1,\ldots,\tau^N$ be formal variables. For an arbitrary F-CohFT $c_{g,n+1}\colon V^*\otimes V^{\otimes n}\to H^\even(\oM_{g,n+1})$ with a fixed basis $e_1,\ldots,e_N\in V$ consider a system of maps  
$$
c_{g,n+1}^\otau\colon V^*\otimes V^{\otimes n}\to H^\even(\oM_{g,n+1})\otimes\mbC[[\tau^1,\ldots,\tau^N]]
$$ 
defined by
\begin{gather}\label{eq:formal shift of an F-CohFT}
c^\otau_{g,n+1}(\omega\otimes \otimes_{i=1}^n v_i):=\sum_{m\geq 0} \frac{1}{m!}\pi_{m*} c_{g,n+m+1}(\omega\otimes \otimes_{i=1}^n v_i \otimes (\tau^\alpha e_\alpha)^{\otimes m}),
\end{gather}
where $\omega\in V^*$, $v_i\in V$, $\otau=(\tau^1,\ldots,\tau^N)$ and $\pi_m\colon\oM_{g,n+m+1}\to\oM_{g,n+1}$ is the map that forgets the last $m$ marked points. It is straightforward to check that the maps $c^\otau_{g,n+1}$ form an F-CohFT with coefficients in~$\mbC[[\tau^1,\ldots,\tau^N]]$. Moreover, if $\oF$ is the vector potential given by formula~\eqref{eq:genus 0 vector potential of an F-CohFT}, then the flat F-manifold of the F-CohFT $c^\otau$ is described by the vector potential~$\oF_\otau$ from Section~\ref{subsubsection:lower triangular group}. We will call the F-CohFT $c^\otau_{g,n+1}$ the {\it formal shift} of the F-CohFT $c_{g,n+1}$.

\smallskip

\subsection{Partial and F-topological field theories}
A CohFT $c_{g,n}\colon V^{\otimes n}\to H^\even(\oM_{g,n})$ is called a {\it topological field theory} (TFT) when $c_{g,n}(V^{\otimes n})\subset H^0(\oM_{g,n})$. In this case, each class $c_{g,n}(\otimes_{i=1}^n e_{\alpha_i})\in H^0(\oM_{g,n})$ is represented by a constant complex valued function on $\oM_{g,n}$. Evaluating such functions at a point of $\oM_{g,n}$ representing a maximally degenerate stable curve, i.e., a nodal curve whose irreducible components are all $\mbP^1$'s with three special (nodal or marked) points, using Axioms~(iii) and~(iv) of Definition~\ref{definition:CohFT}, we see that the entire CohFT can be reconstructed from the linear map $c_{0,3}\colon V^{\otimes 3}\to H^0(\oM_{0,3})$ only. This amounts to the datum of a Frobenius algebra with unit $(V,\bullet,\eta,e)$, where the metric is given by $\eta(e_\alpha,e_\beta)= c_{0,3}(e\otimes e_\alpha\otimes e_\beta)$ and the product is given by $e_{\alpha}\bullet e_{\beta} = c_{0,3}(e_\alpha\otimes e_\beta\otimes e_{\mu})\eta^{\mu \nu}e_{\nu}$.\\

Let us generalize these notions and observations to partial and F-CohFTs.

\smallskip

\begin{definition}
A partial CohFT $c_{g,n}\colon V^{\otimes n}\to H^\even(\oM_{g,n})$ is called a partial topological field theory (partial TFT) when $c_{g,n}(V^{\otimes n})\subset H^0(\oM_{g,n})$.

\smallskip

\noindent An F-CohFT $c_{g,n+1}\colon V^*\otimes V^{\otimes n}\to H^\even(\oM_{g,n+1})$ is called an F-topological field theory (F-TFT) when $c_{g,n+1}(V^*\otimes V^{\otimes n})\subset H^0(\oM_{g,n+1})$.
\end{definition}

\smallskip

\begin{proposition}\label{proposition:reconstruction of F-TFTs}
A partial TFT $c_{g,n}\colon V^{\otimes n}\to H^0(\oM_{g,n})$ can be uniquely reconstructed from the linear maps $c_{0,3}\colon V^{\otimes 3}\to H^0(\oM_{0,3})$ and $c_{1,1}\colon V\to H^0(\oM_{1,1})$ only. This amounts to the datum of a Frobenius algebra with unit $(V,\bullet,\eta,e)$, where the metric is given by $\eta(e_\alpha,e_\beta)= c_{0,3}(e\otimes e_\alpha\otimes e_\beta)$ and the product is given by $e_{\alpha}\bullet e_{\beta} = c_{0,3}(e_\alpha\otimes e_\beta\otimes e_{\mu})\eta^{\mu \nu}e_{\nu}$, together with a special covector $\omega\in V^*$ given by $\langle\omega,e_{\alpha}\rangle = c_{1,1}(e_\alpha)$.

\smallskip

\noindent An F-TFT $c_{g,n+1}\colon V^*\otimes V^{\otimes n}\to H^0(\oM_{g,n+1})$ can be uniquely reconstructed from the linear maps $c_{0,3}\colon V^*\otimes V^{\otimes 2}\to H^0(\oM_{0,3})$ and $c_{1,1}\colon V^*\to H^0(\oM_{1,1})$ only. This amounts to the datum of a commutative associative algebra with unit $(V,\bullet,e)$, where the product is given by $e_{\alpha}\bullet e_{\beta} = c_{0,3}(e^\mu \otimes e_\alpha\otimes e_\beta)e_{\mu}$, together with a special vector $w\in V$ given by $w=c_{1,1}(e^\alpha)e_\alpha$.
\end{proposition}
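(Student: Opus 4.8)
The plan is to exploit that each $\oM_{g,n}$ is connected, so $H^0(\oM_{g,n})=\mbC\cdot 1$ and every class $c_{g,n}(\otimes_i e_{\alpha_i})$ (resp.\ $c_{g,n+1}(e^{\alpha_0}\otimes\otimes_i e_{\alpha_i})$) is a single scalar; reconstructing the partial TFT (resp.\ F-TFT) therefore means showing these scalars are forced by the values on $\oM_{0,3}$ and $\oM_{1,1}$. Since for a partial CohFT only the separating gluing Axiom~(iii) is available (Axiom~(iv) having been dropped, and for F-CohFTs no nonseparating gluing exists at all), the key idea is to evaluate each scalar at a boundary point whose dual graph is a \emph{tree}, so that every node is separating. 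Concretely, I would pick the stratum of $\oM_{g,n}$ whose generic stable curve has $g$ vertices of type $(1,1)$ (each a genus-$1$ component carrying a single half-edge) and $g+n-2$ vertices of type $(0,3)$, joined in a tree; this graph is stable, has total genus $g$ and $n$ legs, hence is nonempty, and since the class is constant its value there equals its value everywhere. In the F-case one uses the analogous tree stratum of $\oM_{g,n+1}$ with one leg distinguished as the covector input.

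First I would reduce the genus. Each $(1,1)$ vertex is a leaf attached by a single separating edge, so Axiom~(iii) applies with one factor equal to $c_{1,1}$. In the partial case this peels off one unit of genus at the cost of inserting the vector $\eta^{\mu\nu}c_{1,1}(e_\nu)e_\mu=\eta^{\mu\nu}\langle\omega,e_\nu\rangle e_\mu$, giving $c_{g,n}(\otimes_i e_{\alpha_i})=c_{g-1,n+1}(\otimes_i e_{\alpha_i}\otimes\eta^{\mu\nu}\langle\omega,e_\nu\rangle e_\mu)$; iterating $g$ times lands in genus $0$. In the F-case the gluing index is contracted directly, $e_\mu\leftrightarrow e^\mu$, with no metric needed, and the recursion reads $c_{g,n+1}(e^{\alpha_0}\otimes\otimes_i e_{\alpha_i})=c_{g-1,n+2}(e^{\alpha_0}\otimes\otimes_i e_{\alpha_i}\otimes w)$ with $w=c_{1,1}(e^\mu)e_\mu$. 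What remains is the genus-$0$ reconstruction, which is the classical one: splitting $\oM_{0,m}$ repeatedly along separating nodes expresses every genus-$0$ scalar through $c_{0,3}$, i.e.\ through the product $\bullet$ (and, in the partial case, the metric $\eta$) together with the unit $e$. This shows that all scalars are determined by $c_{0,3}$ and $c_{1,1}$, which is the asserted uniqueness.

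It then remains to identify the reconstructed data. Commutativity of $\bullet$ follows from the $S_n$-equivariance Axiom~(i), the unit property from Axiom~(ii) (which gives $c_{0,3}(e\otimes e_\alpha\otimes e_\beta)=\eta_{\alpha\beta}$ in the partial case, resp.\ $c_{0,3}(e^\alpha\otimes e_\beta\otimes e)=\delta^\alpha_\beta$ in the F-case), and associativity from genus-$0$ consistency on $\oM_{0,4}\cong\mbP^1$: the two separating degenerations correspond to the two bracketings, exactly as in the derivation of~\eqref{eq:axiom2 of flat F-man}. In the partial case $\eta$ is symmetric and nondegenerate by the definition of a partial CohFT, so $(V,\bullet,\eta,e)$ is a Frobenius algebra; in the F-case one obtains a commutative associative unital algebra $(V,\bullet,e)$. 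Finally the covector $\omega$ (resp.\ the vector $w$) is \emph{unconstrained}: no axiom produces $c_{1,1}$ from lower-complexity data, precisely because this is the role played by the now-absent nonseparating gluing axiom; hence $c_{1,1}$ is free, completing the identification.

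I expect the only genuine subtlety to be the well-definedness of the recursion, since a priori different tree degenerations, or different orders of peeling the genus-$1$ caps, could yield different expressions. This dissolves once one notes that each $c_{g,n}$ is a fixed constant, so any admissible chain of separating-gluing reductions must compute the same number; the content is therefore reduced to (a) the genus-$0$ associativity and commutativity established above and (b) the mere existence of the tree stratum, both of which are already in hand. The most laborious bookkeeping will be the genus-$0$ step for large $n$, but this is the standard two-dimensional TFT reconstruction and requires no new input beyond $\oM_{0,4}$.
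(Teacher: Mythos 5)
Your proposal is correct and takes essentially the same route as the paper's proof: since degree-zero classes are constant on the connected spaces $\oM_{g,n}$, one evaluates them at a boundary point whose dual graph is a tree built from $(0,3)$ rational and $(1,1)$ elliptic components and applies the separating gluing Axiom (iii). The paper compresses this into two sentences; your additional details (the vertex count for the tree stratum, order-independence of the genus-peeling, and the identification of the Frobenius, respectively associative, algebra data via Axioms (i), (ii) and the $\oM_{0,4}$ degenerations) merely make explicit what the paper leaves implicit.
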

\begin{proof}
As for TFTs, the classes $c_{g,n}(\otimes_{i=1}^n e_{\alpha_i})\in H^0(\oM_{g,n})$ for partial TFTs and $c_{g,n+1}(e^{\alpha}\otimes \otimes_{i=1}^n e_{\alpha_i})\in H^0(\oM_{g,n+1})$ for F-TFTs are represented by constant complex valued functions on $\oM_{g,n}$ and $\oM_{g,n+1}$, respectively. Evaluating such functions at points of the moduli spaces representing stable curves with separating nodes only and whose irreducible components are either $\mbP^1$'s with three special (marked or nodal) points or elliptic curves with one special point, and using Axiom (iii) of Definition \ref{definition:pCohFT} or \ref{definition:F-CohFT} we obtain the desired result. 
\end{proof}

\smallskip

Given a CohFT, a partial CohFT or an F-CohFT, their degree zero parts are naturally a TFT, a partial TFT or an F-TFT, respectively.

\smallskip

\subsection{Homogeneous F-CohFTs}

Since $H^*(\oM_{g,n})$ is a graded $\mbC$-vector space, it is natural to consider the special case of F-CohFTs for which:
\begin{itemize}
\item the vector spaces $V$ and $V^*$ are also graded, $\deg e = 0$, and the pairing between $V$ and~$V^*$ has degree $0$, i.e., $\deg e^\alpha =-\deg e_\alpha$ for a homogeneous basis $e_1,\ldots,e_{\dim V}$ of $V$,
\item the maps $c_{g,n+1}\colon V^*\otimes V^{\otimes n} \to H^\even(\oM_{g,n})$ are homogeneous of degree $\deg c_{g,n+1}$.
\end{itemize}
Then, because of Axiom (ii) in Definition \ref{definition:F-CohFT}, $\deg c_{g,n+1}$ does not depend on $n$ and $\deg c_{0,n+1}=0$ for any $n\geq 2$. Moreover, because of Axiom (iii), $\deg c_{g,n+1}$ is a linear function of $g$, which implies that the general form of a grading compatible with the axioms of F-CohFT is
$$
\deg c_{g,n+1} = \gamma g , \quad \gamma \in \mbC.
$$
Thus, setting $q_\alpha:=\deg e_\alpha$, we get the following condition for the classes $c_{g,n+1}(e^{\alpha_0}\otimes\otimes_{i=1}^ne_{\alpha_i})$:
\begin{gather}\label{eq:homogeneous F-CohFT without correction}
\deg c_{g,n+1}(e^{\alpha_0}\otimes\otimes_{i=1}^ne_{\alpha_i})=\sum_{i=1}^n q_{\alpha_i}-q_{\alpha_0}+\gamma g,
\end{gather}
where $\deg$ denotes half of the cohomological degree. Now, in order to get a generalization of the usual notion of homogeneous CohFT, let us correct the left-hand side of~\eqref{eq:homogeneous F-CohFT without correction} by adding a term $\pi_{1*}c_{g,n+2}(e^{\alpha_0}\otimes\otimes_{i=1}^ne_{\alpha_i}\otimes r^\beta e_\beta)$, where $r^\beta\in\mbC$ and $\pi_1\colon\oM_{g,n+2}\to\oM_{g,n+1}$ is the map that forgets the last marked point. We finally arrive to the following definition.

\smallskip

\begin{definition}
An F-CohFT $c_{g,n+1}\colon V^*\otimes V^{\otimes n}\to H^\even(\oM_{g,n+1})$ is called homogeneous if~$V$ is a graded vector space with a homogeneous basis $e_1,\ldots,e_{\dim V}$, $\deg e=0$, and complex constants~$r^\alpha$, $1\leq \alpha\leq \dim V$, and $\gamma$ exist, such that the following condition is satisfied:
\begin{multline*}
\Deg c_{g,n+1}(e^{\alpha_0}\otimes\otimes_{i=1}^ne_{\alpha_i})+\pi_{1*}c_{g,n+2}(e^{\alpha_0}\otimes\otimes_{i=1}^ne_{\alpha_i}\otimes r^\beta e_\beta)=\\
=\left(\sum_{i=1}^n q_{\alpha_i}-q_{\alpha_0}+\gamma g\right)c_{g,n+1}(e^{\alpha_0}\otimes\otimes_{i=1}^ne_{\alpha_i}),
\end{multline*}
where $q_\alpha:=\deg e_\alpha$ and by $\Deg\colon H^*(\oM_{g,n})\to H^*(\oM_{g,n})$ we denote the operator that acts on $H^i(\oM_{g,n})$ by the multiplication by $\frac{i}{2}$. The constant $\gamma$ is called the conformal dimension of our F-CohFT.
\end{definition}

\smallskip

The flat F-manifold associated to a homogeneous F-CohFT is homogeneous with the Euler vector field given by
$$
E=\sum_{\alpha=1}^{\dim V}\left((1-q_\alpha)t^\alpha+r^\alpha\right)\frac{\d}{\d t^\alpha}.
$$

\smallskip

Suppose that a homogeneous F-CohFT comes from a partial CohFT $c_{g,n}\colon V^{\otimes n} \to H^\even(\oM_{g,n})$, with the metric $\eta$ on $V$, seen as the map $\eta\colon V^{\otimes 2}\to \mbC$, having degree $\deg \eta = -\delta$. Then our partial CohFT satisfies the condition
\begin{gather*}
\Deg c_{g,n}(\otimes_{i=1}^ne_{\alpha_i})+\pi_{1*}c_{g,n+1}(\otimes_{i=1}^ne_{\alpha_i}\otimes r^\beta e_\beta)=\left(\sum_{i=1}^n q_{\alpha_i}+\gamma g-\delta\right)c_{g,n}(\otimes_{i=1}^ne_{\alpha_i}).
\end{gather*}

\smallskip

Finally, if our partial CohFT is a CohFT, then the last property has to be compatible with the extra gluing axiom at nonseparating nodes and this imposes the further condition $\gamma=\delta$, i.e.,
\begin{gather*}
\Deg c_{g,n}(\otimes_{i=1}^ne_{\alpha_i})+\pi_{1*}c_{g,n+1}(\otimes_{i=1}^ne_{\alpha_i}\otimes r^\beta e_\beta)=\left(\sum_{i=1}^n q_{\alpha_i}+\delta(g-1)\right)c_{g,n}(\otimes_{i=1}^ne_{\alpha_i}).
\end{gather*}
This is exactly the homogeneity condition in the definition of homogeneous CohFTs (see, e.g.,~\cite[Definition~1.7]{PPZ15}). Note that the constant $\delta$ is the conformal dimension of the corresponding Frobenius manifold.\\
 

\section{Group action on F-CohFTs}\label{section:group action of F-CohFTs}

In this section, we define a generalization of the notion of a Givental group acting on the space of CohFTs (see \cite{PPZ15}) to a corresponding generalized Givental group acting on the space of F-CohFTs. Using this action, we then present a construction of a family of F-CohFTs associated to any given flat F-manifold that is semisimple at the origin. The family is parameterized by a vector $G_0\in\mbC^N$.

\smallskip

\subsection{$R$-matrices}
\begin{definition}
Given a vector space $V$, a system of linear maps 
$$
c_{g,n+1}\colon V^*\otimes V^{\otimes n} \to H^\even(\oM_{g,n+1}),\quad 2g-1+n>0,
$$
satisfying Axioms (i) and (iii) of Definition~\ref{definition:F-CohFT} is called an F-CohFT without unit.
\end{definition}

\smallskip

Consider now the group $G_+$ of $\End(V)$-valued power series of the form $R(z)=\Id+\sum_{i\geq 1} R_iz^i$, and let us denote by $R^{-1}(z)$ the inverse element to $R(z)$ and by $R(z)^t$ the transposed $\End(V^*)$-valued power series. We refer to such an element of $G_+$ as an {\it $R$-matrix}.\\

Let $\Gamma$ be a stable graph of genus $g$ with $n$ marked legs (see \cite[Section~0.2]{PPZ15} for the definition) and $V(\Gamma)$, $E(\Gamma)$ be its sets of vertices and edges, each vertex $v\in V(\Gamma)$ marked with a genus $g(v)$ and with valence $n(v)$. Let $\xi_{\Gamma}\colon\prod_{v\in V(\Gamma)} \oM_{g(v),n(v)} \to \oM_{g,n}$ be the natural map whose image is the closure of the locus of stable curves whose dual graph is $\Gamma$. The degree of~$\xi_\Gamma$ is~$|\Aut\Gamma|$, the number of automorphisms of the graph $\Gamma$.\\

Let $T_{g,n+1}$ be the set of stable trees of genus $g$ with $n+1$ marked legs. Then $\Gamma \in T_{g,n+1}$ can be seen as a stable rooted tree where the root is the vertex to which leg $1$ is attached and each edge $e\in E(\Gamma)$ is splitted into two half edges $e'$ and $e''$, where $e'$ is closer to the root and $e''$ is farther from the root.\\

The action of $R\in G_+$ on an F-CohFT without unit $c_{g,n+1}\colon V^*\otimes V^{\otimes n} \to H^\even(\oM_{g,n+1})$ is the system of maps
\begin{equation}\label{eq:R-action}
(Rc)_{g,n+1} := \sum_{\Gamma \in T_{g,n+1}}\xi_{\Gamma*}\left[\prod_{v\in V(\Gamma)}c_{g(v),n(v)}R(-\psi_1)^t \prod_{k=2}^{n+1} R^{-1}(\psi_k)\prod_{e\in E(\Gamma)} \frac{\Id-R^{-1}(\psi_{e'})R(-\psi_{e''})}{\psi_{e'}+\psi_{e''}}\right].
\end{equation}
Understanding how this formula gives a linear map from $V^*\otimes V^{\otimes n}$ to $H^\even(\oM_{g,n+1})$ requires some explanation. First, the covector and the $n$ vectors are fed to the external leg terms~$R(-\psi_1)^t$ and $R^{-1}(\psi_k)$, $2\leq k \leq n+1$, which are elements of $H^*(\oM_{g,n+1})\otimes \End(V^*)$ and $H^*(\oM_{g,n+1})\otimes \End(V)$, respectively. The result is an element in $H^*(\oM_{g,n+1})\otimes V^*$ and~$n$ elements in $H^*(\oM_{g,n+1})\otimes V$, the first factor of which acts by multiplication in cohomology, while the second factor is fed to the $c_{g(v),n(v)}$ sitting at the vertex $v$ to which the corresponding leg is attached.\\

Second, the edge term
$$
\frac{\Id-R^{-1}(\psi_{e'})R(-\psi_{e''}) }{\psi_{e'}+\psi_{e''}}
$$
is an element of $H^*(\oM_{g(v'),n(v')})\otimes H^*(\oM_{g(v''),n(v'')})\otimes V\otimes V^*$, where $v'$ is the vertex to which~$e'$ is attached and $v''$ is the vertex to which $e''$ is attached (indeed, the $\End(V)$-valued power series in the psi classes at the numerator is in the ideal generated by the denominator, since $R(z)R^{-1}(z)=\Id$). The first two factors act by multiplication in cohomology, the third factor is fed to one of the vector entries of the $c_{g(v'),n(v')}$ sitting at the vertex $v'$ and the fourth factor is fed to the covector entry of the $c_{g(v''),n(v'')}$ sitting at the vertex $v''$.\\

This way, all entries of the vertex terms $c_{g(v),n(v)}$ are exhausted by either a leg or an edge term, and all that is left is a product of (even) cohomology classes.

\smallskip

\begin{remark}
Note that, unlike in the analogous formula for the $R$-action on CohFTs without unit~\cite[Section~2.1]{PPZ15}, we don't have the factor $\frac{1}{|\Aut(\Gamma)|}$ in formula~\eqref{eq:R-action}. This is because stable trees don't have nontrivial automorphisms.
\end{remark}

\smallskip

\begin{theorem}
If $c_{g,n+1}$ is an F-CohFT without unit, then $(Rc)_{g,n+1}$ is an F-CohFT without unit. The resulting action is a left group action.
\end{theorem}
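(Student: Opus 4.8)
The plan is to prove the two assertions in turn: first that $(Rc)_{g,n+1}$ satisfies Axioms (i) and (iii) of an F-CohFT without unit, and second that the assignment $c\mapsto Rc$ defines a left group action, i.e. $R_1(R_2 c) = (R_1 R_2)c$ and $\Id\cdot c = c$. The strategy follows the established pattern of the Givental--Teleman action on CohFTs as carried out in \cite{PPZ15}, but with the crucial structural simplifications afforded by the F-CohFT setting: the sum in~\eqref{eq:R-action} runs only over stable \emph{trees} $T_{g,n+1}$ rather than all stable graphs, the leg attached to the covector entry (leg $1$) plays a distinguished role as the root, and every edge carries a preferred orientation (toward or away from the root). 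These features are exactly what make the gluing axiom~(iii) --- which only glues at \emph{separating} nodes for F-CohFTs --- compatible with the tree decomposition.

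First I would verify Axiom~(i), the $S_n$-equivariance permuting the last $n$ legs. This is essentially formal: permuting the input vectors $v_2,\dots,v_{n+1}$ permutes the leg terms $R^{-1}(\psi_k)$ and correspondingly relabels the legs of the trees $\Gamma\in T_{g,n+1}$, and since the set $T_{g,n+1}$ is stable under relabeling of the non-root legs while $\xi_{\Gamma*}$ transforms equivariantly under the induced action on $\oM_{g,n+1}$, the map $(Rc)_{g,n+1}$ inherits equivariance from the equivariance of each $c_{g(v),n(v)}$ at the vertices. Then I would turn to Axiom~(iii). The key combinatorial input is that a stable tree $\Gamma\in T_{g_1+g_2,n_1+n_2+1}$ pulled back along the separating gluing map $\gl$ decomposes canonically: the edge being glued either already appears in $\Gamma$ as one of its edges, or it is created by $\xi_\Gamma$ meeting the boundary divisor transversally. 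One analyzes the preimage $\gl^*\xi_{\Gamma*}[\cdots]$ by excess-intersection/splitting-of-graphs arguments, using that a tree splits along any edge into two subtrees, one rooted at leg $1$ (containing the genus-$g_1$, covector-carrying side) and one rooted at the new half-edge (the genus-$g_2$ side). The edge term $\frac{\Id - R^{-1}(\psi_{e'})R(-\psi_{e''})}{\psi_{e'}+\psi_{e''}}$ at the glued node is precisely the bilinear propagator that, upon restriction to the boundary, reproduces the product of the two corresponding sums over subtrees $T_{g_1,n_1+2}$ and $T_{g_2,n_2+1}$; here the asymmetry of the propagator in $e'$ versus $e''$ matches the asymmetry between the covector slot and a vector slot, so no metric is needed to raise/lower indices.

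For the group action property I would compute $R_1(R_2 c)$ by substituting the defining formula for $R_2 c$ into~\eqref{eq:R-action} applied with $R_1$. This produces a double sum over an outer tree (from the $R_1$-action) with each vertex decorated by an inner tree (from the $R_2$-action). The combinatorial heart is to recognize that grafting each inner tree at the corresponding vertex of the outer tree yields a single stable tree in $T_{g,n+1}$, and that this grafting is a bijection onto $T_{g,n+1}$ once one sums over all ways of collapsing/expanding edges. One must then match the analytic data: the leg terms compose as $R_1(-\psi_1)^t R_2(-\psi_1)^t = (R_1 R_2)(-\psi_1)^t$ and $R_2^{-1}(\psi_k)R_1^{-1}(\psi_k) = (R_1 R_2)^{-1}(\psi_k)$, while at each interior edge of the merged tree the two propagators compose, via the ``R-matrix on an edge'' identity, into the single $(R_1R_2)$-propagator --- the same telescoping computation that underlies the group law in \cite{PPZ15}, with the orientation of each edge preserved throughout. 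The identity $\Id\cdot c = c$ is immediate since only the trivial one-vertex tree survives when $R=\Id$.

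The main obstacle I expect is the rigorous bookkeeping in Axiom~(iii): one has to track how the preferred edge-orientation (root-side half-edge $e'$ versus far-side $e''$) interacts with the separating gluing map, and to confirm that the non-symmetric propagator distributes correctly so that the covector slot of $c_{g_2,n_2+1}$ is fed by the $R(-\psi_{e''})$ factor while the matching vector slot of $c_{g_1,n_1+2}$ is fed by the $R^{-1}(\psi_{e'})$ factor. Unlike the CohFT case, there is no metric to symmetrize this pairing, so the argument must keep strict account of which half-edge is which; getting this orientation convention consistent between the definition~\eqref{eq:R-action}, the splitting of trees, and the gluing axiom is where the proof genuinely differs from \cite{PPZ15} and is the step most prone to sign or index errors. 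I would isolate this as a separate combinatorial lemma on the behavior of rooted stable trees under splitting along a separating edge before assembling the full verification.
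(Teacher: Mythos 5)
Your plan coincides with the paper's own proof, which likewise establishes Axioms (i) and (iii) and the group law by transplanting the arguments of \cite[Section 2]{PPZ15} to sums over stable rooted trees, with the pullback along a separating gluing map replacing the distinguished edge's propagator by $R^{-1}(\psi_{e'})R(-\psi_{e''})$, whose two factors feed the vector slot on the root side and the covector slot on the far side exactly as you describe. The only slip is the order of composition on the covector leg in your group-law computation: transposition reverses products, so the correct identity is $R_2(-\psi_1)^t R_1(-\psi_1)^t=\left((R_1R_2)(-\psi_1)\right)^t$ rather than $R_1(-\psi_1)^t R_2(-\psi_1)^t=(R_1R_2)(-\psi_1)^t$, consistent with the fact that in $R_1(R_2c)$ the outer matrix $R_1(-\psi_1)^t$ hits the covector first.
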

\begin{proof}
The proof strictly follows the ideas of the analogous proofs, found in \cite[Section~2]{PPZ15}, for the $R$-matrix action on a CohFT. The $S_n$-equivariance of $(RC)_{g,n+1}$ follows from the $S_n$-equivariance of $c_{g,n+1}$ and the definition of the $R$-matrix action. For the pullback of $(Rc)_{g,n+1}$ to a boundary divisor of curves with dual graph $\Phi$ with two vertices and one separating edge, one follows the argument in the proof of \cite[Proposition 2.3]{PPZ15} to show that such pullback is an expression similar to \eqref{eq:R-action}, the only differences being that the sum runs over all stable rooted trees $\Gamma$ which are degenerations of $\Phi$ and that the edge term assigned to the distinguished separating edge $e$ is simply $R^{-1}(\psi_{e'})R(-\psi_{e''})$. This shows that $(Rc)_{g,n+1}$ satisfies Axiom (iii) in Definition \ref{definition:F-CohFT}.\\

To show that the resulting action is a left group action, we follow again the argument in \cite[Proposition 2.4]{PPZ15}.
\end{proof}

\smallskip

\subsection{$R$-matrix action on F-CohFTs}
Consider the abelian group of $V$-valued power series of the form $T(z)=\sum_{i\geq 2} T_i z^i$. We refer to such power series as a {\it translation}. Its action on an F-CohFT without unit $c_{g,n+1}$ is given by the formula
\begin{equation}\label{eq:T-action}
(Tc)_{g,n+1}(\omega\otimes \otimes_{i=1}^n v_i): = \sum_{m\geq 0} \frac{1}{m!} \pi_{m*} c_{g,n+m+1}(\omega\otimes \otimes_{i=1}^n v_i \otimes \otimes_{k=n+2}^{n+m+1} T(\psi_k) ),
\end{equation}
where $\omega\in V^*$ and $v_i\in V$, $1\leq i \leq n$.

\smallskip

\begin{theorem}
If $c_{g,n+1}$ is an F-CohFT without unit, then $(Tc)_{g,n+1}$ is an F-CohFT without unit. The resulting action is an abelian group action. If two translations $T'$ and $T''$ are related by an $R$-matrix $R$ via the equation $T'(z)=R(z)T''(z)$, then $(T'Rc)_{g,n+1} = (RT''c)_{g,n+1}$.
\end{theorem}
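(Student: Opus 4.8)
The plan is to verify the three assertions in turn, in each case mirroring the corresponding argument for ordinary CohFTs in \cite[Section~2]{PPZ15} and isolating the single place where the hypothesis $T(z)=\sum_{i\geq 2}T_iz^i$ (no constant or linear term) is essential. That $(Tc)_{g,n+1}$ is again an F-CohFT without unit amounts to checking Axioms~(i) and~(iii) of Definition~\ref{definition:F-CohFT}. Axiom~(i) is immediate: the $S_n$-action permutes only the legs $2,\dots,n+1$, while the translation insertions sit on the forgotten legs $n+2,\dots,n+m+1$, are symmetric among themselves, and the factor $\frac{1}{m!}$ together with $\pi_{m*}$ absorbs their symmetrization. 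For Axiom~(iii) I would pull $(Tc)_{g_1+g_2,n_1+n_2+1}$ back along the separating gluing map $\gl$ and use the compatibility of $\gl$ with the forgetful projections $\pi_m$: a forgotten point lies on exactly one of the two components, so the sum $\sum_m\frac{1}{m!}$ factors, via the binomial distribution of the $m$ points over the two sides, into the product of the corresponding sums. The only nontrivial point is the comparison of the psi classes $\psi_k$ of the forgotten legs under $\gl^*$; since $T(\psi_k)$ vanishes to order at least $2$, the boundary correction terms (supported where a forgotten point collides with the node) restrict to the bubble $\oM_{0,3}$ on which $\psi_k=0$, and therefore drop out.

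For the abelian group property I would compute $(T'(T''c))_{g,n+1}$ directly. Expanding both translations inserts $T''(\psi)$ on one set of forgotten legs and $T'(\psi)$ on another; iterating the forgetful pushforwards, the key step is the comparison $\psi_j=\pi^*\psi_j+D_{jk}$ when a further point $k$ is forgotten. Because $T'(\psi_k)$ and $T''(\psi_k)$ carry no term of degree $<2$, their restriction to each collision divisor $D_{jk}$ vanishes (again $\psi_k|_{\oM_{0,3}}=0$), so every correction term disappears and the two families of forgotten points become interchangeable. The multinomial bookkeeping of the two $\frac{1}{m!}$ factors then collapses to $\sum_m\frac{1}{m!}$ with insertions of $T'(\psi)+T''(\psi)$, giving $(T'(T''c))=((T'+T'')c)$.

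Most importantly, for the relation $(T'Rc)=(RT''c)$ when $T'(z)=R(z)T''(z)$, I would start from
\[
(T'Rc)_{g,n+1}(\omega\otimes\otimes_{i=1}^nv_i)=\sum_{m\geq 0}\frac{1}{m!}\pi_{m*}(Rc)_{g,n+m+1}\left(\omega\otimes\otimes_{i=1}^nv_i\otimes\otimes_{k=n+2}^{n+m+1}T'(\psi_k)\right),
\]
and expand $(Rc)_{g,n+m+1}$ as the sum \eqref{eq:R-action} over stable trees in $T_{g,n+m+1}$. Each forgotten leg $k$ is an external leg, hence carries the leg factor $R^{-1}(\psi_k)$ acting on the inserted vector $T'(\psi_k)$; the crucial simplification is
\[
R^{-1}(\psi_k)\,T'(\psi_k)=R^{-1}(\psi_k)R(\psi_k)\,T''(\psi_k)=T''(\psi_k),
\]
so the $R$-matrix on these legs is absorbed and each forgotten leg now carries simply $T''(\psi_k)$ fed directly into the vertex to which it is attached. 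Performing $\pi_{m*}$ then forgets these legs vertex by vertex, which is precisely the $T''$-action applied at each vertex term $c_{g(v),n(v)}$; since $T''(\psi_k)$ again has no part of degree $<2$, forgetting preserves stability and, by the same collision-divisor vanishing, commutes with the edge and remaining leg factors $R^{-1}(\psi_{e'})$, $R(-\psi_{e''})$, $R^{-1}(\psi_j)$ in \eqref{eq:R-action}. What remains is exactly the sum over trees defining $R$ applied to the translated theory $T''c$, i.e.\ $(RT''c)_{g,n+1}$.

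The main obstacle in all three parts is the same: controlling the psi-class pullback and pushforward under forgetful maps, and verifying that the boundary correction terms vanish. This is where the hypothesis that a translation has no term below $z^2$ is indispensable. I expect the most delicate bookkeeping to be the precise identification, in the third part, of ``forgetting a leg from an $R$-tree $\Gamma\in T_{g,n+m+1}$'' with ``applying the $T''$-translation at the corresponding vertex'', including the cases where forgetting a leg destabilizes a vertex and contracts it to a tree in $T_{g,n+1}$; here one must track how $\pi_{m*}\circ\xi_{\Gamma*}$ reassembles into $\xi_{\Gamma_0*}$ composed with vertex-wise forgetful pushforwards.
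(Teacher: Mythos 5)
Your proposal is correct and takes essentially the same route as the paper, whose entire proof is a one-line reference to \cite[Propositions 2.7, 2.8, 2.9]{PPZ15} ``with the obvious transpositions'': your sketch reconstructs precisely those arguments, namely the binomial redistribution of forgotten points over components and vertices, the vanishing of the $\psi$-class correction terms on collision divisors because $T$ has no term below $z^2$, and the absorption $R^{-1}(\psi_k)T'(\psi_k)=T''(\psi_k)$ on forgotten legs. The one step you flag as delicate (vertices of $\Gamma\in T_{g,n+m+1}$ that destabilize under $\pi_m$) is in fact resolved by the same hypothesis: such a genus-$0$ vertex carries $m_v\ge 1$ insertions $T''(\psi)$ of total complex degree at least $2m_v$, which exceeds the dimension of its moduli factor ($m_v-2$, $m_v-1$ or $m_v$ in the three possible cases), so these contributions vanish identically and no edge- or leg-factor corrections survive.
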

\begin{proof}
The proof follows closely the proofs of \cite[Proposition 2.7, 2.8, 2.9]{PPZ15} with the obvious transpositions. 
\end{proof}

\smallskip

In order to obtain a well-defined $R$-matrix action on F-CohFTs with unit, we need to combine it with an appropriate translation. Given an F-CohFT with unit $e$ and an $R$-matrix $R$, let us define the translations $T_R'(z):=z[R(z)e-e]$ and $T_R''(z):=z[e-R^{-1}(z)e]$.

\smallskip

\begin{theorem}\label{theorem:R-action on F-CohFTs}
If $c_{g,n+1}$ is an F-CohFT with unit $e$, then $(T_R'Rc)_{g,n+1}=(RT_R''c)_{g,n+1}$ is an F-CohFT with the same unit $e$. The resulting action is a left group action of the group $G_+$ on F-CohFTs with unit.
\end{theorem}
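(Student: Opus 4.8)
The plan is to reduce the statement to the three ingredients already established: the left group action of $G_+$ by $R$-matrices on F-CohFTs without unit, the abelian action of translations, and the compatibility relation asserting that $(T'Rc)_{g,n+1}=(RT''c)_{g,n+1}$ whenever $T'(z)=R(z)T''(z)$. First I would record the elementary identity
\[
R(z)T_R''(z)=R(z)\cdot z\left[e-R^{-1}(z)e\right]=z\left[R(z)e-e\right]=T_R'(z),
\]
so that the equality $(T_R'Rc)_{g,n+1}=(RT_R''c)_{g,n+1}$ is an immediate consequence of the compatibility theorem stated above. I would also note that $T_R'(z)=\sum_{i\ge 1}R_ie\,z^{i+1}$ and $T_R''(z)=-\sum_{i\ge 1}(R^{-1})_ie\,z^{i+1}$ both start in degree $z^2$, hence are genuine translations, so the two actions \eqref{eq:R-action} and \eqref{eq:T-action} may legitimately be composed. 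Since the composite $(T_R'R)$ carries F-CohFTs without unit to F-CohFTs without unit (it preserves Axioms (i) and (iii) by the two preceding theorems), it only remains to verify the unit Axiom~(ii) of Definition~\ref{definition:F-CohFT} and the group law.

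For the unit axiom I would follow the strategy of \cite[Section~2]{PPZ15}, with the transpositions dictated by the presence of the single covector slot. Working with the representative $(RT_R''c)_{g,n+2}$, I would feed the unit $e$ into the last ($(n+2)$-th) marked point and compare the result with $\pi^*(RT_R''c)_{g,n+1}$, where $\pi$ forgets that point. The comparison is carried out graph by graph over $T_{g,n+2}$, using the pullback relations $\psi_i=\pi^*\psi_i+[D_{i,n+2}]$ for the surviving legs together with the projection-formula behaviour of $\psi_{n+2}$, where $D_{i,n+2}$ is the divisor on which marked points $i$ and $n+2$ collide. The trees split into those whose $e$-leg is attached to a stable vertex, which reproduce $\pi^*$ of the corresponding tree in $T_{g,n+1}$ after applying Axiom~(ii) for $c$ at that vertex, and those whose $e$-leg sits on an unstable three-valent genus-$0$ vertex contracted by $\xi_\Gamma$. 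The contributions of the latter are precisely the terms in which the leg factor $R^{-1}(\psi_{n+2})e$ and the translation insertion $T_R''(\psi)=z[e-R^{-1}(z)e]$ appear, and the explicit form of $T_R''$ is engineered so that these combine and cancel, leaving exactly the pullback. The normalization $c_{0,3}(e^\alpha\otimes e_\beta\otimes e)=\delta^\alpha_\beta$ holds because on $\oM_{0,3}$ the only contributing tree is the one-vertex tree and $R(0)=\Id$, so no translation terms survive.

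To establish the left group action I would compose $(T_{R_1}'R_1)$ and $(T_{R_2}'R_2)$ and show the result equals $(T_{R_1R_2}'(R_1R_2))$. The $R$-factors combine into $R_1R_2$ by the group law for the $R$-action established above, while the translations reorganize through the abelian translation action and the semidirect-product structure by which $G_+$ acts on translations via $R\cdot T(z)=R(z)T(z)$; the detailed interaction of the two actions is inherited from the arguments of \cite{PPZ15}. The only new identity to check is the cocycle relation
\[
T_{R_1R_2}'(z)=T_{R_1}'(z)+R_1(z)T_{R_2}'(z),
\]
which is immediate from $T_R'(z)=z[R(z)e-e]$:
\[
T_{R_1}'(z)+R_1(z)T_{R_2}'(z)=z[R_1e-e]+R_1(z)\,z[R_2e-e]=z[R_1R_2e-e]=T_{R_1R_2}'(z).
\]
The identity element $R=\Id$ gives $T_\Id'=0$ and acts trivially, while the equivariance and gluing axioms are inherited from the two component actions.

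The main obstacle is the unit axiom. The delicate part is the bookkeeping in the graph sum under the forgetful pushforward: one must match, edge by edge and vertex by vertex, the contracted unstable genus-$0$ vertices against the translation insertions and verify that the $\psi$-class comparison terms $[D_{i,n+2}]$ assemble correctly across the whole sum. This is exactly where the explicit choices $T_R'(z)=z[R(z)e-e]$ and $T_R''(z)=z[e-R^{-1}(z)e]$ are forced, and where the symmetric argument of \cite{PPZ15} must be adapted to the asymmetry between the single covector entry and the $n$ vector entries.
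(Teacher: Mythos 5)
Your proposal is correct and follows essentially the same route as the paper, whose entire proof is a citation of \cite[Proposition~2.12]{PPZ15}: you simply make explicit the ingredients that reference supplies, namely the identity $R(z)T_R''(z)=T_R'(z)$ combined with the compatibility theorem, the cocycle relation $T'_{R_1R_2}=T'_{R_1}+R_1T'_{R_2}$ for the group law, and the PPZ15-style graph-by-graph verification of the unit axiom under the forgetful pushforward. All the identities you check (including the degree-$z^2$ start of $T_R'$ and $T_R''$ and the $\oM_{0,3}$ normalization) are correct, so your write-up is a faithful expansion of the paper's one-line proof.
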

\begin{proof}
Again, the proof follows closely the proof of \cite[Proposition 2.12]{PPZ15}.
\end{proof}

\smallskip

Following~\cite{PPZ15}, let us use the notation
$$
R.c:=T_R'Rc
$$
for the constructed action of the group $G_+$ on F-CohFTs with unit.

\smallskip

\begin{proposition}\label{proposition:R-invariance of F-TFTs}
The action of $G_+$ on F-CohFTs with unit leaves their degree $0$ part unchanged.
\end{proposition}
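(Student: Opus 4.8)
The plan is to exploit the fact that the combined action is the composition $R.c = T_R' R c$ of the $R$-matrix action \eqref{eq:R-action} and the translation action \eqref{eq:T-action}, and to show that \emph{each} of these two operations preserves the degree $0$ part, i.e.\ the component landing in $H^0(\oM_{g,n+1})$ (equivalently, the associated F-TFT of Proposition~\ref{proposition:reconstruction of F-TFTs}). Since the degree $0$ part of a product of cohomology classes is the product of their degree $0$ parts, I would first reduce everything to a degree-counting analysis of the pushforwards appearing in the two formulas.

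First I would treat the $R$-action. In formula \eqref{eq:R-action} the map $\xi_\Gamma$ has image of complex codimension $|E(\Gamma)|$, so $\xi_{\Gamma*}$ raises cohomological degree by $2|E(\Gamma)|$. Hence a summand can contribute to $H^0(\oM_{g,n+1})$ only if $\Gamma$ has no edges, which, given stability ($2g-1+n>0$), forces $\Gamma$ to be the single-vertex tree carrying all $n+1$ legs. For this graph there are no edge terms, and the leg factors $R(-\psi_1)^t$ and $R^{-1}(\psi_k)$ reduce to $\Id$ modulo positive powers of $\psi$; every $\psi$-dependent term raises degree and can never lower it. Taking the degree $0$ part of the remaining product $c_{g,n+1}\,R(-\psi_1)^t\prod_{k=2}^{n+1}R^{-1}(\psi_k)$ therefore yields exactly the degree $0$ part of $c_{g,n+1}$, so $(Rc)_{g,n+1}$ and $c_{g,n+1}$ have the same degree $0$ part.

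Next I would treat the translation action. In formula \eqref{eq:T-action} the $m$-th summand is $\frac{1}{m!}\pi_{m*}$ of a class into which $m$ copies of $T(\psi_k)$ have been fed. Since $T(z)=\sum_{i\ge 2}T_iz^i$ starts in degree $z^2$, these insertions contribute complex degree at least $2m$, whereas $\pi_{m*}$ (forgetting $m$ points, complex relative dimension $m$) only lowers complex degree by $m$. Thus for $m\ge 1$ the summand lies in complex degree at least $2m-m=m\ge 1$ and cannot reach $H^0$; only the $m=0$ term survives, and it equals $c_{g,n+1}$. Hence any translation, in particular $T_R'$, leaves the degree $0$ part unchanged. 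Combining the two steps, $\bigl(R.c\bigr)_{g,n+1}=\bigl(T_R' Rc\bigr)_{g,n+1}$ and $c_{g,n+1}$ agree in degree $0$.

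I do not expect a serious obstacle here: the statement is essentially a bookkeeping of how the Gysin pushforwards $\xi_{\Gamma*}$ and $\pi_{m*}$ shift cohomological degree, together with the fact that the $R$-matrix series and the translation series both start at the correct order ($\Id$ and $z^2$, respectively). The only point requiring care is the relative-dimension computation for $\pi_m$ and the verification that the edgeless stable tree is unique, after which the conclusion is immediate.
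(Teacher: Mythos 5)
Your proposal is correct and takes essentially the same route as the paper, whose proof simply notes that from equations \eqref{eq:R-action} and \eqref{eq:T-action} both $(Rc)_{g,n+1}-c_{g,n+1}$ and $(Tc)_{g,n+1}-c_{g,n+1}$ have no cohomological degree $0$ term. Your write-up merely makes explicit the degree bookkeeping (the shift by $2|E(\Gamma)|$ under $\xi_{\Gamma*}$, and the bound $2m-m=m\ge 1$ for the translation terms) that the paper leaves as ``easy to see.''
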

\begin{proof}
From equations \eqref{eq:R-action} and \eqref{eq:T-action} it is easy to see that both $(Rc)_{g,n+1} - c_{g,n+1}$ and $(Tc)_{g,n+1}-c_{g,n+1}$, seen as elements of $H^*(\oM_{g,n+1})\otimes V\otimes (V^*)^{\otimes n}$, have no cohomological degree $0$ term.
\end{proof}

\smallskip

The constructed action of the group $G_+$ on F-CohFTs (with unit) induces a $G_+$-action on the corresponding ancestor vector potentials. Let us prove that the latter coincides with the $G_+$-action on ancestor vector potentials constructed in Section~\ref{subsubsection:upper triangular group}.

\smallskip

\begin{theorem}\label{theorem:consistency of R-actions}
Consider an F-CohFT (with unit) $c_{g,n+1}\colon V^*\otimes V^{\otimes n}\to H^\even(\oM_{g,n+1})$ and choose a basis $e_1,\ldots,e_N\in V$. Consider the corresponding sequence of ancestor vector potentials $\overline{\mcF}^a$, $a\ge 0$, and an $R$-matrix $R\in G_+$. Then the sequence of ancestor vector potentials corresponding to the F-CohFT $(R.c)_{g,n+1}$ coincides with the sequence $\overline{R.\mcF}^a$, $a\ge 0$. 
\end{theorem}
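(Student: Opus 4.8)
The plan is to show that the $R$-matrix action on the F-CohFT, defined via the sum over stable trees in equation~\eqref{eq:R-action} combined with the translation $T_R'$, reproduces at the level of genus~$0$ ancestor vector potentials exactly the formulas~\eqref{eq:R-transformation of q} and~\eqref{eq:R-transformation of mcF} from Section~\ref{subsubsection:upper triangular group}. Since an ancestor vector potential is assembled from genus~$0$ intersection numbers of the F-CohFT with psi-classes via equation~\eqref{eq:genus 0 ancestor potential of an F-CohFT}, I would first restrict the $R$-action formula to genus~$0$. In genus~$0$ every stable tree $\Gamma\in T_{0,n+1}$ is a genuine rooted tree with all vertices of genus~$0$, so the vertex terms are precisely the genus~$0$ classes $c_{0,n(v)}$ that feed into the descendant potential, and the Hodge-type contributions that complicate higher genus are absent.

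First I would compute the genus~$0$ descendant potential of $(R.c)$ by inserting the definition~\eqref{eq:R-action} into~\eqref{eq:genus 0 ancestor potential of an F-CohFT}, integrating against a monomial in psi-classes at each of the $n+1$ external legs. The leg terms $R(-\psi_1)^t$ and $R^{-1}(\psi_k)$ and the edge terms $\frac{\Id-R^{-1}(\psi_{e'})R(-\psi_{e''})}{\psi_{e'}+\psi_{e''}}$ become, after pushing forward along $\xi_{\Gamma}$ and integrating, algebraic expressions in the matrices $R_i$ and the genus~$0$ correlators; the resulting sum over trees should collapse into a generating-function identity. The cleanest route is to recognize this as the genus~$0$ specialization of the standard comparison (as in~\cite{Giv04,PPZ15}) between the Givental group action on CohFTs and its action on descendant potentials, adapted to the F-setting where one entry carries a covector. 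The translation $T_R'(z)=z[R(z)e-e]$ is what accounts for the unit insertions and precisely matches the shift of base point by $R(z)(-\phi_\un z)$ appearing in the proof of Proposition~\ref{proposition:R-action is well-defined}.

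The main obstacle I expect is bookkeeping the covector leg. Because the $R$-action on F-CohFTs transposes the $R$-matrix on the covector entry (the term $R(-\psi_1)^t$) while using $R^{-1}(\psi_k)$ on the vector entries, one must verify that this asymmetry reproduces exactly the asymmetric roles of $\mcF^{\alpha,a}$ (upper index, transformed by $R$) versus the differentiations $\frac{\d}{\d t^\mu_j}$ (transformed by $R^{-1}$) in the infinitesimal formula~\eqref{eq:infinitesimal R-action}. I would therefore organize the argument around the infinitesimal action: differentiate $(e^{\eps r(z)}.c)$ in $\eps$ at $\eps=0$, extract the two types of terms (one where $r(z)$ hits the distinguished leg~$1$, one where a single edge is created splitting off a one-edge tree), and match them termwise against the two sums in~\eqref{eq:infinitesimal R-action}. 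Since both the geometric $G_+$-action (Proposition~\ref{proposition:R-action is well-defined}) and the $R$-matrix action (Theorem~\ref{theorem:R-action on F-CohFTs}) are genuine left group actions with the same Lie-algebra infinitesimal generators, agreement of the infinitesimal actions together with the fact that both integrate the same one-parameter subgroups gives agreement of the full actions, completing the proof.
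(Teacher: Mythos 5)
Your proposal is correct and takes essentially the same route as the paper: the paper's proof likewise reduces the statement to the infinitesimal level, differentiates $(e^{\eps r}.c)_{0,n+1}$ at $\eps=0$, and matches the resulting contributions against formula~\eqref{eq:infinitesimal R-action} after multiplying by psi-classes, integrating over $\oM_{0,n+1}$, and taking generating series. The only imprecision is that the derivative produces four types of terms (the covector leg, the $n$ vector legs, edge creation, and the translation/unit insertions) rather than the two you enumerate in your final paragraph, but you acknowledge the remaining two earlier in your outline, and in the matching they are absorbed into the second sum of~\eqref{eq:infinitesimal R-action} via the convention $\mcF^{\alpha,a}=(-1)^{a+1}q^\alpha_{-a-1}$ for $a<0$.
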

\begin{proof}
It is sufficient to check the statement of the theorem infinitesimally, i.e., to prove that for any $r=\sum_{i\ge 1}r_i z^i$, $r_i\in\End(V)$, we have
\begin{align*}
&\sum_{n\geq 2}\frac{1}{n!}\sum_{\substack{1\leq\alpha_1,\ldots,\alpha_n\leq N\\ a_1,\ldots,a_n\geq 0}}\left(\int_{\oM_{0,n+1}}\left.\frac{d}{d\eps}\left[(e^{\eps r}.c)_{0,n+1}(e^\alpha\otimes\otimes_{i=1}^n e_{\alpha_i})\right]\right|_{\eps=0}\psi_1^{a}\prod_{i=1}^n\psi_{i+1}^{a_i}\right)\prod_{i=1}^n t^{\alpha_i}_{a_i}=\\
=&\left.\frac{d}{d\eps}(e^{\eps r}.\mcF)^{\alpha,a}\right|_{\eps=0},\quad 1\le\alpha\le N,\quad a\ge 0.
\end{align*}
Directly from the definition of the $R$-action on F-CohFTs, it is easy to see that
\begin{align*}
&\left.\frac{d}{d\eps}\left[(e^{\eps r}.c)_{0,n+1}(e^\alpha\otimes\otimes_{i=1}^n e_{\alpha_i})\right]\right|_{\eps=0}=\\
=&\sum_{k\ge 1}(-1)^k(r_k)^\alpha_\mu c_{0,n+1}(e^\mu\otimes\otimes_{i=1}^n e_{\alpha_i})\\
&-\sum_{i=1}^n\sum_{k\ge 1}\psi_{i+1}^k(r_k)^\mu_{\alpha_i}c_{0,n+1}\left(e^\alpha\otimes \otimes_{j=1}^{i-1}e_{\alpha_j}\otimes e_\mu\otimes\otimes_{j=i+1}^n e_{\alpha_j}\right)\\
&+\sum_{\substack{I\sqcup J=\{1,\ldots,n\}\\|I|\ge 1,\,|J|\ge 2}}\sum_{p,q\ge 0}(-1)^q(r_{p+q+1})^\mu_\nu\gl_*\left(\psi_{|I|+2}^p c_{0,|I|+2}(e^{\alpha}\otimes \otimes_{i\in I} e_{\alpha_i} \otimes e_\mu)\otimes\psi_1^q c_{0,|J|+1}(e^{\nu}\otimes \otimes_{j\in J} e_{\alpha_j})\right)\\
&+\sum_{k\ge 1}(r_k)^\mu_\un\pi_{1*}\left(\psi_{n+2}^{k+1}c_{0,n+2}(e^\alpha\otimes\otimes_{i=1}^n e_{\alpha_i}\otimes e_\mu)\right),
\end{align*}
where $\gl\colon\oM_{0,|I|+2}\times\oM_{0,|J|+1}$ is the gluing map. Multiplying the right-hand side by $\psi_1^{a}\prod_{i=1}^n\psi_{i+1}^{a_i}$, integrating over $\oM_{0,n+1}$, and taking the generating series, we obtain exactly the expression on the right-hand side of formula~\eqref{eq:infinitesimal R-action}. This completes the proof of the theorem. 
\end{proof}

\smallskip

Define a $\GL(V)$-action on an F-CohFT $c_{g,n+1}\colon V^*\otimes V^{\otimes n}\to H^\even(\oM_{g,n+1})$ by
$$
(M.c)_{g,n+1}(\omega\otimes\otimes_{i=1}^n v_i):=c_{g,n+1}(M^t\omega\otimes\otimes_{i=1}^n M^{-1}v_i),\quad M\in\GL(V),
$$
where $\omega\in V^*$ and $v_i\in V$. Clearly, if $e\in V$ is the unit of the F-CohFT $c_{g,n+1}$, then $Me\in V$ is the unit of the F-CohFT $(M.c)_{g,n+1}$. The following proposition says that this $\GL(V)$-action on F-CohFTs is consistent with the $\GL(\mbC^N)$-action on descendant vector potentials defined in Section~\ref{subsection:calibrated flat F-manifold}.

\smallskip

\begin{proposition}
Consider an F-CohFT $c_{g,n+1}\colon V^*\otimes V^{\otimes n}\to H^\even(\oM_{g,n+1})$ and choose a basis $e_1,\ldots,e_N\in V$. Consider the corresponding sequence of ancestor vector potentials $\overline{\mcF}^a$, $a\ge 0$, and an element $M\in\GL(V)=\GL(\mbC^N)$. Then the sequence of ancestor vector potentials corresponding to the F-CohFT $(M.c)_{g,n+1}$ coincides with the sequence $\overline{M.\mcF}^a$, $a\ge 0$. 
\end{proposition}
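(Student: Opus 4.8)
The plan is to compute the ancestor vector potentials of $(M.c)_{g,n+1}$ directly from formula \eqref{eq:genus 0 ancestor potential of an F-CohFT} and to recognize the outcome as $\overline{M.\mcF}^a$ via the definition \eqref{eq:transformation of descendant potentials from linear changes}. Since ancestor vector potentials depend only on the genus $0$ part of an F-CohFT, I only need to handle $(M.c)_{0,n+1}$. First I would unwind the $\GL(V)$-action on the classes. With the convention $Me_\alpha=M^\beta_\alpha e_\beta$ one computes $\langle M^te^\alpha,e_\beta\rangle=\langle e^\alpha,Me_\beta\rangle=M^\alpha_\beta$, so $M^te^\alpha=M^\alpha_\delta e^\delta$, while $M^{-1}e_{\alpha_i}=(M^{-1})^{\gamma_i}_{\alpha_i}e_{\gamma_i}$. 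Multilinearity of $c_{0,n+1}$ then yields
\begin{gather*}
(M.c)_{0,n+1}(e^\alpha\otimes\otimes_{i=1}^n e_{\alpha_i})=M^\alpha_\delta\Big(\prod_{i=1}^n(M^{-1})^{\gamma_i}_{\alpha_i}\Big)c_{0,n+1}(e^\delta\otimes\otimes_{i=1}^n e_{\gamma_i}).
\end{gather*}

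The second step is purely formal. Substituting this identity into \eqref{eq:genus 0 ancestor potential of an F-CohFT}, the matrix entries $M^\alpha_\delta$ and $(M^{-1})^{\gamma_i}_{\alpha_i}$ are constants and pull out of the integral over $\oM_{0,n+1}$. Denoting by $\tmcF^{\alpha,a}$ the ancestor vector potential of $(M.c)$, this gives
\begin{gather*}
\tmcF^{\alpha,a}=M^\alpha_\delta\sum_{n\ge 2}\frac{1}{n!}\sum_{\substack{\gamma_1,\ldots,\gamma_n\\a_1,\ldots,a_n}}\left(\int_{\oM_{0,n+1}}c_{0,n+1}(e^\delta\otimes\otimes_i e_{\gamma_i})\psi_1^a\prod_i\psi_{i+1}^{a_i}\right)\prod_{i=1}^n\Big((M^{-1})^{\gamma_i}_{\alpha_i}t^{\alpha_i}_{a_i}\Big).
\end{gather*}
I would then observe that, for each index $i$, the factor $\sum_{\alpha_i}(M^{-1})^{\gamma_i}_{\alpha_i}t^{\alpha_i}_{a_i}$ is precisely the outcome of the substitution $t^{\gamma_i}_{a_i}\mapsto(M^{-1})^{\gamma_i}_{\beta}t^\beta_{a_i}$, so the inner sum over $n$ and $\gamma_1,\ldots,\gamma_n$ equals $\mcF^{\delta,a}\big|_{t^\beta_b=(M^{-1})^\beta_\gamma t^\gamma_b}$. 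Hence $\tmcF^{\alpha,a}=M^\alpha_\delta\,\mcF^{\delta,a}\big|_{t^\beta_b=(M^{-1})^\beta_\gamma t^\gamma_b}$, which is exactly $(M.\mcF)^{\alpha,a}$ as defined in \eqref{eq:transformation of descendant potentials from linear changes} after identifying the dummy variables $\tt$ and $t$.

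The only point requiring care — and where an error would most easily creep in — is the index bookkeeping: one must match the transpose $M^t$ acting on the dual basis (which produces $M^\alpha_\delta$, i.e. the same matrix entries as $M$ contracted on the covariant slot) against the covariant prefactor $M^\alpha_\mu$ appearing in \eqref{eq:transformation of descendant potentials from linear changes}, and simultaneously match the $M^{-1}$ on the vector insertions against the change of variables $t^\beta_b=(M^{-1})^\beta_\gamma\tt^\gamma_b$. Once the conventions $Me_\alpha=M^\beta_\alpha e_\beta$ and $M^te^\alpha=M^\alpha_\delta e^\delta$ are fixed, the two actions line up term by term and there is no genuine mathematical obstacle: the statement reduces to multilinearity of $c_{0,n+1}$ together with a relabelling of summation indices.
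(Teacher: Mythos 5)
Your proposal is correct and is precisely the ``direct computation'' that the paper's proof consists of: unwinding the $\GL(V)$-action on $c_{0,n+1}$ by multilinearity, substituting into the generating series \eqref{eq:genus 0 ancestor potential of an F-CohFT}, and matching the result with \eqref{eq:transformation of descendant potentials from linear changes}. The index conventions ($Me_\alpha=M^\beta_\alpha e_\beta$, hence $M^te^\alpha=M^\alpha_\delta e^\delta$) are handled correctly, so nothing is missing.
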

\begin{proof}
Direct computation.
\end{proof}

\smallskip

For an F-CohFT $c_{g,n+1}\colon V^*\otimes V^{\otimes n}\to H^\even(\oM_{g,n+1})$, an element $M\in\GL(V)$ and an $R$-matrix $R$ we will use the notation 
$$
MR.c:=M.(R.c).
$$

\smallskip

\subsection{Construction of F-CohFTs from semisimple flat F-manifolds}
Recall from Section \ref{section:F-CohFT} that to an F-CohFT we associated a flat F-manifold whose vector potential is given by equation~\eqref{eq:genus 0 vector potential of an F-CohFT} and only involves genus $0$ intersection numbers. Recall, moreover, from Proposition~\ref{proposition:reconstruction of F-TFTs} that the degree $0$ part of an F-CohFT is an F-TFT and, as such, it can be uniquely reconstructed from the datum of an associative unital algebra together with an element of such algebra. Such associative algebra coincides, by definition, with the one on the tangent space at the origin of the associated genus $0$ flat F-manifold, while its special element is genus $1$ information.

\smallskip

\begin{theorem}\label{theorem:F-CohFT associated to a flat F-manifold}
Consider a flat F-manifold given by a vector potential $\overline{F}=(F^1,\ldots,F^N)$, where $F^\alpha\in \mbC[[t^1,\ldots,t^N]]$, that is semisimple at the origin $t^*=0$. Let $G_0=G_0^\alpha \frac{\d}{\d t^\alpha}$, where $G_0^\alpha \in \mbC$, be an element of its tangent space at the origin. There exists an F-CohFT whose associated flat F-manifold is the one considered and whose degree $0$ part is the F-TFT defined by the associative unital algebra on the tangent space at the origin of this F-manifold together with the element $G_0$ of this algebra.
\end{theorem}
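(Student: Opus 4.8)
The plan is to build the desired F-CohFT by combining the genus-$0$ reconstruction of Theorem \ref{theorem:reconstruction} with the cohomological $R$-matrix action of Theorem \ref{theorem:R-action on F-CohFTs}, using crucially that the $R$-action modifies the genus-$0$ data while leaving the degree-$0$ part untouched. First I would realize the constant part of the flat F-manifold, enriched with the genus-$1$ datum $G_0$, as a degree-$0$ F-CohFT, and then act on it by the $R$-matrix that bends the constant part into the given flat F-manifold.

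Let $A$ denote the commutative associative unital algebra $(T_0M,\circ,e)$ on the tangent space at the origin, with structure constants $\frac{\d^2 F^\alpha}{\d t^\beta\d t^\gamma}(0)$; this is exactly the algebra of the constant part. By Proposition \ref{proposition:reconstruction of F-TFTs} the pair $(A,G_0)$ determines an F-TFT $c^{\const}_{g,n+1}$ with unit $e$. I would then verify that the flat F-manifold associated to $c^{\const}$ via \eqref{eq:genus 0 vector potential of an F-CohFT} is the constant part: since $c^{\const}_{0,n+1}$ lands in $H^0(\oM_{0,n+1})$, the integral $\int_{\oM_{0,n+1}}c^{\const}_{0,n+1}(\cdots)$ vanishes unless $\oM_{0,n+1}$ is a point, i.e.\ $n=2$, so only the structure constants of $A$ contribute and the resulting vector potential is the quadratic one defining the constant part. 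Consequently the ancestor cone of $c^{\const}$ is the cone $\mcC^{\const}$.

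Next, since $\oF$ is semisimple at the origin, Theorem \ref{theorem:reconstruction} supplies an element $R(z)\in G_+$ with $\mcC=R(z)\mcC^{\const}$, where $\mcC$ is the ancestor cone of $\oF$. I would set $c:=R.c^{\const}$, which by Theorem \ref{theorem:R-action on F-CohFTs} is an F-CohFT with the same unit $e$. By Theorem \ref{theorem:consistency of R-actions} its ancestor vector potentials are the $R$-images $\overline{R.\mcF^{\const}}^a$ of the ancestor potentials $\overline{\mcF^{\const}}^a$ of $c^{\const}$, and by the definition of the $G_+$-action on cones in Section \ref{subsubsection:upper triangular group} these correspond to $R(z)\mcC^{\const}=\mcC$. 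Hence the flat F-manifold attached to $c$ is $\oF$, up to the affine ambiguity that does not affect the structure constants. Finally, Proposition \ref{proposition:R-invariance of F-TFTs} shows that the degree-$0$ part of $c$ equals $c^{\const}$, i.e.\ the F-TFT defined by $(A,G_0)$, which is what the theorem requires.

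The conceptual point making both requirements compatible is the decoupling provided by Proposition \ref{proposition:R-invariance of F-TFTs} together with Theorem \ref{theorem:consistency of R-actions}: the $R$-action transports the ancestor potentials exactly along the $G_+$-orbit of cones, installing the correct genus-$0$ flat F-manifold, while fixing the degree-$0$ F-TFT, thereby preserving $G_0$. I expect the only real care to lie in bookkeeping: checking that the $R$-matrix of Theorem \ref{theorem:reconstruction} acts on $c^{\const}$ as a well-defined formal sum in \eqref{eq:R-action}, and that the normalization of $\mcC^{\const}$ used there matches the F-TFT $c^{\const}$ just constructed. Beyond these consistency checks I do not anticipate a substantive obstacle, since all the analytic and cohomological difficulties are already handled by the cited results.
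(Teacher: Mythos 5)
Your proposal is correct and follows essentially the same route as the paper's own proof: construct the F-TFT from the algebra at the origin together with $G_0$ via Proposition~\ref{proposition:reconstruction of F-TFTs}, obtain the $R$-matrix from Theorem~\ref{theorem:reconstruction}, act on the F-TFT via Theorem~\ref{theorem:R-action on F-CohFTs}, and conclude with Theorem~\ref{theorem:consistency of R-actions} and Proposition~\ref{proposition:R-invariance of F-TFTs}, exactly as the paper does. The only difference is that you spell out the (correct) dimension-count argument showing the genus-$0$ part of the F-TFT reproduces the constant part, a point the paper asserts without proof.
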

\begin{proof}
Let us start with the F-TFT reconstructed uniquely from the unital associative algebra at the origin $t^*=0$ of the F-manifold and its element $G_0$, according to Proposition \ref{proposition:reconstruction of F-TFTs}. It is defined on the $N$ dimensional $\mbC$-vector space generated by $e_\alpha=\frac{\d}{\d t^\alpha}$, $1\leq\alpha\leq N$, with unit~$e$ and structure constants $\left.\frac{\d^2 F^\alpha}{\d t^\beta \d t^\gamma}\right|_{t^*=0}$, $1\leq \alpha,\beta,\gamma\leq N$, with respect to the basis $e_1,\ldots,e_n$. Since an F-TFT is an F-CohFT, we can associate to it a flat F-manifold and a sequence of ancestor vector potentials. By definition, it coincides with the constant part of the starting flat F-manifold, and its ancestor cone will be denoted by $\mcC^\const \subset \mcH$, according to the terminology introduced in Section~\ref{section:reconstruction}.\\

Thanks to semisimplicity at the origin, according to Theorem~\ref{theorem:reconstruction}, there exists an $R$-matrix $R(z)\in G_+$ acting on $\mcC^\const$ to produce the ancestor cone $\mcC$ of the starting flat F-manifold.\\

We then make this $R$-matrix $R(z)$ act on the F-TFT via the action described in Theorem~\ref{theorem:R-action on F-CohFTs}, obtaining an F-CohFT. By Theorem~\ref{theorem:consistency of R-actions}, the flat F-manifold associated to this F-CohFT coincides with the starting F-manifold. Proposition~\ref{proposition:R-invariance of F-TFTs} ensures that the degree~$0$ part is the starting F-TFT.
\end{proof}

\smallskip

\subsection{Homogeneous F-CohFTs corresponding to homogeneous flat F-manifolds}

Here we present a construction of a family of homogeneous F-CohFTs associated to any given homogeneous flat F-manifold that is semisimple at the origin.\\

Before considering the homogeneous case, let us discuss the construction from the proof of Theorem~\ref{theorem:F-CohFT associated to a flat F-manifold} in more details. So, let us consider a flat F-manifold given by a vector potential $\overline{F}=(F^1,\ldots,F^N)$, where $F^\alpha\in \mbC[[t^1,\ldots,t^N]]$, that is semisimple at the origin $t^*=0$, and the associated ancestor cone~$\mcC$. As in the proof of Theorem~\ref{theorem:reconstruction}, we consider the canonical coordinates $u^i(t^*)$ and the matrix $H=\diag(H_1,\ldots,H_N)$ constructed in Section~\ref{subsection:metric for a flat F-manifold}. Recall that the functions $H_i$ were defined uniquely up to the rescalings $H_i\mapsto\lambda_iH_i$, where the constants $\lambda_i\in\mbC^*$ can be chosen arbitrarily. Let us make a unique choice such that $H_i|_{t^*=0}=1$ for any $i$. Consider then the matrix~$\Psi$ and the matrices $R_i$ given by Proposition~\ref{proposition:matrices R_k}. We consider these matrices as functions of the variables $t^\alpha$, $H=H(t^*)$, $\Psi=\Psi(t^*)$, $R_i=R_i(t^*)$. From the proof of Theorem~\ref{theorem:reconstruction} we know that $\mcC=\Psi^{-1}(0)R^{-1}(-z,0)\Psi(0)\mcC^\const$ or, equivalently,
$$
\mcC=\Psi^{-1}(0)R^{-1}(-z,0)\mcC^\triv_N.
$$

\smallskip

Let $V=\mbC^N$ and $e_1,\ldots,e_N\in\mbC^N$ be the standard basis in $\mbC^N$. A family of F-TFTs $c^{\triv,G_0}_{g,n+1}\colon V^*\otimes V^{\otimes n}\to H^0(\oM_{g,n+1})$, parameterized by a vector $G_0=(G_0^1,\ldots,G_0^N)\in\mbC^N$, corresponding to the trivial flat F-manifold of dimension $N$ is given by
$$
c^{\triv,G_0}_{g,n+1}(e^{i_0}\otimes\otimes_{j=1}^n e_{i_j}):=
\begin{cases}
(G_0^{i_0})^g,&\text{if $i_0=i_1=\ldots=i_n$},\\
0,&\text{otherwise}.
\end{cases}
$$
Note that $c^{\triv,G_0}_{1,1}(e^i)=G_0^i$. By the proof of Theorem~\ref{theorem:F-CohFT associated to a flat F-manifold}, a family of F-CohFTs associated to our flat F-manifold is given by
\begin{gather}\label{eq:oF-G_0 F-CohFT}
c^{\oF,G_0}:=\Psi^{-1}(0)R^{-1}(-z,0).c^{\triv,G_0}.
\end{gather}
Note that the degree zero part of $c^{\oF,G_0}_{1,1}(e^\alpha)$ is equal to $\sum_j(\Psi^{-1}(0))^\alpha_j G_0^j$.\\

Suppose now that our F-manifold is homogeneous with an Euler vector field 
$$
E=E^\alpha\frac{\d}{\d t^\alpha}=((1-q_\alpha)t^\alpha+r^\alpha)\frac{\d}{\d t^\alpha}.
$$ 
Then, by Proposition~\ref{proposition:homogeneity of H and gamma}, we have $E^\alpha\frac{\d H_i}{\d t^\alpha}=\delta_i H_i$ for some $\delta_i\in\mbC$, $1\le i\le N$. Consider then a unique sequence of matrices~$R_i$, $i\ge 1$, given by Proposition~\ref{proposition:unique R-matrix in the homogeneous case}. For any $\delta\in\mbC$ define a subspace $V_\delta\subset\mbC^N$ by
$$
V_\delta:=\{w=(w^1,\ldots,w^N)\in\mbC^N|w^i=0\text{ if } \delta_i\ne\delta\}.
$$
Let $\mathcal{D}:=\{\delta_i\}_{1\le i\le N}$. We get the decomposition $\mbC^N=\oplus_{\delta\in\mathcal{D}}V_{\delta}$.

\smallskip

\begin{theorem}\label{theorem:homogeneous F-CohFT}
For any $1\le l\le N$ and a vector $G_0\in V_{\delta_l}$ the F-CohFT 
$$
c^{\oF,G_0}=\Psi^{-1}(0)R^{-1}(-z,0).c^{\triv,G_0}
$$
is homogeneous of conformal dimension $-2\delta_l$.
\end{theorem}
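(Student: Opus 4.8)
The plan is to verify the homogeneity condition in the definition of homogeneous F-CohFT for $c^{\oF,G_0}$ directly, with conformal dimension $\gamma=-2\delta_l$, by transporting it through the construction $c^{\oF,G_0}=\Psi^{-1}(0)R^{-1}(-z,0).c^{\triv,G_0}$ of equation~\eqref{eq:oF-G_0 F-CohFT}. First I would rephrase that condition as a single eigenvalue equation: with the degrees $q_\alpha$ and shift constants $r^\alpha$ read off from the Euler vector field $E=((1-q_\alpha)t^\alpha+r^\alpha)\frac{\d}{\d t^\alpha}$ of the given homogeneous flat F-manifold, introduce the grading derivation $\mathcal{T}$ acting on a class $c_{g,n+1}(e^{\alpha_0}\otimes\otimes_i e_{\alpha_i})$ by $\Deg+\pi_{1*}(\,\cdot\,\otimes r^\gamma e_\gamma)-\sum_i q_{\alpha_i}+q_{\alpha_0}$, so that homogeneity of conformal dimension $\gamma$ is precisely the statement $\mathcal{T}(c)_{g,n+1}=\gamma g\,c_{g,n+1}$. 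The task is then to show that $\mathcal{T}$ acts on $c^{\oF,G_0}$ with eigenvalue $-2\delta_l\,g$.

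I would next settle the base case. For the trivial F-TFT the only non-vanishing classes are $c^{\triv,G_0}_{g,n+1}(e^i\otimes e_i^{\otimes n})=(G_0^i)^g$, all living in $H^0$, so both $\Deg$ and the forgetful pushforward contribute $0$, while the hypothesis $G_0\in V_{\delta_l}$ forces $G_0^i=0$ unless $\delta_i=\delta_l$. Hence on $c^{\triv,G_0}$ the canonical-coordinate degree operator $\mu=\diag(\delta_1,\dots,\delta_N)$ reads off the single eigenvalue $\delta_l$ on every surviving entry, and the genus $1$ datum $c^{\triv,G_0}_{1,1}(e^i)=G_0^i$ is exactly what records the conformal dimension. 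Matching the cohomological degree, which is twice the complex degree, with the sign convention of the remark following Proposition~\ref{proposition:homogeneity of H and gamma} (where a conformal Frobenius manifold of dimension $\delta$ has $\delta_i=-\delta/2$) yields the value $\gamma=-2\delta_l$.

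The core of the argument is then to conjugate $\mathcal{T}$ through the two group elements. The $\GL$-factor $\Psi^{-1}(0)$ intertwines the flat degree operator $Q=\diag(q_\alpha)$ with the canonical one $\mu=\diag(\delta_1,\dots,\delta_N)$, which is the infinitesimal shadow of Proposition~\ref{proposition:homogeneity of H and gamma} under the normalization $H_i|_{t^*=0}=1$ that makes $\Psi(0)$ the pure flat-to-canonical change of basis. The action of $R^{-1}(-z,0)\in G_+$ is controlled by Proposition~\ref{proposition:unique R-matrix in the homogeneous case}: relation~\eqref{eq:homogeneity for Rk}, $\sum_s u^s\frac{\d(R_k)^i_j}{\d u^s}=(\delta_i-\delta_j-k)(R_k)^i_j$, says exactly that $[\mu,R(z)]-z\frac{\d}{\d z}R(z)$ reproduces the derivative of $R$ along $E$. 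Inserting $R(-z,0)$ into the $R$-action formula~\eqref{eq:R-action}, the $-k$ part of this relation matches the cohomological degree that $\Deg$ picks up from each power of $\psi$ carried by the leg and edge factors, while the $[\mu,\cdot]$ part matches the gradings of the $V$- and $V^*$-entries; simultaneously the accompanying translation $T_R'(z)=z[R(z)e-e]$ together with the constant part $r$ of $E$ should produce precisely the $\pi_{1*}(\,\cdot\,\otimes r^\gamma e_\gamma)$ correction, which at the level of vector potentials is compatible with Theorem~\ref{theorem:consistency of R-actions}. Assembling these identities gives $\mathcal{T}(c^{\oF,G_0})_{g,n+1}=-2\delta_l\,g\,c^{\oF,G_0}_{g,n+1}$.

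I expect the main obstacle to be this last matching of the degree bookkeeping: reconciling the intrinsic operator $\Deg$ on $H^\even(\oM_{g,n+1})$ with the formal $z$-grading of the $R$-matrix, and in particular checking that the translation $T_R'$ combined with the constant shift $r^\alpha$ reproduces the genuine $r$-correction term rather than a $\psi$-weighted descendant insertion. This is the step where the homogeneous normalization of $R$ (the uniqueness in Proposition~\ref{proposition:unique R-matrix in the homogeneous case}) is indispensable, since an arbitrary $R$-matrix would fail~\eqref{eq:homogeneity for Rk} and break the cancellation. I would adapt the corresponding computation in the CohFT setting of~\cite{PPZ15}, with the same transpositions already used in the proof of Theorem~\ref{theorem:consistency of R-actions}, to the F-CohFT formulas~\eqref{eq:R-action} and~\eqref{eq:T-action}.
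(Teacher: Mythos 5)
Your proposal has the right skeleton --- the reformulation of homogeneity as an eigenvalue equation for $\Deg$ plus the $r$-insertion, the base case on $c^{\triv,G_0}$ using $G_0\in V_{\delta_l}$, and the recognition that Proposition~\ref{proposition:unique R-matrix in the homogeneous case} is indispensable --- but it has a genuine gap exactly at the step you yourself flag as the ``main obstacle'', and the mechanism you propose there is not the one that works. You plan to conjugate the grading operator through the frozen group elements $\Psi^{-1}(0)$ and $R^{-1}(-z,0)$ at the single point $t^*=0$, and to have the translation $T_R'(z)=z[R(z)e-e]$ together with the constant part $r^\gamma$ of $E$ produce the correction term $\pi_{1*}(\,\cdot\,\otimes r^\gamma e_\gamma)$. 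This cannot close: the homogeneity relation~\eqref{eq:homogeneity for Rk} is a statement about the Euler derivative $E^\alpha\frac{\d}{\d t^\alpha}R(z,t^*)$, i.e.\ $E^\alpha\frac{\d R}{\d t^\alpha}=[\Delta,R]-z\frac{\d R}{\d z}$, and at $t^*=0$ this derivative is nonzero and is \emph{not} determined by the frozen matrix $R(-z,0)$ alone (note that in general $E(0)=r^\gamma\frac{\d}{\d t^\gamma}\neq 0$ and $u^i(0)\neq 0$ in the homogeneous normalization $E=\sum_i u^i\frac{\d}{\d u^i}$, as in the extended $2$-spin example around $(0,\tau)$). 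Likewise the intertwining you invoke for $\Psi$ is really the derivative identity $E^\alpha\frac{\d}{\d t^\alpha}\Psi^i_\beta=(\delta_i+q_\beta)\Psi^i_\beta$, again invisible at a frozen point. Moreover, in the actual proof the translation term is itself an eigenvector of the combined grading operator (via $\left(z\frac{\d}{\d z}+E^\alpha\frac{\d}{\d t^\alpha}\right)(R(z)\oH)=\Delta(R(z)\oH)$); it is \emph{not} the source of the $r$-correction.

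The missing idea, which is the bulk of the paper's argument, is to embed $c^{\oF,G_0}$ into its formal shift $c^{\oF,G_0,\ot}$ (equation~\eqref{eq:formal shift of an F-CohFT}) and to prove the family-level identity of Proposition~\ref{proposition:crucial proposition}, $c^{\oF,G_0,\ot}=\Psi^{-1}(t^*)R^{-1}(-z,t^*).c^{\oH(t^*),H^{-1}(t^*)G_0}$, whose proof is a substantial stable-tree pushforward computation (Lemma~\ref{lemma:forgetful map} and the matching of vertex, leg and edge contributions with $d\Omega$, $d\tR^{\pm 1}$, $d\ET$ through the Darboux--Egorov-type equations~\eqref{eq:formulas for dPsi and dGammadU},~\eqref{eq:equation for oH},~\eqref{eq:equation for R(z)}). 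This identity converts the troublesome insertion term into $\left.E^\alpha\frac{\d}{\d t^\alpha}c^{\oF,G_0,\ot}_{g,n+1}\right|_{t^*=0}$, after which the operator $\Deg+E^\alpha\frac{\d}{\d t^\alpha}$ acts diagonally on every factor of the Givental-type formula~\eqref{eq:R-action}, with~\eqref{eq:homogeneity for Rk} and Proposition~\ref{proposition:homogeneity of H and gamma} now applicable as genuine differential relations along the family. A direct computation at $t^*=0$ could in principle be attempted by inserting $r^\gamma e_\gamma$ into the tree sum and using the $i_E$-contracted relations, but that amounts to redoing the proof of Proposition~\ref{proposition:crucial proposition} with extra $U(0)$-dependent terms to cancel --- a computation your proposal does not supply and cannot replace by the transposed CohFT argument of~\cite{PPZ15} alone.
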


\smallskip

The proof of the theorem is based on the following crucial result, which is true without the homogeneity assumption. For arbitrary vectors $w=(w^1,\ldots,w^N)\in(\mbC^*)^N$ and $G_0=(G_0^1,\ldots,G_0^N)\in\mbC^N$ define an F-TFT $c^{w,G_0}$ with the phase space $V=\mbC^N$ by
\begin{gather}\label{eq:definition of cwG0}
c^{w,G_0}_{g,n+1}(e^{i_0}\otimes\otimes_{j=1}^n e_{i_j}):=
\begin{cases}
\frac{(G_0^{i_0})^g}{(w^{i_0})^{g+n-1}},&\text{if $i_0=i_1=\ldots=i_n$},\\
0,&\text{otherwise}.
\end{cases}
\end{gather}
This F-TFT corresponds to the constant flat F-manifold with the vector potential $\left(\frac{(t^1)^2}{2w^1},\ldots,\frac{(t^N)^2}{2w^N}\right)$ and the unit $\sum_{i=1}^N w^i\frac{\d}{\d t^i}$.\\

As at the beginning of this section, consider a flat F-manifold given by a vector potential~$\oF$, $F^\alpha\in \mbC[[t^1,\ldots,t^N]]$, that is semisimple at the origin $t^*=0$, and the associated matrices~$H(t^*)$,~$\Psi(t^*)$ and~$R_i(t^*)$ such that $H_i(0)=1$. 

\smallskip

\begin{proposition}\label{proposition:crucial proposition}
For an arbitrary $G_0\in\mbC^N$, we have
\begin{gather}\label{eq:crucial proposition}
c^{\oF,G_0,\ot}=\Psi^{-1}(t^*)R^{-1}(-z,t^*).c^{\oH(t^*),H^{-1}(t^*)G_0},
\end{gather}
where $c^{\oF,G_0,\ot}$ is the formal shift of the F-CohFT $c^{\oF,G_0}$, $\ot=(t^1,\ldots,t^N)$.
\end{proposition}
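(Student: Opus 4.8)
The plan is to prove the identity \eqref{eq:crucial proposition} by writing both sides as a $\GL(\mbC^N)$-action followed by an $R$-matrix action applied to a constant F-TFT, and then matching the three ingredients. Since an F-CohFT built by such a composition is completely determined by the constant F-TFT, the $R$-matrix and the $\GL(\mbC^N)$-element, it suffices to rewrite the formal shift $c^{\oF,G_0,\ot}$ in this form and to show that the resulting data coincide with those of the right-hand side, namely the $\GL(\mbC^N)$-element $\Psi^{-1}(t^*)$, the $R$-matrix $R^{-1}(-z,t^*)$ and the F-TFT $c^{\oH(t^*),H^{-1}(t^*)G_0}$ defined through \eqref{eq:definition of cwG0}.

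First I would check that the genus $0$ parts agree. By the defining property of the formal shift recorded in Section~\ref{section:F-CohFT}, the genus $0$ part of $c^{\oF,G_0,\ot}$ is the shifted vector potential $\oF_\ot$. For the right-hand side, the F-TFT $c^{\oH(t^*),H^{-1}(t^*)G_0}$ corresponds to the constant part of the F-manifold at the point $\ot$ written in the normalized canonical frame, and its ancestor cone is an explicit diagonal rescaling of $\mcC^\triv_N$. I would then rerun the computation in the proof of Theorem~\ref{theorem:reconstruction} based at $\ot$ rather than at the origin: the series $S_2(z,t^*)=e^{U(t^*)/z}R(z,t^*)\Psi(t^*)$ satisfies $dS_2=z^{-1}S_2\,d\Omega^0_0$ identically in $t^*$, so the same argument shows that $\Psi^{-1}(t^*)R^{-1}(-z,t^*)$ sends the ancestor cone of $c^{\oH(t^*),H^{-1}(t^*)G_0}$ to the ancestor cone $\mcC_\ot$ of $\oF_\ot$; this is also consistent with Lemma~\ref{lemma:ancestor cones at shifted points}. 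By Theorem~\ref{theorem:consistency of R-actions} and the compatibility of the $\GL(\mbC^N)$-action with the action on vector potentials, the genus $0$ part of the right-hand side is therefore $\oF_\ot$ as well.

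The main obstacle is that the genus $0$ part, even together with the degree $0$ part, does \emph{not} determine an F-CohFT obtained as an $R$-matrix action on an F-TFT. Indeed, by Proposition~\ref{proposition:matrices R_k} the $R$-matrix reconstructing a fixed semisimple flat F-manifold is unique only up to right multiplication by a diagonal series with constant entries; such an ambiguity lies in the stabilizer of $\mcC^\triv_N$, hence preserves the genus $0$ cone, and it preserves the degree $0$ part by Proposition~\ref{proposition:R-invariance of F-TFTs}, while it does modify the higher-genus classes. Consequently the equality cannot be deduced from the genus $0$ match alone, and the specific $R$-matrix must be followed through the formal shift.

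To overcome this I would establish a compatibility lemma describing how the formal shift \eqref{eq:formal shift of an F-CohFT} interacts with the $\GL(\mbC^N)$- and $R$-matrix actions of Section~\ref{section:group action of F-CohFTs}. The shift of a $\GL(\mbC^N)$-transform $M.c$ by $\ot$ reduces to $M$ acting on the shift of $c$ by $M^{-1}\ot$, and the shift of a reconstruction based at the origin produces the reconstruction based at $\ot$ in which the shifted $R$-matrix is precisely $R(z,t^*)$. The identification of the shifted $R$-matrix with $R(z,t^*)$ is governed by the propagation equation $dS_2=z^{-1}S_2\,d\Omega^0_0$ used above, which is the mechanism relating $R(z,0)$ to $R(z,t^*)$, while the induced renormalization of the constant F-TFT by the metric factors $H_i(t^*)$ is read off from the diagonal identity $M.c^{w,G_0}=c^{Mw,MG_0}$ for diagonal $M$ together with $\Psi=H\tPsi$, and produces exactly $c^{\oH(t^*),H^{-1}(t^*)G_0}$. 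Combining the genus $0$ identity of the previous step with this explicit matching of the $R$-matrix and of the constant F-TFT then yields \eqref{eq:crucial proposition}.
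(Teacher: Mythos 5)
Your reduction has a genuine gap at its central step. You correctly observe (your third paragraph) that the genus $0$ cone together with the degree $0$ part does not determine an F-CohFT of the form $MR.c^{\triv,G_0}$, so everything hinges on the ``compatibility lemma'' of your last paragraph: that the formal shift \eqref{eq:formal shift of an F-CohFT} of the reconstruction based at the origin is again of the form $\Psi^{-1}(t^*)R^{-1}(-z,t^*).c^{\oH(t^*),H^{-1}(t^*)G_0}$. But that lemma \emph{is} the proposition, and the evidence you offer cannot prove it: the propagation equation $dS_2=z^{-1}S_2\,d\Omega^0_0$ lives entirely at the level of cones, i.e. genus $0$, and the diagonal identity $M.c^{w,G_0}=c^{Mw,MG_0}$ only controls degree $0$ data --- exactly the two kinds of information that, by your own third paragraph, underdetermine the higher-genus classes. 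The interaction of the shift (pushforwards $\pi_{m*}$ of insertions of $\tau^\alpha e_\alpha$ carrying no psi classes) with the $R$-matrix action (sums over stable trees with psi-dependent vertex, leg, edge and translation terms) is a nontrivial all-genus cohomological computation, and nothing in your sketch performs it.

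The paper closes this gap by a differential-equation argument. Since the formal shift satisfies, essentially by definition, $\frac{\d}{\d t^\beta}\,c^{\oF,G_0,\ot}_{g,n+1}(\cdot)=\pi_{1*}\,c^{\oF,G_0,\ot}_{g,n+2}(\cdot\otimes e_\beta)$, and both sides of \eqref{eq:crucial proposition} visibly agree at $t^*=0$, it suffices to verify that the right-hand side satisfies the same first-order system \eqref{eq:differential equation for RHS}. That verification is the real work: one differentiates each ingredient of formula \eqref{eq:R-action} using \eqref{eq:formulas for dPsi and dGammadU}, \eqref{eq:equation for oH} and \eqref{eq:equation for R(z)} (yielding the formulas \eqref{eq:formula for dOmega}--\eqref{eq:formula for dET} for $d\Omega^k_{g,n}$, $d\tR^{-1}$, $d\tR$ and $d\ET$), and matches the result against the pushforward $\pi_{1*}$ of the tree sum with the extra insertion $e_\beta\,dt^\beta$, splitting according to whether the forgotten point sits on a trivalent genus $0$ vertex attached to the root leg, to another leg, to an edge, or elsewhere; the key technical input is Lemma~\ref{lemma:forgetful map}, which controls how $\pi_{1*}$ interacts with $\tpi_{m*}$ of psi monomials. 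To salvage your plan you would have to prove your compatibility lemma by exactly this kind of computation, at which point you have reproduced the paper's proof rather than bypassed it.
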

\begin{proof}
Obviously, both sides of equation~\eqref{eq:crucial proposition} are equal if we set $t^\alpha=0$. From equation~\eqref{eq:formal shift of an F-CohFT} it is clear that the left-hand side of~\eqref{eq:crucial proposition} satisfies the differential equation
$$
\frac{\d}{\d t^\beta}\left(c^{\oF,G_0,\ot}_{g,n+1}(e^{\alpha_0}\otimes\otimes_{i=1}^ne_{\alpha_i})\right)=\pi_{1*}\left(c^{\oF,G_0,\ot}_{g,n+2}(e^{\alpha_0}\otimes\otimes_{i=1}^ne_{\alpha_i}\otimes e_\beta)\right).
$$
Therefore, it is sufficient to check that the right-hand side of~\eqref{eq:crucial proposition} satisfies the same differential equation, 
\begin{multline*}
\frac{\d}{\d t^\beta}\left(\left(\Psi^{-1}R^{-1}(-z).c^{\oH,H^{-1}G_0}\right)_{g,n+1}(e^{\alpha_0}\otimes\otimes_{i=1}^ne_{\alpha_i})\right)=\\
=\pi_{1*}\left(\left(\Psi^{-1}R^{-1}(-z).c^{\oH,H^{-1}G_0}\right)_{g,n+2}(e^{\alpha_0}\otimes\otimes_{i=1}^ne_{\alpha_i}\otimes e_\beta)\right),
\end{multline*}
or, equivalently,
\begin{multline}\label{eq:differential equation for RHS}
d\left(\left(\Psi^{-1}R^{-1}(-z).c^{\oH,H^{-1}G_0}\right)_{g,n+1}(e^{\alpha_0}\otimes\otimes_{i=1}^ne_{\alpha_i})\right)=\\
=\pi_{1*}\left(\left(\Psi^{-1}R^{-1}(-z).c^{\oH,H^{-1}G_0}\right)_{g,n+2}(e^{\alpha_0}\otimes\otimes_{i=1}^ne_{\alpha_i}\otimes e_\beta dt^\beta)\right).
\end{multline}

\smallskip

Recall that the unit of the F-CohFT $c^{\oH,H^{-1}G_0}$ is $\oH=\sum_{i=1}^n H_i e_i$. We have
$$
\Psi^{-1}R^{-1}(-z).c^{\oH,H^{-1}G_0}=\Psi^{-1}R^{-1}(-z)T''_{R^{-1}(-z)}c^{\oH,H^{-1}G_0},
$$
where $T''_{R^{-1}(-z)}=z\left(\oH-R(-z)\oH\right)$. Let us introduce the notation
$$
R_{\ge k}(z):=\sum_{i\ge k}R_i z^i,\quad k\ge 1.
$$
We see that
$$
\left(T''_{R^{-1}(-z)}c^{\oH,H^{-1}G_0}\right)_{g,n+1}(e^{i_0}\otimes\otimes_{j=1}^n e_{i_j})=
\begin{cases}
\Omega^k_{g,n+1},&\text{if $i_0=i_1=\ldots=i_n=k$},\\
0,&\text{otherwise},
\end{cases}
$$
where
$$
\Omega^k_{g,n}=\sum_{m\ge 0}\frac{1}{m!}\frac{(G_0^k)^g}{H_k^{2g+n+m-2}}\pi_{m*}\left(\prod_{i=n+1}^{n+m}(-\psi_i)\left(R_{\ge 1}(-\psi_i)\oH\right)^k\right)\in H^*(\oM_{g,n})\otimes\mbC[[t^1,\ldots,t^N]],
$$
and $\left(R_{\ge 1}(-\psi_i)\oH\right)^k$ denotes the $k$-th component of the vector $R_{\ge 1}(-\psi_i)\oH$.\\

Let
$$
T^N_{g,n+1}:=\left\{(\Gamma,f)\left|\begin{smallmatrix}\Gamma\in T_{g,n+1}\\f\colon V(\Gamma)\to\{1,\ldots,N\}\end{smallmatrix}\right.\right\}.
$$
We denote by $H(\Gamma)$ the set of half-edges of $\Gamma\in T_{g,n+1}$. A function $f\colon V(\Gamma)\to\{1,\ldots,N\}$ induces a function $H(\Gamma)\to\{1,\ldots,N\}$, denoted by the same letter~$f$, by $f(h):=f(v(h))$, where $h\in H(\Gamma)$ and $v(h)$ is the vertex of $\Gamma$ incident to~$h$. We denote by $l_i(\Gamma)$ the leg of $\Gamma$ marked by $i$, $1\le i\le n+1$. Let us also introduce the notations
$$
\tR(z):=R(z)\Psi,\qquad \ET(x,y):=\frac{\Id-R(-x)R^{-1}(y)}{x+y}.
$$
Then the F-CohFT $\Psi^{-1}R^{-1}(-z).c^{\oH,H^{-1}G_0}$ can be described in the following way:
\begin{multline*}
\left(\Psi^{-1}R^{-1}(-z).c^{\oH,H^{-1}G_0}\right)_{g,n+1}(e^{\alpha_0}\otimes\otimes_{i=1}^ne_{\alpha_i})=\\
=\sum_{(\Gamma,f)\in T^N_{g,n+1}}\xi_{\Gamma*}\left[\prod_{v\in V(\Gamma)}\Omega_{g(v),n(v)}^{f(v)}\tR^{-1}(\psi_1)^{\alpha_0}_{f(l_1)}\prod_{k=2}^{n+1}\tR(-\psi_k)^{f(l_k)}_{\alpha_{k-1}}\prod_{e\in E(\Gamma)}\ET(\psi_{e'},\psi_{e''})^{f(e')}_{f(e'')}\right].
\end{multline*}

\smallskip

For a pair $(\Gamma,f)\in T^N_{g,n+1}$, $v\in V(\Gamma)$, $e\in E(\Gamma)$, and $2\le k\le n+1$, let us introduce the following classes in $H^*\left(\prod_{v\in V(\Gamma)}\oM_{g(v),n(v)}\right)\otimes\mbC[[t^1,\ldots,t^N]]$:
\begin{align*}
\Cont_{\Gamma,f}^v:=&\prod_{\substack{\tv\in V(\Gamma)\\\tv\ne v}}\Omega_{g(\tv),n(\tv)}^{f(\tv)}\tR^{-1}(\psi_1)^{\alpha_0}_{f(l_1)}\prod_{k=2}^{n+1}\tR(-\psi_k)^{f(l_k)}_{\alpha_{k-1}}\prod_{e\in E(\Gamma)}\ET(\psi_{e'},\psi_{e''})^{f(e')}_{f(e'')},\\
\Cont_{\Gamma,f}^{l_1}:=&\prod_{v\in V(\Gamma)}\Omega_{g(v),n(v)}^{f(v)}\prod_{k=2}^{n+1}\tR(-\psi_k)^{f(l_k)}_{\alpha_{k-1}}\prod_{e\in E(\Gamma)}\ET(\psi_{e'},\psi_{e''})^{f(e')}_{f(e'')},\\
\Cont_{\Gamma,f}^{l_k}:=&\prod_{v\in V(\Gamma)}\Omega_{g(v),n(v)}^{f(v)}\tR^{-1}(\psi_1)^{\alpha_0}_{f(l_1)}\prod_{\substack{2\le\tk\le n+1\\\tk\ne k}}\tR(-\psi_{\tk})^{f(l_{\tk})}_{\alpha_{\tk-1}}\prod_{e\in E(\Gamma)}\ET(\psi_{e'},\psi_{e''})^{f(e')}_{f(e'')},\\
\Cont_{\Gamma,f}^e:=&\prod_{v\in V(\Gamma)}\Omega_{g(v),n(v)}^{f(v)}\tR^{-1}(\psi_1)^{\alpha_0}_{f(l_1)}\prod_{k=2}^{n+1}\tR(-\psi_k)^{f(l_k)}_{\alpha_{k-1}}\prod_{\substack{\te\in E(\Gamma)\\\te\ne e}}\ET(\psi_{e'},\psi_{e''})^{f(e')}_{f(e'')}.
\end{align*}
Then we can write
\begin{align*}
&d\left(\left(\Psi^{-1}R^{-1}(-z).c^{\oH,H^{-1}G_0}\right)_{g,n+1}(e^{\alpha_0}\otimes\otimes_{i=1}^ne_{\alpha_i})\right)=\\
=&\sum_{(\Gamma,f)\in T^N_{g,n+1}}\xi_{\Gamma*}\left[\sum_{v\in V(\Gamma)}d\Omega_{g(v),n(v)}^{f(v)}\Cont_{\Gamma,f}^v+d\tR^{-1}(\psi_1)^{\alpha_0}_{f(l_1)}\Cont_{\Gamma,f}^{l_1}+\sum_{k=2}^{n+1}d\tR(-\psi_k)^{f(l_k)}_{\alpha_{k-1}}\Cont_{\Gamma,f}^{l_k}\right.+\\
&\left.\hspace{2.7cm}+\sum_{e\in E(\Gamma)}d\ET(\psi_{e'},\psi_{e''})^{f(e')}_{f(e'')}\Cont_{\Gamma,f}^e\right].
\end{align*}

\smallskip

Equations \eqref{eq:formulas for dPsi and dGammadU},~\eqref{eq:equation for oH}, and~\eqref{eq:equation for R(z)} imply that
\begin{align*}
&d\left(zR_{\ge 1}(z)\oH\right)=[R_{\ge 2}(z),dU]\oH, && z\left(dR^{-1}(z)-[\Gamma,dU]R^{-1}(z)\right)=[R^{-1}(z),dU],\\
&d\tR(z)=z^{-1}[R(z),dU]\Psi, && d\tR^{-1}(z)=z^{-1}\Psi^{-1}[R^{-1}(z),dU],
\end{align*}
which gives the following equations:
\begin{align}
d\Omega^k_{g,n}=&\sum_{m\ge 0}\frac{2g+n+m-2}{m!}\frac{(G_0^k)^g}{H_k^{2g+n+m-1}}\left([dU,\Gamma]\oH\right)^k\pi_{m*}\left(\prod_{i=n+1}^{n+m}(-\psi_i)\left(R_{\ge 1}(-\psi_i)\oH\right)^k\right)+\label{eq:formula for dOmega}\\
&+\sum_{m\ge 0}\frac{1}{m!}\frac{(G_0^k)^g}{H_k^{2g+n+m-1}}\pi_{(m+1)*}\left(\prod_{i=n+1}^{n+m}(-\psi_i)\left(R_{\ge 1}(-\psi_i)\oH\right)^k\cdot\left([R_{\ge 2}(-\psi_{n+m+1}),dU]\oH\right)^k\right),\notag\\
d\tR^{-1}(z)=&z^{-1}\Psi^{-1}[R^{-1}(z),dU],\label{eq:formula for dtRinv}\\
d\tR(-z)=&z^{-1}[dU,R(-z)]\Psi,\label{eq:formula for dtR}\\
d\ET(x,y)=&\frac{y[R(-x),dU]R^{-1}(y)+xR(-x)[dU,R^{-1}(y)]}{x y(x+y)}.\label{eq:formula for dET}
\end{align}

\smallskip

For the right-hand side of~\eqref{eq:differential equation for RHS}, we compute
\begin{multline}
\pi_{1*}\left(\left(\Psi^{-1}R^{-1}(-z).c^{\oH,H^{-1}G_0}\right)_{g,n+2}(e^{\alpha_0}\otimes\otimes_{i=1}^ne_{\alpha_i}\otimes e_\beta dt^\beta)\right)=\label{eq:sum on RHS of differential equation}\\
=\pi_{1*}\left(\sum_{(\Gamma,f)\in T^N_{g,n+2}}\xi_{\Gamma*}\left[\tR(-\psi_{n+2})^{f(l_{n+2})}_\beta dt^\beta\Cont_{\Gamma,f}^{l_{n+2}}\right]\right).
\end{multline}
Define a subset $\tT^N_{g,n+2}\subset T^N_{g,n+2}$ by
$$
\tT^N_{g,n+2}:=\left\{(\Gamma,f)\in T^N_{g,n+2}\left|\begin{smallmatrix}g(v(l_{n+2}(\Gamma)))=0\\n(v(l_{n+2}(\Gamma)))=3\end{smallmatrix}\right.\right\}.
$$
Let us first compute the part of the sum on the right-hand side of~\eqref{eq:sum on RHS of differential equation} where $(\Gamma,f)\in \tT^N_{g,n+2}$:
\begin{align}
&\pi_{1*}\left(\sum_{(\Gamma,f)\in\tT^N_{g,n+2}}\xi_{\Gamma*}\left[\tR(-\psi_{n+2})^{f(l_{n+2})}_\beta dt^\beta\Cont_{\Gamma,f}^{l_{n+2}}\right]\right)=\label{eq:sum for contracted components,1}\\
&\hspace{2cm}=\sum_{(\Gamma,f)\in T^N_{g,n+1}}\xi_{\Gamma*}\left[\left(\Psi^{-1}dU\frac{\Id-R^{-1}(\psi_1)}{\psi_1}\right)^{\alpha_0}_{f(l_1)}\Cont_{\Gamma,f}^{l_1}+\right.\label{eq:sum for contracted components,2}\\
&\left.\hspace{5.1cm}+\sum_{k=2}^{n+1}\left(\frac{\Id-R(-\psi_k)}{\psi_k}dU\Psi\right)_{\alpha_{k-1}}^{f(l_k)}\Cont_{\Gamma,f}^{l_k}+\right.\label{eq:sum for contracted components,3}\\
&\left.\hspace{5.1cm}+\sum_{e\in E(\Gamma)}\left(\frac{\Id-R(-\psi_{e'})}{\psi_{e'}}dU\frac{\Id-R^{-1}(\psi_{e''})}{\psi_{e''}}\right)_{f(e'')}^{f(e')}\Cont_{\Gamma,f}^e\right].\label{eq:sum for contracted components,4}
\end{align}
Here, the sum in line~\eqref{eq:sum for contracted components,2} corresponds to the part of the sum in line~\eqref{eq:sum for contracted components,1} where $v(l_1(\Gamma))=v(l_{n+2}(\Gamma))$, the sum in line~\eqref{eq:sum for contracted components,3} corresponds to the part of the sum in line~\eqref{eq:sum for contracted components,1} where $v(l_k(\Gamma))=v(l_{n+2}(\Gamma))$ for $2\le k\le n+1$, and the sum in line~\eqref{eq:sum for contracted components,4} corresponds to the part of the sum in line~\eqref{eq:sum for contracted components,1} where the leg $l_{n+2}(\Gamma)$ is a unique leg incident to the vertex $v(l_{n+2}(\Gamma))$.\\

Let us now compute the part of the sum on the right-hand side of~\eqref{eq:sum on RHS of differential equation} where $(\Gamma,f)\in T^N_{g,n+2}\backslash\tT^N_{g,n+2}$. Consider the following diagram of forgetful maps:
\begin{gather*}
\xymatrix{
\oM_{g,n+m+1}\ar[d]_{\tpi_1}\ar[r]^{\tpi_m}\ar[dr]^{\pi_{m+1}} & \oM_{g,n+1}\ar[d]^{\pi_1}\\
\oM_{g,n+m}\ar[r]^{\pi_m} & \oM_{g,n}
}
\end{gather*}

\smallskip

\begin{lemma}\label{lemma:forgetful map}
Consider integers $a_1,\ldots,a_{n+1}\ge 0$ and $b_1,\ldots,b_m\ge 2$. Then we have
\begin{multline*}
\pi_{1*}\left[\prod_{j=1}^{n+1}\psi_j^{a_j}\tpi_{m*}\left(\prod_{j=1}^m\psi_{n+1+j}^{b_j}\right)\right]=\\
=\begin{cases}
\sum\limits_{i=1}^n\psi_i^{a_i-1}\prod\limits_{\substack{1\le j\le n\\j\ne i}}\psi_j^{a_j}\pi_{m*}\Big(\prod\limits_{j=1}^m\psi_{n+j}^{b_j}\Big)+\prod\limits_{j=1}^n\psi_j^{a_j}\pi_{m*}\Big(\sum\limits_{i=1}^m\psi_{n+i}^{b_i-1}\prod\limits_{j\ne i}\psi_{n+j}^{b_j}\Big),&\text{if $a_{n+1}=0$},\\
(2g+n+m-2)\prod\limits_{j=1}^n\psi_j^{a_j}\pi_{m*}\Big(\prod\limits_{j=1}^m\psi_{n+j}^{b_j}\Big),&\text{if $a_{n+1}=1$},\\
\prod\limits_{j=1}^n\psi_j^{a_j}\pi_{(m+1)*}\Big(\psi_{n+1}^{a_{n+1}}\prod\limits_{j=1}^m\psi_{n+1+j}^{b_j}\Big),&\text{if $a_{n+1}\ge 2$}.
\end{cases}
\end{multline*}
\end{lemma}
\begin{proof}
Suppose $a_{n+1}=0$, then, using~\eqref{eq:monomial in psi's and forgetful map}, we compute
\begin{align*}
&\pi_{1*}\left[\prod_{j=1}^n\psi_j^{a_j}\tpi_{m*}\left(\prod_{j=1}^m\psi_{n+1+j}^{b_j}\right)\right]=\\
=&\pi_{1*}\left[\pi_1^*\left(\prod_{j=1}^n\psi_j^{a_j}\right)\tpi_{m*}\left(\prod_{j=1}^m\psi_{n+1+j}^{b_j}\right)+\sum_{i=1}^n\pi_1^*\left(\psi_i^{a_i-1}\prod_{j\ne i}\psi_j^{a_j}\right)\delta_0^{\{i,n+1\}}\tpi_{m*}\left(\prod_{j=1}^m\psi_{n+1+j}^{b_j}\right)\right]=\\
=&\pi_{(m+1)*}\left[\pi_{m+1}^*\left(\prod_{j=1}^n\psi_j^{a_j}\right)\prod_{j=1}^m\psi_{n+1+j}^{b_j}+\sum_{i=1}^n\pi_{m+1}^*\left(\psi_i^{a_i-1}\prod_{j\ne i}\psi_j^{a_j}\right)\tpi_m^*(\delta_0^{\{i,n+1\}})\prod_{j=1}^m\psi_{n+1+j}^{b_j}\right].
\end{align*}
Since $b_1,\ldots,b_m\ge 2$, we have $\tpi_m^*\left(\delta_0^{\{i,n+1\}}\right)\prod_{j=1}^m\psi_{n+1+j}^{b_j}=\delta_0^{\{i,n+1\}}\prod_{j=1}^m\psi_{n+1+j}^{b_j}$. Therefore, we can continue the last chain of equations as follows:
\begin{align*}
&\pi_{(m+1)*}\left[\pi_{m+1}^*\left(\prod_{j=1}^n\psi_j^{a_j}\right)\prod_{j=1}^m\psi_{n+1+j}^{b_j}+\sum_{i=1}^n\pi_{m+1}^*\left(\psi_i^{a_i-1}\prod_{j\ne i}\psi_j^{a_j}\right)\delta_0^{\{i,n+1\}}\prod_{j=1}^m\psi_{n+1+j}^{b_j}\right]=\\
=&\pi_{m*}\left[\pi_m^*\left(\prod_{j=1}^n\psi_j^{a_j}\right)\sum_{i=1}^m\psi_{n+i}^{b_i-1}\prod_{j\ne i}\psi_{n+j}^{b_j}+\sum_{i=1}^n\pi_m^*\left(\psi_i^{a_i-1}\prod_{j\ne i}\psi_j^{a_j}\right)\prod_{j=1}^m\psi_{n+j}^{b_j}\right]=\\
=&\prod_{j=1}^n\psi_j^{a_j}\pi_{m*}\left(\sum_{i=1}^m\psi_{n+i}^{b_i-1}\prod_{j\ne i}\psi_{n+j}^{b_j}\right)+\sum_{i=1}^n\psi_i^{a_i-1}\prod_{j\ne i}\psi_j^{a_j}\pi_{m*}\left(\prod_{j=1}^m\psi_{n+j}^{b_j}\right).
\end{align*}

\smallskip

If $a_{n+1}\ge 1$, then, using again~\eqref{eq:monomial in psi's and forgetful map}, we obtain
\begin{align*}
\pi_{1*}\left[\prod_{j=1}^{n+1}\psi_j^{a_j}\tpi_{m*}\left(\prod_{j=1}^m\psi_{n+1+j}^{b_j}\right)\right]=&\pi_{1*}\left[\pi_1^*\left(\prod_{j=1}^n\psi_j^{a_j}\right)\psi_{n+1}^{a_{n+1}}\tpi_{m*}\left(\prod_{j=1}^m\psi_{n+1+j}^{b_j}\right)\right]=\\
=&\pi_{(m+1)*}\left[\pi_{m+1}^*\left(\prod_{j=1}^n\psi_j^{a_j}\right)\tpi_m^*(\psi_{n+1}^{a_{n+1}})\prod_{j=1}^m\psi_{n+1+j}^{b_j}\right].
\end{align*}
Noticing that $\tpi_m^*(\psi_{n+1}^{a_{n+1}})\prod_{j=1}^m\psi_{n+1+j}^{b_j}=\psi_{n+1}^{a_{n+1}}\prod_{j=1}^m\psi_{n+1+j}^{b_j}$ we get
$$
\pi_{(m+1)*}\left[\pi_{m+1}^*\left(\prod_{j=1}^n\psi_j^{a_j}\right)\psi_{n+1}^{a_{n+1}}\prod_{j=1}^m\psi_{n+1+j}^{b_j}\right]=\prod_{j=1}^n\psi_j^{a_j}\pi_{(m+1)*}\left(\psi_{n+1}^{a_{n+1}}\prod_{j=1}^m\psi_{n+1+j}^{b_j}\right).
$$
If $a_{n+1}\ge 2$, then this proves the lemma. If $a_{n+1}=1$, then we just note that 
$$
\pi_{(m+1)*}\left(\psi_{n+1}\prod_{j=1}^m\psi_{n+1+j}^{b_j}\right)=(2g+n+m-2)\pi_{m*}\left(\prod_{j=1}^m\psi_{n+j}^{b_j}\right).
$$
\end{proof}

\smallskip

We write
\begin{align}
&\pi_{1*}\left(\sum_{(\Gamma,f)\in T^N_{g,n+2}\backslash\tT^N_{g,n+2}}\xi_{\Gamma*}\left[\tR(-\psi_{n+2})^{f(l_{n+2})}_\beta dt^\beta\Cont_{\Gamma,f}^{l_{n+2}}\right]\right)=\notag\\
&\hspace{3cm}=\pi_{1*}\left(\sum_{(\Gamma,f)\in T^N_{g,n+2}\backslash\tT^N_{g,n+2}}\xi_{\Gamma*}\left[\Psi^{f(l_{n+2})}_\beta dt^\beta\Cont_{\Gamma,f}^{l_{n+2}}\right]\right)+\label{eq:crucial proposition,sum1}\\
&\hspace{3.5cm}+\pi_{1*}\left(\sum_{(\Gamma,f)\in T^N_{g,n+2}\backslash\tT^N_{g,n+2}}\xi_{\Gamma*}\left[(-\psi_{n+2})(R_1\Psi)^{f(l_{n+2})}_\beta dt^\beta\Cont_{\Gamma,f}^{l_{n+2}}\right]\right)+\label{eq:crucial proposition,sum2}\\
&\hspace{3.5cm}+\pi_{1*}\left(\sum_{(\Gamma,f)\in T^N_{g,n+2}\backslash\tT^N_{g,n+2}}\xi_{\Gamma*}\left[\tR_{\ge 2}(-\psi_{n+2})^{f(l_{n+2})}_\beta dt^\beta\Cont_{\Gamma,f}^{l_{n+2}}\right]\right).\label{eq:crucial proposition,sum3}
\end{align}
By Lemma~\ref{lemma:forgetful map}, the expression in line~\eqref{eq:crucial proposition,sum1} is equal to
\begin{align*}
&\sum_{(\Gamma,f)\in T^N_{g,n+1}}\xi_{\Gamma*}\left[\sum_{v\in V(\Gamma)}A^{f(v)}_{g(v),n(v)}\Cont_{\Gamma,f}^v+\left(\Psi^{-1}\frac{R^{-1}(\psi_1)-\Id}{\psi_1}dU\right)^{\alpha_0}_{f(l_1)}\Cont_{\Gamma,f}^{l_1}+\right.\\
&\hspace{1cm}+\sum_{k=2}^{n+1}\left(dU\frac{R(-\psi_k)-\Id}{\psi_k}\Psi\right)^{f(l_k)}_{\alpha_{k-1}}\Cont_{\Gamma,f}^{l_k}+\\
&\hspace{1cm}\left.+\sum_{e\in E(\Gamma)}\left(\frac{\ET(\psi_{e'},\psi_{e''})-\ET(\psi_{e'},0)}{\psi_{e''}}dU+dU\frac{\ET(\psi_{e'},\psi_{e''})-\ET(0,\psi_{e''})}{\psi_{e'}}\right)^{f(e')}_{f(e'')}\Cont_{\Gamma,f}^e\right],
\end{align*}
where
\begin{align*}
A^k_{g,n}=&\sum_{m\ge 0}\frac{1}{m!}\frac{(G_0^k)^g}{H_k^{2g+n+m-1}}\pi_{(m+1)*}\left(\prod_{i=n+1}^{n+m}(-\psi_i)\left(R_{\ge 1}(-\psi_i)\oH\right)^k\cdot(-1)\left(dU R_{\ge 1}(-\psi_{n+m+1})\oH\right)^k\right)=\\
=&\sum_{m\ge 0}\frac{1}{m!}\frac{(G_0^k)^g}{H_k^{2g+n+m-1}}\pi_{(m+1)*}\left(\prod_{i=n+1}^{n+m}(-\psi_i)\left(R_{\ge 1}(-\psi_i)\oH\right)^k\cdot(-1)\left(dU R_{\ge 2}(-\psi_{n+m+1})\oH\right)^k\right)+\\
&+\sum_{m\ge 0}\frac{2g+n+m-2}{m!}\frac{(G_0^k)^g}{H_k^{2g+n+m-1}}\left(dU R_1\oH\right)^k\pi_{m*}\left(\prod_{i=n+1}^{n+m}(-\psi_i)\left(R_{\ge 1}(-\psi_i)\oH\right)^k\right).
\end{align*}
Lemma~\ref{lemma:forgetful map} implies that the expressions in lines~\eqref{eq:crucial proposition,sum2} and~\eqref{eq:crucial proposition,sum3} are equal to 
$$
\sum_{(\Gamma,f)\in T^N_{g,n+1}}\xi_{\Gamma*}\left[B^{f(v)}_{g(v),n(v)}\Cont^v_{\Gamma,f}\right]\quad\text{and}\quad\sum_{(\Gamma,f)\in T^N_{g,n+1}}\xi_{\Gamma*}\left[C^{f(v)}_{g(v),n(v)}\Cont^v_{\Gamma,f}\right],
$$
respectively, where
\begin{align*}
B^k_{g,n}=&-\sum_{m\ge 0}\frac{2g+n+m-2}{m!}\frac{(G_0^k)^g}{H_k^{2g+n+m-1}}\left(R_1dU\oH\right)^k\pi_{m*}\left(\prod_{i=n+1}^{n+m}(-\psi_i)\left(R_{\ge 1}(-\psi_i)\oH\right)^k\right),\\
C^k_{g,n}=&\sum_{m\ge 0}\frac{1}{m!}\frac{(G_0^k)^g}{H_k^{2g+n+m-1}}\pi_{(m+1)*}\left(\prod_{i=n+1}^{n+m}(-\psi_i)\left(R_{\ge 1}(-\psi_i)\oH\right)^k\cdot\left(R_{\ge 2}(-\psi_{n+m+1})dU\oH\right)^k\right).
\end{align*}

\smallskip

Summarizing the above computations, we get
\begin{align*}
&\pi_{1*}\left(\left(\Psi^{-1}R^{-1}(-z).c^{\oH,H^{-1}G_0}\right)_{g,n+2}(e^{\alpha_0}\otimes\otimes_{i=1}^ne_{\alpha_i}\otimes e_\beta dt^\beta)\right)=\\
=&\sum_{(\Gamma,f)\in T^N_{g,n+1}}\xi_{\Gamma*}\left[\sum_{v\in V(\Gamma)}\mcV_{g(v),n(v)}^{f(v)}\Cont_{\Gamma,f}^v+\tcL(\psi_1)^{\alpha_0}_{f(l_1)}\Cont_{\Gamma,f}^{l_1}+\sum_{k=2}^{n+1}\mcL(\psi_k)^{f(l_k)}_{\alpha_{k-1}}\Cont_{\Gamma,f}^{l_k}\right.+\\
&\left.\hspace{2.7cm}+\sum_{e\in E(\Gamma)}\mcE(\psi_{e'},\psi_{e''})^{f(e')}_{f(e'')}\Cont_{\Gamma,f}^e\right],
\end{align*}
where
\begin{align*}
\mcV^k_{g,n}=&A^k_{g,n}+B^k_{g,n}+C^k_{g,n}\stackrel{\text{eq.~\eqref{eq:formula for dOmega}}}{=}d\Omega^k_{g,n},\\
\tcL(z)=&\Psi^{-1}dU\frac{\Id-R^{-1}(z)}{z}+\Psi^{-1}\frac{R^{-1}(z)-\Id}{z}dU=\Psi^{-1}z^{-1}[R^{-1}(z),dU]\stackrel{\text{eq.~\eqref{eq:formula for dtRinv}}}{=}d\tR^{-1}(z),\\
\mcL(z)=&\frac{\Id-R(-z)}{z}dU\Psi+dU\frac{R(-z)-\Id}{z}\Psi=z^{-1}[dU,R(-z)]\Psi\stackrel{\text{eq.~\eqref{eq:formula for dtR}}}{=}d\tR(-z),\\
\mcE(x,y)=&\frac{\Id-R(-x)}{x}dU\frac{\Id-R^{-1}(y)}{y}+\frac{\ET(x,y)-\ET(x,0)}{y}dU+dU\frac{\ET(x,y)-\ET(0,y)}{x}=\\
=&\frac{y[R(-x),dU]R^{-1}(y)+xR(-x)[dU,R^{-1}(y)]}{x y(x+y)}\stackrel{\text{eq.~\eqref{eq:formula for dET}}}{=}d\ET(x,y).
\end{align*}
We conclude that equation~\eqref{eq:differential equation for RHS} is true.
\end{proof}

\smallskip

\begin{proof}[Proof of Theorem~\ref{theorem:homogeneous F-CohFT}]
We have to check that
\begin{multline}\label{eq:homogeneity property of F-CohFT}
\Deg c^{\oF,G_0}_{g,n+1}(e^{\alpha_0}\otimes\otimes_{i=1}^ne_{\alpha_i})+\pi_{1*}c^{\oF,G_0}_{g,n+2}(e^{\alpha_0}\otimes\otimes_{i=1}^ne_{\alpha_i}\otimes r^\gamma e_\gamma)=\\
=\left(\sum_{i=1}^n q_{\alpha_i}-q_{\alpha_0}-2\delta_l g\right)c^{\oF,G_0}_{g,n+1}(e^{\alpha_0}\otimes\otimes_{i=1}^ne_{\alpha_i}).
\end{multline}
Since 
$$
\pi_{1*}c^{\oF,G_0}_{g,n+2}(e^{\alpha_0}\otimes\otimes_{i=1}^ne_{\alpha_i}\otimes r^\gamma e_\gamma)=\left.E^\alpha\frac{\d}{\d t^\alpha}c^{\oF,G_0,\ot}_{g,n+1}(e^{\alpha_0}\otimes\otimes_{i=1}^ne_{\alpha_i})\right|_{t^*=0},
$$
equation~\eqref{eq:homogeneity property of F-CohFT} follows from the equation
\begin{gather*}
\left(\Deg+E^\alpha\frac{\d}{\d t^\alpha}\right)c^{\oF,G_0,\ot}_{g,n+1}(e^{\alpha_0}\otimes\otimes_{i=1}^ne_{\alpha_i})=\left(\sum_{i=1}^n q_{\alpha_i}-q_{\alpha_0}-2\delta_l g\right)c^{\oF,G_0,\ot}_{g,n+1}(e^{\alpha_0}\otimes\otimes_{i=1}^ne_{\alpha_i}),
\end{gather*}
which, by Proposition~\ref{proposition:crucial proposition}, is equivalent to the equation
\begin{multline}\label{eq:main homogeneity}
\left(\Deg+E^\alpha\frac{\d}{\d t^\alpha}\right)\left(\Psi^{-1}R^{-1}(-z).c^{\oH,H^{-1}G_0}\right)_{g,n+1}(e^{\alpha_0}\otimes\otimes_{i=1}^ne_{\alpha_i})=\\
=\left(\sum_{i=1}^n q_{\alpha_i}-q_{\alpha_0}-2\delta_l g\right)\left(\Psi^{-1}R^{-1}(-z).c^{\oH,H^{-1}G_0}\right)_{g,n+1}(e^{\alpha_0}\otimes\otimes_{i=1}^ne_{\alpha_i}).
\end{multline}
Let us prove it.\\

From definition~\eqref{eq:definition of cwG0} and the assumption $G_0\in V_{\delta_l}$ it follows that
\begin{gather*}
\left(\Deg+E^\alpha\frac{\d}{\d t^\alpha}\right)c^{\oH,H^{-1}G_0}_{g,n+1}(e^{i_0}\otimes\otimes_{j=1}^n e_{i_j})=\left(\delta_{i_0}-\sum_{j=1}^n \delta_{i_j}-2\delta_l g\right)c^{\oH,H^{-1}G_0}_{g,n+1}(e^{i_0}\otimes\otimes_{j=1}^n e_{i_j}).
\end{gather*}
Recall that the unit of the F-CohFT $c^{\oH,H^{-1}G_0}$ is $\oH=\sum_{i=1}^n H_i e_i$. We express
$$
R^{-1}(-z).c^{\oH,H^{-1}G_0}=R^{-1}(-z)T''_{R^{-1}(-z)}c^{\oH,H^{-1}G_0},
$$
where $T''_{R^{-1}(-z)}=z(\oH-R(-z)\oH)$. Consider the definition~\eqref{eq:T-action} for the action of $T''_{R^{-1}(-z)}$ on the F-CohFT $c^{\oH,H^{-1}G_0}$. Let $\Delta:=\diag(\delta_1,\ldots,\delta_N)$. Since
$$
\left(z\frac{\d}{\d z}+E^\alpha\frac{\d}{\d t^\alpha}\right)R(z)=[\Delta,R(z)],\qquad E^\alpha\frac{\d}{\d t^\alpha}\oH=\Delta\oH,
$$ 
we have
$$
\left(z\frac{\d}{\d z}+E^\alpha\frac{\d}{\d t^\alpha}\right)\left(R(z)\oH\right)=\Delta\left(R(z)\oH\right),
$$
which implies that
$$
\left(\Deg+E^\alpha\frac{\d}{\d t^\alpha}\right)T''_{R^{-1}(-\psi_k)}=(\Id+\Delta)T''_{R^{-1}(-\psi_k)}\in V\otimes H^*(\oM_{g,n+m+1})\otimes\mbC[[t^1,\ldots,t^N]].
$$
Since the map $\pi_{m*}\colon H^*(\oM_{g,n+m+1})\to H^*(\oM_{g,n+1})$ decreases the cohomological degree by~$2m$, we obtain
\begin{multline*}
\left(\Deg+E^\alpha\frac{\d}{\d t^\alpha}\right)\left(T''_{R^{-1}(-z)}c^{\oH,H^{-1}G_0}\right)_{g,n+1}(e^{i_0}\otimes\otimes_{j=1}^n e_{i_j})=\\
=\left(\delta_{i_0}-\sum_{j=1}^n \delta_{i_j}-2\delta_l g\right)\left(T''_{R^{-1}(-z)}c^{\oH,H^{-1}G_0}\right)_{g,n+1}(e^{i_0}\otimes\otimes_{j=1}^n e_{i_j}).
\end{multline*}

\smallskip

Consider now definition~\eqref{eq:R-action} of the action of $R^{-1}(-z)$ on the F-CohFT without unit (and with coefficients in $\mbC[[t^1,\ldots,t^N]]$) $T''_{R^{-1}(-z)}c^{\oH,H^{-1}G_0}$. For the leg terms $R^{-1}(\psi_1)^t\in\End(V^*)\otimes H^*(\oM_{g,n+1})\otimes\mbC[[t^1,\ldots,t^N]]$ and $R(-\psi_k)\in \End(V)\otimes H^*(\oM_{g,n+1})\otimes\mbC[[t^1,\ldots,t^N]]$, we have
\begin{gather*}
\left(\Deg+E^\alpha\frac{\d}{\d t^\alpha}\right)R^{-1}(\psi_1)^t=[R^{-1}(\psi_1)^t,\Delta],\qquad 
\left(\Deg+E^\alpha\frac{\d}{\d t^\alpha}\right)R(-\psi_k)=[\Delta,R(-\psi_k)].
\end{gather*}
For the edge term $\frac{\Id-R(-\psi_{e'})R^{-1}(\psi_{e''})}{\psi_{e'}+\psi_{e''}}$, we compute
$$
\left(\Deg+E^\alpha\frac{\d}{\d t^\alpha}\right)\frac{\Id-R(-\psi_{e'})R^{-1}(\psi_{e''})}{\psi_{e'}+\psi_{e''}}=\left[\Delta,\frac{\Id-R(-\psi_{e'})R^{-1}(\psi_{e''})}{\psi_{e'}+\psi_{e''}}\right]-\frac{\Id-R(-\psi_{e'})R^{-1}(\psi_{e''})}{\psi_{e'}+\psi_{e''}}.
$$
Note that the map
$$
\xi_{\Gamma*}\colon H^*\left(\prod_{v\in V(\Gamma)}\oM_{g(v),n(v)}\right)\to H^*(\oM_{g,n+1})
$$
increases the cohomological degree by $2|E(\Gamma)|$. Summarizing the above computations for the action the operator $\Deg+E^\alpha\frac{\d}{\d t^\alpha}$ on the vertex, the leg, and the edge terms, we see that the contribution of each stable tree $\Gamma\in T_{g,n+1}$ in formula~\eqref{eq:R-action} to a class 
$$
\left(R^{-1}(-z)T''_{R^{-1}(-z)}c^{\oH,H^{-1}G_0}\right)_{g,n+1}(e^{i_0}\otimes\otimes_{j=1}^n e_{i_j})
$$
is an eigenvector of the operator $\Deg+E^\alpha\frac{\d}{\d t^\alpha}$ with the eigenvalue $\delta_{i_0}-\sum_{j=1}^n \delta_{i_j}-2\delta_l g$. Therefore,
\begin{multline}\label{eq:last step for homogeneity}
\left(\Deg+E^\alpha\frac{\d}{\d t^\alpha}\right)\left(R^{-1}(-z)T''_{R^{-1}(-z)}c^{\oH,H^{-1}G_0}\right)_{g,n+1}(e^{i_0}\otimes\otimes_{j=1}^n e_{i_j})=\\
=\left(\delta_{i_0}-\sum_{j=1}^n \delta_{i_j}-2\delta_l g\right)\left(R^{-1}(-z)T''_{R^{-1}(-z)}c^{\oH,H^{-1}G_0}\right)_{g,n+1}(e^{i_0}\otimes\otimes_{j=1}^n e_{i_j}).
\end{multline}

\smallskip

It remains to act by $\Psi^{-1}$ on the F-CohFT (with coefficients in $\mbC[[t^1,\ldots,t^N]]$) 
$$
R^{-1}(-z)T''_{R^{-1}(-z)}c^{\oH,H^{-1}G_0}=R^{-1}(-z).c^{\oH,H^{-1}G_0}.
$$
We have
$$
E^\alpha\frac{\d}{\d t^\alpha}\frac{\d u^i}{\d t^\beta}=\frac{\d}{\d t^\beta}\left(E^\alpha\frac{\d u^i}{\d t^\alpha}\right)-\frac{\d E^\alpha}{\d t^\beta}\frac{\d u^i}{\d t^\alpha}=q_\beta\frac{\d u^i}{\d t^\beta}\quad\Rightarrow\quad E^\alpha\frac{\d}{\d t^\alpha}\Psi^i_\beta=(\delta_i+q_\beta)\Psi^i_\beta.
$$
Together with equation~\eqref{eq:last step for homogeneity} this immediately implies equation~\eqref{eq:main homogeneity}.
\end{proof}

\smallskip

\subsubsection{Example: extended $2$-spin theory in all genera}

Let us apply the above construction to the flat F-CohFT of the extended $2$-spin theory (see Section~\ref{subsubsection:example1}):
$$
F^1(t^1,t^2)=\frac{(t^1)^2}{2},\qquad F^2(t^1,t^2)=t^1t^2-\frac{(t^2)^3}{12},\qquad E=t^1\frac{\d}{\d t^1}+\frac{1}{2}t^2\frac{\d}{\d t^2}.
$$
The unit is $\frac{\d}{\d t^1}$. The flat F-manifold is not semisimple at the origin, so we consider it around a semisimple point $(0,\tau)$, $\tau\in\mbC^*$. We have $\delta_1=0$ and $\delta_2=-\frac{1}{2}$. By Theorem~\ref{theorem:homogeneous F-CohFT}, there exist two families of homogeneous F-CohFTs with the associated flat F-manifolds given by the vector potential $\oF_{(0,\tau)}$.\\

First, for any $\lambda\in\mbC^*$ the F-CohFT
$$
c^{\oF_{(0,\tau)},(\lambda,0)}=\left.\left(\Psi^{-1}R^{-1}(-z)\right)\right|_{(t^1,t^2)=(0,\tau)}.c^{\triv,(\lambda,0)}
$$ 
is homogeneous of conformal dimension $0$. Here, the matrices $\Psi$ and $R(z)=\Id+\sum_{k\ge 1}R_k z^k$ were computed in Section~\ref{subsubsection:example1}. It is easy to see that
$$
c^{\oF_{(0,\tau)},(\lambda,0)}_{g,k+l+1}(e^1\otimes e_1^{\otimes k}\otimes e_2^{\otimes l})=\begin{cases}
0,&\text{if $l\ge 1$},\\
\lambda^g\in H^0(\oM_{g,k+1}),&\text{if $l=0$}.
\end{cases}
$$
An argument from~\cite[Section 6]{BR18} shows that the F-CohFT $c^{\oF_{(0,\tau)},(\lambda,0)}$ doesn't have a limit when $\tau\to 0$.\\

Second, for any $\lambda\in\mbC^*$ the F-CohFT
$$
c^{\oF_{(0,\tau)},(0,\lambda)}=\left.\left(\Psi^{-1}R^{-1}(-z)\right)\right|_{(t^1,t^2)=(0,\tau)}.c^{\triv,(0,\lambda)}
$$ 
is homogeneous of conformal dimension $1$. On the other hand, in~\cite[Theorem~3.9]{BR18} the authors constructed a homogeneous F-CohFT $c^{2,\ext}$, also of conformal dimension~$1$, with the associated flat F-manifold given by the vector potential $\oF$. Consider its formal shift $c^{2,\ext,(0,\tau)}$. The property (see \cite[Theorem~3.9]{BR18})
\begin{gather*}
\deg c^{2,\ext}_{g,n+1}(e^{\alpha_0}\otimes\otimes_{i=1}^ne_{\alpha_i})=\left(\sum_{i=1}^n q_{\alpha_i}-q_{\alpha_0}+g\right),\quad q_1=0,\quad q_2=\frac{1}{2},
\end{gather*}
together with the fact that $q_\alpha<1$ implies that the sum on the right-hand side of~\eqref{eq:formal shift of an F-CohFT} is finite. Therefore, the F-CohFT $c^{2,\ext,(0,\tau)}$ is well defined for any value of~$\tau$. It is interesting to compare the F-CohFTs $c^{\oF_{(0,\tau)},(0,\lambda)}$ and $c^{2,\ext,(0,\tau)}$.\\

Using the properties of the F-CohFT $c^{2,\ext}$~\cite[Theorem~3.9]{BR18}, it is easy to compute that
$$
c^{2,\ext,(0,\tau)}_{1,1}(e^\alpha)=
\begin{cases}
0,&\text{if $\alpha=1$},\\
\tau\in H^0(\oM_{1,1}),&\text{if $\alpha=2$}.
\end{cases}
$$
On the other hand, the degree zero part of $c^{\oF_{(0,\tau)},(0,\lambda)}_{1,1}(e^\alpha)$ is equal to 
$$
\lambda\left.(\Psi^{-1})^\alpha_2\right|_{(t^1,t^2)=(0,\tau)}=\begin{cases}
0,&\text{if $\alpha=1$},\\
-\frac{\lambda}{\tau},&\text{if $\alpha=2$}.
\end{cases}
$$
We see that the degree zero parts of the F-CohFTs $c^{\oF_{(0,\tau)},(0,-\tau^2)}$ and $c^{2,\ext,(0,\tau)}$ coincide. The whole F-CohFTs can not coincide because, for example, $c^{\oF_{(0,\tau)},(0,-\tau^2)}_{g,n+1}(e^1\otimes e_1^{\otimes n})=0$ for $g\ge 1$, and $c^{2,\ext,(0,\tau)}_{g,n+1}(e^1\otimes e_1^{\otimes n})=\lambda_g$. However, we expect that the F-CohFTs $c^{\oF_{(0,\tau)},(0,-\tau^2)}$ and $c^{2,\ext,(0,\tau)}$ coincide after the restriction to the moduli space of curves of compact type $\cM^\ct_{g,n+1}$ (those whose dual graph is a tree). If this is true, then it would be interesting to study whether the existence of the limit $\lim_{\tau\to 0}c^{\oF_{(0,\tau)},(0,-\tau^2)}$ gives new relations in the cohomology or Chow ring of $\cM^\ct_{g,n+1}$.\\

\smallskip

We finally remark that the partial failure, just observed for the extended $2$-spin F-CohFT, of the Givental-type theory to reconstruct an F-CohFT from its F-TFT and genus $0$ restriction is not unexpected. Indeed, a crucial difference between the $R$-matrix action on F-CohFTs and the corresponding $R$-matrix action on CohFTs~\cite[Section~2.1]{PPZ15} is that in \eqref{eq:R-action} the sum runs over stable trees only, instead of all stable graphs. This restriction seems natural, as the vertex contributions, i.e., the maps $c_{g,n+1}$, need one input and $n$ outputs, but it is clear that, in general, some parts of the full F-CohFT, supported in particular on $\oM_{g,n+1}\setminus \cM_{g,n+1}^\ct$, can be lost. One can hence try to enlarge the Givental group (for instance introducing nonseparating-edge contributions different from the $R$-matrix) and modify the action \eqref{eq:R-action} adding back stable graphs that are not trees, in the effort to recover the lost transitivity.\\

Alternatively, one can look at F-CohFTs $c_{g,n+1}$ as maps to the cohomology groups $H^*(\cM_{g,n+1}^\ct)$ instead of $H^*(\oM_{g,n+1})$ (simply by restriction) and, since the restriction of any cohomology class on $\oM_{g,n+1}$ supported on $\oM_{g,n+1} \setminus \cM_{g,n+1}^\ct$ to $\cM_{g,n+1}^\ct$ is zero, any contribution to the action~\eqref{eq:R-action} of stable graphs that are not trees becomes irrelevant.\\  

Moreover, given an F-CohFT $c_{g,n+1}$ on $\cM_{g,n+1}^\ct$, it is possible to produce a canonical F-CohFT on $\oM_{g,n+1}$ by multiplying it by the top Chern class $\lambda_g$ of the Hodge bundle on $\oM_{g,n+1}$. The result is a well defined F-CohFT on $\oM_{g,n+1}$, because the restriction of the class $\lambda_g$ to $\oM_{g,n+1} \setminus \cM_{g,n+1}^\ct$ is zero. Notice also that multiplying by $\lambda_g$ commutes with the $R$-matrix action on F-CohFTs. This is particularly relevant in view of future applications to the double ramification hierarchy, see \cite{Bur15,BR16,BR18}, which only depends on $\lambda_g \cdot c_{g,n+1}$.\\

\smallskip

\end{document}